\DeclareSymbolFont{bbmind}{U}{bbm}{m}{n}
\DeclareMathSymbol{\mathbbone}{\mathord}{bbmind}{"31}
\newcommand{\bbone}{\mathbbone}
\newcommand{\NN}{{\mathbb N}}
\newcommand{\ZZ}{{\mathbb Z}}
\newcommand{\RR}{{\mathbb R}}
\newcommand{\R}{{\mathbb R}}
\newcommand{\CC}{{\mathbb C}}
\newcommand{\ba}{\boldsymbol{a}}
\newcommand{\bA}{\textbf{\textsf{A}}}
\newcommand{\bH}{\textbf{\textsf{H}}}
\newcommand{\balpha}{{\sansmath{\bm \alpha}}}
\newcommand{\bd}{{\sansmath{\bm \delta}}}
\newcommand{\bv}{\boldsymbol{v}}
\newcommand{\be}{\boldsymbol e}
\newcommand{\bface}{\boldsymbol{f}}
\newcommand{\bt}{\boldsymbol{t}}
\newcommand{\bc}{\boldsymbol{c}}
\newcommand{\bsig}{\boldsymbol{\sigma}}
\newcommand{\bZ}{\textbf{\textsf{Z}}}
\newcommand{\bO}{\textbf{\textsf{O}}}
\newcommand{\bX}{\textbf{\textsf{X}}}
\newcommand{\bU}{\textbf{\textsf{U}}}
\newcommand{\bG}{\textbf{\textsf{G}}}
\newcommand{\bW}{\textbf{\textsf{W}}}
\newcommand{\bM}{\textbf{\textsf{M}}}
\newcommand{\bB}{\textbf{\textsf{B}}}
\newcommand{\oZ}{\overline{Z}}
\newcommand{\oX}{\overline{X}}
\newcommand{\oA}{\overline{A}}
\newcommand{\tU}{\tilde{U}}
\newcommand{\eps}{\epsilon}
\newcommand{\bx}{{\overline{x}}}
\newcommand{\by}{{\overline{y}}}
\newcommand{\mX}{{\mathcal{X}}}
\newcommand{\mZ}{{\mathcal{Z}}}
\newcommand{\mS}{{\mathcal{S}}}
\def\CZ{{\mathcal Z}}
\def\CX{{\mathcal X}}
\newcommand{\lr}[1]{ \langle {#1} \rangle}
\newcommand*\colvec[3][]{
    \begin{pmatrix}\ifx\relax#1\relax\else#1\\\fi#2\\#3\end{pmatrix}
}
\newcommand{\Ext}{\mathrm{Ext}}
\newcommand{\msquare}{{\mathord{{\scalerel*{\Box}{t}}}}}
\newcommand{\ihat}{\hat{\imath}}
\newtheorem{defn}{Definition} 
\newtheorem{lemma}{Lemma}
\newtheorem{corollary}{Corollary}
\newtheorem{prop}{Proposition}
\newtheorem{theorem}{Theorem}
\newcommand{\change}{\color{black}}
\begin{document}

\title{Clifford quantum cellular automata from topological quantum field theories and invertible subalgebras}

\author{\mbox{Meng Sun (\begin{CJK}{UTF8}{gkai}{孙萌}\end{CJK})}}
\thanks{These authors contributed equally to this work.}
\affiliation{International Center for Quantum Materials, School of Physics, Peking University, Beijing 100871, China}

\author{\mbox{Bowen Yang (\begin{CJK}{UTF8}{gkai}{杨博闻}\end{CJK})}}
\thanks{These authors contributed equally to this work.}
\affiliation{Center of Mathematical Sciences and Applications, Harvard University, Cambridge, Massachusetts 02138, USA}

\author{\mbox{Zongyuan Wang (\begin{CJK}{UTF8}{gkai}{王宗远}\end{CJK})}}
\thanks{These authors contributed equally to this work.}
\affiliation{International Center for Quantum Materials, School of Physics, Peking University, Beijing 100871, China}
\affiliation{Department of Physics and Institute for Quantum Information and Matter, California Institute of Technology, Pasadena, CA 91125, USA}

\author{Nathanan Tantivasadakarn}
\email[E-mail: ]{nathanan.tantivasadakarn@stonybrook.edu}
\affiliation{C. N. Yang Institute for Theoretical Physics, Stony Brook University, Stony Brook, NY 11794, USA}
\affiliation{Department of Physics and Institute for Quantum Information and Matter, California Institute of Technology, Pasadena, CA 91125, USA}
\affiliation{Walter Burke Institute for Theoretical Physics, California Institute of Technology, Pasadena, CA 91125, USA}

\author{\mbox{Yu-An Chen (\begin{CJK*}{UTF8}{bkai}{陳昱安}\end{CJK*})}}
\email[E-mail: ]{yuanchen@pku.edu.cn}
\affiliation{International Center for Quantum Materials, School of Physics, Peking University, Beijing 100871, China}

\date{\today}

%\author[1,*]{Meng Sun}
%\author[2,*]{Bowen Yang}
%\author[4]{Zongyuan Wang}
%\author[3,4,5,$\dagger$]{Nathanan Tantivasadakarn}
%\author[1,$\ddagger$]{Yu-An Chen}

%\affil[1]{International Center for Quantum Materials, School of Physics, Peking University, Beijing 100871, China}
%\affil[2]{Center of Mathematical Sciences and Applications, Harvard University, Cambridge, Massachusetts 02138, USA}
%\affil[3]{C. N. Yang Institute for Theoretical Physics, Stony Brook University, Stony Brook, NY 11794, USA}
%\affil[4]{Department of Physics and Institute for Quantum Information and Matter, California Institute of Technology, Pasadena, CA 91125, USA}
%\affil[5]{Walter Burke Institute for Theoretical Physics, California Institute of Technology, Pasadena, CA 91125, USA}

%\renewcommand*{\thefootnote}{*}
%\footnotetext[1]{These authors contributed equally to this work.}
%\renewcommand*{\thefootnote}{$\dagger$}
%\footnotetext[1]{Contact author: nathanan.tantivasadakarn@stonybrook.edu}
%\renewcommand*{\thefootnote}{$\ddagger$}
%\footnotetext[1]{Contact author: yuanchen@pku.edu.cn}

% Set footnotes to be numbered
%\renewcommand{\thefootnote}{\arabic{footnote}}

\begin{abstract}
We present a general framework for constructing quantum cellular automata (QCA) from topological quantum field theories (TQFT) and invertible subalgebras (ISA) using the cup-product formalism. This approach explicitly realizes all $\mathbb{Z}_2$ and $\mathbb{Z}_p$ Clifford QCAs (for prime $p$) in all admissible dimensions, in precise agreement with the classification predicted by algebraic $L$-theory. We determine the orders of these QCAs by explicitly showing that finite powers reduce to the identity up to finite-depth quantum circuits (FDQC) and lattice translations. In particular,  we demonstrate that the $\mathbb{Z}_2$ Clifford QCAs in $(4l{+}1)$ spatial dimensions can be disentangled by non-Clifford FDQCs.  
Our construction applies beyond cubic lattices, allowing $\mathbb{Z}_2$ QCAs to be defined on arbitrary cellulations.
Furthermore, we explicitly construct invertible subalgebras in higher dimensions, obtaining $\mathbb{Z}_2$ ISAs in $2l$ spatial dimensions and $\mathbb{Z}_p$ ISAs in $(4l{-}2)$ spatial dimensions.
These ISAs give rise to $\mathbb{Z}_2$ QCAs in $(2l{+}1)$ dimensions and $\mathbb{Z}_p$ QCAs in $(4l{-}1)$ dimensions. {\change
We further prove that the QCAs in $3$ spatial dimensions constructed via TQFTs and ISAs are equivalent by identifying their boundary algebras, and show that this approach extends to higher dimensions.
}
Together, these results establish a unified and dimension-periodic framework for Clifford QCAs, connecting their explicit lattice realizations to field theories.  

\end{abstract}

\maketitle
\tableofcontents

%%%%%%%%%%%%%%%%%%%%%%%%%%%%%%%%%%%%%%%%%%%%%%%%%%%%%%%%%%%%%%%%%%%%%%%%%%%%%%%%
\section{Introduction}

\begin{table*}[t]
    \centering
    \renewcommand{\arraystretch}{1.8}
    \begin{tabular}{|l|c|c|c|}
    \hline
    ~\begin{tabular}{cc}
        Spacetime \\
        dimension~$D$ 
    \end{tabular} 
    &
    \begin{tabular}{cc}
        Topological action in \\
        $H^D(K(\ZZ_2 \times \ZZ_2, D/2), \RR/\ZZ)$
    \end{tabular} 
    &
    \begin{tabular}{c}
        Trivializable by \\
        FDQC + shift
    \end{tabular}
    &
    \begin{tabular}{c}
        Order \\
        of QCA
    \end{tabular}
    \\
    \hline
    ~$4$ (Sec.~\ref{Sec. 4d Z2 QCA})  & $\frac{1}{2}(A_2\cup A_2+A_2\cup B_2+B_2\cup B_2$) & No*&2 \\
    \hline
    ~$6$ (Sec.~\ref{sec: w1w2^2}) & $\frac{1}{2}( A_3\cup A_3+A_3\cup B_3+B_3\cup B_3$) & Yes & 2\\
    \hline
    ~$4l$ (Sec.~\ref{Sec. even d Z2 QCA})  & $\frac{1}{2}(A_{2l}\cup A_{2l}+A_{2l}\cup B_{2l}+B_{2l}\cup B_{2l}$) & No* &2\\
    \hline
    ~$4l{+}2$  (Sec.~\ref{Sec. even d Z2 QCA})~  & ~$\frac{1}{2}(A_{2l+1}\cup A_{2l+1}+A_{2l+1}\cup B_{2l+1}+B_{2l+1}\cup B_{2l+1})$~ & Yes & 2\\
    \hline
    \end{tabular}
    \caption{\change Summary of $\mathbb{Z}_2$ Clifford QCAs in even spacetime dimensions. 
    The $3{+}1$D $\mathbb{Z}_2$ QCA corresponds to the 3-fermion Walker–Wang model~\cite{Walker2012TQFT}, which can be understood as a gauged $\mathbb{Z}_2 \times \mathbb{Z}_2$ 1-form symmetry-protected topological (SPT) phase. 
    The corresponding topological actions admit a direct generalization to higher dimensions, from which we construct the associated $\mathbb{Z}_2$ Clifford QCAs. 
    In spacetime dimensions $D = 4l$, these QCAs are conjectured to be nontrivial under non-Clifford finite-depth quantum circuits (FDQCs), based on the expectation that the corresponding anomalous boundary theories do not admit commuting-projector Hamiltonians.
    On the other hand, in $D = 4l{+}2$ they become trivial upon allowing non-Clifford circuits, since the term $\frac{1}{2} A_{2l+1} \cup A_{2l+1}$ is a coboundary. 
    Consequently, the classification of $\mathbb{Z}_2$ Clifford QCAs exhibits a periodicity of 2 in dimension, while allowing non-Clifford circuits reveals that the 3-fermion–type QCAs occur only every 4 dimensions.}
    \label{table: Z2 QCA}
\end{table*}

\begin{table*}[t]
    \centering
    \renewcommand{\arraystretch}{1.8}
    \begin{tabular}{|l|c|c|c|}
    \hline
    ~\begin{tabular}{cc}
        Spacetime \\
        dimension~$D$
    \end{tabular}
    &
    \begin{tabular}{cc}
        Topological action in \\
        $H^D(K(\ZZ_p, D/2), \RR/\ZZ)$
    \end{tabular}
    &
    \begin{tabular}{c}
        Trivializable by \\
        FDQC + shift
    \end{tabular}
    &
    \begin{tabular}{c}
        Order \\
        of QCA
    \end{tabular}
    \\
    \hline
    ~$4$ (Sec.~\ref{Sec. 3d Zp QCA})
    & $\dfrac{k}{p}\, A_2 \cup A_2$
    & No*
    & \begin{tabular}{l}
        $2$ ~if~ $p \equiv 1 \pmod{4}$ \\
        $4$ ~if~ $p \equiv 3 \pmod{4}$
      \end{tabular}
    \\
    \hline
    ~$4l$ (Sec.~\ref{sec:Zp_QCA_(4l-1)+1D_TQFT})
    & $\dfrac{k}{p}\, A_{2l} \cup A_{2l}$
    & No*
    & \begin{tabular}{l}
        $2$ ~if~ $p \equiv 1 \pmod{4}$ \\
        $4$ ~if~ $p \equiv 3 \pmod{4}$
      \end{tabular}
    \\
    \hline
    \end{tabular}
    \caption{\change Summary of $\mathbb{Z}_p$ Clifford QCAs in even spacetime dimensions. 
    These QCAs exist only in spacetime dimensions $D = 4l$ and are conjectured remain nontrivial even upon allowing non-Clifford FDQCs. 
    Depending on the residue of $p \bmod 4$, the order of the $\mathbb{Z}_p$ QCA is either $2$ or $4$.}
    \label{table: Zp QCA}
\end{table*}

Quantum cellular automata (QCAs) are locality-preserving automorphisms of operator algebras defined on lattices. By definition, a QCA maps any strictly local operator to another whose support increases by at most a finite radius~\cite{haah_QCA_23}. On a finite lattice, such automorphisms can always be realized by a unitary operator; however, in the thermodynamic limit, QCAs need not decompose into finite-depth quantum circuits.  
This distinction makes QCAs both mathematically appealing and physically significant. Mathematically, they provide a complete characterization of locality-preserving unitaries. Physically, they capture nontrivial structures in symmetries and many-body dynamics. QCAs have become central to the modern understanding of lattice symmetries, offering a natural framework for extending symmetry-protected topological phases beyond cohomology~\cite{kapustin2014symmetry, Fidkowski2020beyondcohomology, chen2023exactly, fidkowski2024qca, jones2025holography}. They also serve as a foundational tool in the homotopical study of higher symmetries and lattice 't Hooft anomalies~\cite{kobayashi2024generalized, shirley2025anomaly, tu2025anomalies, seifnashri2025disentangling, kapustin2025higher, kapustin2025higher2, Kapustin2025Anomalous, ma2024QCA, kawagoe2025anomaly}.
Moreover, QCAs have found wide-ranging applications, including discretized quantum field theories~\cite{QFT2}, the classification of Floquet phases~\cite{Zhang2023Floquet, PoChiral, PoRadical, PotterMorimoto17, PotterVishwanathFidkowski18, Zhang2021classification, Glorioso21}, tensor-network unitary operators~\cite{IgnacioCirac2017, Sahinoglu2018, GongSunderhaufSchuchCirac20, Piroli21Fermionic, Piroli2020}, and entanglement growth in quantum dynamics~\cite{GongPiroliCirac21, Ranard20, GongNahumPiroli21}.

A complete classification of QCAs is known only in one and two spatial dimensions. In these cases, QCAs are labeled by the rational Gross-Nesme-Vogts-Werner (GNVW) index, which quantifies the net flow of quantum information along a given direction~\cite{Gross2012GNVWindex}. Every one- or two-dimensional QCA can be expressed as a composition of finite-depth quantum circuits and lattice translations~\cite{Freedman2020ClassificationQCA}. In higher dimensions, the structure is considerably richer. Prominent examples include the 3-fermion QCA~\cite{haah_QCA_23}, which disentangles the time-reversal-protected topological phase in three spatial dimensions, along with generalizations constructed from chiral topological orders~\cite{Haah2021CliffordQCA, Shirley2022QCA}. Many of these higher-dimensional QCAs are \emph{Clifford} QCAs, mapping Pauli operators to Pauli operators.
The study of Clifford QCAs lies at the intersection of operator algebras, quantum information theory, and condensed matter physics, connecting algebraic $L$-theory~\cite{haah2025topological,yang2025categorifying}, fault-tolerant protocols in measurement-based quantum computation \cite{Williamson2024threefermion, Bombin2024unifyingflavorsof, bombin2023fault, Stephen2019subsystem}, and the construction of topological phases~\cite{Fidkowski2020beyondcohomology, Chen2023HigherCup}.

Despite significant progress, several fundamental issues remain unresolved. First, although a classification of Clifford QCAs exists~\cite{haah2025topological,yang2025categorifying}, no general procedure is known for constructing all non-trivial Clifford QCAs in arbitrary dimensions. Second, the algebraic tools for determining their order and composition laws remain incomplete. Third, existing constructions are restricted to (hyper-)cubic lattices and do not extend naturally to arbitrary triangulations or cellulations.

In this work, we address these challenges by constructing Clifford QCAs through two complementary approaches: one based on {\bf topological quantum field theories (TQFTs)} and the other on {\bf invertible subalgebras (ISAs)}. In the first approach, we discretize TQFT actions in the cup product formalism on cellulations and realize the corresponding topological phases on the lattice through Pauli stabilizer models. In the second approach, we generalize the ISA construction of Ref.~\cite{Haah2023InvertibleSubalgebras} to higher dimensions --- also using the cup product --- extending its applicability to arbitrary cellulations.

Using these frameworks, we construct explicit $\mathbb{Z}_2$ and $\mathbb{Z}_p$ Clifford QCAs in all spatial dimensions, where $p$ is an odd prime. The two approaches yield consistent results, and the resulting families of QCAs align precisely with the predictions of algebraic $L$-groups reported in Ref.~\cite{haah2025topological}, providing strong evidence that our construction captures all non-trivial Clifford classes. For each QCA, we further show that some finite power reduces to a finite-depth quantum circuit up to a lattice translation, thereby determining its exact order in the quotient group of QCAs modulo circuits and shifts. Moreover, we find that $\mathbb{Z}_2$ Clifford QCAs in $(4l{+}1)$ spatial dimensions can be disentangled by non-Clifford finite-depth circuits, whereas their $(4l{-}1)$ counterparts remain intrinsically non-trivial.
{\change
Tables~\ref{table: Z2 QCA} and~\ref{table: Zp QCA} summarize all $\mathbb{Z}_2$ and $\mathbb{Z}_p$ QCAs constructed in this work. 
The tables list the associated topological actions, indicate whether each QCA is trivialized under non-Clifford finite-depth quantum circuits with shifts, and specify the order of each QCA.

% \begin{table}[]
%         \centering
%     \begin{tabular}{|p{50pt}|p{50pt}|p{50pt}|p{85pt}|}
%     \hline
%       Spacetime Dimensions   &  Topological Action&Equivalence to FDQC and Shift& Order \\
%       \hline
%        (3+1)D (Sec.\ref{Sec. 3d Zp QCA}) & $\frac{k}{p}A_2\cup A_2$ & No &2 (if $p\equiv 1$ (mod $4$)) \quad 4 (if $p\equiv -1$ (mod $4$))\\
%        \hline
%        (4l)D (Sec.\ref{sec:Zp_QCA_(4l-1)+1D_TQFT}) & $\frac{k}{p}A_{2l}\cup A_{2l}$ & No& 2 (if $p\equiv 1$ (mod $4$)) \quad 4 (if $p\equiv -1$ (mod $4$))\\
%        \hline
%     \end{tabular}
%     \caption{Summary of $\ZZ_p$ QCA in $4l$ spacetime dimensions constructed in this work. $\ZZ_p$ QCA only exists in 4-spatial periodicity, and all $\ZZ_p$ QCAs are nontrivial. We also show that orders of $\ZZ_p$ QCA are $2$ or $4$ depending on the residue of $p$ mod $4$.}
%     \label{Summary: Zp QCA}
% \end{table}
}

In summary, this paper develops a unified and dimension-periodic method for generating Clifford QCAs, establishes their correspondence with TQFT data, and clarifies their algebraic properties. We expect these results to provide both a conceptual advance in the classification of locality-preserving unitaries and a practical toolkit for designing Floquet protocols and fault-tolerant logical gates in quantum devices.

\subsection*{Organization of the paper}

This paper is organized as follows.  
Sec.~\ref{sec:cohomology_tools} reviews the necessary background on simplicial (co)homology, higher cup products, and the polynomial formalism, and introduces the notations and conventions used throughout this work.  
Sec.~\ref{sec:3d_examples} constructs the $3{+}1$D $\mathbb{Z}_2$ and $\mathbb{Z}_p$ QCAs from the TQFT approach, and determines their orders.  
In Sec.~\ref{sec:Z2_Zp_higher_QCA_TQFT}, we generalize the construction to $\mathbb{Z}_2$ QCAs in $(2l{+}1){+}1$D and to $\mathbb{Z}_p$ QCAs in $(4l{-}1){+}1$D, establishing their periodicities in spatial dimensions.  
Sec.~\ref{sec:ISA} develops the ISA framework, first in $2{+}1$D and then in higher dimensions, including $\mathbb{Z}_2$ ISAs in $(2l{+}1)$D and $\mathbb{Z}_p$ ISAs in $(4l{-}2){+}1$D, which is consistent with the TQFT construction.
{\change
In Sec.~\ref{Sec: Equivalence of QCAs Constructed from TQFTs and ISAs}, we rigorously establish the equivalence between the $3{+}1$D QCAs obtained from the TQFT and ISA constructions by explicitly comparing their associated skew-Hermitian forms. We also present a cup-product analysis of the boundary algebra, which admits a natural extension to higher dimensions.
}
Sec.~\ref{sec: Algebraic formalism for Clifford QCA} reformulates Clifford QCAs in terms of band-diagonal symplectic matrices, connects them to bounded algebraic $K$-theory of Pedersen and Weibel, and demonstrates that two Clifford QCAs are equivalent when they disentangle the same stabilizer group. We anticipate that this formalism may be useful in future work.

Technical details and background material are collected in the appendices. Appendix~\ref{app: Review of the Laurent polynomial formalism} reviews the Laurent polynomial formalism, and Appendix~\ref{app:highercuphypercube} summarizes higher cup products on hypercubes.
{\change
Appendix~\ref{app: Explicit matrices for the 3-fermion-type QCA} lists the matrix representations of the $3$-fermion-type QCAs in $3{+}1$D and $5{+}1$D.
}

By providing explicit constructions that are general across dimensions, together with a comprehensive algebraic analysis, we aim for this work to serve both as a practical toolkit for quantum information applications, such as Floquet engineering in interacting many-body systems, and as a step toward a unified classification of locality-preserving unitaries.

\section{Notation}\label{sec:cohomology_tools}

Our constructions are formulated using topological quantum field theories (TQFTs) and invertible subalgebras (ISAs), both of which rely on higher cup products and the Laurent polynomial ring. This section reviews the notational conventions used throughout the main text.

We begin with a brief overview of simplicial cohomology, following the conventions of Refs.~\cite{Chen2021Disentangling, Chen2023HigherCup}. We define cochains, the coboundary operator, and various cup products, including their higher versions. For extensions to group and higher-group cohomology, we refer the reader to Refs.~\cite{chen2012symmetry, Kapustin2017Higher}.
We then introduce a new compact Einstein summation notation tailored for the Laurent polynomial formalism of Pauli stabilizer codes. The latter was developed in Refs.~\cite{Schlingemann2008structure, Gutschow2010Clifford, haah_module_13, haah2016algebraic}, and our summation notation can be considered an extension of that. A detailed review of the polynomial formalism is provided in Appendix~\ref{app: Review of the Laurent polynomial formalism}.

\subsection{Simplicial Cohomology} \label{app:terminology}

Consider a simplicial complex or cellulation of a manifold $M$. We may label an $n$-dimensional simplex/cell as $\sigma_n$. In low dimensions, we will also call $\sigma_0 \equiv v$ as vertices, $\sigma_1 =e$ as edges , $\sigma_2 =f$ as faces, and $\sigma_3 = t$ as tetrahedron or $\sigma_3=c$ as cubes. In general, a $p$-simplex is denoted by vertices as $\langle 0, 1, 2,\ldots, p \rangle$. A $p$-chain is a formal sum of $p$-simplices in $M$ with coefficients in the ring $R$, where $R$ is typically $\ZZ$ or $\ZZ_N$. The $p$-chains form the basis of the chain group $C_p(M,R)$. That is, denoting an arbitrary $p$-chain as $c_p$, we have $c_p = \sum_{\sigma_p} a_{\sigma_p} \sigma_p$ with $a_{\sigma_p} \in R$.

Consider the boundary map 
\begin{equation}
    \partial: C_p(M, R) \to C_{p-1}(M, R).
\end{equation}
which maps a $p$-simplex to the sum of its boundary simplices in one lower dimension while taking the orientation into account. Specifically, for the oriented $p$-simplex $\langle 0, 1, \dots, p \rangle$, its boundary is defined as 
\begin{equation}
    \partial \langle 0, 1, \dots, p \rangle = \sum_{i=0}^p ~(-1)^i ~\langle 0,\dots, \hat{i},\dots,p \rangle,
\end{equation}
where $\langle 0,\dots, \hat{i},\dots,p \rangle$ denotes the $i$th face of $\langle 0, 1, \dots, p \rangle$, obtained by deleting the vertex $i$.

A \(p\)-cochain on \(M\) is an $R$-valued linear function acting on \(p\)-chains. The set of all \(p\)-cochains on \(M\) is \(C^p(M, R)\). We use boldface symbols for cochains on \(M\), e.g., \(\boldsymbol{c} \in C^p(M, R)\).
Each simplex can be associated with a dual basis element. For instance, a 0-cochain \(\boldsymbol{v}\) is dual to a vertex \(v\) via
\begin{align}
    \boldsymbol{v}(v') = 
    \begin{cases}
      1 & \text{if } v' = v,\\
      0 & \text{otherwise}.
    \end{cases}
    \label{example: chain cochain correspondance}
\end{align}
Similarly, \(\boldsymbol{e}\) is a 1-cochain dual to an edge \(e\), and \(\boldsymbol{f}\) is a 2-cochain dual to a face \(f\) and so forth. The cochain dual to $c_p= \sum_{\sigma_p} a_{\sigma_p} \sigma_p$ is $\boldsymbol{c}_p = \sum_{\sigma_p} a_{\sigma_p} \boldsymbol{\sigma}_p$.

Dual to the boundary operator is the coboundary operator \(\delta\), which maps a \(p\)-cochain to a \((p+1)\)-cochain,
\begin{align}
    \delta: C^p(M, R) \to C^{p+1}(M,R).
\end{align}
For a \(p\)-cochain \(\boldsymbol{c}\) and a \((p+1)\)-simplex \(s\), we define $\delta$ via $\partial$ as
\begin{align} \label{coboundarydef1}
    \delta \boldsymbol{c}(s) = \boldsymbol{c}(\partial s).
\end{align}
The cup product \(\cup\) combines a \(p\)-cochain and a \(q\)-cochain to produce a \((p+q)\)-cochain:
\begin{align}
    \cup: C^p(M, R) \times C^q(M, R) \to C^{p+q}(M, R).
\end{align}
For instance, for \(\boldsymbol{c} \in C^p(M, \mathbb{Z}_2)\) and \(\boldsymbol{d} \in C^q(M, \mathbb{Z}_2)\), $\boldsymbol{c} \cup \boldsymbol{d}$ evaluates on a \((p+q)\)-simplex \(\langle 0, 1, 2,\ldots, p+q \rangle\) as
\begin{equation}
\begin{aligned}
    &\bigl(\boldsymbol{c} \cup \boldsymbol{d}\bigr)\bigl(\langle 0, 1,\ldots, p+q \rangle\bigr) \\
    & \quad\quad= \boldsymbol{c}\bigl(\langle 0, 1,\ldots, p \rangle\bigr)\,\boldsymbol{d}\bigl(\langle p, p+1, \ldots, p+q \rangle\bigr).
\end{aligned}
\end{equation}
The coboundary operator satisfies the derivation (Leibniz) rule:
\begin{align} \label{leibnizrule}
    \delta (\boldsymbol{c} \cup \boldsymbol{d})
    = \delta \boldsymbol{c} \cup \boldsymbol{d} + (-1)^p\boldsymbol{c} \cup \delta \boldsymbol{d},
\end{align}
for any $p$-cochain $\boldsymbol{c}$ and $q$-cochain $\boldsymbol{d}$.

We can similarly define higher cup products. A cup-$i$ product $\cup_i$ takes a $p$-cochain and a $q$-cochain to a \((p+q-i)\)-cochain:
\begin{align}
    \cup_i : C^p(M, R) \times C^q(M, R) \to C^{p+q-i}(M,R).
\end{align}
Closed-form expressions for higher cup products on simplicial complexes are given in Ref.~\cite{Steenrod1947Products, Gaiotto2015SpinTQFTs, Chen2020Exactbosonization, lokman2020lattice}.
They satisfy the following recursive relation
\begin{equation}\label{highercupleibniz}
\begin{aligned} 
    \delta (\boldsymbol{c} \cup_i \boldsymbol{d}) 
    =& \delta \boldsymbol{c} \cup_i \boldsymbol{d} 
    + (-1)^p\boldsymbol{c} \cup_i \delta \boldsymbol{d} 
    + \\
    &(-1)^{p+q-i}\boldsymbol{c} \cup_{i-1} \boldsymbol{d} 
    + (-1)^{pq+p+q}\boldsymbol{d} \cup_{i-1} \boldsymbol{c},
\end{aligned}
\end{equation}
where $\cup_0 \equiv \cup$. Next, we give a few examples.
\begin{widetext}
For the cup–1 product we have
\begin{eqs}
    \bigl(\boldsymbol{c}_p \cup_1 \boldsymbol{d}_q\bigr)\bigl(\langle 0, \ldots, p+q-1 \rangle\bigr) 
    &= \sum_{i=0}^{p-1} (-1)^{(p-i)(q+1)}\boldsymbol{c}_p\bigl(\langle 0, \ldots, i, q+i, \ldots, p+q-1 \rangle\bigr)~
    \boldsymbol{d}_q\bigl(\langle i, \ldots, q+i \rangle\bigr) \\
    &:= \sum_{i=0}^{p-1} (-1)^{(p-i)(q+1)} \boldsymbol{c}_p\bigl(\langle 0 \rightarrow i,~ q+i \rightarrow p+q-1 \rangle\bigr)~
    \boldsymbol{d}_q\bigl(\langle i \rightarrow q+i \rangle\bigr).
\end{eqs}

For the cup–2 product (non-zero only when $2\le p,q$), we obtain
\begin{eqs}
    \bigl(\boldsymbol{c}_p \cup_2 \boldsymbol{d}_q\bigr)\bigl(\langle 0, \ldots, p+q-2 \rangle\bigr)
    &=\sum_{0 \le i_1 < i_2 \le p+q-2}
    (-1)^{(p-i_1)(i_2-i_1-1)}~
    \boldsymbol{c}_p (\langle 0\rightarrow i_1, ~i_2 \rightarrow p+i_2-i_1-1 \rangle)\\
    & \qquad \qquad \boldsymbol{d}_q (\langle i_1\rightarrow i_2, ~p+i_2-i_1-1 \rightarrow p+q-2\rangle).
\end{eqs}

For a general integer $k$ with $0\le k\le\min\{p,q\}$, the cup-$k$ product is
\begin{eqs}
    &\bigl(\boldsymbol{c}_p \cup_k \boldsymbol{d}_q \bigr) \bigl(\langle0,1,\dots ,p+q-k\rangle\bigr) \\
    &= \sum_{0\le i_1<\cdots<i_k\le p+q-k}
    (-1)^{\mathsf{sgn}}  \boldsymbol{c}_p \bigl( \langle 0 \rightarrow i_{1}, i_{2} \rightarrow i_{3},~ \dots \rangle \bigr)
    \boldsymbol{d}_q \bigl( \langle i_{1} \rightarrow i_{2},\; i_{3} \rightarrow i_{4},~ \dots \rangle \bigr),
\end{eqs}
where $\mathsf{sgn}(i_{1},\dots ,i_{k})$ counts the permutations required
to rearrange the interleaved sequence
\[
  0\rightarrow i_{1},\; i_{2}\rightarrow i_{3},\;\dots;\;
  i_{1}+1\rightarrow i_{2}-1,\; i_{3}+1\rightarrow i_{4}-1,\;\dots
\]
into the natural order $0\rightarrow p+q-k$.
\end{widetext}

Finally, as a convenient shorthand, we write
\begin{align}
    \int_{N_p} \boldsymbol{c}_p := \sum_{s_p \in N_p} \boldsymbol{c}_p (s_p),
\end{align}
where \(N_p\) is a \(p\)-dimensional manifold.
When it is clear from the context, we will suppress the subscripts and write $\int_{N_p} \boldsymbol{c}_p$ as $\int \boldsymbol{c}$.

\subsection{Einstein notation for linear maps}

We introduce a new notation reminiscent of the Einstein notation that will be useful in bookkeeping many linear maps in cohomology. Readers who prefer to first see the concrete setup may proceed directly to later sections and refer back to this subsection as needed.

First, in this notation, because chains and cochains are dual, we will not use the bold symbol to distinguish cochains from chains to avoid cluttering expressions. Second, the linear maps in this notation will be denoted in bold sans serif font to distinguish it from the bold operators denoting cochains.

In the basis of simplices, the coboundary operator $\delta: C^p \rightarrow C^{p+1}$ may be represented as a linear map, with matrix elements $\delta_{\bsig_{p+1},\bsig_{p}}$. Our new notation will be
\begin{align}
    \bd_{\sigma_{p+1}, \delta \sigma_{p}} := \delta_{\bsig_{p+1},\bsig_{p}}.
\end{align}
The notational convenience of introducing $\bd$ is that we will no longer treat it as the coboundary operator, but as if it is a Kronecker delta. Thus, the above notation means that it is a matrix setting $\sigma_{p+1}$ equal to $\delta \sigma_{p}$.

The canonical pairing of chains and cochains allows us to lift the action of the boundary operator on chains, into an action on cochains $\delta^\dagger$, where $\dagger$ denotes the matrix transpose (Later the when using the polynomial formalism, e.g. in Sec.~\ref{sec:Symplectic_Rep_Pauli}, $\dagger$ will denote the transpose followed by the antipode.). Thus, as a slight abuse of notation, we will take $\partial = \delta^\dagger$ when acting on cochains. With this, we may also define
\begin{align}
  \bd_{ \sigma_{p}, \partial \sigma_{p+1}} =   \bd_{\delta \sigma_{p}, \sigma_{p+1}}  :=   (\bd^\dagger)_{\sigma_{p+1}, \delta \sigma_{p}},
\end{align}
which intuitively means that we may move the $\delta$ from one side to $\partial$ on the other side and vice versa. In this notation, the (co)chain complex condition reads:
\begin{align}
\label{eq:complexcondition}
\bd_{\sigma_{p+1},\delta \sigma_{p}} \bd_{ \sigma_{p},\delta  \sigma_{p-1}} &=0, & \bd_{\partial  \sigma_{p-1}, \sigma_{p}} \bd_{\partial  \sigma_{p}, \sigma_{p+1}} &=0.
\end{align}
where in the above, we implicitly sum over the repeated ``index" $\sigma_{p}$. 

Next, for any map $Q: C^q \rightarrow C^p$ with matrix elements $\textbf{\textsf{Q}}_{\sigma_p,\sigma_q}$ we may define the matrix with ``contracted" indices as follows:
\begin{align}
 \textbf{\textsf{Q}}_{\sigma_p,\delta \sigma_{q-1}} := \textbf{\textsf{Q}}_{\sigma_p,\sigma_{q}} \bd_{\sigma_{q},\delta \sigma_{q-1}},\\
 \textbf{\textsf{Q}}_{\sigma_p,\partial \sigma_{q+1}} := \textbf{\textsf{Q}}_{\sigma_p,\sigma_{q}} \bd_{\sigma_{q},\partial \sigma_{q+1}},\\
 \textbf{\textsf{Q}}_{\delta \sigma_{p-1},\sigma_q} := \bd_{\delta \sigma_{p-1},\sigma_p} \textbf{\textsf{Q}}_{\sigma_p,\sigma_q},\\
 \textbf{\textsf{Q}}_{\partial \sigma_{p+1},\sigma_q} := \bd_{\partial \sigma_{p+1},\sigma_p} \textbf{\textsf{Q}}_{\sigma_p,\sigma_q}.
\end{align}
The above notation allows us to simplify expressions by performing a sum over the repeated index. In particular, this also applies to the matrices $\bd$ themselves. For example, by summing over $\sigma_p$ using the Kronecker delta, in the complex condition Eq.~\eqref{eq:complexcondition}, we may write
\begin{align}
\bd_{\sigma_{p+1},\delta \sigma_p} \bd_{\sigma_p,\delta \sigma_{p-1}} =\bd_{\sigma_{p+1},\delta^2 \sigma_{p-1}} =0,
\end{align}
where we ``substituted" $\sigma_p$ for $\delta\sigma_{p-1}$ and used the fact that $\delta^2=0$.

We now turn to the linear map corresponding to cup products. Because the cup-$i$ product is a pairing between $p$-cochains and $d-p+i$-cochains, for each $p=1, \ldots , d$, we can use it to define multiple linear maps. For each $p$, we define $M^{(p,i)}: C^p \rightarrow C^{d-p+i}$, which acts as
\begin{align}
    M^{(p,i)} (\boldsymbol{c}_p) =\sum_{\boldsymbol{\sigma}_{d+i-p}}  \left( \int  \boldsymbol{\sigma}_{d+i-p} \cup_i \boldsymbol{c}_{p}\right) \boldsymbol{\sigma}_{d+i-p}.
\end{align}
Said differently, in the basis of $\boldsymbol{\sigma}_p \in C^p$ and $\boldsymbol{\sigma}_{d+p-i} \in C^{d+p-i}$, the matrix element is exactly
\begin{align}
    M^{(p,i)} _{\boldsymbol{\sigma}_{d+p-i},\boldsymbol{\sigma}_p} = \int  \boldsymbol{\sigma}_{d+p-i} \cup_i \boldsymbol{\sigma}_{p}.
\end{align}
We note that an analogous map was defined in Ref.~\cite{fidkowski2024qca} in order to obtained higher dimensional analogues of the 3F QCA. Here, we slightly improve on the notation of the matrix as follows. We will denote in Einstein notation the matrix elements as
\begin{align}
  \bM_{\sigma_{d+p-i} \cup_i \sigma_p} := M^{(p,i)}_{\boldsymbol{\sigma}_{d+p-i},\boldsymbol{\sigma}_p},
\end{align}
where we opt to use the cup symbol in lieu of the comma for the following aesthetic reason: any relation between cup products automatically gives rise to similar looking relations between these linear maps. For example, starting with the Leibniz rule for higher cup products Eq.~\eqref{highercupleibniz}, performing the integral and matching the terms with the matrix elements gives rise to the following identity
\begin{eqs}
\label{eq:highercuprelationmatrix}
  &\bM_{\delta \sigma_{q} \cup_i \sigma_p} + (-1)^p\bM_{\sigma_{q} \cup_i \delta \sigma_{p}}  \\
  &+ (-1)^{p+q-i}\bM_{\sigma_{q} \cup_{i-1} \sigma_{p}} +  (-1)^{pq+p+q}\bM_{\sigma_{p}\cup_{i-1}  \sigma_{q} }^\dagger=0 ,
\end{eqs}
where $q = d-p+i-1$. In particular, for $d=3$ we have
\begin{eqs}
     - \bM_{\delta e\cup_1 f} + \bM_{e\cup_1 \delta f} - \bM_{e\cup f} + \bM_{f\cup e}^\dagger   &=0,\\
  \bM_{\delta f'\cup_2 f} + \bM_{f'\cup_2 \delta f}+ \bM_{f\cup_1 f'} +\bM_{f'\cup_1 f}^\dagger  &=0.
\end{eqs}
Lastly, one can show that these matrices satisfy
\begin{align}
   \bM_{\partial \sigma_{d-p+2} \cup \partial \sigma_p}  =0.
\end{align}

\subsection{Symplectic representation of Pauli matrices}
\label{sec:Symplectic_Rep_Pauli}

We will now apply this formalism to the symplectic representation of Pauli matrices. See Appendix~\ref{app: Review of the Laurent polynomial formalism} for a review of the symplectic representation. We place $d$-dimensional qudits on each $p$-dimensional simplex $\sigma_p$. The total Hilbert space is then $\bigotimes_{|C^p|} \CC^{d}$. The generalized Pauli matrices can be represented in symplectic form as 
 \begin{align}
    \bZ_{\sigma_p} &= \begin{pmatrix}
        0\\
      \bbone
          \end{pmatrix},
          &  \bX_{\sigma_p} &= \begin{pmatrix}
       \bbone\\
       0
    \end{pmatrix},
\end{align} 
where $\bbone$ is the $|C^p|\times |C^p|$ identity matrix. Let us restrict our attention for now to $d=3$ and consider qudits placed on faces $f= \sigma_2$. Using the index notation, we may write this as
\begin{align}
    \bZ_{f} &= \begin{pmatrix}
        0\\
      \bd_{f_0,f}
          \end{pmatrix},
          &  \bX_f &= \begin{pmatrix}
       \bd_{f_0,f}\\
      0
    \end{pmatrix},
\end{align}
where $f_0$ denotes a dummy index for the rows of $\bZ_{f}$ and $\bX_f$, and $\bd_{f_0,f}$ is, as expected, the Kronecker delta representing the identity matrix. The symplectic inner product in this notation corresponds to summing over the index $f_0$
\begin{align}
   \langle \bX_f,\bZ_{f'}\rangle = \sum_{f_0}\bX_f^\dagger \bZ_{f'} = \bd_{f,f_0}\bd_{f_0,f'} = \bd_{f,f'}.
\end{align}

As a further warmup of this notation, we may define the stabilizers of the 3d toric code as
\begin{equation}
\begin{aligned}
    &\bW_c := \bZ_{\partial c} = \begin{pmatrix}
        0\\
     \bd_{f_0,\partial c}
     \end{pmatrix}, \\
      &\bB_e  := \bX_{\delta e} = \begin{pmatrix}
        \bd_{f_0,\delta e}\\
     0
     \end{pmatrix} .
\end{aligned}
\end{equation}
Indeed, we can verify that they commute
\begin{align}
    \langle \bB_e,\bW_c \rangle = \bd_{\partial e,f} \langle \bX_f,\bZ_{f'} \rangle \bd_{f',\partial c} = \bd_{e,\partial f}\bd_{f,\partial c}=0.
\end{align}

In specific constructions, we'll often meet terms like $\prod_{\sigma_p}P_p^{\int \boldsymbol{\sigma}_p\cup_i \boldsymbol{\sigma}_q}$, where $P_p$ is a Pauli operator $Z$ or $X$ living on a p-simplex $\sigma_p$. The $\boldsymbol\sigma_p$ and $\boldsymbol\sigma_q$ in the integral denote the cochains corresponding to the simplexes $\sigma_p$ and $\sigma_q$ according to Eq.~\eqref{example: chain cochain correspondance}. In our Einstein notation, we can express it as $P_p \times \bM_{\sigma_{p}\cup_i \sigma_{q}}$. Likewise, the term $P_q^{\int \boldsymbol\sigma_p\cup_i \boldsymbol\sigma_q}$ can be represented by $P_q \times \bM^\dagger_{\sigma_p\cup_i \sigma_q}$. Note that the dagger is required for the index to contract correctly in the latter case.

\subsection{Einstein notation in polynomial formalism}

In the case of a hypercubic lattice with translation invariance, the notation can be further simplified using a polynomial representation. More specifically, the matrices become circulant matrices, which can be represented by their associated polynomials~\cite{haah_module_13, panteleev2021quantum}. We denote $x_i$ as the translation in the $i^\text{th}$ direction, with periodic boundary conditions $x_i^{L_i}=1$ for integers $L_i$. The polynomials are elements of the ring $R[x_1,\ldots,x_d]/(x_1^{L_1}-1, \ldots, x_d^{L_d}{-}1)$. In lower dimensions, we use $x_1=x$, $x_2=y$, and $x_3=z$. A detailed review is given in Appendix~\ref{app: Review of the Laurent polynomial formalism}. The polynomial formalism has been used to construct QCAs in three and higher dimensions in Ref.~\cite{fidkowski2024qca}. In this work, we develop the framework systematically and introduce new notations that allow us to express the resulting QCAs more compactly.

Fig.~\ref{fig:example_poly} illustrates how Pauli operators on a two-dimensional square lattice can be represented by a $4 \times 1$ polynomial matrix. The first and second entries correspond to Pauli-$X$ operators pointing in the $x$- and $y$-directions, while the third and fourth entries correspond to Pauli-$Z$ operators in each direction. The entries of the matrix depend on the choice of base point, which in Fig.~\ref{fig:example_poly} is taken to be the origin $(0,0)$. Shifting the base point corresponds to multiplying the matrix representation by an overall monomial factor.

\begin{figure}
    \centering
    \hspace{2cm}
    \includegraphics[width=\linewidth]{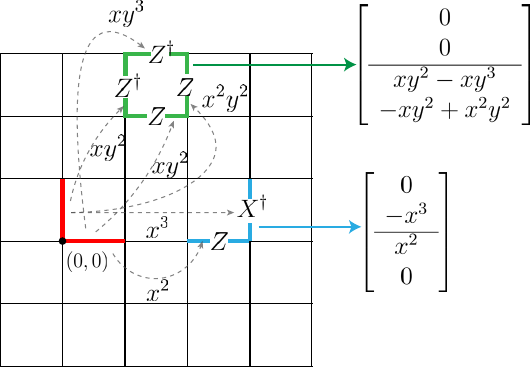}
    \caption{Examples of polynomial expressions for Pauli operators adapted from Ref.~\cite{liang2023extracting}. The flux operator on a plaquette and the $XZ$ operator on edges are shown. Red edges mark the chosen base edge, while all other edges follow from it by lattice translations. Monomials such as $x^2 y^2$ and $x^2$ specify operator locations relative to the origin. A detailed review is provided in Appendix~\ref{app: Review of the Laurent polynomial formalism}.
    }
    \label{fig:example_poly}
\end{figure}

\begin{figure*}
    \centering
    \includegraphics[width=0.9\textwidth]{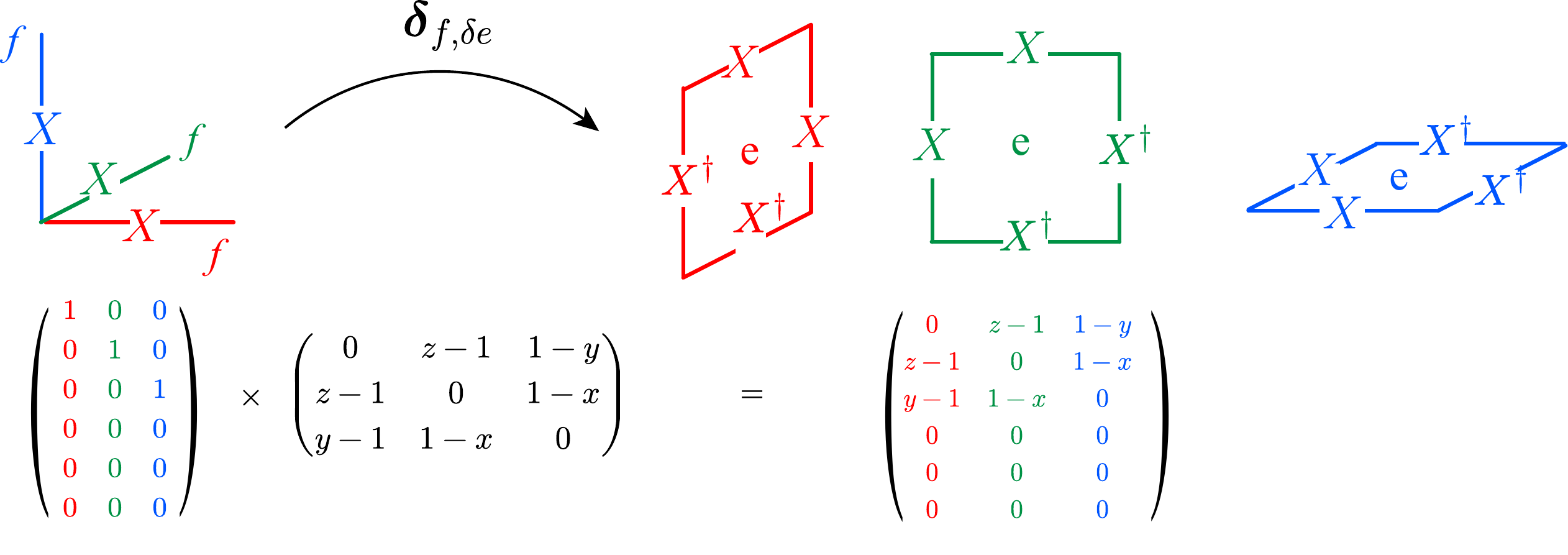}
    \caption{
    An illustration on the dual lattice showing the construction of the flux terms $B_e$ via the action of the coboundary matrix $\bd_{f,\delta e}$ on the vector of operators $X_f$. A Pauli operator $X_f$ is supported on each face $f$, and the product $X_f \bd_{f,\delta e}$ yields the flux term $X_{\delta e} = B_e$. The three colored plaquettes correspond to the $yz$, $xy$, and $xy$ orientations, with the associated polynomial expressions shown below. }
    \label{fig:example how delta matrix act on Pauli}
\end{figure*} 

Let us consider the three-dimensional cubic lattice in particular. The cochain complex takes the form of a tensor product of one-dimensional complexes and expands as
\begin{equation}
\begin{aligned}
C^0  &\xrightarrow{ \delta_{e,v}} C^1_x \oplus  C^1_y \oplus  C^1_z \\ 
&\xrightarrow{\delta_{f,e}} C^2_{yz}  \oplus C^2_{xz} \oplus C^2_{xy} \\
&\xrightarrow{\delta_{c,f}} C^3 ,
\end{aligned}
\end{equation}
where $C^1_x$ denotes edges oriented in the $x$ direction, $C^2_{yz}$ denotes faces oriented along the $yz$ plane, and similarly for the other orientations.

Likewise, the corresponding chain complex expands as
\begin{equation}
\begin{aligned}
C^0  &\xleftarrow{ \partial_{v,e}} C^1_x \oplus  C^1_y \oplus  C^1_z \\
&\xleftarrow{\partial_{e,f}} C^2_{yz}  \oplus C^2_{xz} \oplus C^2_{xy}  \\ 
&\xleftarrow{\partial_{f,c}} C^3 .
\end{aligned}
\end{equation}
The (co)boundary operators are defined as the maps between the respective (co)chains on the direct lattice, using the polynomial representation.
However, throughout this work, we will adopt a polynomial representation defined on the dual lattice. This is because our figures are drawn on the dual lattice where faces are depicted as edges for visual clarity.
In our Einstein notation, we have
\begin{align}
    \bd_{e,\delta v} = \bd_{\partial e,v} &= \begin{pmatrix}
        1-x \\
        1-y \\
        1-z
    \end{pmatrix}, \\
    \bd_{f,\delta e}= \bd_{\partial f,e}&= \begin{pmatrix}
 0 & z-1 & 1-y \\
 z-1 & 0 & 1-x \\
 y-1 & 1-x & 0 
    \end{pmatrix},\\
    \bd_{c,\delta f} =\bd_{\partial c,f} &= \begin{pmatrix}
        1- x &
         y-1 &
       1- z
    \end{pmatrix}.
\end{align}
If we want to construct the Pauli-X or Z operators on coboundaries like $\delta e$,  we just multiply the original $X_f$ or $Z_f$ matrix by the coboundary operator $\bd_{f,\delta e}$. Similar rules hold in higher dimensions. In Fig.~\ref{fig:example how delta matrix act on Pauli}, we give an example of constructing stabilizers using our Einstein notation on the dual cubic lattice. Specifically, in polynomial representation, the stabilizers of the $3d$ toric code are constructed as
\begin{align}
    & \bW_c = \bZ_{\partial c} = \bZ_f \bd_{f,\partial c} =
    \begin{pmatrix}
        0\\0\\0\\
        1-\bar x\\
        \bar y-1\\1-\bar z    
    \end{pmatrix}, \\
    & \bB_e = \bX_{\delta e} = \bX_f \bd_{f,\delta e}= \begin{pmatrix}
     0 & z-1 & 1-y \\
     z-1 & 0 & 1-x \\
     y-1 & 1-x & 0 \\
     0 & 0 & 0 \\
     0 & 0 & 0 \\
     0 & 0 & 0 
    \end{pmatrix}.
\end{align}

%%%%%%%%%%%%%%%%%%%%%%%%%%%%%%%%%%%%%%%%%%%%%%%%%%%%%%%%%%%%%%%%%%%%%%%%%%%%%%%%
\section{Clifford QCAs in $3{+}1$D from TQFTs}\label{sec:3d_examples}

In this section, we use cup products to construct all $\mathbb{Z}_2$ and $\mathbb{Z}_p$ Clifford QCAs in $3{+}1$D associated with the (classical) Witt group. These QCAs generate bulk states whose truncations yield commuting-projector models on the $2{+}1$D boundary, supporting chiral Abelian anyon theories. The $\mathbb{Z}_2$ Clifford QCA was first obtained in Ref.~\cite{haah_QCA_23}, while the $\mathbb{Z}_p$ cases were later developed in Ref.~\cite{Haah2021CliffordQCA}. Here, we unify these constructions within a TQFT framework, showing that they arise from the same method under different choices of topological actions.  
Previous approaches were restricted to cubic lattices, whereas our formulation of the $\mathbb{Z}_2$ Clifford QCA applies to arbitrary cellulations. In addition, the construction extends naturally to higher dimensions, as we demonstrate in Sec.~\ref{sec:Z2_Zp_higher_QCA_TQFT}.

{\change
First, we briefly review the Walker--Wang framework, which provides a systematic construction of $3{+}1$D lattice models with nontrivial boundary anyon theories~\cite{Walker2012TQFT}. When the input data consist of a \emph{premodular} anyon theory (braided fusion category containing nontrivial transparent anyons) the resulting Walker-Wang model exhibits a nontrivial bulk Hamiltonian whose $(2{+}1)$D boundary realizes precisely the input anyon theory~\cite{Keyserlingk2013Threedimensional}.  
In contrast, when the input theory is \emph{modular}, the bulk Hamiltonian is trivial, in the sense that it supports no deconfined excitations, and the unitary mapping from a product state to the ground state defines a QCA.

An Abelian anyon theory is specified by a finite Abelian group $A$ together with a quadratic function
$q: A \rightarrow \mathbb{Q}/\mathbb{Z}$~\cite{Wang2022in},
where the topological spin of an anyon $a \in A$ is given by $q(a)$.
Such quadratic functions are in one-to-one correspondence with cohomology classes in
$H^4(K(A,2), \mathbb{R}/\mathbb{Z})$~\cite{Eilenberg1953OnTG, Eilenberg1954OnTG, Kapustin2014theta, Kapustin2017Higher, Delcamp2019On},
which classify 2-form Dijkgraaf-Witten TQFT~\cite{dijkgraaf1990topological}.
Consequently, the Abelian Walker--Wang model can be naturally interpreted as a gauged 1-form symmetry-protected topological (SPT) phase~\cite{chen2012symmetry}.

%Thus, the topological action of a Walker–Wang model is determined by the input anyon theory.  

% while the mutual statistics of anyons $a, a' \in A$ are captured by the bilinear form  
% \begin{equation}
% b(a,a') = q(a) + q(a') - q(a \times a').
% \end{equation}

% , where the gauge fields couple according to this quadratic form and multiplication is implemented by cup products, up to counterterms.\footnote{For even-order anyons, such as in the chiral semion theory, one must instead use the Pontryagin square, which incorporates higher cup products.}
% Such quadratic forms are known to classify $3{+}1$D Consequently, the Abelian Walker–Wang model can be interpreted as a gauged 1-form SPT phase.
}

We are interested in constructing QCAs corresponding to the generators of the Witt group of Abelian anyon theories. The Witt group gives an equivalence class of anyon theories up to stacking with anyon theories that admit a gapped boundary (Drinfeld center of some fusion category)~\cite{Davydov2013Witt}. Formally, two modular tensory categories $\mathcal{A}$ and $\mathcal{B}$ lie in the same Witt class if their product $\mathcal{A} \otimes \overline{\mathcal{B}}$ admits a gapped boundary, or equivalently, if a set of bosons can be condensed to trivialize the combined theory. Intuitively, the Witt group captures the “indecomposable” topological orders that cannot be removed by boson condensation. Its group operation is stacking of anyon theories, and the inverse of a theory is given by its time-reversal. In this work, we restrict to Abelian anyon theories, and the Witt group reduces to the classical Witt group of quadratic forms over finite Abelian groups.

{\change
The relation to the $L$-theoretic classification of Clifford QCAs can be summarized
as follows.
Any finite abelian group admits a primary decomposition
\[
\bigoplus_i \mathbb{Z}_{p_i^{r_i}},
\]
where the $p_i^{r_i}$ are powers of (not necessarily distinct) primes.
Quadratic forms on finite abelian groups decompose compatibly with this primary
decomposition (Proposition~76 of Ref.~\cite{ruba2025witt}).
Consequently, the associated Witt group decomposes as a direct sum of its
$p_i$-primary components (Proposition~5.16 of Ref.~\cite{davydov2013structure}).

Restricting to a fixed prime-$p$ power cyclic group, one may realize its quadratic forms by reduction from quadratic forms on free $\mathbb{Z}_{p^r}$-modules, for $r$ sufficiently large. The classification of Clifford QCAs for qudit dimension $p^r$ developed in Refs.~\cite{haah2025topological, yang2025categorifying} identifies the relevant invariant as $L_{3-D}(\mathbb{Z}_{p^r})$. In spatial dimension $D=3$, this reduces to $L_0(\mathbb{Z}_{p^r})$. This group is precisely the Witt group of free $\mathbb{Z}_{p^r}$-modules equipped with a quadratic form, and hence reproduces the corresponding $p$-primary direct summand of the Witt group upon allowing all possible values of $r$.
}

For Abelian anyon theories with a single generator (i.e., $A$ cyclic), we follow the notation of Ref.~\cite{Bonderson2007} and denote the anyon theory described by the group $A = \mathbb{Z}_n$ and the quadratic form $q(a) = \tfrac{m}{n}a^2 : \mathbb{Z}_n \to \mathbb{Q}/\mathbb{Z}$ as $\mathbb{Z}_n^{(m)}$.
The generators of the Witt group decompose according to prime numbers as follows:~\cite{Davydov2013Witt}
\begin{enumerate}
    \item For $p=2$ (anyon groups of order $2^k$), the Witt group contains the subgroup $\mathbb{Z}_8 \times \mathbb{Z}_2$. The relevant elements include:
    \begin{itemize}
        \item $(1,0) \in \mathbb{Z}_8 \times \mathbb{Z}_2$: the chiral semion theory ($\mathbb{Z}_2^{(1/2)}$),
        \item $(1,1) \in \mathbb{Z}_8 \times \mathbb{Z}_2$: the $U(1)_4$ theory ($\mathbb{Z}_4^{(1/2)}$),
        \item $(4,0) \in \mathbb{Z}_8 \times \mathbb{Z}_2$: the 3-fermion theory, with $A=\mathbb{Z}_2 \times \mathbb{Z}_2$ and $q((a,b)) = \tfrac{1}{2}(a^2 + ab + b^2)$.
    \end{itemize}
    Among these, only the 3-fermion theory gives rise to a Clifford QCA.
    \item For each prime $p \equiv 3 \pmod{4}$, the Witt group contributes a $\mathbb{Z}_4$ subgroup generated by $\mathbb{Z}_p^{(1)}$, all of which correspond to Clifford QCAs.  
    \item For each prime $p \equiv 1 \pmod{4}$, the Witt group contributes a $\mathbb{Z}_2 \times \mathbb{Z}_2$ subgroup generated by $\mathbb{Z}_p^{(1)}$ and $\mathbb{Z}_p^{(r)}$, where $r$ is a quadratic nonresidue modulo $p$. All of these also correspond to Clifford QCAs.
\end{enumerate}

{\change

To construct these QCAs explicitly, we first review the definition of a QCA in terms of \emph{separators} and \emph{flippers}.

\begin{defn}[\textbf{Separators and flippers}~\cite{haah_nontrivial_2023}]
A locally flippable $\mathbb{Z}_p$ separator consists of a set of operators $\{\overline{Z}_a\}$ together with another set of operators $\{\tilde{X}_a\}$, called flippers. Each operator is supported within a disk of finite radius, and the following conditions are satisfied:
\begin{enumerate}
    \item ${(\overline{Z}_a)}^p = 1$.
    \item $[\overline{Z}_a, \overline{Z}_b] = 0$ for all $a,b$.
    \item $\overline{Z}_a \tilde{X}_b = e^{2\pi i/p}\, \tilde{X}_b \overline{Z}_a$ if $a=b$; otherwise, $\overline{Z}_a$ and $\tilde{X}_b$ commute.
    \item For an arbitrary assignment $a \mapsto \omega(a)$, where $\omega(a)$ is a $p$-th root of unity, the space of states $\ket{\psi}$ satisfying
    \[
        \overline{Z}_a \ket{\psi} = \omega(a)\ket{\psi}
    \]
    for all $a$ is one-dimensional.
\end{enumerate}
\label{definition: separators}
\end{defn}

In the above definition, the flippers $\tilde{X}_a$ are not required to commute with one another. However, this issue can be resolved by appropriately decorating them with the operators $\overline{Z}_a$, yielding modified flippers $\overline{X}_a$ that commute mutually.
With these ingredients, a QCA $\alpha$ is then defined by its action on the Pauli generators,
\begin{equation}
    \alpha(Z_a) = \overline{Z}_a, 
    \qquad 
    \alpha(X_a) = \overline{X}_a .
\end{equation}

Our construction of QCAs proceeds as follows. We begin with TQFT data, namely a cohomology class, and follow the standard procedure to obtain the ground-state wavefunction of a symmetry-protected topological (SPT) phase~\cite{chen2012symmetry}. Upon gauging the symmetry, this yields the ground-state wavefunction of a Dijkgraaf-Witten TQFT~\cite{dijkgraaf1990topological}, whose special cases include the Walker--Wang models considered in this work.

Given a ground-state wavefunction, however, there are infinitely many possible choices of parent Hamiltonians. Our goal is to identify a particular parent Hamiltonian that can be written as a sum of locally flippable separators,
\begin{equation}
    H_{\mathrm{parent}} = - \sum_i \overline{Z}_i ,
\end{equation}
such that the operators $\overline{Z}_i$ admit flippers in the sense of Definition~\ref{definition: separators}. Organizing a parent Hamiltonian into this form is generally a highly nontrivial task. Fortunately, the cup-product formalism provides a systematic and efficient way to construct such Hamiltonians in our setting.
}

%%%%%%%%%%%%%%%%%%%%%%%%%%%%%%%%%%%%%%%%%%%%%%%%%%%%%%%%%%%%%%%%%%%%%%%%%%%%%%%%
\subsection{3-fermion QCA from topological action $S=\tfrac{1}{2}(A_2 \cup A_2 + B_2 \cup B_2 + A_2 \cup B_2)$}
\label{Sec. 4d Z2 QCA}

We begin with the QCA constructed from the 3-fermion Walker-Wang model on arbitrary cellulations.
When restricted to the cubic lattice, our construction here reproduces the QCA obtained in Ref.~\cite{Shirley2022QCA}, which is equivalent to the original construction in Ref.~\cite{haah_QCA_23} up to a finite-depth quantum circuit.

The 3-fermion anyon theory corresponds to the anyon group $A=\ZZ_2 \times \ZZ_2$ and $q(a,b) = \frac{1}{2}(a^2+ab+b^2)$, where $a,b$ are the generators of the group. The terms $a^2$ and $b^2$ endow the generating anyons with fermionic self-statistics, while the term $ab$ gives the mutual statistics, resulting in their bound state also having fermionic self-statistics. To write down the corresponding Walker-Wang model on the lattice, we place two qubits (denoted by its sublattice $A$ and $B$) on each face $f$. We then write down the gauge theory with the following topological action on a (3+1)-dimensional manifold~\cite{Chen2023HigherCup}:
\begin{equation}
    S = \frac{1}{2} \int A_2 \cup A_2 + B_2 \cup B_2 + A_2 \cup B_2,
\label{eq: 3f WW S}
\end{equation}
where the subscript denotes the $A_2$ and $B_2$ are 2-cocycles. Here, and throughout the paper, we will implicitly assume that all gauge fields are dynamical, rather than background gauge fields. This means that we implicitly sum over all configurations which are flat $\delta A_2 = \delta B_2 =0$. 

Following Refs.~\cite{Chen2012cohomology, Chen2023HigherCup}, we write the Hamiltonian for the gauge theory described above on a three-dimensional lattice. On each face $f$, we introduce Pauli operators $Z_f^A, X_f^A$ and $Z_f^B, X_f^B$. The Hamiltonian takes the form
\begin{eqs}
    H^{\mathrm{3f}} =& - \sum_t Z^A_{\partial t} - \sum_t Z^B_{\partial t} \\
    &- \sum_e { G^B_e }  \prod_f {Z^A_f}^{\int \bface \cup \be} 
    - \sum_e {G^A_e} \prod_f {Z^B_f}^{\int \be \cup \bface},
    \label{eq: 3fWW_Hamiltonian 1}
\end{eqs}
where $t$ denotes a 3-cell, $f$ a 2-cell, and $G^\alpha_e$ is the gauge constraint of the $3{+}1$D bosonization~\cite{Chen2019Bosonization}, defined as
\begin{equation}
    G^\alpha_{e} :=  X^\alpha_{\delta \be} \prod_{f'} {Z^\alpha_{f'}}^{\int  \delta \be \cup_1 \bface'},
    \quad  \alpha \in \{A, B\}.
\label{eq: definition of G}
\end{equation}
Here, $Z^\alpha_{\partial t}$ denotes the flux term, defined as the product of $Z^\alpha_f$ over the surface of a 3-dimensional cell.
\begin{equation}
\begin{aligned}
    Z_{\partial t} &:=  \prod_f Z_f^{\bface(\partial t)} = \prod_f Z_f^{\int \delta \bface \cup_3 \bt}, \\
    X_{\delta \be} &:= \prod_f X_f^{ \be (\partial f)}~.
\end{aligned}
    \label{eq:Z_partialt_defintion}
\end{equation}

In the special case of a cubic lattice, where edges $e$ and faces $f$ are in one-to-one correspondence through $\be \cup \bface$ or $\bface \cup \be$, the Hamiltonian simplifies to
\begin{eqs}
    H^{\mathrm{3f}} =& - \sum_c Z^A_{\partial c} - \sum_c Z^B_{\partial c} \\
    &- \sum_f Z^A_f \prod_e { G^B_e }^{\int \bface \cup \be} - \sum_f Z^B_f \prod_e {G^A_e}^{\int \be \cup \bface}~.
    \label{eq: 3fWW_Hamiltonian 2}
\end{eqs}
{\change
The 3-cell is now labeled by a cube $c$, and the operator $G^\alpha_e$ in Eq.~\eqref{eq: definition of G} can be represented diagrammatically:
\begin{equation*}
    \raisebox{-0.5\height}{\includegraphics[scale=.33]{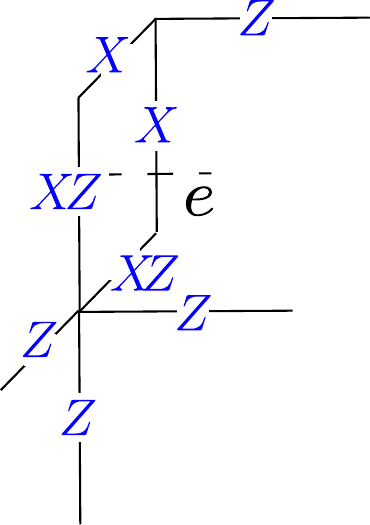}},
    ~\raisebox{-0.5\height}{\includegraphics[scale=.33]{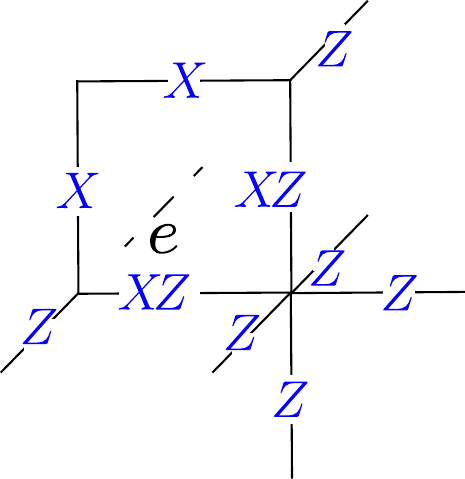}},
    ~\raisebox{-0.5\height}{\includegraphics[scale=.33]{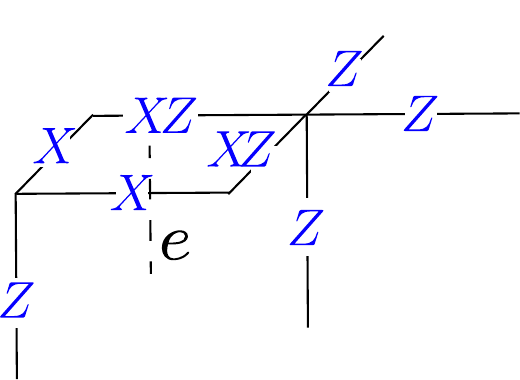}}~.
\end{equation*}
where, for clarity, all operators are illustrated on the dual lattice (solid lines), while the dashed line marks the edge $e$ on the original lattice.
}
Consequently, the third and fourth terms of Eq.~\eqref{eq: 3fWW_Hamiltonian 2} can be depicted as
\begin{widetext}
\begin{align}
Z^A_f \prod_e { G^B_e }^{\int \bface \cup \be} &=\raisebox{-0.5\height}{\includegraphics[scale=.25]{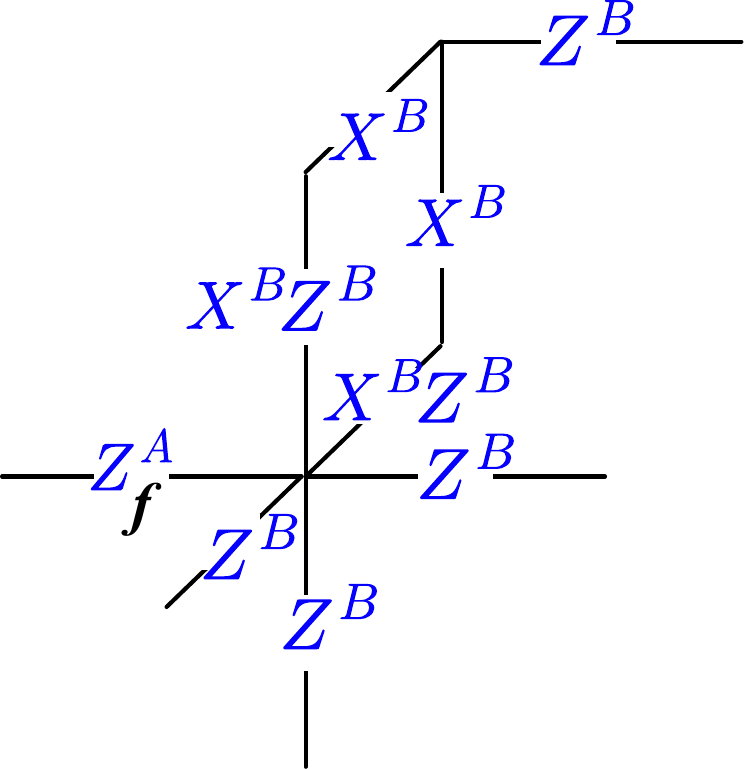}}~,
\quad \raisebox{-0.5\height}{\includegraphics[scale=.25]{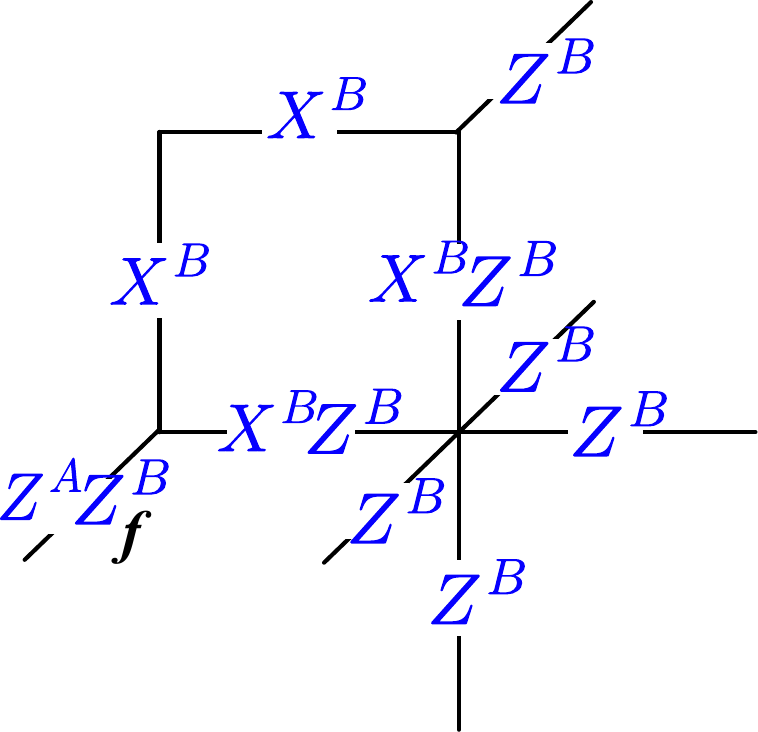}}~,
\quad\raisebox{-0.5\height}{\includegraphics[scale=.25]{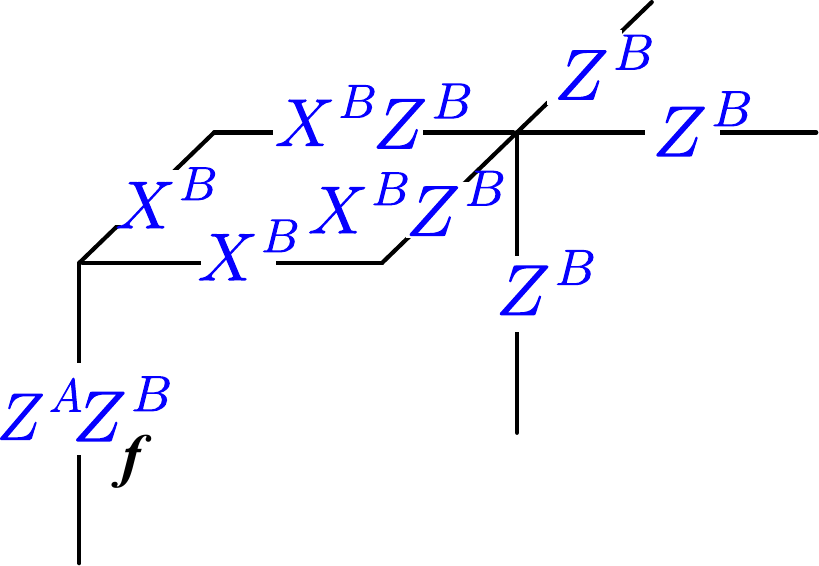}}~,\\
Z^B_f \prod_e {G^A_e}^{\int \be \cup \bface} &= \quad\raisebox{-0.5\height}{\includegraphics[scale=.25]{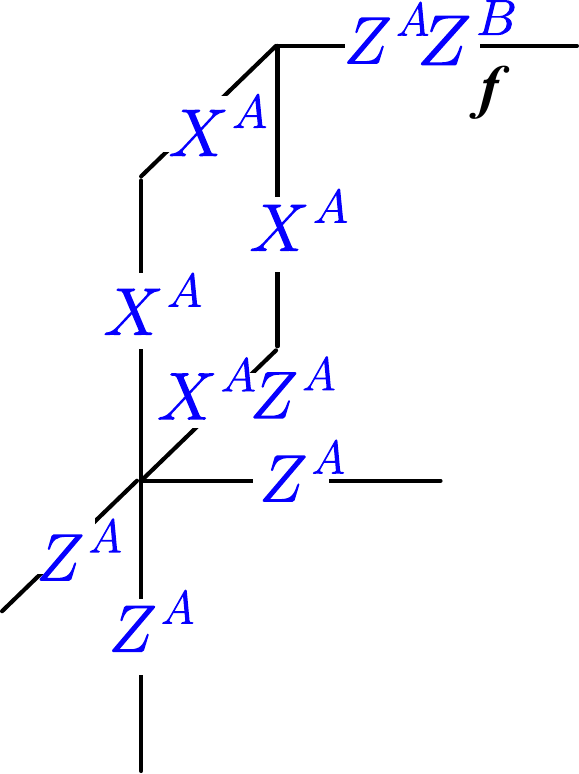}},
\quad\raisebox{-0.5\height}{\includegraphics[scale=.25]{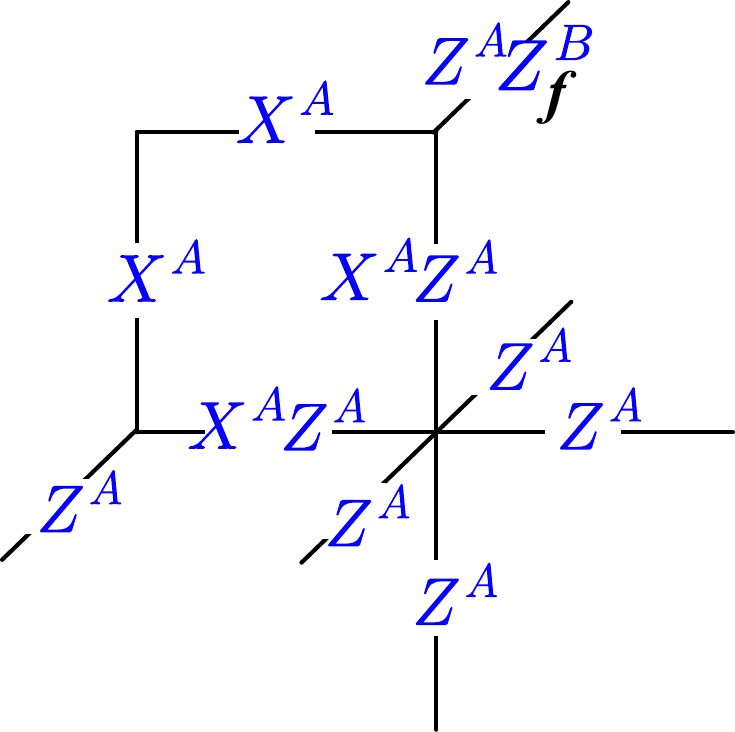}}~,
\quad\raisebox{-0.5\height}{\includegraphics[scale=.25]{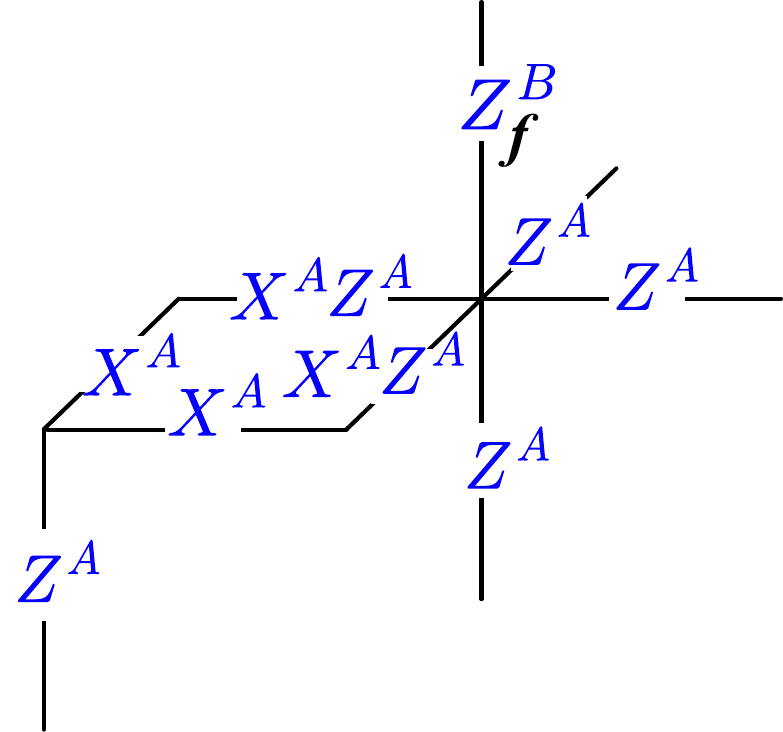}}~.
\end{align}
\end{widetext}
Note that if we multiply $G_e^\alpha$ around faces of a closed membrane on the dual lattice (equivalently, around a vertex on the direct lattice), the product becomes the identity
\begin{equation}
    \prod_{e \supset v} G^\alpha_e = 1, \quad \forall~ \alpha \in \{A, B\}.
\label{eq: closed condition of G_e}
\end{equation}
Consequently, we observe that the third and fourth terms of Eq.~\eqref{eq: 3fWW_Hamiltonian 2} generate the first and second terms. Thus, we may remove these two terms from the Hamiltonian without affecting its ground state.

Since the ground state is uniquely determined by the last two terms, we may define the separators of the QCA as
\begin{equation}
\begin{aligned}
    &\overline{Z}^A_f = Z^A_f \prod_e {G^B_e}^{\int \bface \cup \be}, \\ &\overline{Z}^B_f = Z^B_f \prod_e {G^A_e }^{\int \be \cup \bface},
\end{aligned}
\label{eq:3fWW_separator}
\end{equation}
on each face. They are the original $Z$ operators decorated with the $G$ operator for the other species of qubits.
In the $3{+}1$D bosonization~\cite{Chen2019Bosonization}, the fermion hopping operators are given by
\begin{equation}
\begin{aligned}
    &U^A_f := X^A_f \prod_{f'} {Z^A_{f'}}^{\int \bface' \cup_1 \bface}, \\ &U^B_f := X^B_f \prod_{f'} {Z^B_{f'}}^{\int \bface' \cup_1 \bface},
\end{aligned}
\label{eq: fermionic hopping operators}
\end{equation}
where the cup-1 product encodes the interaction between Pauli operators on adjacent faces.
They satisfy the commutation relations
\begin{eqs}
\label{eq:Ucommutation}
    U^{\alpha}_f U^{\alpha}_{f'} = (-1)^{\int \bface \cup_1 \bface' + \bface' \cup_1 \bface} U^{\alpha}_{f'} U^{\alpha}_f~,
\end{eqs}
$\forall~ \alpha \in \{A, B\}$. The explicit realizations on the cubic lattice are
\begin{equation}
    U_f =\vcenter{\hbox{\includegraphics[scale=.22]{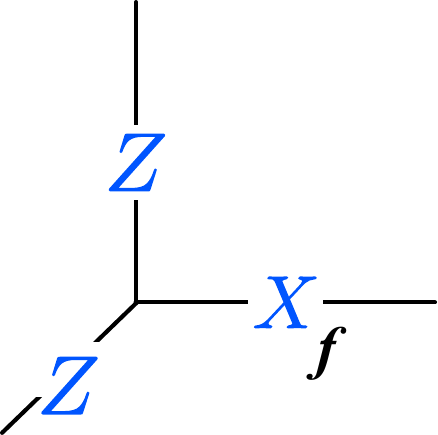}}}~,
    \quad\vcenter{\hbox{\includegraphics[scale=.22]{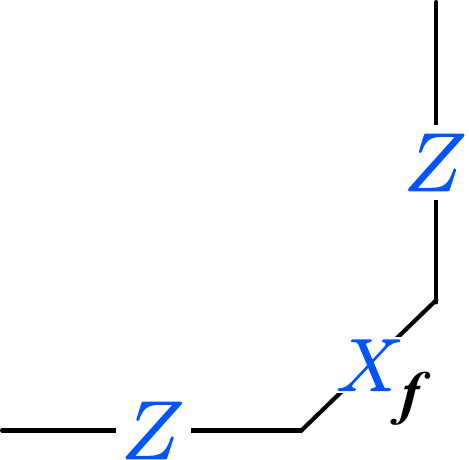}}}~,
    \quad\vcenter{\hbox{\includegraphics[scale=.22]{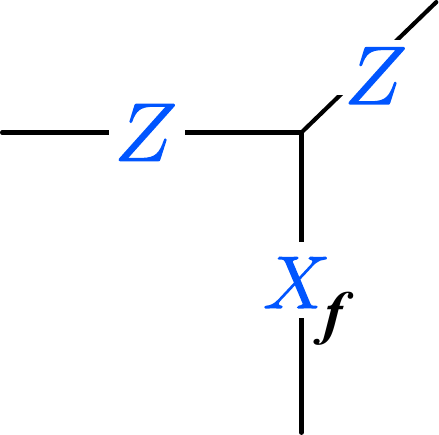}}}~.
\end{equation}
Since $[U^\alpha_f, G^\beta_e]=0$ for all $\alpha, \beta \in \{A, B\}$, we also find $[\overline{Z}^A_f , U^B_{f'}] = [\overline{Z}^B_f , U^A_{f'}] = 0$. Therefore, we have
\begin{eqs}
    \overline{Z}^A_f U^A_{f'} &= (-1)^{\delta_{f,f'}} U^A_{f'} \overline{Z}^A_f~, \\
    \overline{Z}^B_f U^B_{f'} &= (-1)^{\delta_{f,f'}} U^B_{f'} \overline{Z}^B_f~.
\end{eqs}
Hence, the hopping operators $U^A_f$ and $U^B_f$ act as (non-commuting) flippers for the separators $\overline{Z}^A_f$ and $\overline{Z}^B_f$. We may fix this non-commutation by attaching separators to cancel the sign coming from the commutation relation in Eq.\eqref{eq:Ucommutation}. Define
\begin{eqs} 
    \overline{X}^A_f &:= U^A_f \prod_{f'} {\overline{Z}^A_{f'}}^{\int \bface' \cup_1 \bface } 
    = X^A_f \prod_{f'} \left( Z^A_{f'} \overline{Z}^A_{f'} \right)^{\int \bface' \cup_1 \bface } \\
    &= X^A_f \prod_{e,f'} \left({G^B_e}^{\int \bface'\cup \be} \right)^{\int \bface' \cup_1 \bface}, \\
    \overline{X}^B_f &:= U^B_f \prod_{f'} {\overline{Z}^B_{f'}}^{\int \bface' \cup_1 \bface}
    = X^B_f \prod_{f'} \left( Z^B_{f'} \overline{Z}^B_{f'} \right)^{\int \bface' \cup_1 \bface} \\
    &= X^B_f \prod_{e,f'} \left({G^A_e}^{\int \be \cup \bface'} \right)^{\int \bface' \cup_1 \bface}.
\label{eq: definition of tilde X^A and X^B}
\end{eqs}
These flippers on the cubic lattice can be visualized in Fig.~\ref{flippers of 3-fermion QCA}. 
\begin{figure*}[htbp]
    \centering
    \includegraphics[scale=0.25]{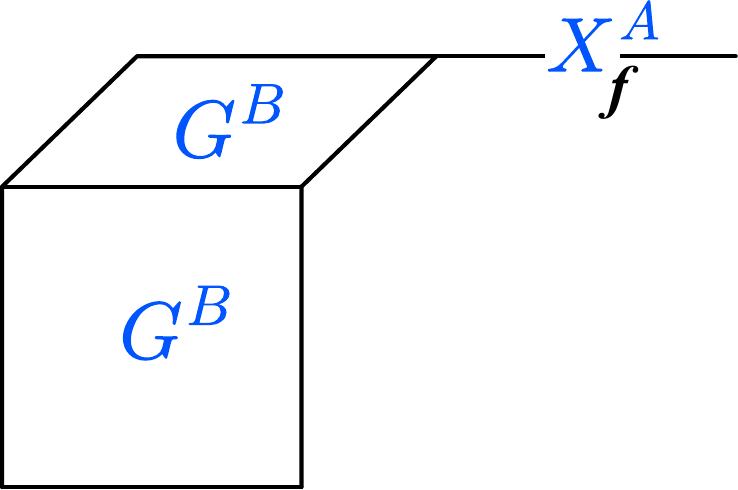}
    \hspace{1em}
    \includegraphics[scale=0.25]{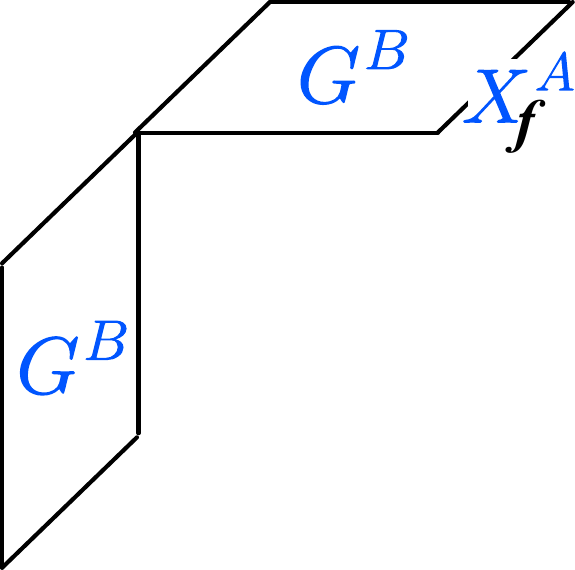}
    \hspace{1em}
    \includegraphics[scale=0.25]{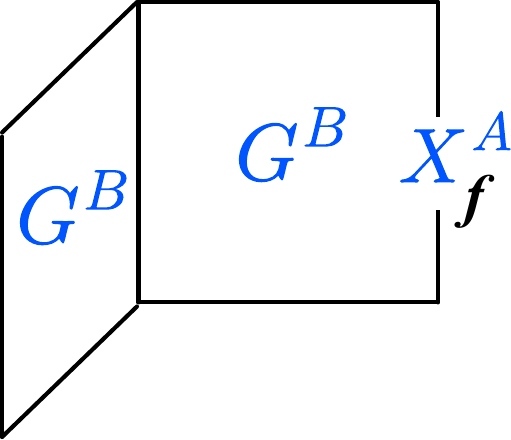}
    \\
    \vspace{1em}
    \includegraphics[scale=0.25]{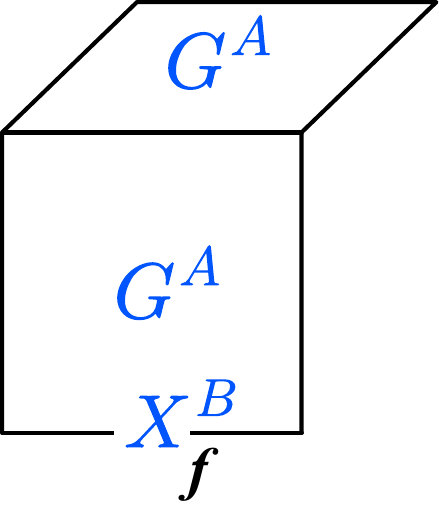}
    \hspace{2em}
    \includegraphics[scale=0.25]{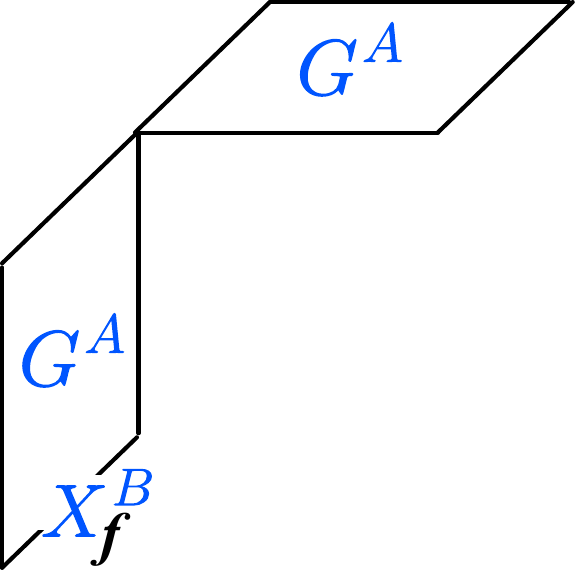}
    \hspace{2em}
    \raisebox{0em}{\includegraphics[scale=0.25]{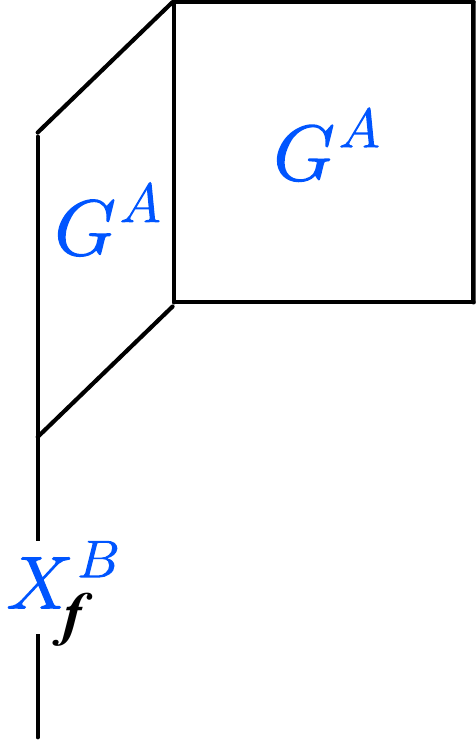}}
    \caption{
    The flippers $\overline{X}_f^A$ and $\overline{X}_f^B$ of the 3-fermion QCA on the dual lattice, with $G^A$ and $G^B$ defined in Eq.~\eqref{eq: definition of G}.
    }
    \label{flippers of 3-fermion QCA}
\end{figure*}
We may also seperately verify that $[\overline{X}^A_f, \overline{X}^A_{f'}] = [\overline{X}^B_f, \overline{X}^B_{f'}] = 0$ using the final expression by noting that $G^\alpha_e$ commutes for distinct edges $e$ (this property holds only for $\mathbb{Z}_2$ qubits).

The 3-fermion quantum cellular automaton (QCA) is defined as
\begin{eqs}
    X^A &\rightarrow \overline{X}^A, \quad  Z^A \rightarrow \overline{Z}^A, \\
    X^B &\rightarrow \overline{X}^B, \quad  Z^B \rightarrow \overline{Z}^B.
\end{eqs}

%%%%%%%%%%%%%%%%%%%%%%%%%%%%%%%%%%%%%%%%%%%%%%%%%%%%%%%%%%%%%%%%%%%%%%%%%%%%%%%%
\subsubsection{Order of 3-fermion QCA}

We will show that this QCA squares to the identity. To prove this, it suffices to show that $G^\alpha_e$ is invariant under the QCA, since the action of the QCA attaches an appropriate set of $G_e$ to both $X$ and $Z$. Thus, if this is the case, then we have
\begin{equation}
    X \rightarrow X \prod G_e \rightarrow \left( X \prod G_e^2 \right) = X.
\end{equation}
and similarly for Pauli $Z$.
To verify this, observe that
\begin{eqs}
    U^A_{f} \rightarrow &~ \overline{X}^A_{f} \prod_{f'} {\overline{Z}^A_{f'}}^{\int \bface' \cup_1 \bface} \\
    &= \left[ U^A_f \prod_{f'} {\overline{Z}^A_{f'}}^{\int \bface' \cup_1 \bface } \right]
    \prod_{f'} {\overline{Z}^A_{f'}}^{\int \bface' \cup_1 \bface} 
    = U^A_{f}~,
\label{eq:U^A_invariant_under_QCA}
\end{eqs}
\begin{eqs}
    U^B_{f} \rightarrow &~ \overline{X}^B_{f} \prod_{f'} {\overline{Z}^B_{f'}}^{\int \bface' \cup_1 \bface} \\
    &= \left[ U^B_f \prod_{f'} {\overline{Z}^B_{f'}}^{\int \bface' \cup_1 \bface } \right]
    \prod_{f'} {\overline{Z}^B_{f'}}^{\int \bface' \cup_1 \bface} 
    = U^B_{f}~.
    \label{eq:U^B_invariant_under_QCA}
\end{eqs}
Thus, the hopping operator remains invariant under the QCA. By construction, the flux term $Z^\alpha_{\partial c}$ is also invariant under the QCA, due to Eq.~\eqref{eq: closed condition of G_e}. Consequently, the gauge constraint $G^\alpha_e$ is invariant under the QCA. Explicitly, we have:
\begin{equation}
\begin{aligned}
    G_e &= \left( X_{\delta \be} \prod_{f'} Z_{f'}^{\int \bface' \cup_1 \delta \be} \right)
    \prod_{f'}Z^{\int \delta \be \cup_1 \bface' + \bface' \cup_1 \delta \be} \\
    &= U_{\delta \be} \prod_{f'}Z_{f'}^{\int \delta \be \cup_2 \delta \bface'} 
    = U_{\delta \be} \prod_c Z_{\partial c}^{\delta \be \cup_2 \bc},
\end{aligned}
\end{equation}
where $U_{\delta \be} := X_{\delta \be} \prod_{f'} Z_{f'}^{\int \bface' \cup_1 \delta \be}$ represents a product of $U_f$ around an edge $e$, up to an overall $\pm$ sign. It is important to emphasize that the property of $G_e$ being a product of $U_f$ and $Z_{\partial c}$ is specific to $\mathbb{Z}_2$ qubits. In the QCA with $\mathbb{Z}_p$ qudits discussed in the next section, $G_e$ is no longer expressible as a simple product of $U_f$ and $Z_{\partial c}$. Therefore, while it is generally true in our construction that $U_f$ and $Z_{\partial c}$ (as well as their generalization in higher dimensions) remain invariant under the QCA, the invariance of $G_e$ under the QCA is unique to $\mathbb{Z}_2$ qubit systems.

%%%%%%%%%%%%%%%%%%%%%%%%%%%%%%%%%%%%%%%%%%%%%%%%%%%%%%%%%%%%
\subsubsection{Polynomial expression for 3-fermion QCA}

Since we have two qubits species $A$ and $B$, we define
\begin{align}
    &\bZ^A =
    \begin{pmatrix}
        0\\0\\
      \bd_{f_0,f}\\
      0
    \end{pmatrix}, \quad
    \bZ^B =
    \begin{pmatrix}
        0\\0\\0\\
      \bd_{f_0,f}
    \end{pmatrix}, \\
    &\bX^A =
    \begin{pmatrix}
       \bd_{f_0,f}\\0\\0\\0
    \end{pmatrix}, \quad
    \bX^B =
    \begin{pmatrix}
      0 \\\bd_{f_0,f}\\0\\0
    \end{pmatrix}~.
\end{align}
The hopping operators and the gauge constraint can be written in polynomial form as
\begin{align}
    \bU_f^{\alpha} &= \bX_f^\alpha + k \bZ_{f'}^\alpha \bM_{f' \cup_1 f},   \\
    \tilde \bU_f^\alpha &=\bX_f + k \bZ_{f'}^\alpha \bM_{f \cup_1 f'}^\dagger ,\\
    \bG_e^{(k)}&= \tilde \bU_{\delta e}^\alpha . 
\end{align}
Finally, we can write the separators and flippers as
\begin{align}
    \overline{\bZ}_f^A &=\bZ_f^A +\bG_e^B \bM_{f\cup e}^\dagger, \nonumber\\
    \overline{\bZ}_f^B &=\bZ_f^B +\bG_e^A \bM_{e\cup f}, \nonumber\\
    \overline{\bX}_f^A &= \bU_f^{A} +\overline{\bZ}_{f'}^A \bM_{f'\cup_1 f}  = \bX_f^A +\bG^{B}_e\bM_{f'\cup e}^\dagger\bM_{f'\cup_1f}, \nonumber\\
    \overline{\bX}_f^B &= \bU_f^{B} +\overline{\bZ}_{f'}^B \bM_{f'\cup_1 f}  = \bX_f^B +\bG^{A}_e\bM_{e\cup f'}\bM_{f'\cup_1f}.\nonumber
\end{align}
\begin{widetext}
In matrix form,
\begin{eqs}
    \overline{\bZ}^A &=
    \begin{pmatrix}
        0\\\bM_{f\cup\partial f_0}^\dagger\\
        \bd_{f_0,f}\\
        \bM_{ f' \cup_1 f_0}^\dagger  \bM_{f\cup \partial f'}^\dagger
    \end{pmatrix}~, \qquad\qquad~
    \overline{\bZ}^B =
    \begin{pmatrix}
        \bM_{\partial f_0 \cup f}\\0\\ \bM_{f' \cup_1 f_0}^\dagger  \bM_{\partial f'\cup f}\\
        \bd_{f_0,f}
    \end{pmatrix}~,\\
    \overline{\bX}^A &=
    \begin{pmatrix}
       \bd_{f_0,f}\\\bM_{f' \cup \partial f_0}^\dagger\bM_{f'\cup_1 f}\\0\\\bM_{f'' \cup_1 f_0}^\dagger \bM^\dagger_{f'\cup \partial f''} \bM_{f\cup_1 f},
    \end{pmatrix}, \quad
    \overline{\bX}^B =
    \begin{pmatrix}
      \bM_{\partial f_0 \cup f'}\bM_{f' \cup_1 f} \\\bd_{f_0,f}\\\bM_{f''\cup_1 f_0}^\dagger\bM_{\partial f'' \cup f'}\bM_{f' \cup_1 f} \\0
    \end{pmatrix}.
\end{eqs}
This allows us to define the 3F QCA as
\begin{align}
    \sansmath{\bm \alpha} =  \begin{pmatrix}\overline{\bX}^A  & \overline{\bX}^B &  \overline{\bZ}^A& \overline{\bZ}^B\end{pmatrix}.
    \label{eq:3FQCA_TQFT}
\end{align}
The above QCA has been presented in polynomial form in Ref.~\cite{Shirley2022QCA}, albeit in a different basis. {\change The matrix elements of our 3F QCA can be found in Appendix~\ref{app:3FQCA_TQFT_3D}.} We will later show in Sec.~\ref{sec:Z2_Zp_higher_QCA_TQFT} that higher dimensional generalizations of the 3F QCA can be straightforwardly written by replacing the simplices to higher dimensional ones. In fact, in 5+1D, these matrices coincide with those constructed in Ref.~\cite{fidkowski2024qca}.

We may show explicitly that the QCA squares exactly to the identity. Before that, we first prove some simple identities. Let us show that
\begin{eqs}
    &\begin{pmatrix}
      \bO^{(11)}_{f_0,f} & \cdots & \bO^{(14)}_{f_0,f}\\
      \vdots & \ddots & \vdots\\
      \bO^{(41)}_{f_0,f} & \cdots& \bO^{(44)}_{f_0,f}
    \end{pmatrix} 
    \coloneqq  \begin{pmatrix} \bM_{\partial f_0 \cup f'} \\ \bM^\dagger_{f' \cup \partial f_0} \\ \bM_{  f_0 \cup \partial f'}\\
    \bM^\dagger_{\partial f' \cup f_0}
    \end{pmatrix} (\bM_{f'\cup_1 f''}+\bM^\dagger_{f''\cup_1 f'}) 
    \times \begin{pmatrix}
     \bM_{\partial f'' \cup f}^\dagger & \bM_{f'' \cup \partial f}  
 & \bM_{  f \cup \partial f''}^\dagger & \bM_{\partial f'' \cup f}
    \end{pmatrix} 
    =0~.
\end{eqs}
For example,
\begin{align*}
\bO^{(13)}_{f_0,f} &=\bM_{\partial f_0 \cup f'}(\bM_{f'\cup_1 f''}+\bM^\dagger_{f''\cup_1 f'})\bM^\dagger_{f\cup \partial f''}
= \bM_{\partial f_0 \cup f'}(\bM_{\delta f'\cup_2 f''}+\bM^\dagger_{f'\cup_2 \delta f''})\bM^\dagger_{f\cup \partial f''}\\
&={{\bM_{\partial f_0 \cup \partial c}}} ~\bM_{c\cup_2 f''}~ \bM^\dagger_{f\cup \partial f''} +\bM_{\partial f_0 \cup f'}~\bM^\dagger_{f'\cup_2 c}~{\bM^\dagger_{f\cup \partial^2 c}} ~=0~,
\end{align*}
and
\begin{align*}
\bO^{(24)}_{f_0,f} &=   \bM^\dagger_{f' \cup \partial f_0}(\bM_{f'\cup_1 f''}+\bM^\dagger_{f''\cup_1 f'})\bM_{\partial f'' \cup f} 
= \bM^\dagger_{f' \cup \partial f_0}(\bM_{\delta f'\cup_2 f''}+\bM^\dagger_{f'\cup_2 \delta f''})\bM_{\partial f'' \cup f}\\
&={\bM^\dagger_{f' \cup \partial^2 c}}~\bM_{c\cup_2 f''}~ \bM_{\partial f'' \cup f}  +\bM^\dagger_{f' \cup \partial f_0}~\bM^\dagger_{f'\cup_2 c}~{\bM_{\partial^2 c \cup f}} =0~,
\end{align*}
evaluate to zero using the relation of the $\cup_1$ and $\cup_2$ products in Eq.~\eqref{eq:highercuprelationmatrix}, similarly for all other entries.
Returning to the squaring of the QCA, a direct computation shows that
\begin{align}
    \boldsymbol{\alpha}^2 =  \begin{pmatrix}
    \bd_{f_0,f}&0&\bO^{(13)}_{f_0,f}&0\\
   0& \bd_{f_0,f}+ \bO^{(31)}_{f_0,f'}\bM_{f'\cup_1 f}&0&\bO^{(31)}_{f_0,f}\\
   \bM^\dagger_{f' \cup_1 f_0} \bO^{(13)}_{f',f''}\bM_{f''\cup_1 f}&0& \bd_{f_0,f}+\bM^\dagger_{f' \cup_1 f_0} \bO^{(13)}_{f',f}& 0\\
    0&\bM^\dagger_{f'\cup_1 f_0}\bO^{(24)}_{f',f''}\bM_{f''\cup_1 f}&0& \bd_{f_0,f}\\
    \end{pmatrix} = \bbone_{12\times 12} .
\end{align}
\end{widetext}

%%%%%%%%%%%%%%%%%%%%%%%%%%%%%%%%%%%%%%%%%%%%%%%%%%%%%%%%%%%%
\subsubsection{3-fermion QCA on arbitrary cellulations}

We emphasize that our QCA construction applies to arbitrary cellulations. We place qubits of type $A$ on the faces of the cellulation, and qubits of type $B$ on the faces of its dual cellulation. The separators in Eq.~\eqref{eq:3fWW_separator} are then
\begin{eqs}
    \overline{Z}^A_f = Z^A_f G^B_{\mathrm{PD}(f)}, \quad 
    \overline{Z}^B_f = Z^B_f G^A_{\mathrm{PD}(f)},
\label{eq:3fWW_separator triangulation}
\end{eqs}
while the flippers in Eq.~\eqref{eq: definition of tilde X^A and X^B} become
\begin{eqs} 
    &\overline{X}^A_f 
    = X^A_f \prod_{f'} \left(G^B_{\mathrm{PD}(f')}\right)^{\int \bface' \cup_1 \bface}, \\
    &\overline{X}^B_f 
    = X^B_f \prod_{f'} \left(G^A_{\mathrm{PD}(f')}\right)^{\int \bface' \cup_1 \bface}~.
\label{eq: definition of tilde X^A and X^B triangulation}
\end{eqs}
Here $\mathrm{PD}$ denotes the Poincaré dual: $\mathrm{PD}(f)$ is the edge in the dual (direct) lattice intersecting the face $f$ in the direct (dual) lattice. We use the dual cellulation because, in passing from Eq.~\eqref{eq: 3fWW_Hamiltonian 1} to Eq.~\eqref{eq: 3fWW_Hamiltonian 2}, we exploited the correspondence between edges $e$ and faces $f$ on the cubic lattice, which is naturally interpreted as the Poincaré duality between two lattices. The definition of cup products on arbitrary cellulations can be found in Ref.~\cite{Chen2023HigherCup, Bauer2025lowoverheadnon}.

%%%%%%%%%%%%%%%%%%%%%%%%%%%%%%%%%%%%%%%%%%%%%%%%%%%%%%%%%%%%%%%%%%%%%%%%%%%%%%%%
\subsection{$\ZZ_p^{(k)}$ QCA from topological action $S = \frac{k}{p}B_2 \cup B_2$}
\label{Sec. 3d Zp QCA}
In this section, we construct the QCA associated with the anyon theory $\mathbb{Z}_p^{(k)}$. This theory consists of anyons $\mathcal{A} = \{1, a, a^2, \ldots, a^{p-1}\}$ with fusion rules and topological spins given by
\begin{equation}
    a^p = 1, \qquad 
    \theta_{a} = \exp\!\left(\tfrac{2\pi i k}{p}\right).
\end{equation}
Although a representative of this QCA  has been constructed before in Ref.~\cite{Haah2021CliffordQCA} via the shifting of invertible subalgebras~\cite{Haah2023InvertibleSubalgebras}, we provide here a novel construction through TQFTs and the cup product formalism. This approach places the $\ZZ_p$ QCA in $3{+}1$D on the same footing as other $\ZZ_2$ QCAs in $3{+}1$D and higher dimensions. Moreover, we explicitly show that the order of these QCAs, matches that of the Witt group.

The Hamiltonian of the gauged $\mathbb{Z}_p$ 1-form SPT phase with action $S = \tfrac{k}{p} B_2 \cup B_2 \in H^4(B^2\mathbb{Z}_p, \mathbb{R}/\mathbb{Z})$ is
\begin{eqs}
    H^{\ZZ_p^{(k)}} =& -\sum_t Z_{\partial t} - \sum_e X_{\delta \be} \prod_{f'} Z_{f'}^{k \int \bface' \cup \be + \be \cup \bface'} \\
    &+ h.c. \, ,
\label{eq: 1/p BUB QCA initial Hamiltonian}
\end{eqs}
where $Z_{\partial t}$ is the flux operator, defined as the product of $Z_f$ over the 2-cell faces of a 3-cell $t$, and $X_{\delta e}$ is the product of $X_f$ around an edge $e$, with orientations specified as in Eq.~\eqref{eq:Z_partialt_defintion}. On the cubic lattice, the corresponding Hamiltonian terms are shown in Fig.~\ref{fig:Zp^k_original_Hamiltonian}.

\begin{figure*}[t]
    \centering
    \includegraphics[scale=0.5]{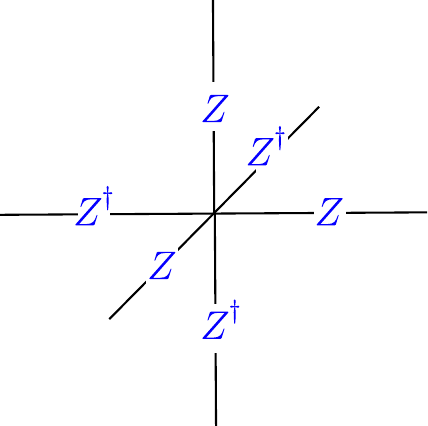}
    \hfill
    \includegraphics[scale=0.5]{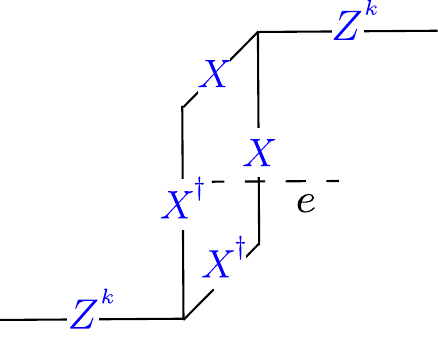}
    \hfill
    \includegraphics[scale=0.5]{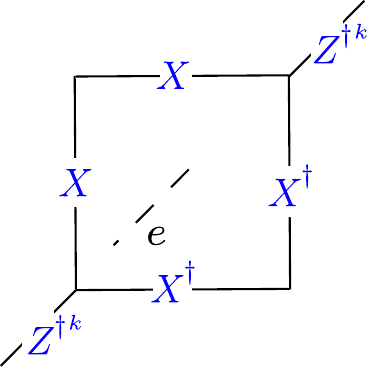}
    \hfill
    \includegraphics[scale=0.5]{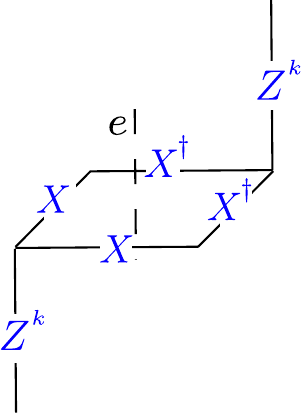}
\caption{
A valid choice of parent Hamiltonian for the gauged 1-form SPT phase associated with the cocycle $S = \tfrac{k}{p} B_2 \cup B_2 \in H^4(B^2\mathbb{Z}_p, \mathbb{R}/\mathbb{Z})$, illustrated on the dual lattice. However, the Hamiltonian terms are not independent, and therefore cannot be used directly as separators in the QCA construction.
}
\label{fig:Zp^k_original_Hamiltonian}
\end{figure*}

Unlike the 3-fermion case in Eq.~\eqref{eq: 3fWW_Hamiltonian 2}, we cannot simply remove the $Z_{\partial c}$ term on the cubic lattice (here we use $c$ instead of $t$ to indicate that the construction is defined on the cubic lattice).
The second term in Eq.~\eqref{eq: 1/p BUB QCA initial Hamiltonian} is an $X$ plaquette operator multiplied by two $Z$ operators. Consequently, taking the product around a cube leaves behind two $Z_{\partial c}$ terms instead of one. To address this, we look for a better set of stabilizer generators that eliminates $Z_{\partial t}$. These new generators will serve as separators for the QCA construction.
Our approach is to multiply the second term by $Z_{\partial c}$, which preserves the stabilizer group. This allows us to rewrite it as a $Z_f$ operator times an additional operator $G$ defined below in Eq.~\eqref{eq: G_e^{(k)} definition}. First, using the identity
\begin{eqs}
    &\bface' \cup \be - \be \cup \bface' \\ 
    & = \delta \be \cup_1 \bface' - \be \cup_1 \delta \bface' 
    -\delta (\be \cup_1 \bface'),
\label{eq: rearrange cup to cup1}
\end{eqs}
we can transform the second term of Eq.~\eqref{eq: 1/p BUB QCA initial Hamiltonian} to:
\begin{eqs}
    - \sum_e X_{\delta \be} \prod_{f'} Z_{f'}^{k\int 2 \be \cup \bface' + \delta \be \cup_1 \bface' - \be \cup_1 \delta \bface'}.
\label{eq: e U f and f U e to de U1 f}
\end{eqs}
Multiplying by $Z_{\partial c}^k$ does not change the stabilizer group generated by these operators, which leads to
\begin{eqs}
    & -\sum_e  \Biggl[\prod_c Z_{\partial c}^{k\int \be \cup_1 \bc} \, X_{\delta \be} \prod_{f'} Z_{f'}^{k\int 2 \be \cup \bface' + \delta \be \cup_1 \bface' - \be \cup_1 \delta \bface'} \Biggr]\\
    &= -\sum_e X_{\delta \be} Z_{f'}^{k\int 2 \be \cup \bface' + \delta \be \cup_1 \bface'} \\
    &= -\sum_e G_e^{(k)} \prod_{f'} Z_{f'}^{2k\int \be \cup \bface'} \, ,
\label{eq: stabilizers of G times Z^2}
\end{eqs}
where by analogy with the $3{+}1$D bosonization \cite{Chen2019Bosonization}, we define
\begin{eqs}
    G_e^{(k)} := X_{\delta \be} \prod_{f'} Z_{f'}^{k\int \delta \be \cup_1 \bface'}.
\label{eq: G_e^{(k)} definition}
\end{eqs}
In the $\mathbb{Z}_p$ case, the operators $G_e^{(k)}$ on different edges do not commute. However, as shown in Eq.~\eqref{eq: stabilizers of G times Z^2}, the combinations $G_e^{(k)} \prod_{f'} Z_{f'}^{2k\int \be \cup \bface'}$ for different edges do commute.

Next, since $\be \cup \bface'$ defines a one-to-one correspondence between edges and faces on the cubic lattice, we can rewrite this term as
\begin{eqs}
    -\sum_f Z_f^{2k} \prod_e {G_e^{(k)}}^{\int \be \cup \bface},
\label{eq: 1/p BUB QCA modified plaquette terms}
\end{eqs}
and we will show that it generates the entire stabilizer group. In fact, the product of these terms around a cube reproduces $Z_{\partial c}^{2k}$, with all $G_e^{(k)}$ factors canceling. An alternative way to see this is by substituting $\be = \delta \bv$ into $G_e^{(k)}$, which gives
\begin{equation}
    \prod_{e \supset v} G_e^{(k)} = X_{\delta (\delta \bv)} Z_{f'}^{k\int \delta (\delta \bv) \cup_1 \bface'} = 1.
\end{equation}
For odd $p$, the operator $Z_{\partial c}^{2k}$ already generates $Z_{\partial c}$, ensuring that the entire stabilizer group is captured.
Consequently, we obtain an equivalent Hamiltonian with the same ground state,
\begin{eqs}
    H^{\ZZ_p^{(k)}}_{\text{separator}} =& -\sum_f Z_f^{2k} \prod_e {G_e^{(k)}}^{\int \be \cup \bface} + h.c. \\
    =& -\sum_f \left({\oZ_f^{(k)}}\right)^{2k}+ h.c.~,  
\label{eq: 1/p BUB QCA separator Hamiltonian}
\end{eqs}
where we define
\begin{equation}
    \oZ_f^{(k)} := Z_f \prod_e {G_e^{(k)}}^{\frac{1}{2k}\int \be \cup \bface}~,
\end{equation}
with $\tfrac{1}{2k}$ denoting the multiplicative inverse of $2k$ in $\mathbb{Z}_p$. The construction is illustrated in Fig.~\ref{fig:Separators_Zp^k}.

\begin{figure*}[t]
    \centering
    \includegraphics[scale=.3]{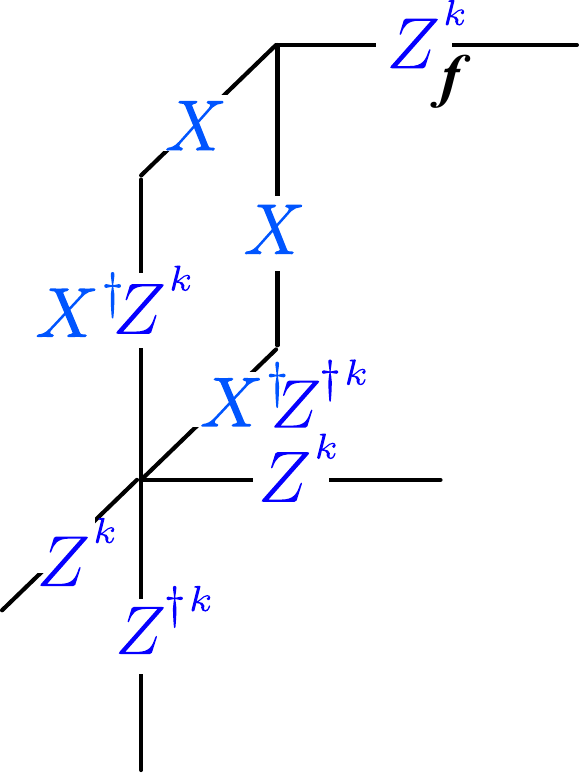} \quad\quad\quad
    %\hfill
    \includegraphics[scale=.3]{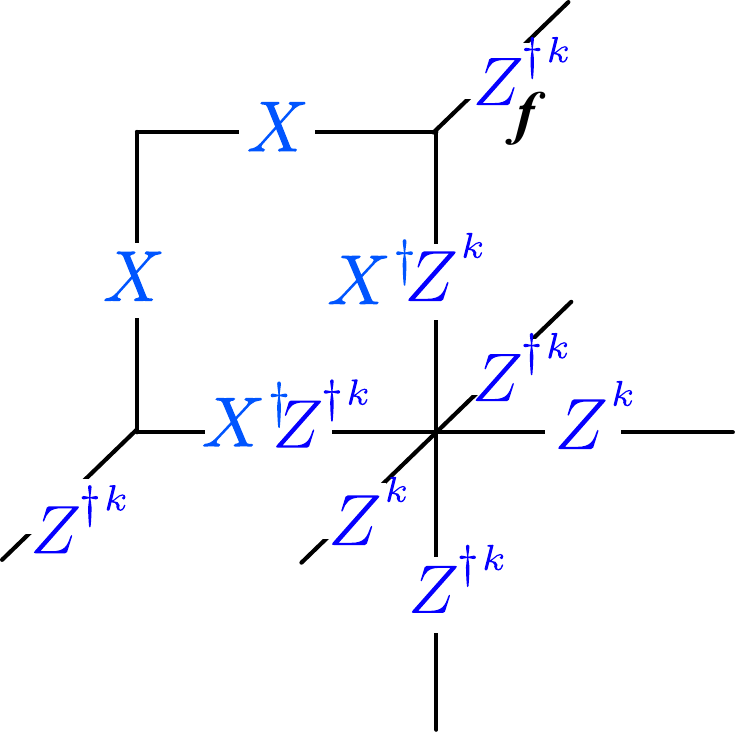} \quad\quad\quad
    %\hfill
    \includegraphics[scale=.3]{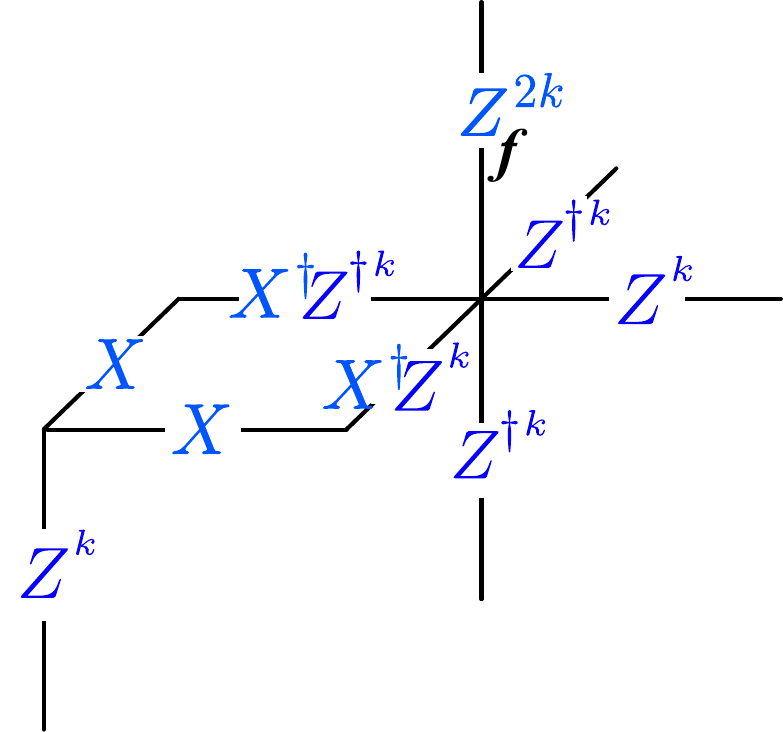}
    \caption{Separators $\overline{Z}_f^{2k}$ shown on the dual lattice.
    }
\label{fig:Separators_Zp^k}
\end{figure*}

We also define the ``hopping term'' as
\begin{eqs}
    U_f^{(k)} := X_f \prod_{f'} Z_{f'}^{k\int \bface' \cup_1 \bface}.
\end{eqs}
On a cubic lattice, $U_f^{(k)}$ can be depicted as 
\begin{equation}
    U_f^{(k)} = \vcenter{\hbox{\includegraphics[scale=0.23]{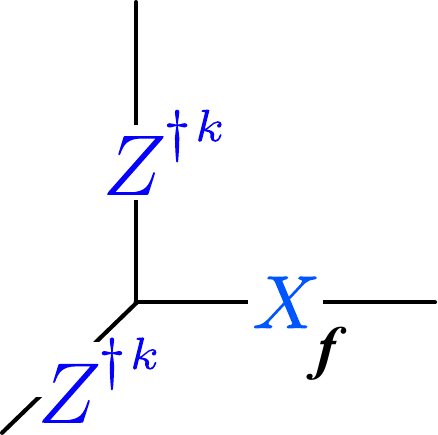}}},
    \hspace{1em}
    ~\vcenter{\hbox{\includegraphics[scale=0.23]{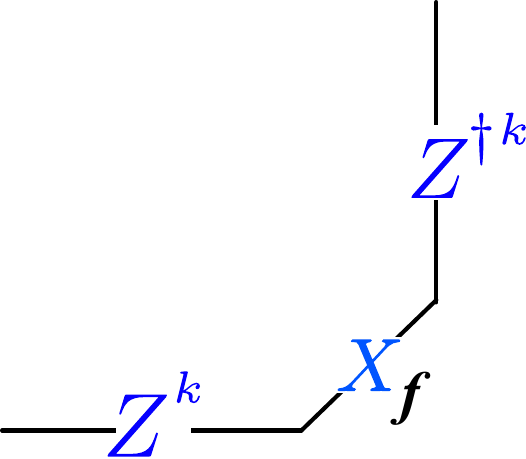}}},
    \hspace{1em}
    ~\vcenter{\hbox{\includegraphics[scale=0.23]{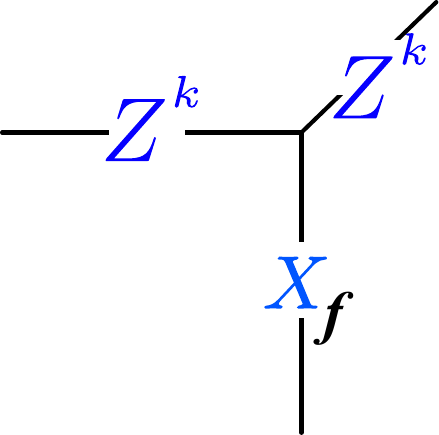}}},
\end{equation}
where faces on the original lattice are represented as edges on the dual lattice.
One checks that $U_f^{(k)}$ commutes with $G_e^{(k)}$
\begin{eqs}
    U_f^{(k)} G_e^{(k)} 
    &=
    \Bigl( X_f \prod_{f'} Z_{f'}^{k\int \bface' \cup_1 \bface} \Bigr)
    \Bigl( X_{\delta \be} \prod_{f''} Z_{f''}^{k\int \delta \be \cup_1 \bface''}  \Bigr) \\
    &= \exp\Bigl(\frac{2 \pi i k}{p}
    \bigl[\int \delta \be \cup_1 f - \delta \be \cup_1 f\bigr] \Bigr) \\
    & \quad~~ \Bigl( X_{\delta \be} 
    \prod_{f''} Z_{f''}^{k\int \delta \be \cup_1 \bface''}  \Bigr)
    \Bigl( X_f \prod_{f'} Z_{f'}^{k\int \bface' \cup_1 \bface} \Bigr) \\
    &= G_e^{(k)} U_f^{(k)}~.
\end{eqs}
As a result, each $U_f^{(k)}$ fails to commute with exactly one $Z_f^2$ term in Eq.~\eqref{eq: 1/p BUB QCA modified plaquette terms}. Hence, $\{U_f^{(k)}\}$ are the non-commuting ``flippers'' used to build the QCA, just as in Eq.~\eqref{eq: fermionic hopping operators} for the 3-fermion example.

To define the commuting flippers, we decorate $U_f^{(k)}$ with a product of $\overline{Z}_f$ 
\begin{eqs}
    \oX_f^{(k)} :=& U_f^{(k)} \prod_{f'} {\oZ_{f'}^{\dagger}}^{k\int \bface' \cup_1 \bface} \\
    =& X_f \prod_{f'} \bigl(Z_{f'} \oZ_{f'}^{\dagger}\bigr)^{k\int \bface' \cup_1 \bface} \\
    =& X_f \prod_{f'}
    \Bigl[ \prod_e {G_e^{(k)}}^{-\frac{1}{2k}\int \be \cup \bface} \Bigr]^{k\int \bface' \cup_1 \bface} \\
    =& X_f \prod_{f'}
    \Bigl[ \prod_e {G_e^{(k)}}^{-\frac{1}{2}\int \be \cup \bface} \Bigr]^{\int \bface' \cup_1 \bface},
    %= X_f \prod_{f'}
    %\Bigl[ \prod_e {G_e^{(k)}}^{\frac{p+1}{2}\int \be \cup \bface} \Bigr]^{\int \bface' \cup_1 \bface},
\label{eq: complete Zp QCA}
\end{eqs}
in analogy to Eq.~\eqref{eq: definition of tilde X^A and X^B}. Hence, the complete $\ZZ_p^{(k)}$ QCA, denoted $\alpha_p^{(k)}$, is defined by
\begin{eqs}
    \alpha_p^{(k)}: Z_f \rightarrow&~\alpha_p^{(k)}(Z_f) 
    = \oZ_f^{(k)} \\
    &= Z_f \prod_e {G_e^{(k)}}^{\frac{1}{2k}\int \be \cup \bface}, \\
    \alpha_p^{(k)}: X_f \rightarrow& ~\alpha_p^{(k)}(X_f) 
    = \oX_f^{(k)} \\ 
    &= X_f \prod_{f'} \Bigl[ \prod_e {G_e^{(k)}}^{-\frac{1}{2}\int \be \cup \bface} \Bigr]^{\int \bface' \cup_1 \bface}.
    \label{eq: definition of alpha_p}
\end{eqs}
As in Eq.~\eqref{eq:U^A_invariant_under_QCA} and Eq.~\eqref{eq:U^B_invariant_under_QCA}, we find that $U_f^{(k)}$ remains invariant under this QCA
\begin{equation}
\begin{aligned}
    U_f^{(k)} \rightarrow & ~\oX_f \prod_{f'} \oZ_{f'}^{k\int \bface' \cup_1 \bface} \\ 
    &= U_f^{(k)} \prod_{f'} {\oZ_{f'}^{\dagger}}^{k\int \bface' \cup_1 \bface} \prod_{f'} \oZ_{f'}^{k\int \bface' \cup_1 \bface} 
    = U_f^{(k)}.
\end{aligned}
\end{equation}
It also follows from Eq.~\eqref{eq: complete Zp QCA} that $Z_{\partial c}\oZ_f^{(k)}$ is invariant under this QCA
\begin{equation}
    \overline{Z}_{\partial c}^{(k)} = \prod_f \left(\oZ_f^{(k)}\right)^{[\bface](\partial c)} = \prod_f Z_f^{[\bface](\partial c)} = Z_{\partial c}.
\end{equation}
We stress, however, that $G_e^{(k)}$ is \emph{not} invariant under the QCA, because in the $\ZZ_p$ case, $G_e^{(k)}$ cannot be expressed as a product of $U_f^{(k)}$ and $Z_{\partial c}$. Concretely, there is a leftover term of $Z^2$, which vanishes only when $p=2$
\begin{eqs}
    G_e^{(k)} =& X_{\delta \be} \prod_{f'} Z_{f'}^{k\int \delta \be \cup_1 \bface'} \\
    =& \Bigl( X_{\delta \be} \prod_{f'} Z_{f'}^{k\int  \bface' \cup_1  \delta \be} \Bigr)
    \prod_{f'} Z_{f'}^{k\int  \delta \be \cup_1 \bface' - \bface' \cup_1  \delta \be} \\
    =& U_{\delta \be}^{(k)} \prod_{f'} Z_{f'}^{k\int  \delta \be \cup_2 \delta \bface' + 2 \delta \be \cup_1 \bface'}\\
    =&U_{\delta \be}^{(k)} \prod_{f'} Z_{f'}^{k\int  \delta \be \cup_2 \delta \bface' + 2 (\bface' \cup \be - \be \cup \bface' + \be \cup_1 \delta \bface')}\\
    %= U_{\delta \be}^{(k)} \prod_{c} Z_{\partial c}^{k\int  \delta \be \cup_2 \bc}
   % \prod_{f'} Z_{f'}^{2k \int  \delta \be \cup_1 \bface'} \\
    =&  U_{\delta \be}^{(k)} \prod_{c} Z_{\partial c}^{k\int  \delta \be \cup_2 \bc + 2 \be \cup_1 \bc}
    \prod_{f'} Z_{f'}^{2k \int \bface' \cup \be - \be \cup \bface'},
\end{eqs}
where we have employed Eq.~\eqref{eq: rearrange cup to cup1} and the recursive formula for higher cup products
\begin{eqs}
    \delta (A_2 \cup_2 B_2) &= \delta A_2 \cup_2 B_2 + A_2 \cup_2 \delta B_2 \\
    & \hspace{2em} + A_2 \cup_1 B_2 + B_2 \cup_1 A_2.
\label{eq: 2 2 2 recursive formual}
\end{eqs}
Notably, on the cubic lattice, there is a one-to-one correspondence between edges and faces, characterized by $\int \be \cup \bface \neq 0$ (and similarly a dual correspondence given by $\int \bface \cup \be \neq 0$). More precisely, the following identity holds on the cubic lattice, but not on a generic triangulation:
\begin{equation}
    \prod_{e} \Bigl( \prod_{f'} Z_{f'}^{\int \be \cup \bface'}\Bigr)^{\int \be \cup \bface} = Z_f.
\end{equation}
We can now decompose $\alpha_p^{(k)}(Z_f)$ as
\begin{eqs}
    &\alpha_p^{(k)}(Z_f) \\
    &= Z_f \prod_e {G_e^{(k)}}^{\frac{1}{2k}\int \be \cup \bface} \\
    &= \prod_e \Bigl( U_{\delta \be}^{(k)} \prod_{c} Z_{\partial c}^{k\int  \delta \be \cup_2 \bc + 2 \be \cup_1 \bc} \Bigr)^{\frac{1}{2k}\int \be \cup \bface} \\
    & \hspace{3em} \times \prod_e
    \Bigl( \prod_{f'} Z_{f'}^{\int \bface' \cup \be } \Bigr)^{\int \be \cup \bface} \\
    &= Z_{T(f)} \prod_e \Bigl( U_{\delta \be}^{(k)} \prod_{c} Z_{\partial c}^{k\int  \delta \be \cup_2 \bc + 2 \be \cup_1 \bc} \Bigr)^{\frac{1}{2k}\int \be \cup \bface}~,
\label{eq: alpha_p^k translate Zf along (-1, -1, -1)}
\end{eqs}
in which we have used
\begin{equation}
    \prod_e \Bigl( \prod_{f'} Z_{f'}^{\int \bface' \cup \be } \Bigr)^{\int \be \cup \bface}
    =Z_{T(f)},
    \label{eq: two e f duality are shift}
\end{equation}
where $T(f)$ denotes the face $f$ shifted by $(-1, -1, {-}1)$ on the cubic lattice. Thus, $\oZ_f$ is simply $Z_{T(f)}$ multiplied by $U_{f'}^{(k)}$ and $Z_{\partial c}$. When we apply $\alpha^{(k)}_p$ once again, the $U_{f'}^{(k)}$ and $Z_{\partial c}$ factors remain unchanged, and only $Z_{T(f)}$ is translated to $Z_{T(T(f))}$ (up to some additional $U_{f'}^{(k)}$ and $Z_{\partial c}$ factors), as illustrated in Fig.~\ref{fig:twice action of Zp QCA}.
\begin{figure*}
    \centering
    \includegraphics[width=0.9\linewidth]{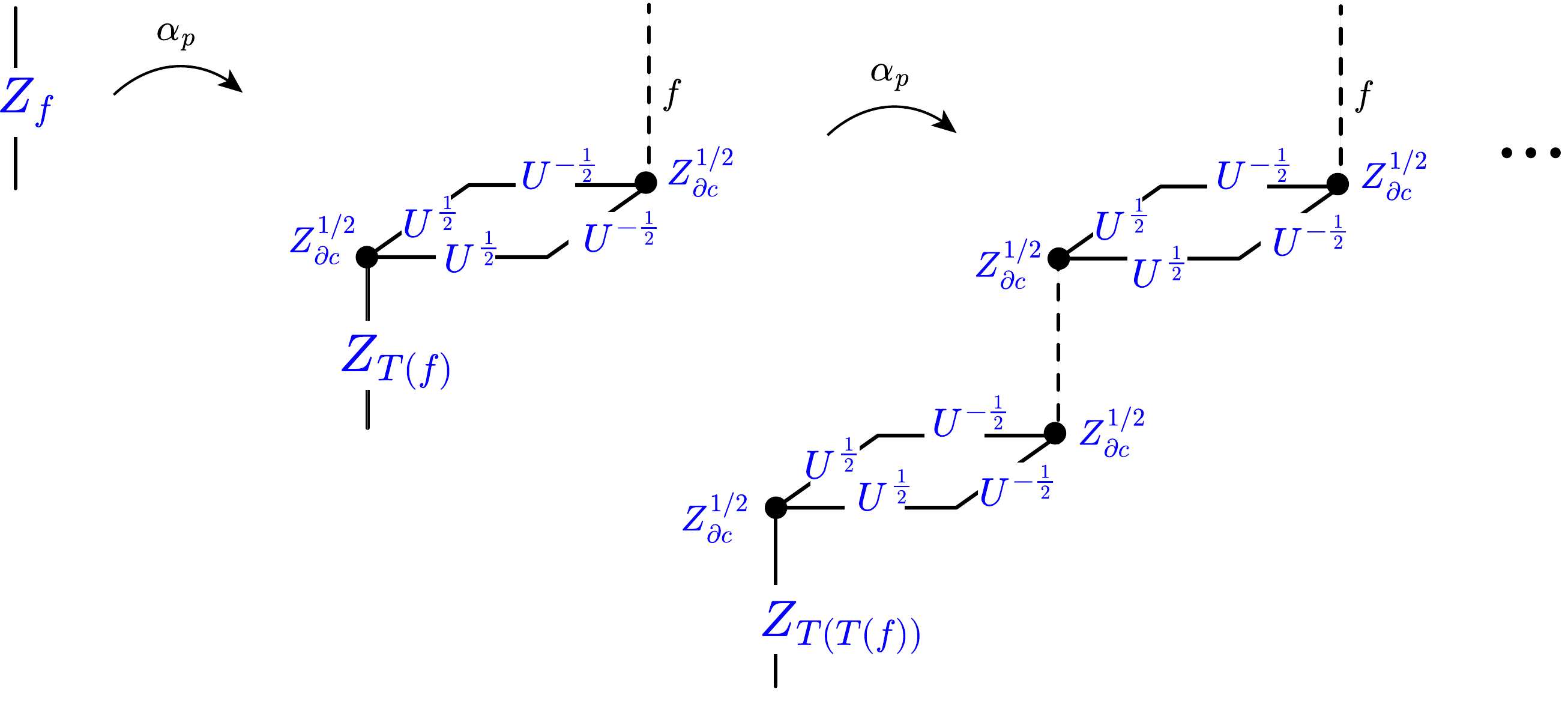}
    \caption{An illustration of the action of $\alpha^{(k)}_p$ QCA on a single Pauli-$Z$ operator on an $xy$ face (shown here as a $z$ edge on the dual lattice). Repeated applications forms a stair-like structure, where face $f$ is represented by an edge and cube $c$ is represented by a dot. The Pauli-$Z$ operators in other directions follow a similar inductive law.}
    \label{fig:twice action of Zp QCA}
\end{figure*}
We emphasize that the QCA constructed in Eq.~\eqref{eq: definition of alpha_p} is not the unique way to disentangle the ground state of the $\mathbb{Z}_p^{(k)}$ Walker–Wang Hamiltonian in Eq.~\eqref{eq: 1/p BUB QCA initial Hamiltonian}.
For example, Eq.~\eqref{eq: e U f and f U e to de U1 f} can be instead written as
\begin{equation}
    - \sum_e X_{\delta \be} \prod_{f'} Z_{f'}^{k\int 2 \bface' \cup \be  - \delta \be \cup_1 \bface' + \be \cup_1 \delta \bface'}.
\end{equation}
Following the same procedure, we obtain separators and flippers
\begin{eqs}
    \beta_p^{(k)}: Z_f \rightarrow& ~\beta_p^{(k)}(Z_f) \\
    &= Z_f \prod_e {G_e^{(-k)}}^{\frac{1}{2k}\int \bface \cup \be}, \\
    \beta_p^{(k)}: X_f \rightarrow& ~\beta_p^{(k)}(X_f) \\
    &= U_f^{(-k)} \prod_{f'} \Bigl[\beta_p^{(k)}(Z_f)^\dagger\Bigr]^{-k \int \bface' \cup_1 \bface} \\
    &= X_f \prod_{f'} \Bigl[Z_{f'} \beta_p^{(k)}(Z_{f'})^\dagger\Bigr]^{-k \int \bface' \cup_1 \bface} \\
    &= X_f \prod_{f'} \Bigl[ \prod_e {G_e^{(k)}}^{\frac{1}{2}\int \bface \cup \be} \Bigr]^{\int \bface' \cup_1 \bface}.
    \label{eq: definition of beta_p}
\end{eqs}
The QCA $\beta_p^{(k)}$ preserves both $U_f^{(-k)}$ and $Z_{\partial c}$. Combined with Eqs.~\eqref{eq: alpha_p^k translate Zf along (-1, -1, -1)} and~\eqref{eq: definition of beta_p}, it is straightforward to verify that
\begin{equation}
    \beta_p^{(-k)}(\alpha_p^{(k)}(Z_f)) = Z_f,
\end{equation}  
on the cubic lattice. Therefore, $\beta_p^{(-k)}$ is the inverse of $\alpha_p^{(k)}$ on the cubic lattice.  

Both $\alpha_p^{(-k)}$ and $\beta_p^{(-k)}$ disentangle the same Walker-Wang ground state, suggesting that these two QCAs belong to the same class. In Sec.~\ref{sec: Algebraic formalism for Clifford QCA}, we will establish the operator algebra formalism to show that if two QCAs disentangle the same stabilizer group, they are equivalent up to finite-depth quantum circuits (FDQC) and translations. Consequently, we conclude that $\alpha_p^{(-k)} \circ \alpha_p^{(k)} \sim \mathbf{1}$, where $\mathbf{1}$ is the identity operator and $\sim$ denotes equivalence up to FDQC and translations.

At the end of this section, we will prove that the order of $\ZZ_p^{(k)}$ QCA is 2 or 4, depending on p mod 4 be 1 or 3. The proof will be divided into two parts.

%%%%%%%%%%%%%%%%%%%%%%%%%%%%%%%%%%%%%%%%%%%%%%%%%%%%%%%%%%%%%%%%%%%%%%%%%%%%%%%%
\subsubsection{Order of $\ZZ_p$ QCA for $p\equiv 1 \pmod{4}$}

We will prove that when \(p \equiv 1 \ (\text{mod}\ 4)\), the \(\ZZ_p^{(k)}\) QCA can be transformed into the \(\ZZ_p^{(-k)}\) QCA \(\alpha_p^{(-k)}\)---with action \(S=-\tfrac{k}{p}B_2\cup B_2\)---by an on-site unitary transformation. Furthermore, in the Appendix~\ref{app: Polynomial formalism for Clifford QCA}, we use a polynomial representation to show that \(\ZZ_p^{(-k)}\) QCA \(\alpha_p^{(-k)}\) is the inverse of \(\ZZ_p^{(k)}\) QCA \(\alpha_p^{(k)}\) up to a finite-depth circuit.

When \(p \equiv 1 \ (\text{mod}\ 4)\), $-1$ is a quadratic residue mod $p$, i.e., there exists $a$ such that $a^2 \equiv -1 \pmod{p}$.

We now define the following transformation \(c_p\) on each face
\begin{equation}
    c_p (Z_f) := Z_f^a, 
    \quad 
    c_p (X_f) := X_f^{-a}.
\end{equation}
This transformation is unitary because \(-a\) is the multiplicative inverse of \(a\) mod $p$. Moreover, it can be viewed as a squareroot of charge conjugation. In the cochain language, this corresponds to the map \(B \rightarrow aB\). Substituting \(B \to aB\) into the original \(\ZZ_p^{(k)}\) action \(S=\tfrac{k}{p}B_2\cup B_2\) yields the action \(\,S'=-\tfrac{k}{p}B_2\cup B_2\), with the corresponding \(\ZZ_p^{(-k)}\) QCA defined as $c_p \circ \alpha_p^{(k)} \circ c_p^{-1}$.
Indeed, we can verify that this matches the expression for $\alpha_p^{(-k)}(Z_f)$ defined in Eq.~\eqref{eq: definition of alpha_p}
\begin{eqs}
    c_p(\alpha_p^{(k)} &(c_p^{-1}(Z_f))) \\
    =& 
    c_p^{-1}\left(
    Z_f \prod_e {\Bigl[ X_{\delta \be} \prod_{f'} Z_{f'}^{k\int \delta \be \cup_1 \bface'}\Bigr]}^{\frac{1}{2k}\int \be \cup \bface}
    \right)^{-a} \\
    =& Z_f \prod_e {\Bigl[ X^\dagger_{\delta \be} \prod_{f'} Z_{f'}^{k\int \delta \be \cup_1 \bface'}\Bigr]}^{\frac{1}{2k}\int \be \cup \bface} \\
    =& Z_f \prod_e {G^{(-k)}_e}^{-\frac{1}{2k}\int \be \cup \bface}
    = \alpha_p^{(-k)}(Z_f).
\end{eqs}
 There is another way to express the above quantity as
\begin{eqs}
    & Z_f \prod_e {\Bigl[ X^\dagger_{\delta \be} \prod_{f'} {Z^\dagger_{f'}}^{k\int \delta \be \cup_1 \bface'}\Bigr]}^{\frac{1}{2k}\int \be \cup \bface} \\
    & \hspace{8em} \times \prod_{e, f'} \Bigl[ Z_{f'}^{\int \delta \be \cup_1 \bface'}
    \Bigr]^{\int \be \cup \bface} \\
    &= Z_f \prod_e {G_e^{(k)}}^{-\frac{1}{2k}\int \be \cup \bface}
    \prod_{e, f'} \Bigl[ Z_{f'}^{\int \delta \be \cup_1 \bface'}
    \Bigr]^{\int \be \cup \bface}.
\end{eqs}
Now, we focus on the cubic lattice to proceed.
Note that \(\int \be \cup \bface \neq 0\) induces a one-to-one correspondence between edges \(e\) and faces \(f\) on the cubic lattice. Hence, we can rewrite the above expression as
\begin{eqs}
    &{G_e^{(k)}}^{-\frac{1}{2k}}
    \prod_{f} Z_f^{\int \be \cup \bface}
    \prod_{f'}  Z_{f'}^{\int \delta \be \cup_1 \bface'} \\
    &= {G_e^{(k)}}^{-\frac{1}{2k}} \prod_{f} Z_f^{\int \bface \cup \be+ \be \cup_1 \delta \bface},
\end{eqs}
where we have used Eq.~\eqref{eq: rearrange cup to cup1}. By applying the other duality between \(e\) and \(f\) for \(\int \bface \cup \be \neq 1\), we can rearrange it into
\begin{equation}
\begin{aligned}
    &\alpha_p^{(-k)}(Z_{T^{-1}(f)}) \\
    &= Z_f \prod_e {G_e^{(k)}}^{-\frac{1}{2k} \int \bface \cup \be} 
    \prod_{e, f'} \Bigl[ Z_{f'}^{\int \be \cup_1 \delta \bface'}\Bigr]^{\int \bface \cup \be} \\
    &= Z_f \prod_e {G_e^{(k)}}^{-\frac{1}{2k} \int \bface \cup \be} 
    \prod_{e, c} \Bigl[ Z_{\partial c}^{\int \be \cup_1 \bc}\Bigr]^{\int \bface \cup \be},
\end{aligned}
    \label{eq: alpha_p^-k on Z_f}
\end{equation}
where the overall shift along \(({-1}, {-1}, {-1})\) comes from Eq.~\eqref{eq: two e f duality are shift}.

Comparing \(\beta_p^{(-k)}(Z_f)\) from Eq.~\eqref{eq: definition of beta_p} with \(\alpha_p^{(-k)}(Z_{T^{-1}(f)})\) in Eq.~\eqref{eq: alpha_p^-k on Z_f}, we find that they differ only by a \(Z_{\partial c}\) term. Since such \(Z_{\partial c}\) factors lie in the stabilizer group, we conclude that both \(\alpha_p^{(-k)}\) and \(\beta_p^{(-k)}\) disentangle the same \(\ZZ_p^{(-k)}\) Walker-Wang ground state, which corresponds to the action \(S' = -\frac{k}{p} B_2 \cup B_2\). 

{
\change 
Before proceeding, we emphasize a key result stating that any Clifford QCAs that disentangle the same stabilizer group are equivalent. 
Corollary~\ref{corollary: separated QCAs trivial}, proved in Sec.~\ref{sec: QCAs with the same stabilizer group}, is essential for our argument. Even though two QCAs disentangle the same Walker-Wang ground state, it is not immediately obvious that these two QCAs should be equivalent since they map the stabilizers differently. The corollary guarantees that it suffices to consider a single representative QCA that disentangles the given ground state, without concerning that different choices of Hamiltonian lead to inequivalent QCAs.

Therefore, by Corollary~\ref{corollary: separated QCAs trivial}}, $ \alpha_p^{(-k)} $ and $ \beta_p^{(-k)} $ are related by a finite-depth quantum circuit (FDQC) and translations.
In Sec.~\ref{order of 1 mod 4 Zp QCA in matrix form}, we write $ \alpha_p^{(-k)} $ explicitly as matrices over polynomial rings and exhibit the FDQC and translations that relate these two matrices.

%%%%%%%%%%%%%%%%%%%%%%%%%%%%%%%%%%%%%%%%%%%%%%%%%%%%%%%%%%%%%%%%%%%%%%%%%%%%%%%%
\subsubsection{Order of $\ZZ_p$ QCA for $p\equiv 3 \pmod{4}$}

In the case that $p \equiv 3 \ (\text{mod}\ 4)$, we'll show that two copies of $\ZZ_p^{(1)}$ QCA are transformed into two copies of $\ZZ_p^{(-1)}$ QCA by a local unitary communicating the two copies.
In the case where $p \equiv 3 \ (\text{mod}\ 4)$, the term $e^{\frac{p-1}{4}}$ is no longer valid, so we should seek help from another copy of QCA. To begin with, we present a number theory proposition, that there exists two integers $a$ and $b$, satisfying $a^2+b^2 \equiv -1 \ (\text{mod}\ p)$.
To prove this, we first modify the equation to be $(1+x^2)a^2 \equiv -1 \ (\text{mod}\ p)$. There must exist an $x$ for $1+x^2$ to be quadratic non-residue. Otherwise quadratic residue can be extended to the whole residue class, which is obviously contradictory. Since  quadratic non-residues correspond to the odd power of any primitive roots, the quotient of two quadratic non-residues (in number theory that means the multiplication of one and the other's inverse) is exactly a quadratic residue. Namely, there must exists an $a$ for $(1+x^2)a^2 \equiv -1 \ (\text{mod}\ p)$ when $1+x^2$ is a quadratic non-residue. Denoting $x\times a$ as $b$, we obtain the solution for $a^2+b^2 \equiv -1 \ (\text{mod} p)$ 

We label the qudits of the two QCA copies as $Z_f^1$ and $Z_f^2$. The transformation is then given by 
\begin{align}
    Z_f^1&\rightarrow (Z_f^1)^a (Z_f^2)^b~,\\
    Z_f^2&\rightarrow (Z_f^1)^b(Z_f^2)^{-a}~,\\
    X_f^1&\rightarrow (X_f^1)^{-a}(X_f^2)^{-b}~,\\
    X_f^2&\rightarrow (X_f^1)^{-b}(X_f^2)^a~.
\end{align}
While in the cochain representation, that can be shown as $B^1\rightarrow aB^1+bB^2, B^2\rightarrow bB^1-aB^2$. Substituting the transformation into the two-copy $\ZZ_p^{(1)}$ action, we obtain $S'=-\frac{1}{p}(B^{1}\cup B^{1}+B^{2}\cup B^{2})$, the action for two-copy $\ZZ_p^{(-1)}$ QCA.

As we have proved that $\ZZ_p^{(1)}$ QCA and $\ZZ_p^{(-1)}$ QCA are inverse in the appendix, we finally reached the conclusion that $\ZZ_p^{(1)}$ QCA has order 2 when $p \equiv 1 \ (\text{mod}\ 4)$,and that it has order 4 when $p \equiv 3 \ (\text{mod}\ 4)$.

%%%%%%%%%%%%%%%%%%%%%%%%%%%%%%%%%%%%%%%%%%%%%%%%%%%%%%%%%%%%%%%%%%%%%%%%%%%%%%%%
\subsubsection{Polynomial expression of odd prime QCA}

First, we have
\begin{align}
    \bU_f^{(k)} &= \bX_f^{(k)} + k \bZ_{f'} \bM_{f' \cup_1 f}, \\
    \tilde \bU_f^{(k)} &=\bX_f^{(k)} + k \bZ_{f'} \bM_{f \cup_1 f'}^\dagger \\
    \bG_e^{(k)}&= \tilde \bU_{\delta e}^{(k)} .
\end{align}
In term of the polynomial, they are
\begin{align}
    \bU^{(k)} &= \begin{pmatrix}
        \bd_{f_0,f}\\k \bM_{f_0\cup_1f}\end{pmatrix}, \\
    \tilde \bU^{(k)}&= \begin{pmatrix}
       \bd_{f_0,f}\\k \bM_{f'\cup_1f}^\dagger
    \end{pmatrix}, \\
    \bG^{(k)}&= \begin{pmatrix}
        \bd_{f_0,\delta e}\\k \bM_{\delta e\cup_1f_0}^\dagger \end{pmatrix}.     
\end{align}

Then separators and flippers for the odd prime QCA are
\begin{align}
       \overline{\bZ}_f^{(k)} &= \bZ_f + \frac{1}{2k} \bG_e^{(k)} \bM_{e \cup f} \label{eq:bZ_odd_prime}\\
       \overline{\bX}_f^{(k)} &= \bU_f^{(k)} - k\overline{\bZ}_{f'} \bM_{f'\cup_1 f} \notag \\
    &= \bX_f -\frac{1}{2}\bG^{(k)}_e\bM_{e\cup f'}\bM_{f'\cup_1f} \label{eq:bX_odd_prime}
\end{align}
In polynomial form, we have
\begin{align}
    &\overline{\bZ}^{(k)}=  \begin{pmatrix} \frac{1}{2k}\bM_{\partial f_0\cup f}\\
    \bd_{f_0,f} + \frac{1}{2} \bM^\dagger_{f'\cup_1 f_0} \bM_{\partial f'\cup f}\end{pmatrix}~,
\end{align}
whose matrix elements are
\begin{equation*}
    \begin{pmatrix}
 0 & \frac{1}{2 k x z}-\frac{1}{2 k x} & \frac{1}{2 k x y}-\frac{1}{2 k x} \\
 \frac{1}{2 k y}-\frac{1}{2 k y z} & 0 & \frac{1}{2 k x y}-\frac{1}{2 k y} \\
 \frac{1}{2 k z}-\frac{1}{2 k y z} & \frac{1}{2 k z}-\frac{1}{2 k x z} & 0 \\
 \frac{1}{2 y z}+\frac{1}{2} & \frac{1}{2 x z}-\frac{1}{2 z} & \frac{1}{2}-\frac{1}{2 x} \\
 \frac{1}{2 y^2 z}-\frac{1}{2 y z} & \frac{1}{2 x y z}-\frac{1}{2 y z}+\frac{1}{2 z}+\frac{1}{2} & \frac{1}{2 y}-\frac{1}{2} \\
 \frac{1}{2 y z}-\frac{1}{2 y z^2} & \frac{1}{2 z^2}-\frac{1}{2 z} & \frac{1}{2 x y z}-\frac{1}{2 z}+1 \\
\end{pmatrix}
\end{equation*}
and
\begin{align}
    \overline{\bX}^{(k)}&=\begin{pmatrix} \bd_{f_0,f}  - \frac{1}{2}\bM_{\partial f_0 \cup f'}\bM_{f'\cup_1f}\\
   -\frac{k}{2}\bM_{\delta e\cup_1f_0}^\dagger \bM_{ e\cup f'}\bM_{f'\cup_1f} \end{pmatrix}~,
\end{align}
whose matrix elements are
\begin{widetext}
\begin{equation}
    \begin{pmatrix}
 \frac{1}{2 x y z}-\frac{1}{2 x}+1 & \frac{1}{2 x}-\frac{y}{2 x} & \frac{z}{2 x}-\frac{1}{2 x} \\
 \frac{1}{2 x y}-\frac{1}{2 y} & \frac{1}{2 x y z}-\frac{1}{2 x y}+\frac{1}{2 x}+\frac{1}{2} & \frac{1}{2 x y}-\frac{z}{2 x y} \\
 \frac{1}{2 y z}-\frac{1}{2 x y z} & \frac{1}{2 x y z}-\frac{1}{2 x z} & \frac{1}{2 x y}+\frac{1}{2} \\
 \frac{k}{2 x y z}-\frac{k}{2 x}-\frac{k}{2 y z}+\frac{k}{2} & -\frac{k}{2 x y z}-\frac{k y}{2 x}+\frac{k}{2 x}+\frac{k y}{2} & -\frac{k}{2 x y}+\frac{k z}{2 x}-\frac{k}{2 x}+\frac{k}{2} \\
 \frac{k}{2 x y^2 z}-\frac{k}{2 y^2 z}+\frac{k}{2 y z}-\frac{k}{2} & -\frac{k}{2 x y^2 z}+\frac{k}{2 x y z}-\frac{k y}{2}+\frac{k}{2} & -\frac{k}{2 x y^2}+\frac{k}{2 y}+\frac{k z}{2}-\frac{k}{2} \\
 \frac{k}{2 x y z}+\frac{k}{2 y z^2}-\frac{k}{2 y z}-\frac{k}{2 z} & \frac{k}{2 x y z^2}-\frac{k}{2 x y z}+\frac{k}{2 x z}-\frac{k y}{2 z} & \frac{k}{2 x y z}-\frac{k}{2 x y}-\frac{k}{2 z}+\frac{k}{2} 
\end{pmatrix} ~.
\end{equation}
We can confirm that $\overline{\bZ}_f^{(k)}$ generates $\bZ_{\partial c}$ since
\begin{align*}
    \overline{\bZ}_{\partial c} &= \bZ_{\partial c}  + \frac{1}{2k} \tilde \bG_e^{(k)} \bM_{e\cup \partial c} 
    = \bZ_{\partial c}  + \frac{1}{2k}  \bG_e^{(k)} \bM_{e\cup \partial c} 
    = \bZ_{\partial c}  + \frac{1}{2k} \tilde \bU_{\delta e}^{(k)} \cancelto{0}{\bM_{\partial e\cup \partial c}}
\end{align*}
With this, we can now define the QCA as
\begin{align}\label{eq:ZkpQCA_from_TQFT}
    \balpha^{(k)} &= \begin{pmatrix}\overline{\bX}^{(k)} &  \overline{\bZ}^{(k)}\end{pmatrix}
    =\begin{pmatrix}
         \bd_{f_0,f} - \frac{1}{2}\bM_{\partial f_0 \cup f'}\bM_{f'\cup_1f} &  \frac{1}{2k}\bM_{\partial f_0\cup f}\\
   -\frac{k}{2}\bM_{f''\cup_1f_0}^\dagger \bM_{\partial f''\cup f'}\bM_{f'\cup_1f} & \bd_{f_0,f}  + \frac{1}{2} \bM^\dagger_{f'\cup_1 f_0} \bM_{\partial f'\cup f} \end{pmatrix}~.
\end{align}
From the expression above, it is clear that $ \balpha^{(k)}\bU^{(k)} = \bU^{(k)}$.

\subsubsection{$p \equiv 1 \ (\text{mod} \ 4)$}\label{order of 1 mod 4 Zp QCA in matrix form}
In this case, there exists $a$ such that $a^2=-1$ (mod $p$). The onsite basis transformation is given by
$\textbf{\textsf{c}} = \begin{pmatrix}
    -a \bbone & 0\\ 0 & a\bbone
\end{pmatrix}$, which is a symplectic matrix. Conjugating a matrix results in
\begin{align}
    \begin{pmatrix}
    -a \bbone & 0\\ 0 & a\bbone
\end{pmatrix} \begin{pmatrix}
   \alpha_{11}& \alpha_{12}\\ \alpha_{21} & \alpha_{22}
\end{pmatrix}
 \begin{pmatrix}
    a \bbone & 0\\ 0 & -a\bbone
\end{pmatrix} = \begin{pmatrix}
   \alpha_{11}& -\alpha_{12}\\ -\alpha_{21} & \alpha_{22}
\end{pmatrix}
\end{align}
Since the dependence of $k$ in $\alpha$ is exactly off-diagonal, we can immediately see that $\textbf{\textsf{c}}   \balpha^{(k)} \textbf{\textsf{c}}^{-1} = \balpha^{(-k)}$~.
% \begin{align}
%    \textbf{\textsf{c}}   \balpha^{(k)} \textbf{\textsf{c}}^{-1} = \balpha^{(-k)} ~.
% \end{align}
Let us now compute the square of the QCA (up to circuits):
%\begin{widetext}
\begin{align}
\balpha^{(-k)} \balpha^{(k)} &= 
\begin{pmatrix}
         \bd_{f_0,f} + (\frac{1}{4}\bO^{(14)}_{f_0,f'}-\bM_{\partial f_0 \cup f'})\bM_{f'\cup_1 f} & 0 \\
          0 & \bd_{f_0,f} + \bM^\dagger_{f'\cup_1 f_0}(\frac{1}{4}\bO^{(14)}_{f',f} +\bM_{\partial f'\cup f})
    \end{pmatrix}\\
    &=\begin{pmatrix}
         \bd_{f_0,f} -\bM_{\partial f_0 \cup f'}\bM_{f'\cup_1 f} & 0 \\
          0 & \bd_{f_0,f} + \bM^\dagger_{\delta e\cup_1 f_0}\bM_{e\cup f}
    \end{pmatrix}\\
    &=\begin{pmatrix}
         \bd_{f_0,f} -\bM_{\partial f_0 \cup f'}\bM_{f'\cup_1 f} & 0 \\
          0 & \bd_{f_0,f} + (\bM_{f_0\cup e} - \bM^\dagger_{e\cup f_0}+\bM^\dagger_{e\cup_1 \delta f_0})\bM_{e\cup f}
    \end{pmatrix}~.
\end{align}
%\end{widetext}
Thus, this QCA does not mix $Z$ and $X$ stabilizers. The result of Sec.~\ref{sec: Algebraic formalism for Clifford QCA} therefore implies that this QCA can be decomposed into circuits and shifts.

We may further explicitly find such a decomposition on the cubic lattice. Specifically, due to the unique pairing of edges and faces via the cup product, we have a special property that $\bM^\dagger_{e\cup f_0} \bM_{e\cup f} = \bd_{f_0,f}$~.
% \begin{align}
%     \bM^\dagger_{e\cup f_0} \bM_{e\cup f} = \bd_{f_0,f}~.
% \end{align}
Thus,
%\begin{widetext}
\begin{align}
\balpha^{(-k)} \balpha^{(k)} &=\begin{pmatrix}
         \bd_{f_0,f} -\bM_{\partial f_0 \cup f'}\bM_{f'\cup_1 f} & 0 \\
          0 & (\bM_{f_0\cup e}+\bM^\dagger_{e\cup_1 \delta f_0})\bM_{e\cup f}
    \end{pmatrix}~.
\end{align}
Next, we show that $\balpha^{(-k)} \balpha^{(k)}$ is composed of circuits together with shifts. Consider the following transformation
\begin{align}
    \bU&=\begin{pmatrix}
        \bM_{f_0\cup e}^\dagger+\bM_{e\cup_1 \delta f_0} & 0 \\
          0 & (\bM_{f_0\cup e}+\bM^\dagger_{e\cup_1 \delta f_0})^{-1}
    \end{pmatrix}\\ &= \begin{pmatrix}
 1 & y-1 & 1-z & 0 & 0 & 0 \\
 \frac{1}{x}-1 & \frac{y}{x}-\frac{1}{x}-y & \frac{1}{x}-\frac{z}{x} & 0 & 0 & 0 \\
 \frac{1}{x y}-\frac{1}{y} & \frac{1}{x}-\frac{1}{x y} & -\frac{z}{x y}+\frac{1}{x y}+z & 0 & 0 & 0 \\
 0 & 0 & 0 & -\frac{x}{y z}+x+\frac{1}{y z} & \frac{x}{z}-\frac{1}{z} & 1-x \\
 0 & 0 & 0 & \frac{1}{y z}-\frac{1}{z} & \frac{y}{z}-y-\frac{1}{z} & 1-y \\
 0 & 0 & 0 & \frac{1}{y z}-\frac{1}{y} & 1-\frac{1}{z} & 1 \\
\end{pmatrix} .
\end{align}
This can be decomposed in terms of CNOT gates, and so it is a circuit. In particular,
\begin{align}
    \bU = \text{diag}(1,-1,1,1,-1,1)\bU_1 \bU_2 \bU_3 \bU_4
\end{align}
where 
\begin{align}
    \bU_1 &= \begin{pmatrix}
 1 & 0 & 0 & 0 & 0 & 0 \\
 0 & 1 & 0 & 0 & 0 & 0 \\
 \bar x-1 & 1-\bar y & 1 & 0 & 0 & 0 \\
 0 & 0 & 0 & 1 & 0 & 1-x \\
 0 & 0 & 0 & 0 & 1 & y-1 \\
    \end{pmatrix}, &
        \bU_2 &= \begin{pmatrix}
 1 & 0 & 0 & 0 & 0 & 0 \\
 1-\bar x & 1 & 0 & 0 & 0 & 0 \\
 0 & 0 & 1 & 0 & 0 & 0 \\
 0 & 0 & 0 & 1 & x-1 & 0 \\
 0 & 0 & 0 & 0 & 1 & 0 \\
 0 & 0 & 0 & 0 & 0 & 1 \\
    \end{pmatrix}, \\
    \bU_3 &= \begin{pmatrix}
 1 & 0 & 1-z & 0 & 0 & 0 \\
 0 & 1 & z-1 & 0 & 0 & 0 \\
 0 & 0 & 1 & 0 & 0 & 0 \\
 0 & 0 & 0 & 1 & 0 & 0 \\
 0 & 0 & 0 & 0 & 1 & 0 \\
 0 & 0 & 0 & \bar z-1 & 1-\bar z & 1 \\
    \end{pmatrix}, &
    \bU_4 &= \begin{pmatrix}
 1 & y-1 & 0 & 0 & 0 & 0 \\
 0 & 1 & 0 & 0 & 0 & 0 \\
 0 & 0 & 1 & 0 & 0 & 0 \\
 0 & 0 & 0 & 1 & 0 & 0 \\
 0 & 0 & 0 & 1-\bar y & 1 & 0 \\
 0 & 0 & 0 & 0 & 0 & 1 \\
    \end{pmatrix}.
\end{align}

Acting $\bU$ on $\balpha^{(-k)} \balpha^{(k)}$, we have
\begin{align}
   \bU  \balpha^{(-k)} \balpha^{(k)} = 
   \begin{pmatrix}
        \bM_{e\cup f} & 0 \\
          0 & \bM_{e\cup f}
    \end{pmatrix} = \text{diag}(\bar y \bar z, -\bar x \bar z, \bar x \bar y, \bar y \bar z, -\bar x \bar z, \bar x \bar y)
\end{align}
Thus, we have shown that the QCA is order two up to circuits and shifts.

%%%%%%%%%%%%%%%%%%%%%%%%%%%%%%%%%%%%%%%%%%%%%%%%%%%%%%%%%%%%%%%%%%%%%%%%%%%%%%%%
\subsubsection{$p\equiv3\ (\text{mod} \ 4)$ }

In this case, we consider two copies of QCA, which is just the direct sum $\balpha^{(k)}\oplus\balpha^{(k)}$. In the matrix notation, it can be expressed as a block diagonal matrix
$\begin{pmatrix}
    \balpha^{(k)}&0\\
    0&\balpha^{(k)}
\end{pmatrix}.$
When $p\equiv3\ (\text{mod} \ 4)$, there exists $a$ and $b$ satisfying $a^2+b^2=-1$ (mod $p$). We can consider the following transformation matrix:
\begin{align}
\bc=\begin{pmatrix}
    a\bbone&0&b\bbone&0\\
    0&-a\bbone&0&-b\bbone\\
    b\bbone&0&-a\bbone&0\\
    0&-b\bbone&0&a\bbone
\end{pmatrix}~.
\end{align}
This corresponds to a transformation connecting the same sites in each QCA copy. Applying this transformation to the original QCA, we obtain
\begin{eqs}
\bc \begin{pmatrix}
    \balpha^{(k)}&0\\
    0&\balpha^{(k)}
\end{pmatrix} \bc^{-1}
&=\begin{pmatrix}
    a\bbone&0&b\bbone&0\\
    0&-a\bbone&0&-b\bbone\\
    b\bbone&0&-a\bbone&0\\
    0&-b\bbone&0&a\bbone
\end{pmatrix}
\begin{pmatrix}
    a_{11}&a_{12}&0&0\\
    a_{21}&a_{22}&0&0\\
    0&0&a_{11}&a_{12}\\
    0&0&a_{21}&a_{22}
\end{pmatrix}
\begin{pmatrix}
    -a\bbone&0&-b\bbone&0\\
    0&a\bbone&0&b\bbone\\
    -b\bbone&0&a\bbone&0\\
    0&b\bbone&0&-a\bbone
\end{pmatrix}\\
&=\begin{pmatrix}
        a_{11}&-a_{12}&0&0\\
    -a_{21}&a_{22}&0&0\\
    0&0&a_{11}&-a_{12}\\
    0&0&-a_{21}&a_{22}
\end{pmatrix}
=\begin{pmatrix}
    \balpha^{(-k)}&0\\
    0&\balpha^{(-k)}
\end{pmatrix}~.
\end{eqs}
This implies that after transformation, the original QCA is again decomposed into two copies of $\balpha^{(-k)}$ QCA.

According to the previous subsection, the composition of these two QCAs can be decomposed into two separate set of finite depth circuits $\bU\oplus\bU$
\begin{eqs}
&\begin{pmatrix}
    \bU&0\\
    0&\bU
\end{pmatrix}\begin{pmatrix}
    \balpha^{(-k)}&0\\
    0&\balpha^{(-k)}
\end{pmatrix}\begin{pmatrix}
    \balpha^{(k)}&0\\
    0&\balpha^{(k)}
\end{pmatrix}
=
\text{diag}\Bigl(\bar y \bar z, -\bar x \bar z, \bar x \bar y, \bar y \bar z, -\bar x \bar z, \bar x \bar y,\bar y \bar z,  -\bar x \bar z, \bar x \bar y, \bar y \bar z, -\bar x \bar z, \bar x \bar y\Bigl)~.
\end{eqs}
Since two copies of a QCA are equivalent to a single QCA applied twice, the order of this QCA is 4.

\end{widetext}

%%%%%%%%%%%%%%%%%%%%%%%%%%%%%%%%%%%%%%%%%%%%%%%%%%%%%%%%%%%%%%%%%%%%%%%%%%%%%%%%
\section{$\ZZ_2$ and $\ZZ_p$ Clifford QCAs in higher dimensions from TQFTs}\label{sec:Z2_Zp_higher_QCA_TQFT}

In this section, we generalize the 3-fermion QCA from the previous section to arbitrary $2k{+}1$ spatial dimensions. The construction is derived directly from the corresponding topological action, allowing the QCA to be defined on arbitrary cellulations and thereby providing greater flexibility in lattice realizations. When restricted to the hypercubic lattices, our formulation reproduces the construction of Ref.~\cite{fidkowski2024qca}.

%%%%%%%%%%%%%%%%%%%%%%%%%%%%%%%%%%%%%%%%%%%%%%%%%%%%%%%%%%%%%%%%%%%%%%%%%%%%%%%%
\subsection{$\ZZ_2$ QCA in $5{+}1$D from topological action $S=\tfrac{1}{2}\int A_3\cup A_3+A_3\cup B_3+B_3\cup B_3$}
\label{sec: w1w2^2}
The action for this QCA is given by
\begin{align}
    S=\frac{1}{2}\int A_3\cup A_3+A_3\cup B_3+B_3\cup B_3~.
\end{align}
This expression closely resembles the 3-fermion Walker-Wang QCA in $3{+}1$D, with the primary difference being the change in the dimensionality of the cocycles. Consequently, the lattice construction for this QCA follows the same general framework as in the 3-fermion Walker-Wang case.
The corresponding separators and (non-commuting) flippers on hypercubic lattices are
\begin{eqs}
    \overline{Z}^A_c=Z^A_c\prod_f G^{B \int \boldsymbol{c}\cup\bface}_f~,
    &\quad \overline{Z}^B_c=Z^B_c\prod_f G^{A \int \bface\cup\boldsymbol{c}}_f~,\\
    U_c^A = X^A_c\prod_{c'}Z^{A \int \boldsymbol{c}'\cup_1\boldsymbol{t}}_{c'}~,
    &\quad U_c^B = X^B_c\prod_{c'}Z^{B \int\boldsymbol{c}'\cup_1\boldsymbol{t}}_{c'}~,
\end{eqs}
where qubits are placed on cubes $c$, and $G_f^\alpha$ is defined in analogy with Eq.~\eqref{eq: definition of G}:
\begin{equation}
    G^\alpha_{f} :=  X^\alpha_{\delta \bface} \prod_{c'} {Z^\alpha_{c'}}^{\int  \delta \bface \cup_1 \bc'},
    \quad  \alpha \in \{A, B\}.
\label{definition:G $5{+}1$D}
\end{equation}
See Appendix~\ref{app: Polynomial formalism for Clifford QCA} for the polynomial formalism of this QCA on the hypercubic lattice.
On a general cellulation, we place qubit $A$ on each 3-cell of the direct lattice and qubit $B$ on each 3-cell of the dual lattice. The separators and (non-commuting) flippers then take the form
\begin{eqs}
    \overline{Z}^A_t = Z^A_t G^B_{\mathrm{PD}(t)}~,\quad\quad~~~
    &\quad \overline{Z}^B_t = Z^B_t G^A_{\mathrm{PD}(t)}~,\\
    U_t^A = X^A_t\prod_{t'}Z^{A \int \boldsymbol{t}'\cup_1\boldsymbol{t}}_{t'}~,
    &\quad U_t^B = X^B_t\prod_{t'}Z^{B \int\boldsymbol{t}'\cup_1\boldsymbol{t}}_{t'}~,
\end{eqs}
where $\mathrm{PD}$ denotes the Poincaré dual: $\mathrm{PD}(t)$ is the face in the dual lattice intersecting the tetrahedron $t$ in the direct lattice.
For simplicity, we focus on hypercubic lattices in the following construction, while noting that the argument extends straightforwardly to arbitrary cellulations.

As noted in Ref.~\cite{fidkowski2024qca}, this QCA should be disentangled by a non-Clifford finite-depth quantum circuit, and we argue that our construction indeed possesses this property.
First, observe that
\begin{equation}
    \frac{1}{2} A_3 \cup A_3
\end{equation}
is a trivial cocycle in $H^6(B^3 \mathbb{Z}_2, \mathbb{R}/\mathbb{Z})$ when $A$ is a closed 3-form. Indeed, consider the coboundary
\begin{equation}
\begin{aligned}
&\frac{1}{4}\,\delta\!\left(A_3 \cup_1 A_3 + A_3 \cup_2 \delta A_3\right) \\
=& - \frac{1}{2} A_3 \cup A_3 \;-\; \frac{1}{2} A_3 \cup_1 \delta A_3 \;+\; \frac{1}{4}\,\delta A_3 \cup_2 \delta A_3
\end{aligned}
\label{trivial cocycle in 6D}
\end{equation}
where we have used the standard coboundary identity for higher cup products.  
To be consistent with the denominator $4$, we lift the cochain $A$ from $\ZZ_2$ to $\ZZ$. Since $A$ is closed, we have $\delta A \equiv 0 \pmod{2}$. Consequently,
\begin{equation}
    \frac{1}{2} A_3 \cup_1 \delta A_3 \;-\; \frac{1}{4} \delta A_3 \cup_2 \delta A_3 \equiv 0 \pmod{1}~.
\end{equation}
Thus, $\tfrac{1}{2} A_3 \cup A_3$ is a coboundary.
Adding this coboundary to the topological action modifies the ground state only by a finite-depth quantum circuit (FDQC) parameterized by  
$\tfrac{1}{4}(A_3 \cup_1 A_3 + A_3 \cup_2 \delta A_3)$. Because the denominator is $4$, the resulting circuit is non-Clifford, acting on the $\mathbb{Z}_2$ qubits located on tetrahedra (in particular, it includes the controlled-$S$ gate). 

{\change
We note that although two topological actions that differ by a coboundary give rise to the same ground state, the corresponding Hamiltonian-to-QCA constructions may \textit{a priori} yield different QCAs. 
Corollary~\ref{corollary: separated QCAs trivial} guarantees that it suffices to consider a representative QCA that disentangles the given ground state, without concerning that different choices of Hamiltonian may lead to inequivalent QCAs.
}

The coboundary term $\frac{1}{2}A_3\cup A_3$ transforms the gauged SPT ground state via the circuit  
\begin{equation}
\begin{aligned}
    &V^a_{\mathrm{FDQC}} \ket{\ba_t} \\
    & :=\exp\!\left( -\frac{2\pi i}{4} \int_{M_5} \ba_t \cup_1 \ba_t + \ba_t \cup_2 \delta \ba_t \right) \ket{\ba_t},
\end{aligned}
\end{equation}
where $\ba_t$ is the 3-cochain labeling the $A$-type qubit configuration on tetrahedra, and $M_5$ is the spatial manifold. Consequently, the two Clifford QCAs defined with or without the $\tfrac{1}{2} A_3 \cup A_3$ term differ only by $V^a_{\mathrm{FDQC}}$ and are therefore equivalent. The same reasoning applies to the term $\tfrac{1}{2} B_3 \cup B_3$, for which we define the associated $V^b_{\mathrm{FDQC}}$.
Hence, the QCA generated by  
\begin{equation}
    \tfrac{1}{2}\,(A_3 \cup A_3 + A_3 \cup B_3 + B_3 \cup B_3)~,
\end{equation}
conjugated by the non-Clifford circuit $V^a_{\mathrm{FDQC}} V^b_{\mathrm{FDQC}}$, is identical to the QCA generated by  
\begin{equation}
    \tfrac{1}{2}\,A_3 \cup B_3~.
\end{equation}

Finally, we show that the QCA from $\tfrac{1}{2} A_3 \cup B_3$ is in fact a Clifford circuit. The Hamiltonian associated with this action is
\begin{eqs}
H=-\sum_f X_{\delta f}^A\prod_c Z_c^{B \int \bface\cup\boldsymbol{c}}-\sum_f X_{\delta f}^B\prod_c Z_c^{A \int \boldsymbol{c}\cup\bface}~.
\end{eqs}
On a hypercubic lattice, separators can be written as 
\begin{eqs}
\overline{Z}^A_c=Z^A_c\prod_f X^{B \int \boldsymbol{c}\cup\bface}_{\delta f}~,\\
\overline{Z}^B_c=Z^B_c\prod_f X^{A \int \bface\cup\boldsymbol{c}}_{\delta f}~,
\end{eqs}
while the flippers remain $X^A_c$ and $X^B_c$.  
We define the Clifford circuit
\begin{align}
U=\prod_{c,c'}CZ(X^A_c,X^B_{c'})^{\int \boldsymbol{c}\cup \partial c'}
\end{align}
Here $\partial c'$ means a 2-cochain taking value 1 on $\partial c$ and 0 on other faces. It can be checked geometrically that $\prod_f Z_{\delta f}^{\int \bface\cup\boldsymbol{c}}=\prod_{c'} Z_{c'}^{\int \partial{c'}\cup\boldsymbol{c}}=\prod_{c'}Z_{c'}^{\int \boldsymbol{c}'\cup\partial c}$. Then it is direct that
\begin{eqs}
    \overline{Z}^A_c=UZ^A_cU^\dagger, \quad
    \overline{Z}^B_c=UZ^B_cU^\dagger~.
\end{eqs}
Thus, the QCA derived from $S=\tfrac{1}{2} A_3 \cup B_3$ is realized by a Clifford circuit.  

%A possible concern is that the same ground state may admit different choices of parent Hamiltonians, potentially leading to distinct QCA constructions. However, in Sec.~\ref{sec: QCAs with the same stabilizer group}, we will show that any two Clifford QCAs disentangling the same ground state (i.e., associated with the same stabilizer group) are equivalent up to Clifford circuits. Therefore, it suffices to study a single representative QCA that disentangles the gauged SPT ground state.

% In conclusion, the $\tfrac{1}{2}\nu_3^2$ QCA derived from 
% \begin{equation}
%     S = \tfrac{1}{2}\,(A_3 \cup A_3 + A_3 \cup B_3 + B_3 \cup B_3)
% \end{equation}
% is trivial, up to non-Clifford finite-depth circuits and shifts. 

%%%%%%%%%%%%%%%%%%%%%%%%%%%%%%%%%%%%%%%%%%%%%%%%%%%%%%%%%%%%%%%%%%%%%%%%%%%%%%%%
\subsection{$\ZZ_2$ QCA in $7{+}1$D from topological action $S=\tfrac{1}{2}\int A_4\cup A_4 + A_4\cup B_4+ B_4\cup B_4$}
\label{sec: w8}

The topological action in $7{+}1$D is 
\begin{align}
    S=\frac{1}{2}\int A_4\cup A_4+A_4\cup B_4+B_4\cup B_4~.
\end{align}
The resulting QCA is structurally similar to the $3{+}1$D case, and the separators and flippers can be written directly as
\begin{eqs}
    \overline{Z}^A_{s_4} &= Z^A_t \prod_{s_4} G^{B \int \boldsymbol{s}_4\cup\boldsymbol{t}}_t~, \\
    \overline{Z}^B_{s_4} &= Z^B_t \prod_{s_4} G^{A \int \boldsymbol{t}\cup\boldsymbol{s}_4}_t~,\\
    U_{s_4}^A &= X^A_{s_4} \prod_{s'_4} Z^{A \int \boldsymbol{s}'_4\cup_1\boldsymbol{s}_4}_{s'_4}~, \\
    U_{s_4}^B &= X^B_{s_4} \prod_{{s_4}'} Z^{B \int\boldsymbol{s}'_4\cup_1\boldsymbol{s}_4}_{s'_4}~,
\end{eqs}
where $s_4$ denotes a 4-simplex.

Unlike the $5{+}1$D case, however, the term $\tfrac{1}{2} A_4 \cup A_4$ is no longer a trivial cocycle. In $7{+}1$D there is no analogue of Eq.~\eqref{trivial cocycle in 6D}, and therefore this cocycle cannot be removed by adding a coboundary. As a result, we expect that the corresponding QCA cannot be simplified to a non-Clifford circuit. This conclusion is consistent with the fact that $w_8$ is a nontrivial Stiefel–Whitney class in $7{+}1$D, which indeed classifies a nontrivial QCA.  
A polynomial representation of this QCA can be obtained by translating the cup products of cochains into matrices, as in the previous cases. The resulting matrix coincides with that obtained in Ref.~\cite{fidkowski2024qca}.

%%%%%%%%%%%%%%%%%%%%%%%%%%%%%%%%%%%%%%%%%%%%%%%%%%%%%%%%%%%%%%%%%%%%%%%%%%%%%%%%
\subsection{$\ZZ_2$ QCA in $(2l{-}1){+}1$D from topological action $S=\tfrac{1}{2}\int A_l\cup A_l + A_l\cup B_l + B_l \cup B_l$}
\label{Sec. even d Z2 QCA}
In a general even dimension, the action can be written as
\begin{eqs}
S=\frac{1}{2}\int A_l \cup A_l + A_l \cup B_l + B_l \cup B_l~.
\end{eqs}
We denote a $(l{-}1)$-simplex by $s_{l-1}$ and a $l$-simplex by $s_l$. The corresponding QCA is then
\begin{eqs}
    \overline{Z}^A_{s_l} &= Z^A_{s_l} \prod_{s_{l-1}} G^{B \int \boldsymbol{s}_l \cup \boldsymbol{s}_{l-1}}_{s_{l-1}}~,\\
    \overline{Z}^B_{s_l} &= Z^B_{s_l} \prod_{s_{l-1}} G^{A \int \boldsymbol{s}_{l-1} \cup \boldsymbol{s}_l}_{s_{l-1}}~,\\
    U_{s_l}^A &= X^A_{s_l} \prod_{s_l'} Z^{A \int \boldsymbol{s}'_l \cup_1 \boldsymbol{s}_l}_{s_l'}~,\\
    U_{s_l}^B &= X^B_{s_l} \prod_{s_l'} Z^{B \int \boldsymbol{s}'_l \cup_1 \boldsymbol{s}_l}_{s_l'}~.
\end{eqs}

In an arbitrary dimension, Eq.~\eqref{trivial cocycle in 6D} generalizes to 
\begin{equation}
\begin{aligned}
&\frac{1}{4}\,\delta\!\left(A_l \cup_1 A_l + A_l \cup_2 \delta A_l\right) \\
=& ~\frac{1}{4}\,\delta A_l \cup_2 \delta A_l 
+ \frac{(-1)^l-1}{4}\,\bigl(A_l \cup A_l + A_l \cup_1 \delta A_l\bigr).
\end{aligned}
\label{AcupA in arbitrary dimension}
\end{equation}
When $l$ is odd, using the fact that $\delta A_l \equiv 0 \pmod{2}$, we obtain 
\begin{eqs}
\frac{1}{2}A_l \cup A_l 
= -\frac{1}{4}\,\delta\!\left(A_l \cup_1 A_l + A_l \cup_2 \delta A_l\right),
\end{eqs}
so that $\tfrac{1}{2}A_l \cup A_l$ is a coboundary in $H^{2l}(B^l \mathbb{Z}_2,\mathbb{R}/\mathbb{Z})$. In this case, as in $5{+}1$D, the corresponding QCA is trivial up to a non-Clifford circuit.

When $l$ is even, the $A_l \cup A_l$ term in Eq.~\eqref{AcupA in arbitrary dimension} cancels, leaving a nontrivial cocycle. Consequently, the associated QCA is also nontrivial. We therefore establish a 4-periodicity in the spatial dimensions for the generalized “3-fermion” Walker–Wang QCAs. Moreover, if non-Clifford circuits are not allowed, the QCAs exhibit only a 2-periodicity in the spatial dimensions.

%%%%%%%%%%%%%%%%%%%%%%%%%%%%%%%%%%%%%%%%%%%%%%%%%%%%%%%%%%%%%%%%%%%%%%%%%%%%%%%%
\subsection{$\ZZ_p$ Clifford QCAs in $(4l{-}1){+}1$D from TQFTs}\label{sec:Zp_QCA_(4l-1)+1D_TQFT}

The generalization of the $\mathbb{Z}_2$ 3-fermion QCA to higher dimensions can be extended to $\mathbb{Z}_p^{(k)}$ QCAs as well. However, in this case, the construction exhibits a 4-periodicity in the spatial dimension, so that nontrivial QCAs appear only in $(4l{-}1)$ spatial dimensions.

Consider the action
\begin{eqs}
S = \frac{k}{p}\int A_{2l} \cup A_{2l} ~\in~ H^{4l}(B^{2l} \mathbb{Z}_p, \RR/\ZZ)~.
\end{eqs}
Following the framework of the $3{+}1$D case, the corresponding separators and flippers are
\begin{eqs}
\overline{Z}_{s_l} &= Z_{s_l} \prod_{s_{l-1}} \left(G^{(k)}_{s_{l-1}}\right)^{\tfrac{1}{2k}\int \boldsymbol{s}_{l-1}\cup \boldsymbol{s}_l}, \\
\overline{X}_{s_l} &= X_{s_l} \prod_{s_l'} \left[\prod_{s_{l-1}} \left(G^{(k)}_{s_{l-1}}\right)^{-\tfrac{1}{2}\int \boldsymbol{s}_{l-1}\cup \boldsymbol{s}_l}\right]^{\int \boldsymbol{s}'_l \cup_1 \boldsymbol{s}_l},
\end{eqs}
where $s_{l-1}$ and $s_l$ denote $(l{-}1)$- and $l$-simplices, and
\begin{eqs}
G^{(k)}_{s_{l-1}} := X_{\delta \boldsymbol{s}_{l-1}} 
\prod_{s_l'} Z_{s_l'}^{\,k\int \delta \boldsymbol{s}_{l-1} \cup_1 \boldsymbol{s}'_l}~.
\end{eqs}
This construction is valid because the coboundary identities for higher cup products repeat with a 4-periodicity. Specifically, the formulas
\begin{eqs}
    \delta (\boldsymbol{c}_p \cup_i \boldsymbol{d}_q) 
    =&~ \delta \boldsymbol{c}_p \cup_i \boldsymbol{d}_q \\ 
    &+ (-1)^p\boldsymbol{c}_p \cup_i \delta \boldsymbol{d}_q \\
    &+ (-1)^{p+q-i}\boldsymbol{c}_p \cup_{i-1} \boldsymbol{d}_q \\
    &+ (-1)^{pq+p+q}\boldsymbol{d}_q \cup_{i-1} \boldsymbol{c}_p~, \\
    \delta (\boldsymbol{c}_{p+2} \cup_i \boldsymbol{d}_{q+2}) 
    =&~ \delta \boldsymbol{c}_{p+2} \cup_i \boldsymbol{d}_{q+2} \\
    &+ (-1)^p\boldsymbol{c}_{p+2} \cup_i \delta \boldsymbol{d}_{q+2} \\
    &+ (-1)^{p+q-i}\boldsymbol{c}_{p+2} \cup_{i-1} \boldsymbol{d}_{q+2} \\
    &+ (-1)^{pq+p+q}\boldsymbol{d}_{q+2} \cup_{i-1} \boldsymbol{c}_{p+2}~,
\end{eqs}
have identical sign structures. Promoting each cochain by two degrees therefore preserves the relations, implying a 4-periodicity. Thus, our $\mathbb{Z}_p$ QCA constructions are valid in every fourth spatial dimension, consistent with the classification of Ref.~\cite{haah2025topological}.

We now argue that QCAs in $(4l{+}1)$ spatial dimensions are always trivial, even when restricted to Clifford circuits.  
Consider the identity
\begin{eqs}
& ~\frac{k(p-1)}{2p}\,\delta(A_{2l+1} \cup_1 A_{2l+1})  \\
&= \frac{k(p-1)}{2p}\Bigl(\delta A_{2l+1} \cup_1 A_{2l+1} 
- A_{2l+1} \cup_1 \delta A_{2l+1} \\
&\qquad\qquad\qquad - A_{2l+1} \cup A_{2l+1} 
- A_{2l+1} \cup A_{2l+1}\Bigr) \\
&= \frac{k}{p}\,A_{2l+1} \cup A_{2l+1}~.
\label{Zp AcupA in arbitrary dimension}
\end{eqs}
Hence, $\tfrac{k}{p} A_{2l+1} \cup A_{2l+1}$ is a coboundary.
More importantly,
\begin{equation}
\begin{aligned}
    &\frac{k(p-1)}{2p}\,\delta(A_{2l+1} \cup_1 A_{2l+1})   \\
    =& ~\delta\left( \frac{k(p-1)/2}{p}\,A_{2l+1} \cup_1 A_{2l+1}\right),
\end{aligned}
\end{equation}
is implemented by a Clifford circuit, since the denominator involves only $p$. (By contrast, in the $\mathbb{Z}_2$ case the denominator is $4$, which goes beyond Clifford.) Therefore, the QCA derived from
\begin{equation}
S=\tfrac{k}{p} A_{2l+1} \cup A_{2l+1}
\end{equation}
is a Clifford circuit, and thus no nontrivial QCA arises in $(4l{+}1)$ spatial dimensions.

We conclude that $\mathbb{Z}_p$ QCAs exhibit a 4-periodicity in spatial dimensions, nontrivial only in $(4l{-}1)$ spatial dimensions.

%%%%%%%%%%%%%%%%%%%%%%%%%%%%%%%%%%%%%%%%%%%%%%%%%%%%%%%%%%%%%%%%%%%%%%%%%%%%%%%%
\section{Invertible subalgebras in $2{+}1$D and higher dimensions}\label{sec:ISA}

A closely related notion to QCAs is that of \emph{invertible subalgebras} (ISAs), introduced by Haah in Ref.~\cite{Haah2023InvertibleSubalgebras}. An ISA $A$ is defined as a subalgebra of operators on a Hilbert space such that every operator can be locally expressed using elements of $A$ and its commutant $\bar A$. This induces a decomposition of the Hilbert space as 
\begin{equation}
    \mathcal H = \mathcal H_A \otimes \mathcal H_{\bar A},
\end{equation}
where $\mathcal H_A$ is the Hilbert space generated by operators in $A$. A non-trivial ISA arises when a tensor product Hilbert space $\mathcal H$ splits into factors $\mathcal H_A$ and $\mathcal H_{\bar A}$ that do not themselves form a tensor product structure. 

Haah showed that an ISA in $d$ spatial dimensions can be used to construct a QCA in $(d\!+\!1)$ dimensions. Concretely, consider a stack of $d$-dimensional ISAs layered along the extra dimension. The total Hilbert space takes the form
\begin{equation}
    \bigotimes_i \mathcal H_{A}^{(i)} \otimes \mathcal H_{\bar A}^{(i)},
\end{equation}
where $i$ labels the layers. The QCA acts by shifting operators in $\mathcal H_A^{(i)}$ forward to $\mathcal H_A^{(i+1)}$, while leaving $\mathcal H_{\bar A}^{(i)}$ invariant. This establishes a one-to-one correspondence between ISAs in $d$ dimensions and QCAs in $d\!+\!1$ dimensions. 

To date, the only known example of a non-trivial ISA appears in two spatial dimensions on the square lattice. Ref.~\cite{Haah2023InvertibleSubalgebras} constructed $\mathbb{Z}_p^{(k)}$ ISAs in this setting, showing that each such ISA uniquely generates a QCA in one higher dimension.  

In this section, we introduce a new construction of $\mathbb{Z}_2$ ISAs on arbitrary two-dimensional cellulations, where the subalgebra $A$ realizes the 3-fermion chiral topological order, and extend this construction to all even spatial dimensions.
In addition, we reformulate the $\mathbb{Z}_p^{(k)}$ ISAs in two spatial dimensions and generalize them to all $(4l{-}2)$ dimensions. These constructions are consistent with the classification of algebraic $L$-groups over Laurent polynomial rings given in Ref.~\cite{haah2025topological}.

\subsection{$2{+}1$D $\mathbb{Z}_2$ invertible subalgebras}

We present the invertible subalgebra on the $2{+}1$D square lattice with two $\ZZ_2$ qubits per edge, which corresponds to the $3{+}1$D 3-fermion QCA.
{\change 
First, we can rearrange two copies of $\ZZ_2$ toric codes $\{1, e_A, m_A, f_A\} \times \{1, e_B, m_B, f_B\}$ into the 3-fermion topological order $\mathcal{A}$ and its anti-chiral copy $\bar{\mathcal{A}}$ as follows
\begin{eqs}
    \mathcal{A} \times  \bar{\mathcal{A}}:=&~ \{1, e_A f_B, m_A f_B, f_A\} \\
    & \times \{1, e_B f_A, m_B f_A, f_B\}~.\\
\label{eq: 2 TC = 2 3-fermion}
\end{eqs}
}
We define two shorthands that will be useful later on:
\begin{equation}
\begin{aligned}
    U_e^\alpha &:= X_e^\alpha \prod_{e'} {Z_{e'}^\alpha}^{\int \be' \cup \be}~, \\
    \tU_e^\alpha &:= X_e^\alpha \prod_{e'} {Z_{e'}^\alpha}^{\int \be \cup \be'}~,
\end{aligned}
    \label{eq: definition U-tilde Z2 ISA}
\end{equation}
$\alpha \in \{A, B \}$. Diagrammatically, they are
\begin{eqs}
    &U_e^\alpha =\vcenter{\hbox{\includegraphics[scale=.25]{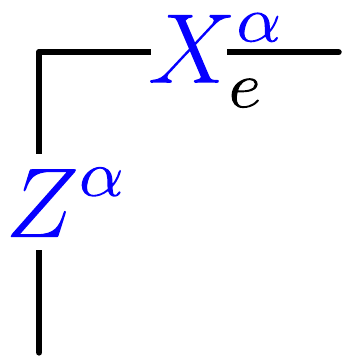}}}~, \quad \vcenter{\hbox{\includegraphics[scale=.25]{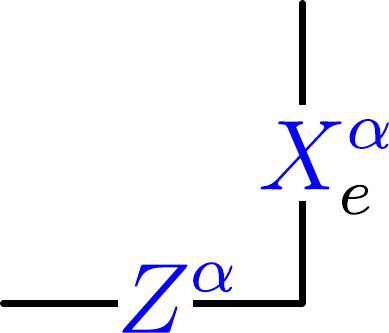}}}~,
    \\
    &\tU_e^\alpha =\vcenter{\hbox{\includegraphics[scale=.25]{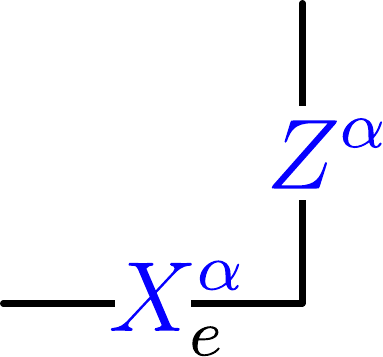}}}~, \quad \vcenter{\hbox{\includegraphics[scale=.25]{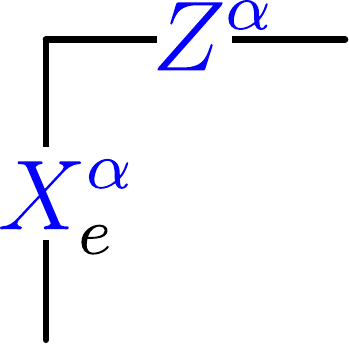}}}~.
\end{eqs}
For simplicity, we define the flux operator $W^\alpha_f$ as:
\begin{equation}
W^\alpha_f=Z^\alpha_{\partial f}=\raisebox{-2.5em}{\includegraphics[scale=.25]{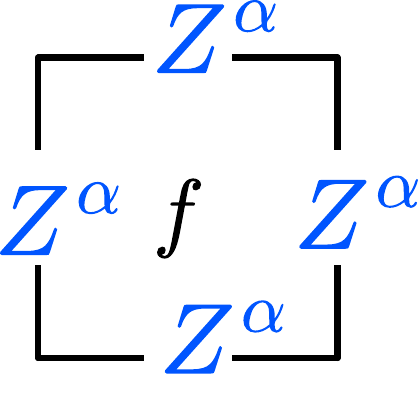}}~.
\end{equation}
The following are the generators of the ISA for the 3-fermion theory
\begin{eqs}
    A_e^A &= \tU_e^A \prod_f W_f^{A\int \bface \cup_1 \be}~, \\
    A_e^B &= \tU_e^B \prod_f W_f^{B\int \be \cup_1 \bface} \prod_{e'} {Z_{e'}^A}^{\int \be\cup \be'}~, \\
    \overline{A}_e^A &= \tU_e^A \prod_f W_f^{A\int \be  \cup_1 \bface} \prod_{e'} {Z^B_{e'}}^{\int \be'\cup \be}~, \\
    \overline{A}_e^B &= \tU_e^B \prod_f W_f^{B\int \bface \cup_1 \be}~.
\label{eq:Z2_ISA_2+1D}
\end{eqs}
These generators are constructed by attaching the flux terms $W_f^{A/B}$ and single-qubit Pauli operators $Z_f^{A/B}$ to the hopping terms $\tU_e^{A/B}$, in accordance with the pattern specified by the cup and cup-1 products.
This structure ensures that the resulting operators satisfy the definition of an ISA.

\begin{widetext}
On the square lattice, these terms are
\begin{eqs}
    A_e^A=&~\vcenter{\hbox{\includegraphics[scale=.25]{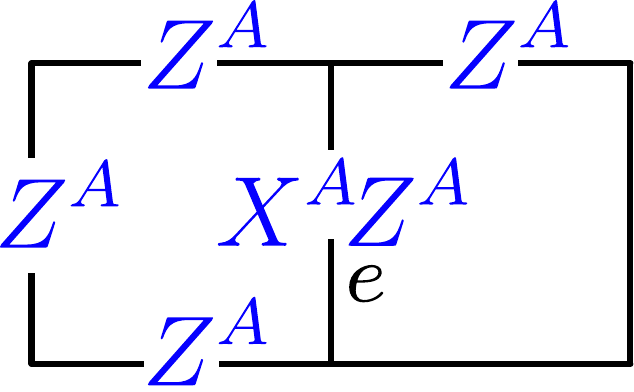}}}~,
    \quad
    \vcenter{\hbox{\includegraphics[scale=.25]{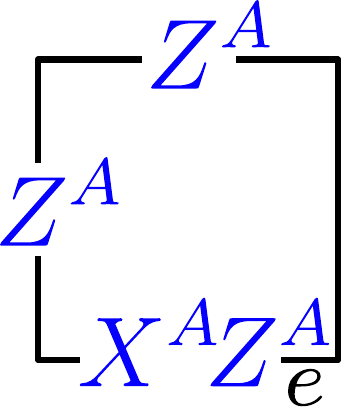}}}~, \qquad\qquad~
    A_e^B=~\vcenter{\hbox{\includegraphics[scale=.25]{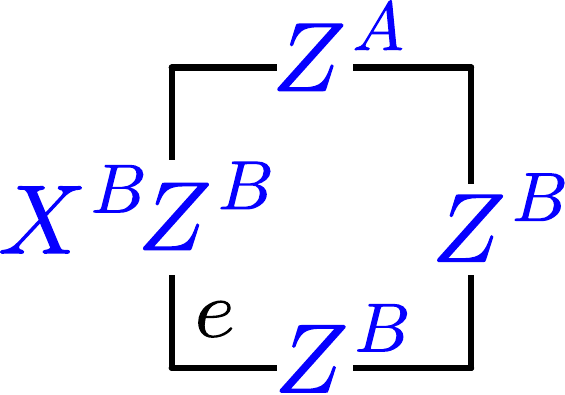}}}~,
    \quad
    \vcenter{\hbox{\includegraphics[scale=.25]{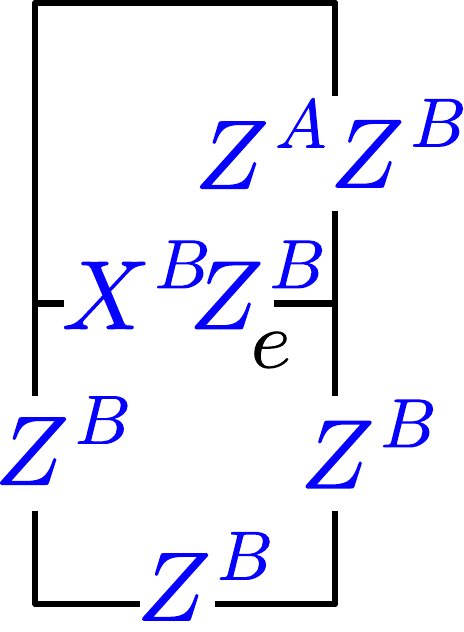}}}~, \\
    \overline{A}_e^A=&~\vcenter{\hbox{\includegraphics[scale=.25]{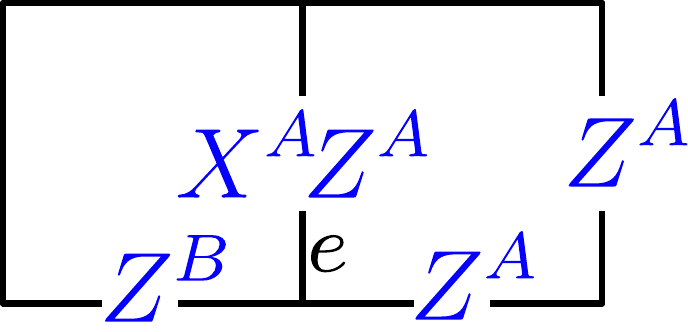}}}~,
    \quad
    \vcenter{\hbox{\includegraphics[scale=.25]{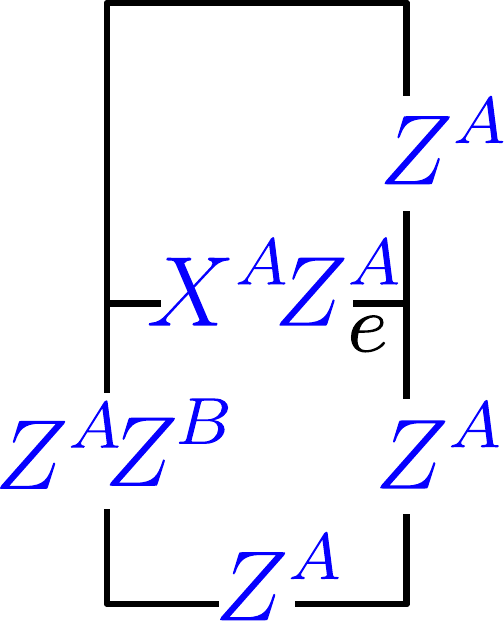}}}~, \qquad
    \overline{A}_e^B=~\vcenter{\hbox{\includegraphics[scale=.25]{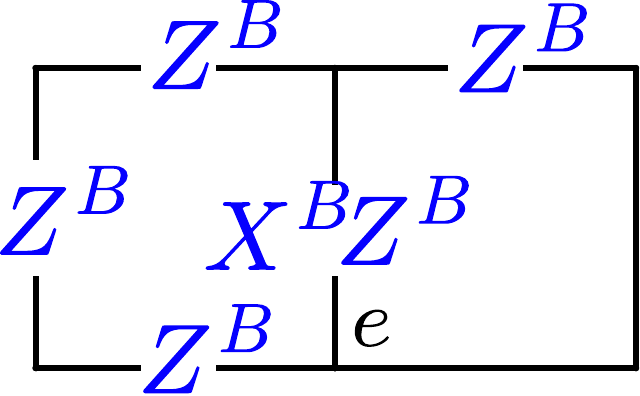}}}~,
    \quad
    \vcenter{\hbox{\includegraphics[scale=.25]{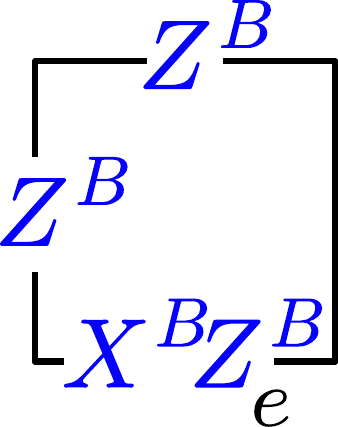}}}~.
\end{eqs}
Next, we prove that it is invertible, which means that the single $X$ and $Z$ terms can be generated from $A$ and $\overline{A}$. We first construct the flux operator of $B$ qubits from $A_e^A$ and $\overline{A}_e^A$ as $W_f^B=\prod_\nu (A^A_{\delta\nu}\overline{A}^A_{\delta\nu})^{\int \bface\cup\boldsymbol{\nu}}$:
% \begin{eqs}
% W_f^B=\prod_\nu (A^A_{\delta\nu}\overline{A}^A_{\delta\nu})^{\int \bface\cup\boldsymbol{\nu}}.
% \label{eq: Wf^B}
% \end{eqs}
%Diagrammatically, this can be shown as
\begin{eqs}
\raisebox{-3.5em}{\includegraphics[scale=.25]{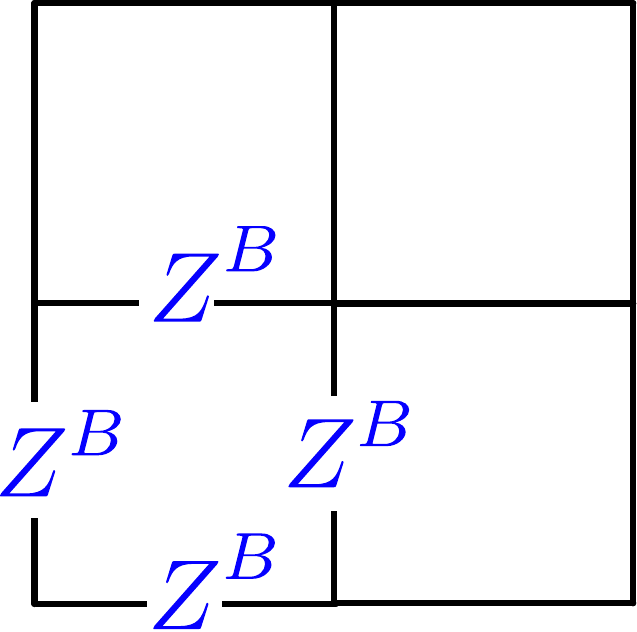}}=&\raisebox{-3.5em}{\includegraphics[scale=.25]{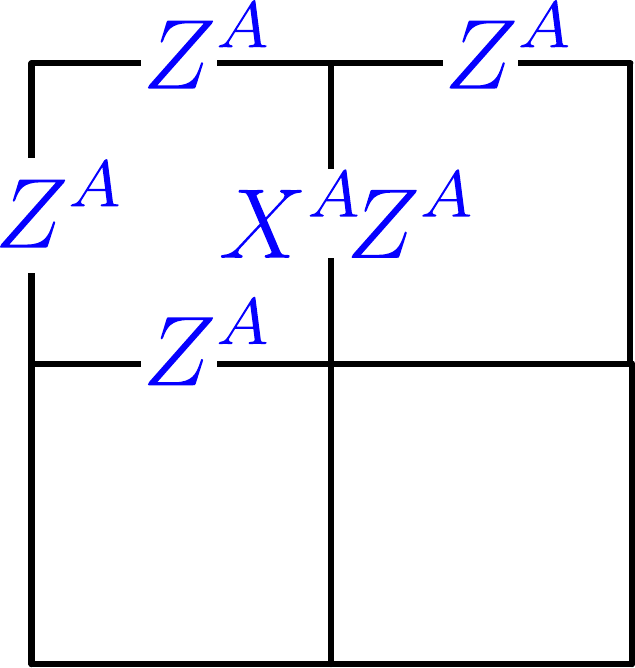}}\times\raisebox{-3.5em}{\includegraphics[scale=.25]{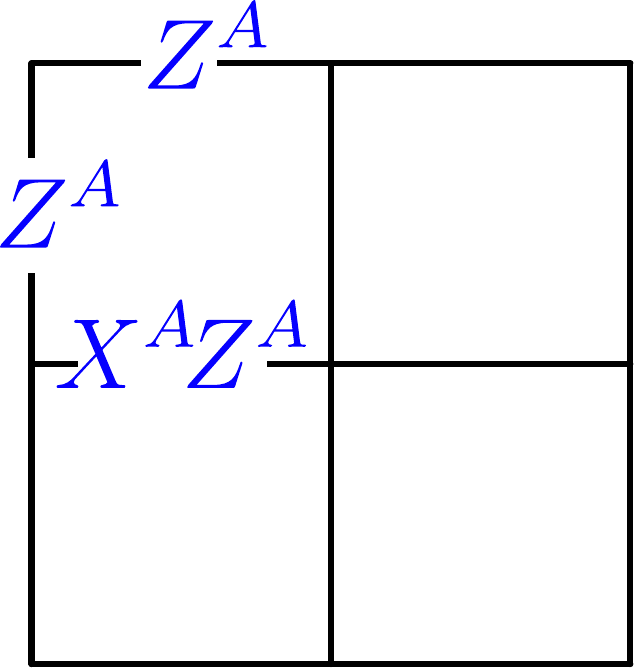}}\times\raisebox{-3.5em}{\includegraphics[scale=.25]{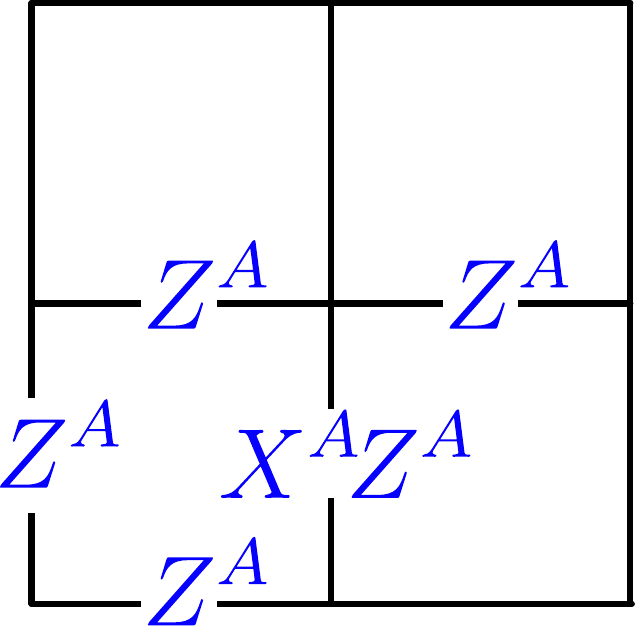}}\times\raisebox{-3.5em}{\includegraphics[scale=.25]{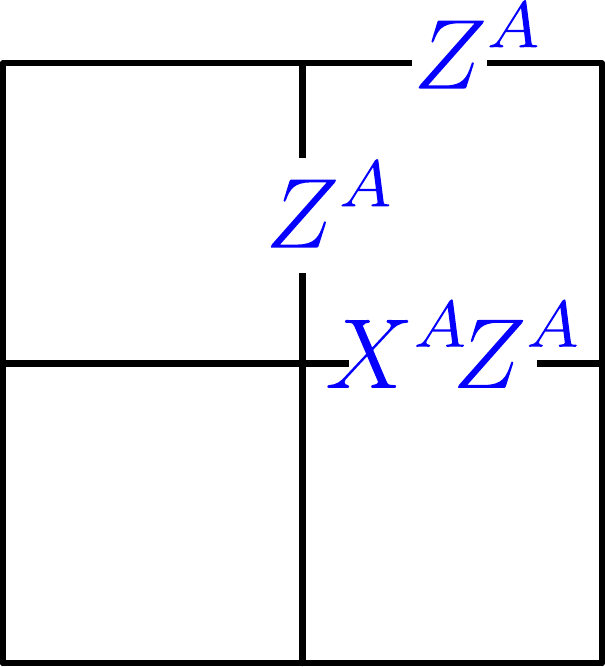}}\\
\times&\raisebox{-3.5em}{\includegraphics[scale=.25]{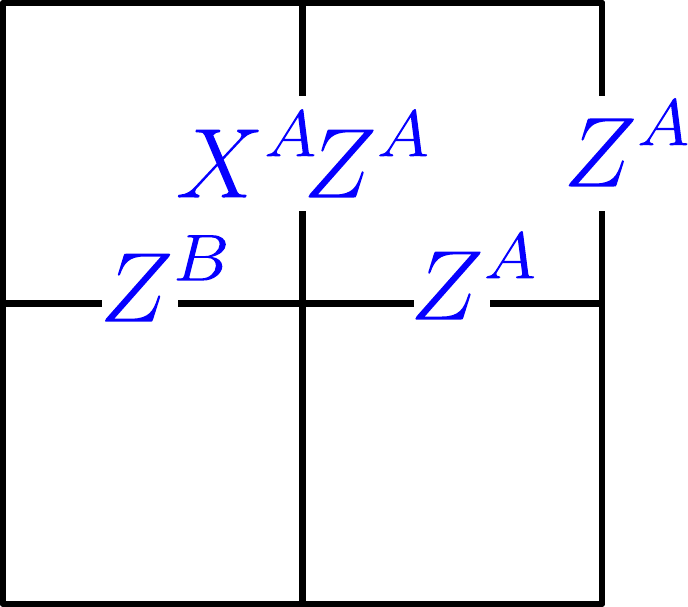}}\times\raisebox{-3.5em}{\includegraphics[scale=.25]{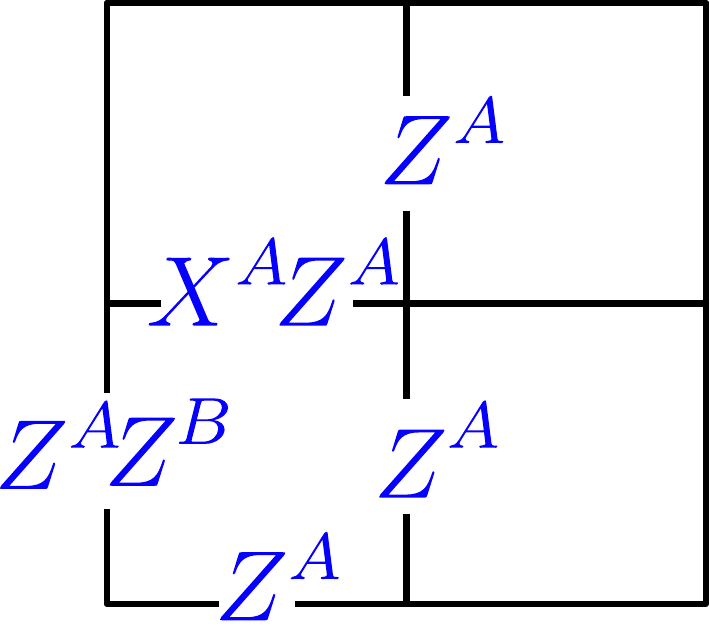}}\times\raisebox{-3.5em}{\includegraphics[scale=.25]{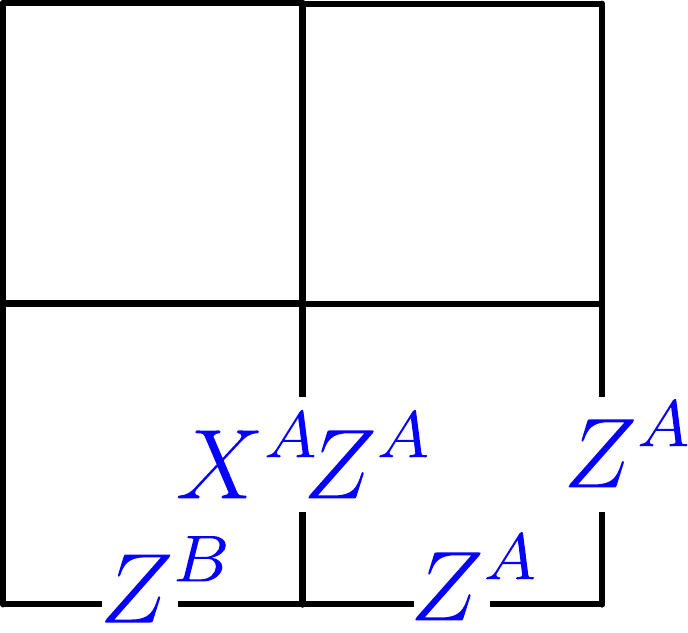}}\times\raisebox{-3.5em}{\includegraphics[scale=.25]{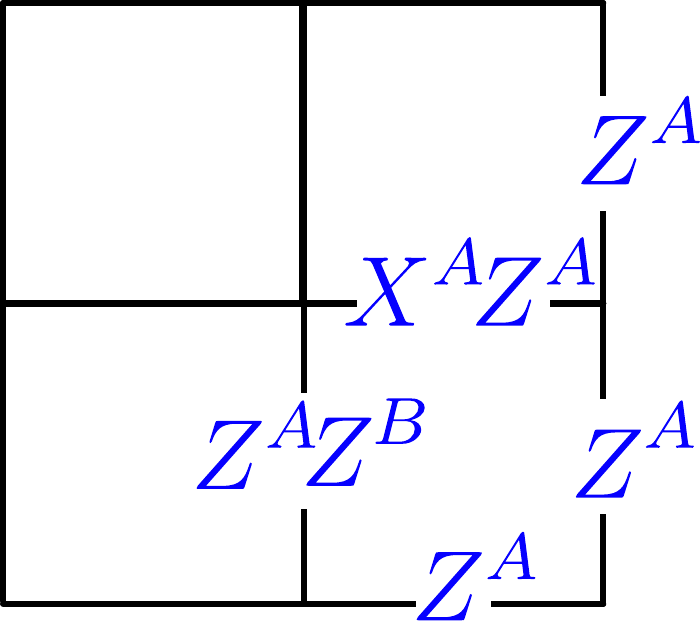}}~.
\nonumber
\end{eqs}
With this $W_f^B$, we construct single $Z_e^A$ from $A_e^B$ and $\overline{A}_e^B$
\begin{eqs}
Z_e^A=\prod_{e'}(A^B_{e'}\overline{A}^B_{e'}\prod_f W_f^{B \int \bface\cup_1\be'+\be'\cup_1\bface})^{\int \be'\cup\be}~,
\end{eqs}
which can be shown diagrammatically as
\begin{eqs}
\raisebox{-1.5em}{\includegraphics[scale=.25]{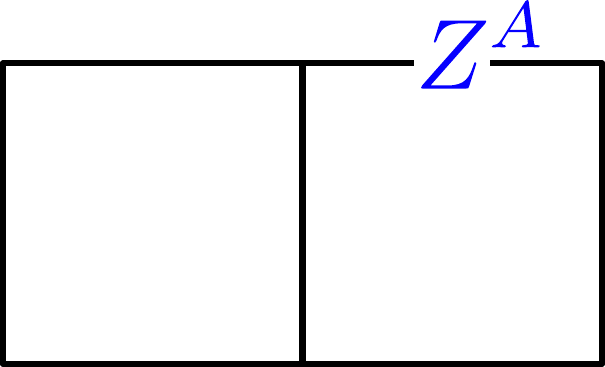}}&=\raisebox{-1.5em}{\includegraphics[scale=.25]{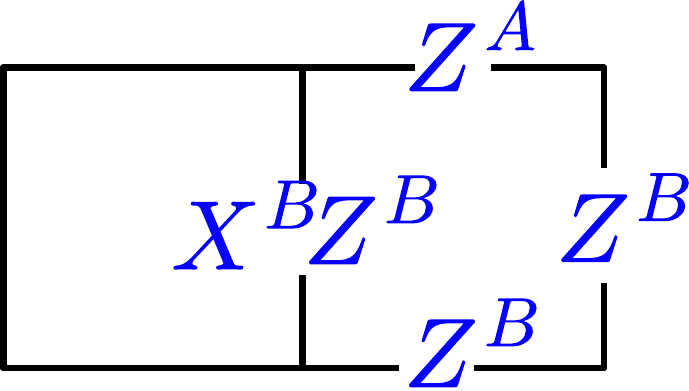}}\times\raisebox{-1.5em}{\includegraphics[scale=.25]{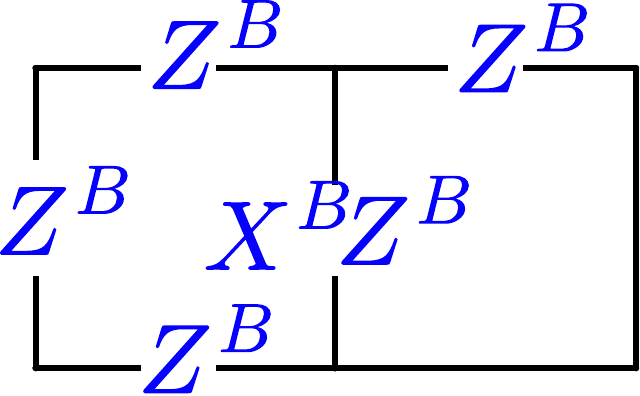}}\times\raisebox{-1.5em}{\includegraphics[scale=.25]{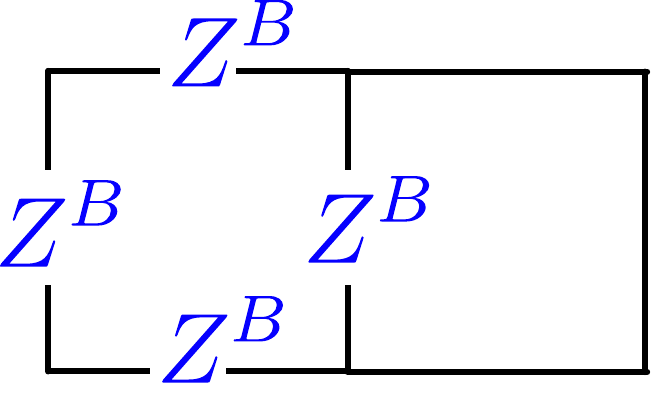}}\times\raisebox{-1.5em}{\includegraphics[scale=.25]{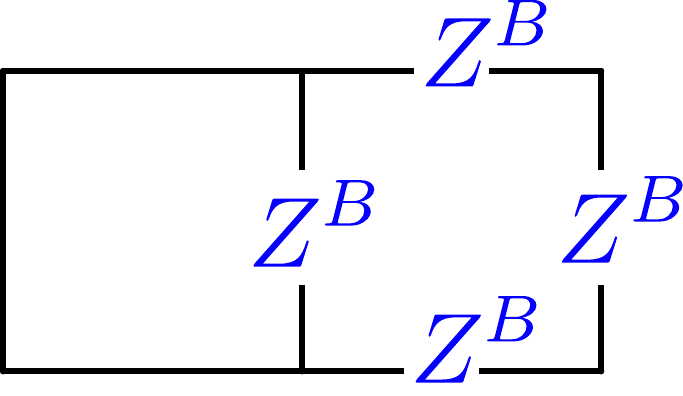}} , \\
\raisebox{-3.5em}{\includegraphics[scale=.25]{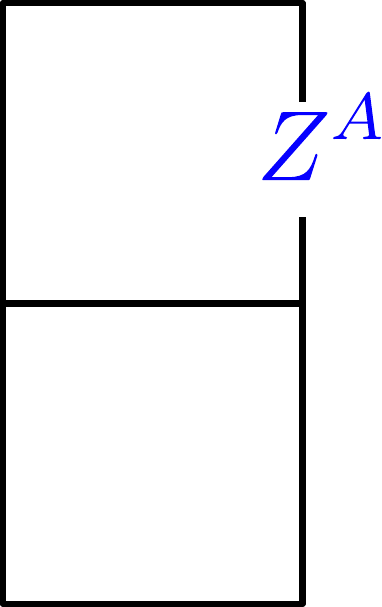}}&=\raisebox{-3.5em}{\includegraphics[scale=.25]{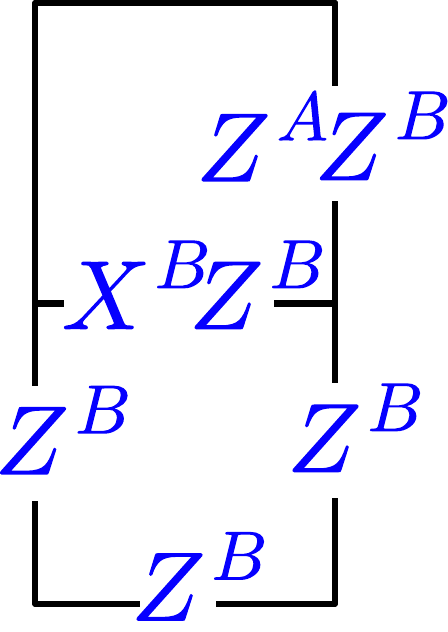}}\times\raisebox{-3.5em}{\includegraphics[scale=.25]{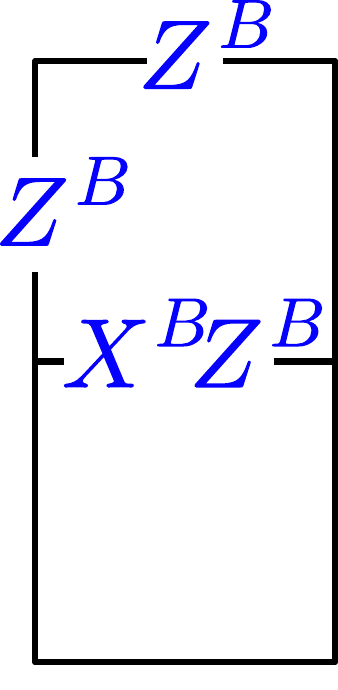}}\times\raisebox{-3.5em}{\includegraphics[scale=.25]{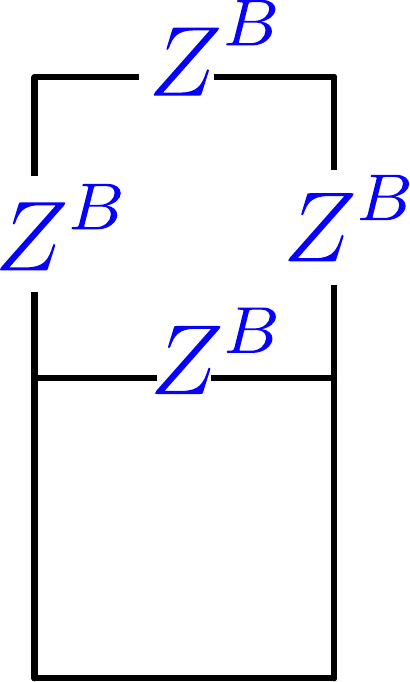}}\times\raisebox{-3.5em}{\includegraphics[scale=.25]{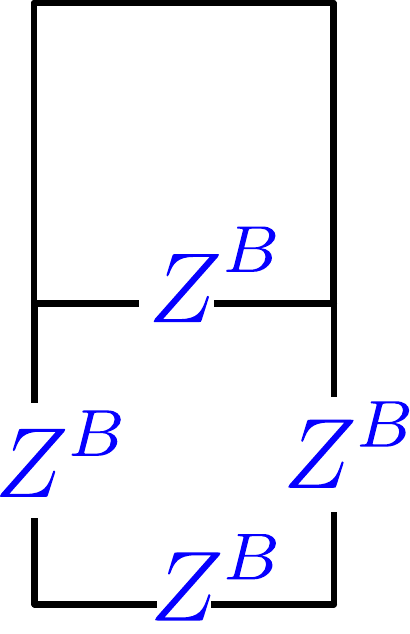}}~.
\nonumber
\end{eqs}
With this single $Z^A$ obtained, the construction of other single Pauli operators are obvious. In cup product formalism, they can be shown as
\begin{eqs}
    X^A_e&=A^A_e \prod_{e'}Z_{e'}^{A \int \be\cup\be'}\prod_f W_f^{A \int \bface\cup_1\be}~,\\
    Z^B_e&=\prod_{e'}\left(X^A_{e'}\overline{A}^A_{e'}\prod_{e''}Z_{e''}^{A \int \be'\cup\be''}\prod_fW_f^{A \int \be'\cup\bface}\right)^{\int \be\cup\be'}~,\\
    X^B_e&=\overline{A}_e^B \prod_{e'}Z_{e'}^{B \int \be\cup\be'}\prod_fW_f^{B \int \bface\cup_1\be}~.
    \label{eq: generation single X B cup product}
\end{eqs}
{\change
Operators $A_e^\alpha$ and $\bar A_e^\alpha$ are related by the unitary
\begin{equation}
    U = \prod_{e,f'}CZ(Z_e^A,W_f^A)^{\int{\bface \cup_1 \be +\be \cup_1 \bface}}  CZ(Z_e^B,W_f^B)^{\int{\bface \cup_1 \be +\be \cup_1 \bface}} \times\prod_{e,e'}CZ(Z_e^A,Z^B_{e'})^{\int{\be' \cup \be}}~,
\end{equation}
where $CZ(P_1,P_2)= (-1)^{\frac{1-P_1}{2}\frac{1-P_2}{2}}$ is the generalized controlled gate of two commuting Pauli operators $P_1$ and $P_2$.
Note that $W_f^{\int (\bface \cup_1 \be +\be \cup_1 \bface)}$ on the square lattice doesn't contain $Z_e$ so the $CZ$ operator is well defined.
}
\end{widetext}

%%%%%%%%%%%%%%%%%%%%%%%%%%%%%%%%%%%%%%%%%%%%%%%%%%%%%%%%%%%%%%%%%%%%%%%%%%%%%%%%
\subsubsection{Polynomial form}
The hopping operators in polynomial form are
\begin{align}
    \bU^\alpha_e &= \bX^\alpha_e + \bZ^\alpha_{e'} \bM_{e' \cup e}~, \\
    \tilde \bU^\alpha_e &=\bX^\alpha_e +\bZ^\alpha_{e'} \bM_{e \cup e'}^\dagger~, \\
    \bW^\alpha_f &= \bZ^\alpha _{\delta e}~,
\end{align}
where $\alpha \in \{A, B \}$. The generators of the ISA are
\begin{align}
    \bA^A_e &= \tilde\bU^A_e+ \bW^A_f \bM_{f\cup_1e}~, \\
    \overline{\bA}^A_e &= \tilde\bU^A_e+ \bW^A_f \bM_{e\cup_1f}^\dagger+\bZ^B_{e'}\bM_{e'\cup e}~, \\ \bA^B_e &= \tilde\bU^B_e+ \bW^B_f\bM_{e\cup_1f} ^\dagger+\bZ^A_{e'}\bM_{e\cup e'}^\dagger~,  \\  \overline{\bA}^B_e &= \tilde\bU^B_e+ \bW^B\bM_{f\cup_1e}~.
\end{align}
In other words, we have
\begin{align}
    \bA &= \begin{pmatrix} \bA^A & \bA^B \end{pmatrix} =  \begin{pmatrix}
        \bbone\\
        \boldsymbol{\mathcal{M}} \end{pmatrix}~, \\
    \overline{\bA} &= \begin{pmatrix} \overline{\bA}^A & \overline{\bA}^B\end{pmatrix} =  \begin{pmatrix} 
        \bbone\\
        \overline{\boldsymbol{\mathcal{M}}}
    \end{pmatrix}~,
\end{align}
where
\begin{align}
    \boldsymbol{\mathcal{M}} &= \begin{pmatrix} \bM^\dagger_{e\cup e_0} +\bM_{\delta e_0 \cup_1 e} & \bM^\dagger_{e\cup e_0}\\
    0 &  \bM^\dagger_{e\cup e_0}+\bM^\dagger_{e\cup_1\delta e_0}\end{pmatrix}~, \\\overline{\boldsymbol{\mathcal{M}}}&= \begin{pmatrix}  \bM^\dagger_{e\cup e_0}+ \bM^\dagger_{ e \cup_1 \delta e_0} & 0\\
    \bM_{e_0\cup e} & \bM^\dagger_{e\cup e_0}+\bM_{\delta e_0 \cup_1 e}
    \end{pmatrix}~.
\end{align}
In fact, $\boldsymbol{\mathcal{M}}^\dagger =  \overline{\boldsymbol{\mathcal{M}}}$, where the equality for the diagonal terms follows from the Leibniz rule for the $\cup_1$ product.

\begin{figure}[t]
    \begin{subfigure}[]
        \centering
        \includegraphics[width=0.6\linewidth]{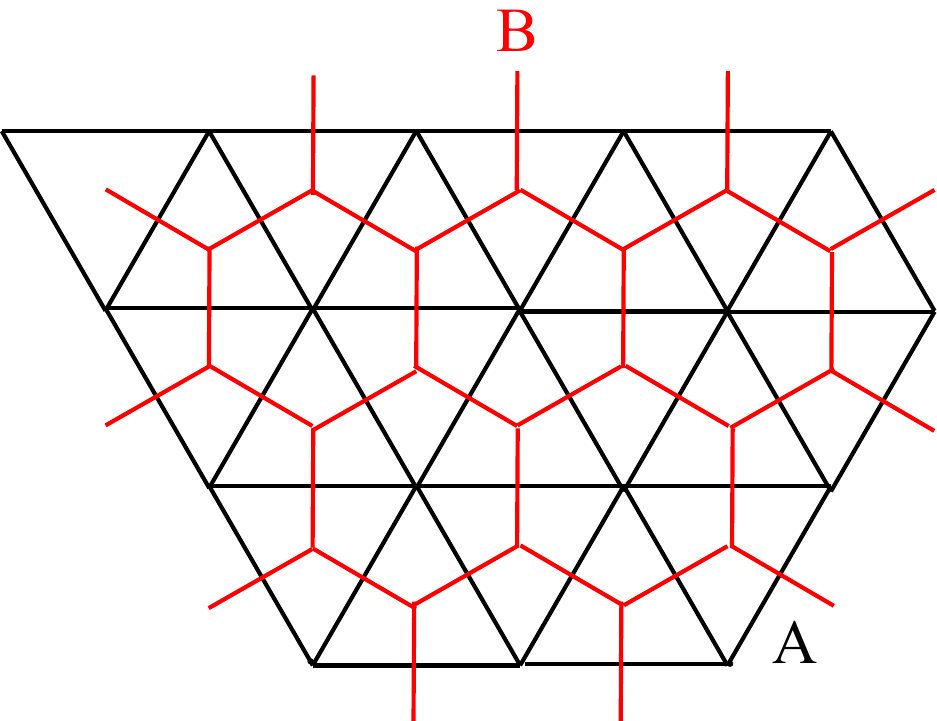}
    \end{subfigure}
    \vspace{1em}
    \begin{subfigure}[]
        \centering
        \includegraphics[width=0.4\linewidth]{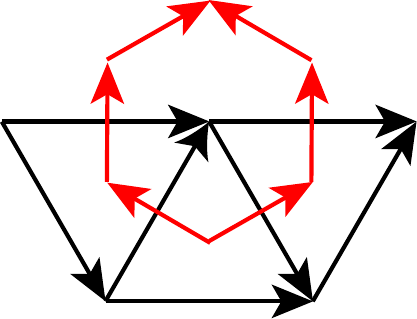}
    \end{subfigure}
    \caption{(a) The lattice is constructed by stacking a triangular lattice together with its dual, the honeycomb lattice. (b) Branching structure of the mixed lattice. }
    \label{fig:triangular-honeycomb mixed lattice}
\end{figure}

{\change
A QCA can be constructed from the ISA by pushing $\bA$ in an arbitrary direction of $m = x^i y^jz^k$ as
\begin{equation}
     \bH \begin{pmatrix} m & 0 \\ 0 & 1\end{pmatrix}\bH^{-1}~, \text{ where }
     \bH = \begin{pmatrix}
        \bA & \bar \bA
    \end{pmatrix}~,
\end{equation}
% where 
% \begin{equation}
%     \bH = \begin{pmatrix}
%         \bA & \bar \bA
%     \end{pmatrix}~,
% \end{equation}
of which the simplest is to take $m = z$. We may write
\begin{equation}
    \balpha^\text{3F}_\text{ISA} = \bH \begin{pmatrix} z & 0 \\ 0 & 1\end{pmatrix}\bH^{-1}~.
    \label{eq:3FQCA_ISA}
\end{equation}
}

%%%%%%%%%%%%%%%%%%%%%%%%%%%%%%%%%%%%%%%%%%%%%%%%%%%%%%%%%%%%%%%%%%%%%%%%%%%%%%%%
\subsubsection{$\mathbb{Z}_2$ invertible subalgebra on dual triangular-honeycomb lattices}

\begin{figure}[t]
    \begin{subfigure}[]
        \centering
        \includegraphics[width=0.5\linewidth]{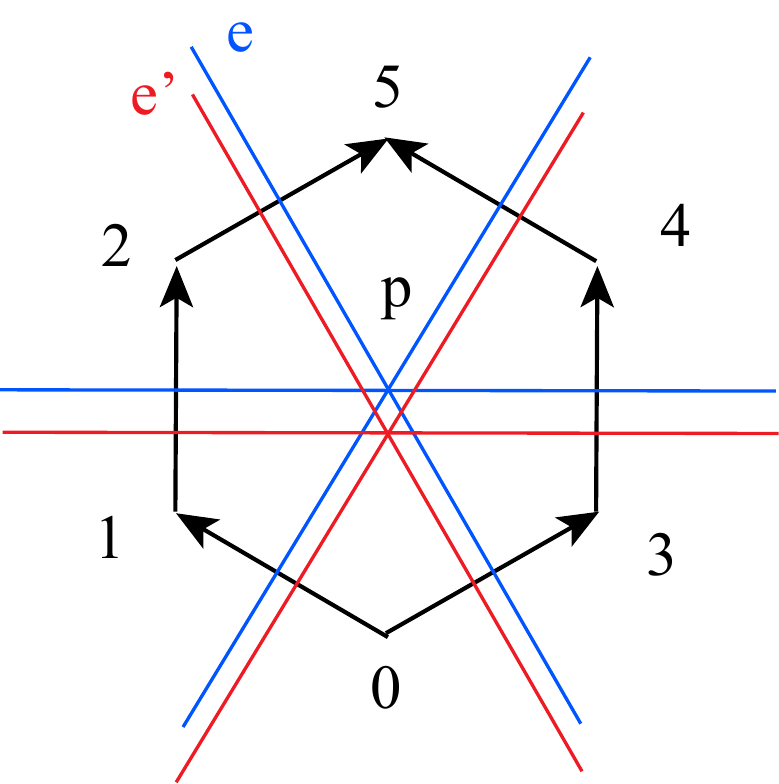}
    \end{subfigure}
    \vspace{1em}
    \begin{subfigure}[]
        \centering
        \includegraphics[width=0.8\linewidth]{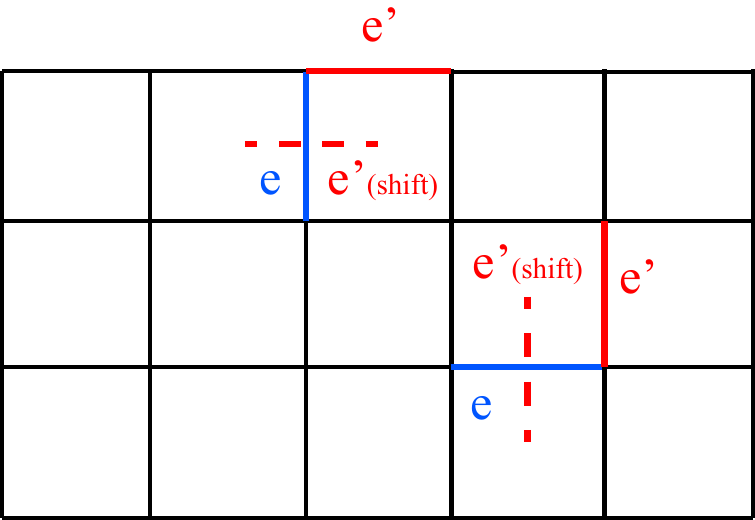}
    \end{subfigure}
    \caption{(a) The intersections of shifted dual lines indicate the cup product definition. (b) An illustration of the cup product on a square lattice, realized as the intersection of one edge with the other after a left-downward shift. }
    \label{fig:cup product and Poincare dual}
\end{figure}

In this subsection, we generalize our cubic lattice construction for $\mathbb{Z}_2$ invertible subalgebra to other lattices. Specifically, we introduce the dual triangular-honeycomb lattices illustrated in Fig.~\ref{fig:triangular-honeycomb mixed lattice}(a).

We place $A$ qubits on the edges of the triangular lattice and $B$ qubits on the edges of the honeycomb lattice. To incorporate this model into our cup product framework, we introduce the rules for cup products on the mixed lattice.

We begin by establishing the branching structure of the lattice, illustrated in Fig.~\ref{fig:triangular-honeycomb mixed lattice}(b). The cup product and coboundary operators on the triangular lattice follow directly from simplicial cohomology, as reviewed in Sec.~\ref{app:terminology}. Our main focus, therefore, is on defining the cup products on the honeycomb lattice and the mixed cup products such as $\be^A \cup \be^B$ across the two lattices.

Following the geometric definition of the cup product on the cubic lattice, we define the cup and cup-1 products on the honeycomb lattice as illustrated in Fig.~\ref{fig:cup product and Poincare dual}(a):
\begin{eqs}
    \be\cup_1\bface(p)&=\be(01)+\be(12)+\be(25)]\bface(p)~,\\
    \bface\cup_1\be(p)&=[\be(03)+\be(34)+\be(45)]\bface(p)~,\\
    \be\cup\be'(p)&=\be(03)\be'(34)+\be(03)\be'(45)+\be(34)\be'(45)\\
    &+\be(01)\be'(12)+\be(01)\be'(25)+\be(12)\be'(25)~.
\end{eqs}
Next, we derive the formula for $\be^A \cup \be^B$. On the square lattice, the cup product of two edges corresponds geometrically to the intersection of one edge with the dual edge shifted toward the down-left direction, as illustrated in Fig.~\ref{fig:cup product and Poincare dual}(b). Motivated by this, we define $\int \be^A \cup \be^B \neq 0$ whenever $e^A$ and $e^B$ are Poincaré dual to each other.
As an example, the operator $A^A_e$ can be written as
\begin{equation}
\begin{aligned}
    A_e^A &= \tU_e^A \prod_f W_f^{A\int \be \cup_1 \bface} \prod_{e'} {Z_{e'}^B}^{\int \be\cup \be'} \\
    &= \tU_e^A \left( \prod_f W_f^{A\int \be \cup_1 \bface} \right){Z_{\mathrm{PD}(e)}^B}~.
\end{aligned}
\end{equation}
where $\mathrm{PD}(e)$ denotes the edge Poincaré dual to $e$.

With all these cup products defined, we can now derive the invertible subalgebra on this triangular-honeycomb lattice from the original formalism. Starting with the $\tilde{U}$ operator defined in Eq.~\eqref{eq: definition U-tilde Z2 ISA}.
\begin{widetext}
\begin{eqs}
\tilde{U}^A_e&=X^A_e \prod_{e'} {Z^A_{e'}}^{\int \be \cup \be'}=
\raisebox{-2em}{\includegraphics[scale=.450]{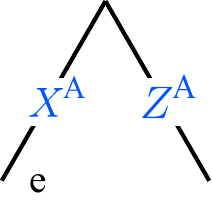}}~,
\quad
\raisebox{-2em}{\includegraphics[scale=.450]{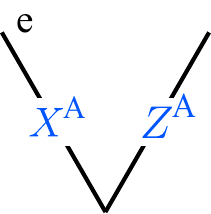}}~,
\quad
\raisebox{-2em}{\includegraphics[scale=.450]{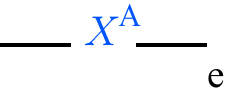}}~,\\
\tilde{U}^B_e&=X^B_e \prod_{e'} {Z^B_{e'}}^{\int \be \cup \be'}=
\raisebox{-2em}{\includegraphics[scale=.450]{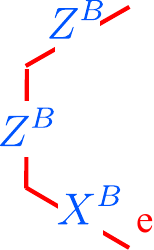}}~,
\quad
\raisebox{-2em}{\includegraphics[scale=.450]{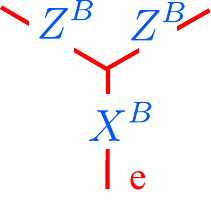}}~,
\quad
\raisebox{-2em}{\includegraphics[scale=.450]{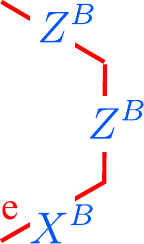}}~.
\end{eqs}
We can then derive the entire invertible subalgebra as follows
\begin{eqs}
    A_e^A &= \tU_e^A \prod_f W_f^{A\int \be \cup_1 \bface} \prod_{e'} {Z_{e'}^B}^{\int \be\cup \be'}=
    \raisebox{-2em}{\includegraphics[scale=.450]{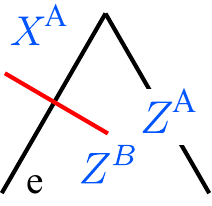}}~,
    \quad
    \raisebox{-2em}{\includegraphics[scale=.450]{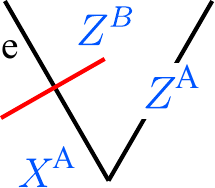}}~,
    \quad
    \raisebox{-2em}{\includegraphics[scale=.450]{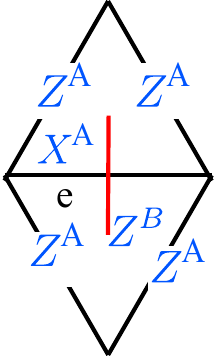}}~,
    \\
    A^B_e&= \tU_e^B \prod_f W_f^{B\int \bface \cup_1 \be}=
    \raisebox{-2em}{\includegraphics[scale=.450]{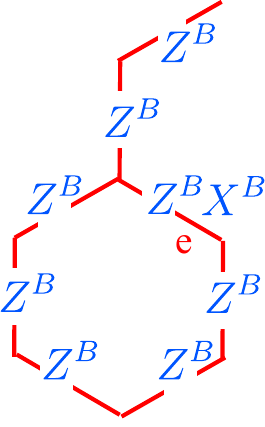}}~,
    \quad
    \raisebox{-2em}{\includegraphics[scale=.450]{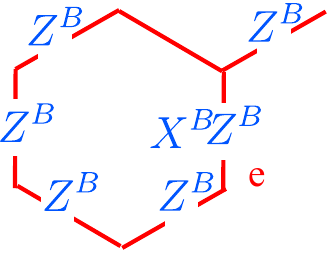}}~,
    \quad
    \raisebox{-2em}{\includegraphics[scale=.450]{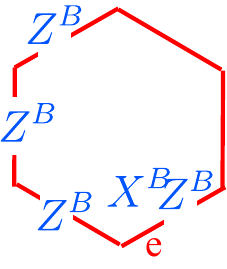}}~,
\end{eqs}
and
\begin{eqs}
    \overline{A}_e^A &= \tU_e^A \prod_f W_f^{A\int \bface \cup_1 \be}=
    \raisebox{-2em}{\includegraphics[scale=.450]{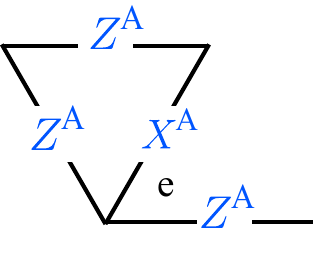}}~,
    \quad
    \raisebox{-2em}{\includegraphics[scale=.450]{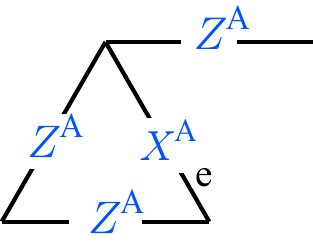}}~,
    \quad
    \raisebox{-2em}{\includegraphics[scale=.450]{Figures/U_tilde_A_3.pdf}}~,\\
    \bar A^B_e&=\tU_e^B \prod_f W_f^{B\int \be  \cup_1 \bface} \prod_{e'} {Z^A_{e'}}^{\int \be'\cup \be}=
    \raisebox{-2em}{\includegraphics[scale=.450]{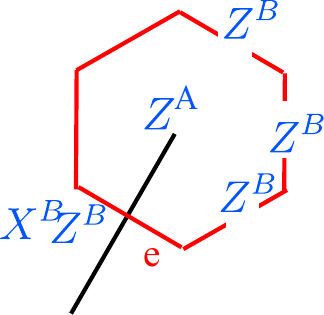}}~,
    \quad
    \raisebox{-2em}{\includegraphics[scale=.450]{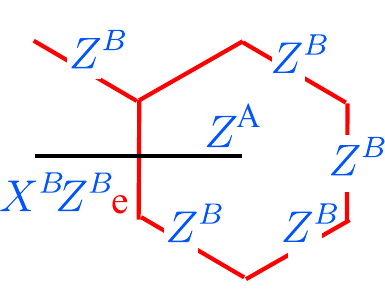}}~,
    \quad
    \raisebox{-2em}{\includegraphics[scale=.450]{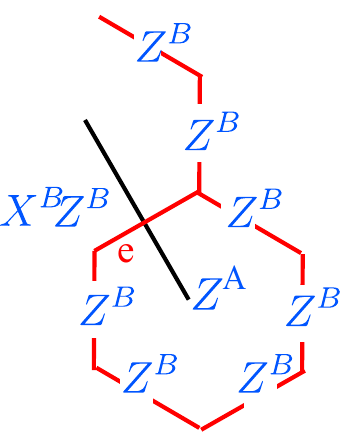}}~.
\end{eqs}
Following the similar calculation of Eq.~\eqref{eq: generation single X B cup product}, we conclude that these operators also generate the entire qubit algebra, meaning that they form an invertible subalgebra.

Thus, we generalize the cubic $\mathbb{Z}_2$ invertible subalgebra to a specific non-cubic lattice. Before moving on, we claim that this kind of generalization only works for $\mathbb{Z}_2$ cases because in other cases the additional $\pm$ sign would break the commutativity of $A$ and $\bar A$. From a mathematical point of view, the $\ZZ_2$ invertible subalgebra (and its corresponding QCA) is guided by a $\ZZ_2$ topological invariant $w_2^2$, while $\ZZ_k$ cases aren't. So the cup product formula for $\ZZ_2$ ISA is robust to lattice changes, but $\ZZ_k$ ISA only lives on a certain lattice (the cubic one).
\end{widetext}

%%%%%%%%%%%%%%%%%%%%%%%%%%%%%%%%%%%%%%%%%%%%%%%%%%%%%%%%%%%%%%%%%%%%%%%%%%%%%%%%
\subsection{$2{+}1$D $\mathbb{Z}_p^{(k)}$ invertible subalgebras}
\label{sec: Zpk invertible subalgebras}
On the 2d square lattice, we put one $\mathbb{Z}_p$ qudit on each edge, spanning the whole Pauli algebra. We can define a subalgebra of the Pauli group generated by the operator
\begin{eqs}
    A_e^{(k)} :=& X_e \prod_{e'} Z_{e'}^{k \int \be' \cup \be}
    \prod_f Z_{\partial f}^{\frac{k}{2}\int(\be \cup_1 \bface - \bface \cup_1 \be)},
\end{eqs}
where we again emphasize that $\frac{1}{2}$ in the exponent stands for the integer $\frac{p+1}{2}$, which is the multiplicative inverse of $2$ mod $p$.
The orthogonal subalgebra is generated by
\begin{equation}
    \oA_e^{(k)} := X_e \prod_{e'} Z_{e'}^{-k \int \be' \cup \be}
    \prod_f Z_{\partial f}^{-\frac{k}{2}\int(\be \cup_1 \bface - \bface \cup_1 \be)},
\end{equation}
which is the charge conjugation, $Z \rightarrow Z^{-1}$, of the subalgebra generated by $A_e^{(k)}$.

These two terms can be shown in the following diagrams:
\begin{eqs}
    {A^{(k)}_e}&=\raisebox{-1.9em}{\includegraphics[scale=.25]{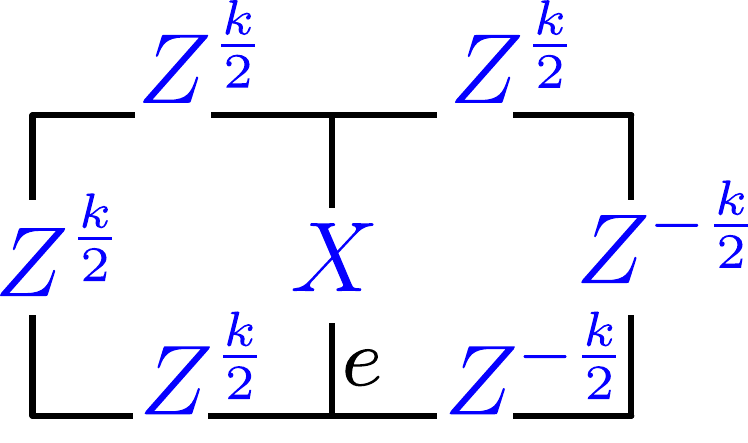}}~,
    \quad
    \raisebox{-4em}{\includegraphics[scale=.25]{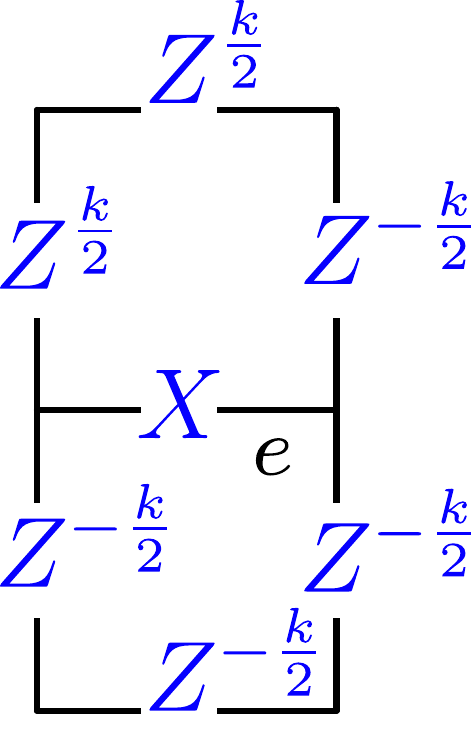}}~,
\end{eqs}
\begin{eqs}
    {\overline{A}^{(k)}_e}&=\raisebox{-2em}{\includegraphics[scale=.25]{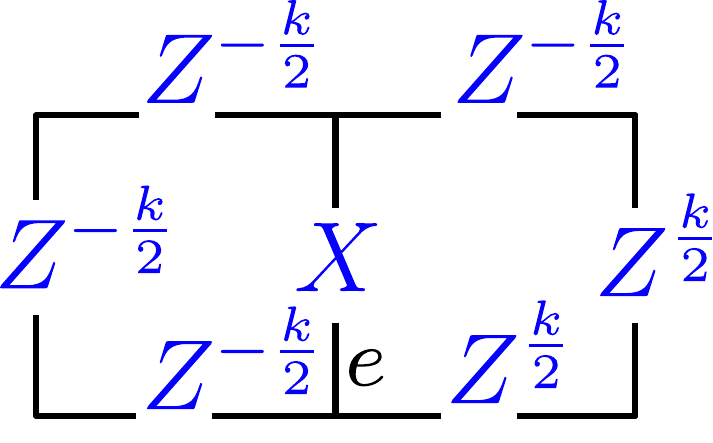}}~,
    \quad~
    \raisebox{-4em}{\includegraphics[scale=.25]{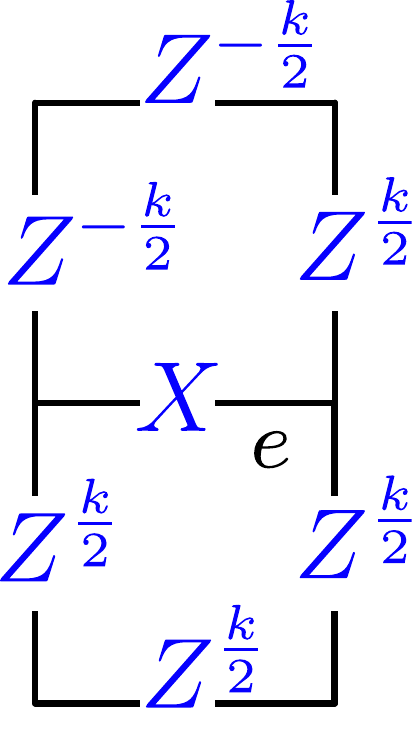}}~.
\end{eqs}
First, we verify elements in $\{ A_e^{(k)} \}$ and $\{ \oA_e^{(k)} \}$ mutually commute, i.e.,
\begin{equation}
    A_{e_1}^{(k)} \oA_{e_2}^{(k)} = \exp(\frac{2\pi i}{p} \chi) \oA_{e_2}^{(k)} A_{e_1}^{(k)},
\end{equation}
with
\begin{equation}
\begin{aligned}
    \chi &= \frac{k}{2} \int -2 \be_1 \cup \be_2 - \be_2 \cup_1 \delta \be_1 + \delta \be_1 \cup_1 \be_2 \\
    & \hspace{4em} - 2 \be_2 \cup \be_1  - \be_1 \cup_1 \delta \be_2 + \delta \be_2 \cup_1 \be_1 \\
    &= 0~,
\end{aligned}
\label{eq: 2d ISA cup product 1}
\end{equation}
where we have used
\begin{equation}
\begin{aligned}
    \delta(A_1 \cup B_1) =&~ \delta A_1 \cup_1 B_1 - A_1 \cup_1 \delta B_1 \\
    &- A_1 \cup B_1 - B_1 \cup A_1.
\end{aligned}
\end{equation}

Next, we need to show that the single Pauli $X_e$ or $Z_e$ can be generated from $\{ A_e^{(k)} \}$ and $\{ \oA_e^{(k)} \}$. We note that this scenario is specific to the square lattice. For a generic triangulation, $\{ A_e^{(k)} \}$ and $\{ \oA_e^{(k)} \}$ might not generate all operators in the Hilbert space, and $\{ A_e^{(k)} \}$ will not realize an invertible subalgebra.

First, it is useful to notice the following identity:
\begin{equation}
\begin{aligned}
    A_{\delta v}^{(k)} &= X_{\delta v} \prod_{e'}Z_{e'}^{k \int \be' \cup \delta \bv + \frac{k}{2}\int (\delta \bv \cup_1 \delta \be' - \delta \be' \cup_1 \delta \bv)} \\
    &= X_{\delta v} \prod_{f}Z_{\partial f}^{k \int \bv \cup \bface}~,
\end{aligned}
\label{eq: A_dv is Gv}
\end{equation}
where we have used
\begin{eqs}
    \delta (\be' \cup \bv) =& \delta \be' \cup \bv - \be' \cup \delta \bv~, \\
    \delta ( \delta \bv \cup_2 \delta \be') =& - \delta \bv \cup_1 \delta \be' - \delta \be' \cup_1 \delta \bv~, \\
    \delta (\bv \cup_1 \delta \be')=& \delta \bv \cup_1 \delta \be' - \bv \cup \delta \be' + \delta \be' \cup \bv~.
\label{eq: 2d ISA cup product 2}
\end{eqs}
We then multiply $A_e^{(k)}$ and $\oA_e^{(k)}$ to obtain $X_e^2$ and therefore single Pauli $X_e$:
\begin{equation}
\label{eq:XfromAAbar}
    X_e = \left(A_e^{(k)} \oA_e^{(k)} \right)^{\frac{1}{2}}.
\end{equation}
Diagrammatically, they are
\begin{eqs}
\raisebox{-0.5em}{\includegraphics[scale=.25]{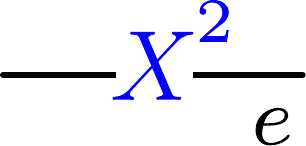}}&=\raisebox{-3.5em}{\includegraphics[scale=.22]{Figures/Zp_ISA_horizontal.pdf}}\times\raisebox{-3.5em}{\includegraphics[scale=.22]{Figures/Zp_ISA_rev_horizontal.pdf}}~, \\
\raisebox{-1.5em}{\includegraphics[scale=.25]{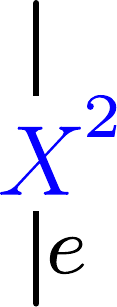}}\quad&=\raisebox{-1.5em}{\includegraphics[scale=.22]{Figures/Zp_ISA_vertical.pdf}}\times\raisebox{-1.5em}{\includegraphics[scale=.22]{Figures/Zp_ISA_rev_vertical.pdf}}~.
\end{eqs}
Subsequently, on the square lattice, we use Pauli $X_e$ and Eq.~\eqref{eq: A_dv is Gv} to obtain a single $Z_{\partial f}$ term:
\begin{equation}
    Z_{\partial f}=\prod_{v} (A_{\delta v} X_{\delta v}^{-1})^{\frac{1}{k}\int \bv \cup \bface}
    = \prod_{v} (A_{\delta v} \oA_{\delta v}^{-1} )^{\frac{1}{2k}\int \bv \cup \bface}.
\end{equation}
This product can be illustrated by the following diagram:
\begin{widetext}
\begin{equation}
\includegraphics[scale=.21]{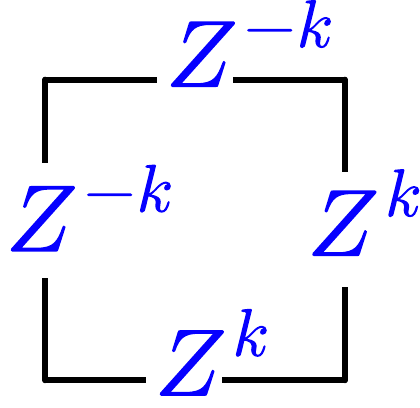}=\raisebox{-2.75em}{\includegraphics[scale=.21]{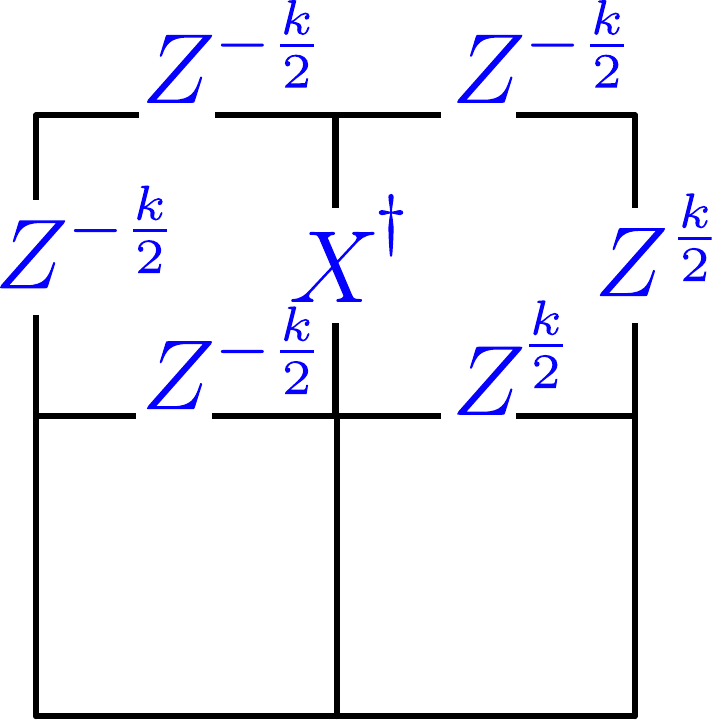}}\times\raisebox{-2.75em}{\includegraphics[scale=.21]{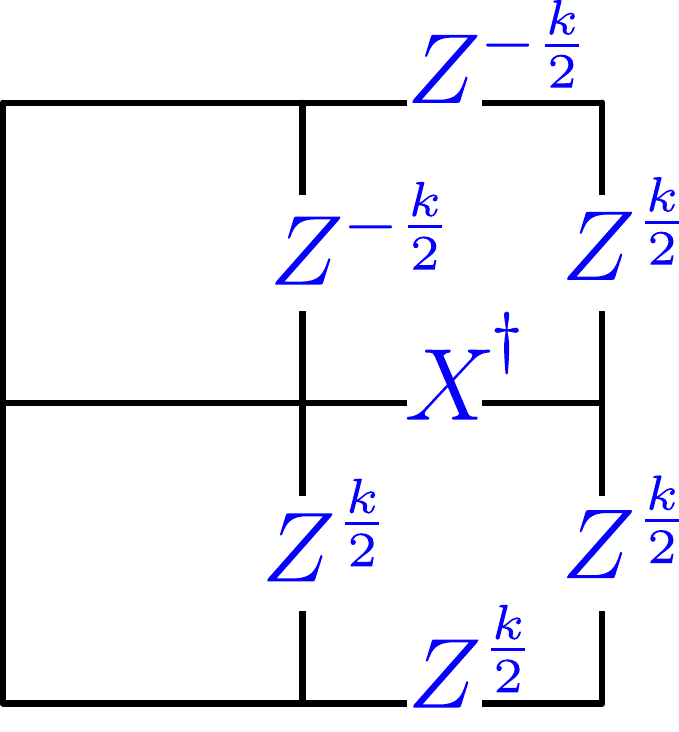}}\times\raisebox{-2.75em}{\includegraphics[scale=.21]{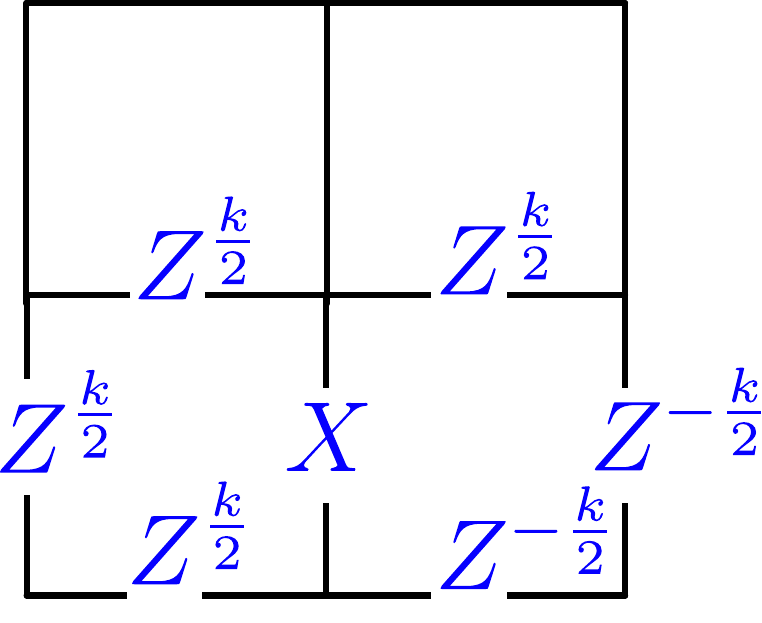}}\times
\raisebox{-2.75em}{\includegraphics[scale=.21]{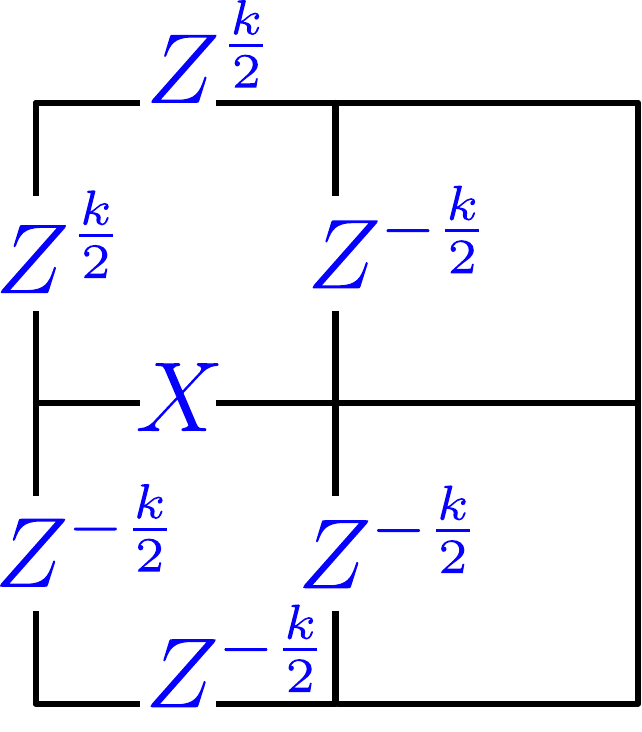}}\times\raisebox{-2.75em}{\includegraphics[scale=.21]{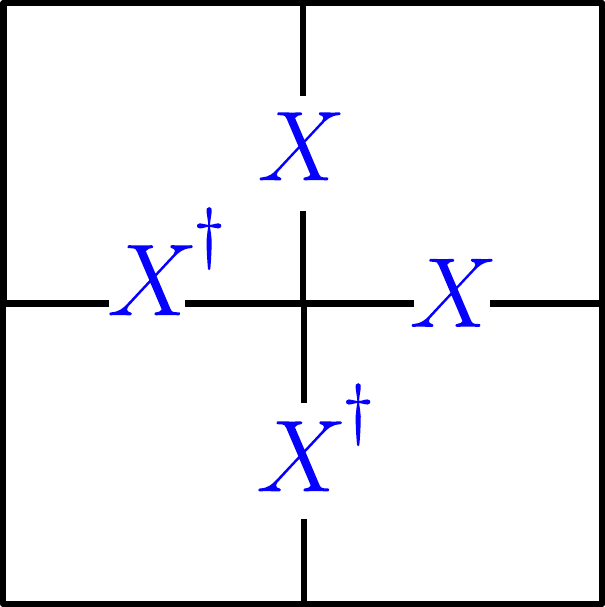}} ~.
\end{equation}
Finally, we multiply the Pauli $X_e$ and plaquette term $Z_{\partial f}$ to $A_e^{(k)}$, to derive single Pauli $Z_e$ as
\begin{equation}
    Z_{e'}= \prod_{e} \left( A_e^{(k)} X_e^{-1}
    \prod_{f} Z_{\partial f}^{-\frac{k}{2}\int(\be \cup_1 \bface - \bface \cup_1 \be)}
    \right)^{\frac{1}{k} \int \be' \cup \be}.
\end{equation}
Graphically, this is
\begin{eqs}
\raisebox{-1.5em}{\includegraphics[scale=.25]{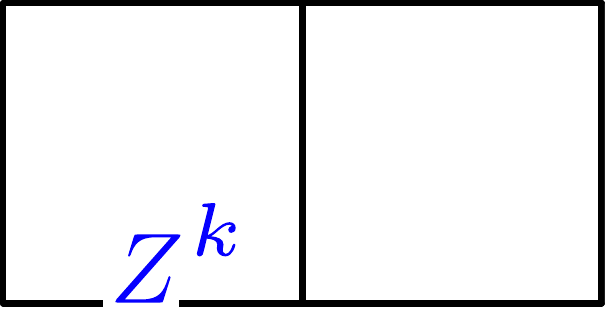}}&=\raisebox{-1.5em}{\includegraphics[scale=.25]{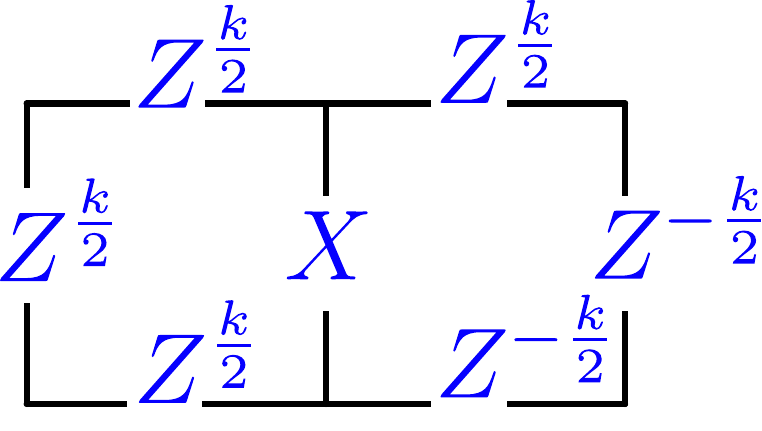}}\times\raisebox{-1.5em}{\includegraphics[scale=.25]{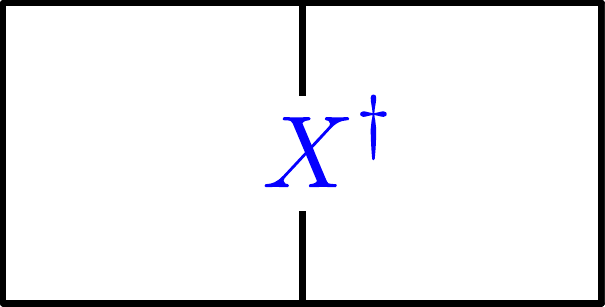}}\times\raisebox{-1.5em}{\includegraphics[scale=.25]{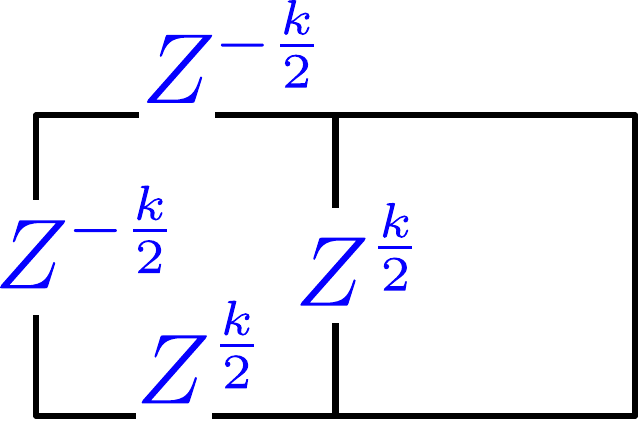}}\times\raisebox{-1.5em}{\includegraphics[scale=.25]{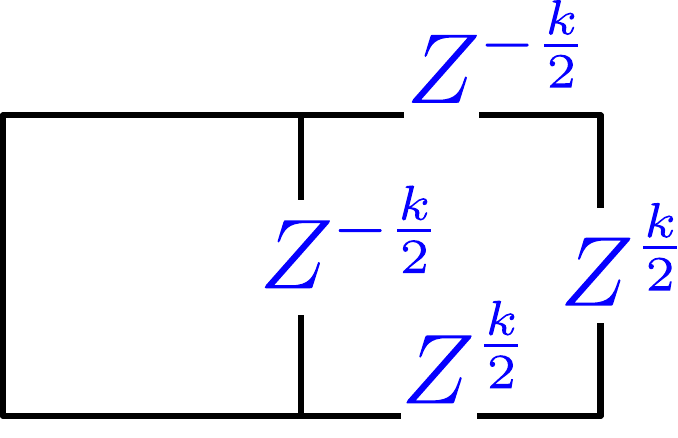}} , \\
\raisebox{-3.5em}{\includegraphics[scale=.25]{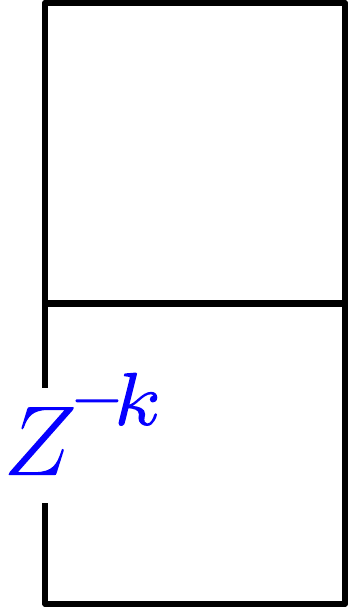}} &=\raisebox{-3.5em}{\includegraphics[scale=.25]{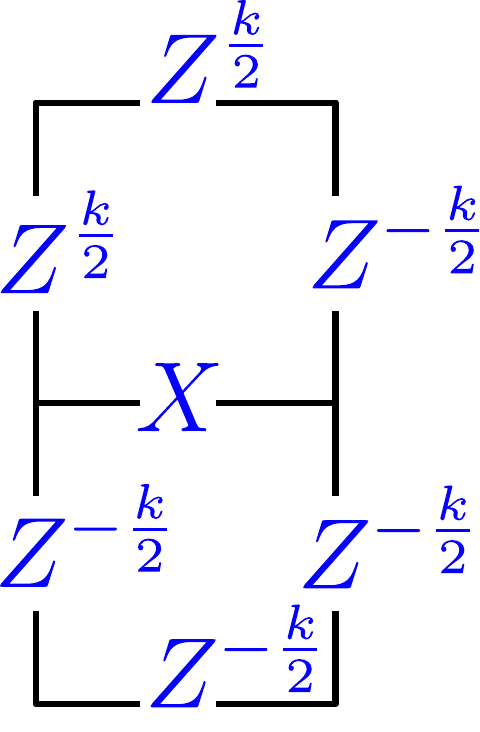}}\times\raisebox{-3.5em}{\includegraphics[scale=.25]{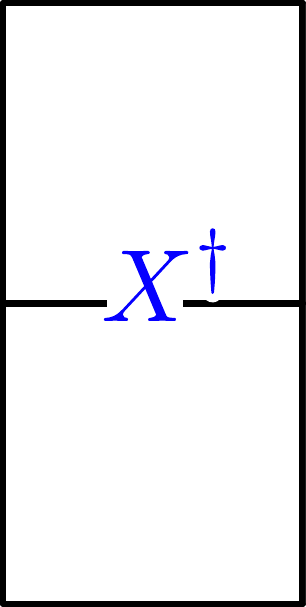}}\times\raisebox{-3.5em}{\includegraphics[scale=.25]{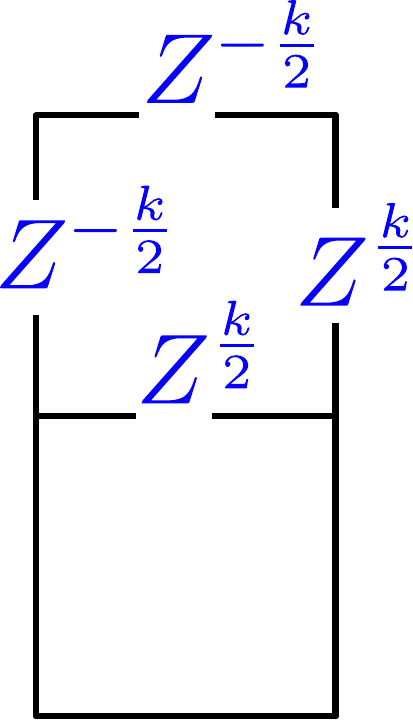}}\times\raisebox{-3.5em}{\includegraphics[scale=.25]{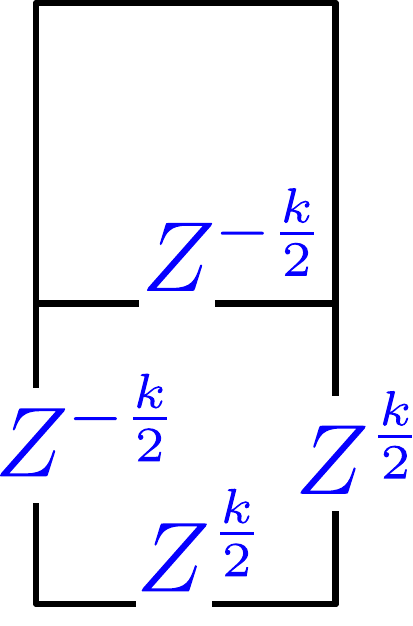}}.
\end{eqs}
Thus, this can be expressed entirely in terms of $A_e^{(k)}$ and $\bar A_e^{(k)}$ as 
\begin{equation}
\label{eq:ZfromAAbar}
\begin{aligned}
    Z_{e'} =& \prod_{e} \Biggl[\Biggl( {A_e^{(k)}} {\big(\bar A_e^{(k)}\big)}^{-1} \Biggl) \times  \prod_{v,f}\left( {A_{\delta v}^{(k)}} \big(\bar A_{\delta v}^{(k)}\big)^{-1} \right)^{-\frac{1}{2} \int \bv \cup \bface \int(\be \cup_1 \bface - \bface \cup_1 \be)}\Biggl]
    ^{\frac{1}{2k} \int \be' \cup \be}.
\end{aligned}
\end{equation}
\end{widetext}

When $p \equiv 1 \ (\mathrm{mod}\ 4)$, the element $-1$ is a quadratic residue in $\mathbb{Z}_p$. 
Let $a \in \mathbb{Z}_p$ satisfy $a^2 \equiv -1 \ (\mathrm{mod}\ p)$. 
Then the diagonal symplectic transformation
\begin{equation}
\begin{pmatrix}
a & 0 & 0 & 0 \\
0 & a & 0 & 0 \\
0 & 0 & -a & 0 \\
0 & 0 & 0 & -a
\end{pmatrix}
\end{equation}
maps $A_e^{(k)}$ to its conjugate $\bar{A}_e^{(k)}$. 
Consequently, two copies of the invertible subalgebra $A_e^{(k)}$ are equivalent to one copy of $A_e^{(k)}$ and one copy of $\bar{A}_e^{(k)}$. 
This tensor product Hilbert space is reducible to a trivial factorization, implying that the combined system of two copies of $A_e^{(k)}$ is trivial ISA. 
This conclusion is fully consistent with the Witt group classification, where for primes $p \equiv 1 \ (\mathrm{mod}\ 4)$ the corresponding quadratic forms yield only order-two elements, so stacking two copies indeed produces a trivial class.

\subsubsection{Polynomial form}

In the odd prime cases, the ISA are constructed as
\begin{align}
    \bA_e^{(k)} &= \bU_e^{(k)}+\frac{k}{2} \bZ_{e'} (\bM^\dagger_{e\cup_1 \delta e'}-\bM_{\delta e'\cup_1 e}) , \\
    \bA^{(k)} &= \begin{pmatrix}
        \bbone\\
        \boldsymbol{\mathcal{M}}
    \end{pmatrix},
\end{align}
where 
\begin{align}
        \boldsymbol{\mathcal{M}}_{e_0,e} = k\bM_{e_0\cup e} + \frac{k}{2}(\bM^\dagger_{e\cup_1 \delta e_0}-\bM_{\delta e_0\cup_1 e}).
\end{align}
Define $ \bar \bA^{(k)} \equiv \bA^{(-k)}$. One can check that
\begin{align}
    \langle \bA^{(k)}, \bar \bA^{(k)} \rangle =0.
\end{align}
$\bA^{(k)}$ defines an invertible subalgebra if $\bA^{(k)}$ and $\bar \bA^{(k)}$ generates all Pauli operators. That is, the matrix
\begin{align}
    \bH = \begin{pmatrix}
        \bA^{(k)} & \bar \bA^{(k)}
    \end{pmatrix}
\end{align}
is invertible. An explicit form of the inverse is
\begin{align}
    \bH^{-1} =\frac{1}{2} \begin{pmatrix} \bbone & \boldsymbol{\mathcal{M}}^{-1} \\
        \bbone &- \boldsymbol{\mathcal{M}}^{-1}
    \end{pmatrix}.
\end{align}
Thus, $\boldsymbol{\mathcal{M}}$ needs to be invertible. A sufficient condition for being invertible is if $\bM_{v\cup f}$ and $\bM_{e'\cup e}$ are invertible matrices, which is the case for the square lattice. Since the columns of $\boldsymbol{H}^{-1}$ describe the coefficients of $\bA^{(k)}$  and $\bar\bA^{(k)}$  that give $\bX$ and $\bZ$, we can directly obtain the expression for $\boldsymbol{\mathcal{M}}^{-1}$  in terms of the cup products from Eq.~\eqref{eq:ZfromAAbar} as
\begin{eqs}
    \boldsymbol{\mathcal{M}}^{-1}_{e_0,e} =& \frac{1}{k} \Bigl[\bd_{e_0,e} -\frac{1}{2} (\bM^\dagger_{\partial e_0,f})^{-1} \\
    & \hspace{2em} \times (\bM^\dagger_{e'\cup_1 f}-\bM_{f\cup_1 e'})\Bigl] (\bM_{e\cup e'})^{-1} .
\end{eqs}

{ \change
A QCA can then be constructed from the ISA by again pushing, for example $\bA^{(k)}$, in an arbitrary direction of $m = x^i y^jz^k$ as
\begin{equation}
     \bH \begin{pmatrix} m & 0 \\ 0 & 1\end{pmatrix}\bH^{-1}
\end{equation}
Take $m= x^{-1}y^{-1}z^{-1}$ as an example, we have 
\begin{equation}\label{eq:ZkpQCA_from_ISA}
    \balpha^{(k)}_\text{ISA} = A + z^{-1} B
\end{equation}
where
}
\begin{widetext}
{\change
\begin{equation*}
    A = \left(
\begin{array}{cccc}
 \frac{1}{2} & 0 & \frac{x}{4 k}-\frac{1}{4 k x} & \frac{y}{4 k x}+\frac{1}{4 k x}+\frac{y}{4 k}-\frac{1}{4 k} \\
 0 & \frac{1}{2} & -\frac{x}{4 k y}-\frac{x}{4 k}-\frac{1}{4 k y}+\frac{1}{4 k} & \frac{1}{4 k y}-\frac{y}{4 k} \\
 \frac{k}{4 y}-\frac{k y}{4} & -\frac{k y}{4 x}-\frac{k}{4 x}-\frac{k y}{4}+\frac{k}{4} & \frac{1}{2} & 0 \\
 \frac{k x}{4 y}+\frac{k x}{4}+\frac{k}{4 y}-\frac{k}{4} & \frac{k x}{4}-\frac{k}{4 x} & 0 & \frac{1}{2}
\end{array}
\right)~,
\end{equation*}
and
\begin{equation*}
    B = \left(
\begin{array}{cccc}
 \frac{1}{2 x y} & 0 & \frac{1}{4 k x^2 y}-\frac{1}{4 k y} & -\frac{1}{4 k x^2 y}-\frac{1}{4 k x^2}+\frac{1}{4 k x y}-\frac{1}{4 k x} \\
 0 & \frac{1}{2 x y} & \frac{1}{4 k x y^2}-\frac{1}{4 k x y}+\frac{1}{4 k y^2}+\frac{1}{4 k y} & \frac{1}{4 k x}-\frac{1}{4 k x y^2} \\
 \frac{k}{4 x}-\frac{k}{4 x y^2} & \frac{k}{4 x^2 y}+\frac{k}{4 x^2}-\frac{k}{4 x y}+\frac{k}{4 x} & \frac{1}{2 x y} & 0 \\
 -\frac{k}{4 x y^2}+\frac{k}{4 x y}-\frac{k}{4 y^2}-\frac{k}{4 y} & \frac{k}{4 x^2 y}-\frac{k}{4 y} & 0 & \frac{1}{2 x y} \\
\end{array}
\right)~.
\end{equation*}
}
Further, we may explicitly construct a QCA that squares to translation in an arbitrary direction. In particular, let the desired translation be given by a monomial $m = x^i y^jz^k$ for $p=1$ mod 4. Define
\begin{equation}
\begin{aligned}
    \balpha_\text{ISA} &= \bH \begin{pmatrix} m & 0 \\ 0 & 1\end{pmatrix}\bH^{-1}\begin{pmatrix} -a\bbone & 0 \\
        0 &a \bbone
    \end{pmatrix}
    = \frac{1}{2}\begin{pmatrix} -a(m+1) \bbone &a(m-1)\boldsymbol{\mathcal{M}}^{-1} \\
        -a(m-1)\boldsymbol{\mathcal{M}} &a(m+1) \bbone 
    \end{pmatrix} ,
\end{aligned}
\end{equation}
where we choose $a^2=-1$ (mod $p$) such that $\balpha_\text{ISA}$ is a symplectic transformation. Then, we see that
\begin{align}
    \balpha_\text{ISA}^2 = \begin{pmatrix}-m \bbone &0\\
       0 &-m \bbone
    \end{pmatrix}, 
\end{align}
which is a translation in the direction $m$ followed by charge conjugation.

When $p= -1 (\text{mod}~4)$, there exists no $a^2= -1(\text{mod}~p)$, but there always exist $a$ and $b$ that $a^2+b^2\equiv -1(\text{mod}~p)$. In this case, we must consider the stack of two invertible subalgebras $\bH\oplus\bH$ represented by a block-diagonal matrix $\text{diag}(\bH,\bH)$
We can then consider a QCA induced by this double subalgebra
\begin{eqs}
\balpha_\text{ISA} &=\begin{pmatrix}
    \bH&0\\0&\bH
\end{pmatrix}
\begin{pmatrix}
    m\bbone&0&0&0\\0&\bbone&0&0\\0&0&m\bbone&0\\0&0&0&\bbone
\end{pmatrix}
\begin{pmatrix}
    \bH^{-1}&0\\0&\bH^{-1}
\end{pmatrix}
\begin{pmatrix}
    a\bbone&0&b\bbone&0\\
    0&-a\bbone&0&-b\bbone\\
    b\bbone&0&-a\bbone&0\\
    0&-b\bbone&0&a\bbone
\end{pmatrix}~.
\end{eqs}
It is straightforward to check that this $\balpha_\text{ISA}$ also satisfies 
\begin{eqs}
\balpha^2_\text{ISA}=\begin{pmatrix}
    -m\bbone&0&0&0\\0&-m\bbone&0&0\\0&0&-m\bbone&0\\0&0&0&-m\bbone
\end{pmatrix},
\end{eqs}
which represents a shift along the direction $m$ along with a charge conjugation on both pieces of the invertible subalgebra.
\end{widetext}

%%%%%%%%%%%%%%%%%%%%%%%%%%%%%%%%%%%%%%%%%%%%%%%%%%%%%%%%%%%%%%%%%%%%%%%%%%%%%%%%%%%%%%%%%%
\subsection{$4{+}1$D and $2l{+}1$D $\mathbb{Z}_2$ invertible subalgebras}

For $\mathbb{Z}_2$ ISAs, the overall signs in front of higher cup products can be ignored. Hence, each cochain can simply be promoted by one degree, and the same analysis applies. For instance, Eq.~\eqref{eq:Z2_ISA_2+1D} generalizes to $4{+}1$D as
\begin{eqs}
    A_f^A &= \tU_f^A \prod_t W_t^{A \int \bt \cup_1 \bface}~, \\
    A_f^B &= \tU_f^B \left( \prod_t W_t^{B \int \bface \cup_1 \bt} \right) Z_{\mathrm{PD}(f)}^A~, \\
    \bar A_f^A &= \tU_f^A \left( \prod_t W_t^{A \int \bface \cup_1 \bt} \right) Z_{\mathrm{PD}(f)}^B~, \\
    \bar A_f^B &= \tU_f^B \prod_t W_t^{B \int \bt \cup_1 \bface}~,
\label{eq:Z2_ISA_4+1D triangulation}
\end{eqs}
where qubits of type $A$ are placed on the faces of the direct lattice and qubits of type $B$ on the faces of the dual lattice. Here $\mathrm{PD}(f)$ denotes the face in the dual (direct) lattice that intersects the face $f$ in the direct (dual) lattice. The hopping and flux terms are defined as
\begin{eqs}
    \tU_f^\alpha := X_f^\alpha \prod_{f'} \left(Z_{f'}^\alpha\right)^{\int \bface \cup \bface'}~, \qquad
    W_t^\alpha := Z^\alpha_{\partial t},
\end{eqs}
$\alpha \in \{A,B\}$. It is straightforward to verify that these operators satisfy the definition of an ISA. ISAs in $6{+}1$D and higher dimensions can be constructed analogously by continuing to increase the cochain degree.

%%%%%%%%%%%%%%%%%%%%%%%%%%%%%%%%%%%%%%%%%%%%%%%%%%%%%%%%%%%%%%%%%%%%%%%%%%%%%%%%%%%%%%%%%%
\subsection{$6{+}1$D and $(4l{-}2){+}1$D $\mathbb{Z}_p^{(k)}$ invertible subalgebras}

The recursive relation for cup products, introduced in the derivation of the $2{+}1$D invertible subalgebras in Eqs.~\eqref{eq: 2d ISA cup product 1} and~\eqref{eq: 2d ISA cup product 2}, generalizes naturally to higher dimensions, reappearing every fourth dimension.

As shown in Sec.~\ref{sec:Zp_QCA_(4l-1)+1D_TQFT}, promoting each cochain by two degrees yields the same structure. On a hypercubic lattice, each nonvanishing product $\boldsymbol{c}_i \cup \boldsymbol{c}_{d-i}$ establishes a one-to-one correspondence between $\boldsymbol{c}_i$ and $\boldsymbol{c}_{d-i}$, ensuring that the subalgebra remains invertible.  

As a concrete example, the $6{+}1$D $\mathbb{Z}_p^{(k)}$ invertible subalgebras are defined by
\begin{eqs}
    A_{c_3}^{(k)} &= X_{c_3} \prod_{{c_3}'} Z_{{c_3}'}^{k \int \bc_3' \cup \bc_3}
    \prod_{c_4} Z_{\partial c_3}^{\frac{k}{2}\int(\bc_3 \cup_1 \bc_4 - \bc_4 \cup_1 \bc_3)}~,\\
    \bar A_{c_3}^{(k)} &=  A_{c_3}^{(-k)},
\end{eqs}
where $c_3$ and $c_4$ denote 3- and 4-dimensional (hyper)cubes, respectively.
For higher dimensions, the construction proceeds analogously by increasing the cochain degree by two.

%%%%%%%%%%%%%%%%%%%%%%%%%%%%%%%%%%%
{\change

\section{Equivalence of QCAs Constructed from TQFTs and ISAs}
\label{Sec: Equivalence of QCAs Constructed from TQFTs and ISAs}

In this section, we first explicitly demonstrate that the $3{+}1$D Clifford QCAs constructed from TQFTs and from ISAs are equivalent by comparing their skew-Hermitian forms within the polynomial formalism. While this approach is, in principle, applicable to higher dimensions, the required explicit computations rapidly become prohibitively time-consuming.
To overcome this difficulty, we develop an algebraic approach based on the cup-product formalism to analyze the boundary excitations. We show that the excitations obtained from the TQFT construction and those from the ISA construction coincide, implying that the two approaches yield equivalent QCAs. This argument applies to arbitrary dimensions.

We begin by reviewing the definition of boundary algebra introduced in Ref.~\cite{liang2024operator} and further developed in Ref.~\cite{ruba2025witt}.
Related constructions and perspectives have appeared in
Refs.~\cite{schuster2023holographic, Kong2017Boundarybulk, kitaev2012models, Lan2014stringnet_boundary},
as well as in studies of phase transitions
Refs.~\cite{Wang2023RG, Chen2024sequential_circuit}.
%Our argument relies on several definitions and results from Refs.~\cite{Haah2021CliffordQCA, haah_commuting_2013}, which we briefly summarize below for convenience

Given a translation-invariant topological stabilizer code, we truncate the system by removing all qudits below the plane $z=0$ and discarding all stabilizers that involve any of the truncated qudits. The remaining stabilizers are referred to as \textbf{bulk stabilizers}.
    
    The \textbf{boundary algebra} is defined as the algebra generated by local Pauli operators that commute with all bulk stabilizers, modulo the bulk stabilizers themselves. This algebra captures the independent boundary degrees of freedom that are invisible to the bulk Hamiltonian. The structure of the boundary algebra can be encoded by a finitely generated module over a polynomial ring in one fewer variable, together with a skew-Hermitian form describing the commutation relations. In general, this module need not be free, nor is the form necessarily non-degenerate. This reflects the presence of nontrivial bulk excitation and braiding~\cite{liang2024operator, schuster2023holographic}. Such modules are referred to as \textbf{quasi-symplectic} in Ref.~\cite{ruba2025witt}. 

    Let \(\alpha\) be a translationally invariant Clifford QCA acting on \(\mathbb{Z}_p\) qudits, with \(p\) a prime. The separators of \(\alpha\) generate the stabilizer group of an associated stabilizer code. In this special case, there are no bulk excitation, and the boundary algebra constructed as above is a free module equipped with a non-degenerate skew-Hermitian form. Section~4.3 of Ref.~\cite{ruba2025witt} shows that this boundary algebra is Witt equivalent to the boundary algebra defined for a Clifford QCA in Ref.~\cite{Haah2021CliffordQCA}, thereby realizing the boundary algebra of a Clifford QCA as a special case of the boundary algebra of a topological stabilizer code.

\begin{theorem}[\textbf{Classification of Clifford QCAs by skew-Hermitian forms}~\cite{Haah2021CliffordQCA}]
Two Clifford QCAs are equivalent if and only if their boundary algebras are equivalent. More precisely, suppose their boundary algebras are free modules equipped with skew-Hermitian forms \(\Xi\) and \(\Xi'\), respectively. Then the QCAs are equivalent if and only if there exist integers \(q, q'\) and an invertible matrix \(E\) such that
\begin{equation}
    E^\dagger \, (\Xi \oplus \lambda_q) \, E
    = \Xi' \oplus \lambda_{q'} ,
\end{equation}
where \(\lambda_q\) and \(\lambda_{q'}\) denote the standard symplectic matrices of dimensions \(2q\) and \(2q'\), respectively.
\label{theom:QCA_equivalence}
\end{theorem}
% We emphasize that the above theorem is a streamlined formulation of the results of Ref.~\cite{Haah2021CliffordQCA}, specialized to our setting.

\subsection{Polynomial formalism}
In this section, we represent the QCA $\alpha$ by a polynomial matrix $Q$. First, we briefly describe how to obtain a skew-Hermitian form $\Xi$ that appears in Theorem~\ref{theom:QCA_equivalence}. As a representation of $\alpha$, each column of $Q$ encodes the image of the onsite Pauli $X, Z$ operators. Assume we fixed some ordering such that the first $q$ columns are the flippers and the last $q$ columns are the respective separators/ stabilizers. It is always possible to coarse-grain the sites so that the separators and flippers have a width of at most two in the $x_D$-direction. In other words, $Q= A + x_D B$ where $A, B$ do not involve the variable $x_D$. The columns of matrix $B$ are the flippers and separators truncated above the hyperplane $x_D=0$. A priori, $Q$ is symplectic following from the fact that flippers and separators have a commutation relation governed by the standard symplectic matrix $\lambda_q$. After the truncation, the commutation relation is governed by a different skew-hermitian form. Focusing on the truncated separators, i.e., the last $q$ columns $B_0$ of $B$, we denote their commutation relation by the form $\Xi= B_0^\dagger \lambda_q B_0$. These truncated separators corresponding to the columns of $B_0$ are boundary gauge operators in Ref.~\cite{liang2024operator}.
The boundary algebra of a stabilizer code determines a Witt class that is independent of the choice of truncation procedure and of basis transformations of $B$~\cite{Haah2021CliffordQCA, ruba2025witt}.

% The algebra they form is dense in the boundary algebra of a stabilizer code and is Witt equivalent, see Remark~44 in Ref.~\cite{ruba2025witt}. 

The skew-Hermitian form $\Xi$ associated with the boundary algebra of a QCA can be computed using the following lemma.
\begin{lemma}[Lemma III.8 of Ref.~\cite{Haah2021CliffordQCA}]
    Let $Q = A + x_D B$ be a $2q \times 2q$ symplectic matrix where $A,B$ do not involve the variable $x_D$ (e.g. in $3{+}1$D, $x_D$ can be chosen to be $x^\pm, y^\pm, z^\pm$). Let $B_0$ be the last $q$ columns of $B$, and put $\Xi = B_0^\dagger \lambda_q B_0$. If the matrix $\Xi$ is invertible, then $\Xi$ is a skew-Hermitian form of Theorem~\ref{theom:QCA_equivalence}.
    \label{lemma:compute_anti_hermitian_form}
\end{lemma}
In addition, we will need the following Clifford gates expressed in terms of
polynomial matrices:
\begin{itemize}
    \item \textit{Hadamard:}
    \[
        \mathrm{H}_i \coloneqq
        E_{i,i+q}(-1)\, E_{i+q,i}(1)\, E_{i,i+q}(-1),
        \qquad 1 \leq i \leq q .
    \]
    \item \textit{Controlled-phase:}
    \[
        \mathrm{CPhase}^{\,i}_f \coloneqq E_{i+q,i}(f),
        \qquad f = \bar f,\quad 1 \leq i \leq q .
    \]
    \item \textit{Controlled-NOT:}
    \[
        \mathrm{CX}^{\,i,j}_a \coloneqq
        E_{i,j}(a)\, E_{j+q,i+q}(-\bar a),
        \qquad 1 \leq i \neq j \leq q .
    \]
\end{itemize}
Here, for $i \neq j$, $E_{i,j}(a)$ denotes the row-addition elementary
$2q \times 2q$ matrix with entries
\begin{equation}
    \bigl[E_{i,j}(a)\bigr]_{\mu\nu}
    = \delta_{\mu\nu} + \delta_{\mu i}\,\delta_{\nu j}\, a ,
\end{equation}
where $a$ is a polynomial.
We refer the reader to Sec.~2 of Ref.~\cite{haah_commuting_2013} for a detailed
discussion of these gates and illustrative examples.
}

\begin{widetext}
{\change
\subsubsection{$\mathbb{Z}_2$ case}

We are ready to show that the 3F QCA $\balpha^\text{3F}_\text{TQFT}$ obtained from the TQFT approach Eq.~\eqref{eq:3FQCA_TQFT}  is equivalent to $\balpha^\text{3F}_\text{ISA}$ obtained from the invertible subalgebra Eq.~\eqref{eq:3FQCA_ISA}. 
Observe that $\balpha^\text{3F}_\text{ISA}$ is already of the form $A+z \, B$. Using lemma~\ref{lemma:compute_anti_hermitian_form}, we find the skew-Hermitian form of

\begin{equation}
    \Xi^\text{3F}_\text{ISA} = \left(
        \begin{array}{cccc}
         x+\frac{1}{x} & \frac{y}{x}+\frac{1}{x}+y+1 & 0 & y \\
         \frac{x}{y}+x+\frac{1}{y}+1 & y+\frac{1}{y} & x & 0 \\
         0 & \frac{1}{x} & x+\frac{1}{x} & \frac{y}{x}+\frac{1}{x}+y+1 \\
         \frac{1}{y} & 0 & \frac{x}{y}+x+\frac{1}{y}+1 & y+\frac{1}{y} \\
        \end{array}
        \right)~.
    \label{eq:antihermitian_form_3F_from_ISA}
\end{equation}

Now consider $\balpha^\text{3F}_\text{TQFT}$ whose matrix elements are given in Appendix~\ref{app:3FQCA_TQFT_3D}. Recall that QCAs are defined up to FDQC and shifts. Applying a shift circuit of
\begin{equation}
    \bm{S} = \left(
    \begin{array}{cccc}
     x\,\mathbb{I}_{3\times 3} & 0 & 0 & 0 \\
     0 & \mathbb{I}_{3\times 3} & 0 & 0 \\
     0 & 0 & x\,\mathbb{I}_{3\times 3} & 0 \\
     0 & 0 & 0 & \mathbb{I}_{3\times 3} \\
    \end{array}
    \right)~,
\end{equation}
we find $\bm{S} \, \balpha^\text{3F}_\text{TQFT}$ is of the form $A + x\, B$, where $A$ and $B$ are matrices that do not involve the variable $x$. However, we cannot yet use lemma~\ref{lemma:compute_anti_hermitian_form}, as a skew-Hermitian form needs to be invertible. We act further by the basis transformation
\begin{equation}
    \bU ( \bm{\cdot} ) = U_\text{FDQC} \bigl(\bm{\cdot}\bigl) E~,
\end{equation}
where
\begin{equation}
\begin{aligned}
    U_\text{FDQC} =& ~\text{H}_6 \text{H}_4 \text{H}_3 \text{H}_2\text{CX}^{6,4}_{1+\frac{1}{z}} \text{CX}^{3,4}_{\frac{1}{y z^2}+\frac{1}{y z}} \text{CX}^{2,4}_{\frac{1}{y^2 z}+\frac{1}{yz}} \text{H}_6 \text{H}_3 \text{H}_2 
    \text{CX}^{5,4}_{1+\frac{1}{yz}}\text{CX}^{3,4}_{\frac{1}{z}+\frac{1}{yz}} \text{CX}^{2,4}_{\frac{1}{y}+\frac{1}{yz}} \text{H}_4 \text{H}_1 \text{CX}^{6,1}_{1+yz} \text{CX}^{5,1}_{1+\frac{1}{yz}} \\
    & \times\text{H}_6 \text{H}_5\text{CX}^{6,1}_{1+\frac{1}{z}} \text{CX}^{5,1}_{1+z}\text{H}_1 \text{H}_2 \text{H}_3 \text{CX}^{3,1}_{1} \text{CX}^{2,1}_{\frac{1}{y}}\text{H}_2\text{H}_3
    \text{CX}^{6,4}_{1} \text{H}_6 \text{CX}^{6,5}_{y}~,
\end{aligned}
\end{equation}
is a finite-depth quantum circuit and
\begin{equation}
\begin{aligned}
    E =& ~\text{T}^{4,12}_1 \text{T}^{4,11}_{\frac{1}{y}} \text{T}^{1,4}_{1+yz} \text{T}^{2,4}_{yz+y^2z} \text{T}^{3,4}_{yz + y z^2} \text{T}^{8,4}_{y + yz} \text{T}^{9,4}_{z+yz} \text{T}^{1,9}_1 \text{T}^{1,8}_{\frac{1}{y}} 
    \text{T}^{2,7}_{y} \text{T}^{3,7}_1 \text{T}^{7,1}_{\frac{1}{y z^2} + \frac{1}{z} + \frac{1}{y z} + z + yz + y z^2} \\
    & \times \text{T}^{10,11}_{1+\frac{1}{y}} \text{T}^{10,12}_{1+\frac{1}{z}} \text{T}^{2,4}_{yz + y^2 z} \text{T}^{3,4}_{yz + y z^2}
    \text{T}^{5,4}_{1+y} \text{T}^{6,4}_{1+z} \text{T}^{8,4}_{y+yz} \text{T}^{9,4}_{z+yz} \text{T}^{5,10}_{y} \text{T}^{6,10}_{1} \text{T}^{5,12}_{1+\frac{y}{z}} \text{T}^{6,12}_{1+\frac{1}{z}} \text{T}^{5,11}_{1+\frac{1}{y}}~,
\end{aligned}
\end{equation}
with $\text{T}^{i,j}_{f}$ is the matrix of elementary column operation that adds to the $i$th column by the $j$th column multiplied by $f$. The column operation is just multiplying our stabilizers generators, and it does not change the stabilizer group. Note that we have designed $E$ such that the transformed QCA is symplectic. Acting these transformations on our QCA, we find
\begin{equation}
     \bU \bigl(\bm{S} \balpha^\text{3F}_\text{TQFT}\bigl) = \left(
\begin{array}{cccccccccccc}
 x & 0 & 0 & 0 & 0 & 0 & 0 & 0 & 0 & 0 & 0 & 0 \\
 0 & * & 0 & 0 & * & * & 0 & 0 & 0 & 0 & 0 & * \\
 0 & 0 & * & 0 & * & * & 0 & 0 & 0 & 0 & * & 0 \\
 0 & 0 & 0 & 1 & 0 & 0 & 0 & 0 & 0 & 0 & 0 & 0 \\
 0 & * & * & 0 & * & * & 0 & * & * & 0 & * & * \\
 0 & * & * & 0 & * & * & 0 & * & * & 0 & 0 & 0 \\
 0 & 0 & 0 & 0 & 0 & 0 & x & 0 & 0 & 0 & 0 & 0 \\
 0 & 0 & 0 & 0 & * & * & 0 & * & 0 & 0 & * & * \\
 0 & 0 & 0 & 0 & * & * & 0 & 0 & * & 0 & * & * \\
 0 & 0 & 0 & 0 & 0 & 0 & 0 & 0 & 0 & 1 & 0 & 0 \\
 0 & * & * & 0 & * & 0 & 0 & 0 & * & 0 & 0 & 0 \\
 0 & * & * & 0 & * & 0 & 0 & * & * & 0 & 0 & * \\
\end{array}
\right)~,
\end{equation}
where the $*$ terms are non-zero polynomials which are given below. Observe that the first and fourth qubits are disentangled from the rest of the system. They can be removed. We now have the transformed QCA
\begin{equation}
    \balpha^\text{3F}_\text{TQFT, transformed} = A+x \, B
\end{equation}
where
\begin{equation*}
\begin{aligned}
     &A = \left(
\begin{array}{cccccccc}
 0 & 0 & \frac{1}{y z} & \frac{z+1}{y} & 0 & 0 & 0 & \frac{1}{y} \\
 0 & 0 & \frac{y+1}{z} & \frac{y+z+1}{y z} & 0 & 0 & \frac{1}{z} & 0 \\
 \frac{1}{y z} & \frac{1}{y} & y+1 & \frac{1}{y}+1 & 1 & \frac{1}{y} & 1 & \frac{1}{y} \\
 y (y+1) z+1 & z (y z+y+1) & y & 1 & y & y z & 0 & 0 \\
 0 & 0 & \frac{(y+1) (y z+1)}{y^2 z} & \frac{z}{y} & 0 & 0 & \frac{1}{y} & \frac{y+1}{y^2} \\
 0 & 0 & \frac{y z (y z+y+z)+z+1}{y z^2} & z+\frac{1}{z} & 0 & 0 & \frac{1}{z}+1 & \frac{y z+z+1}{y z} \\
 y z+z+1 & z (z+1) & 1 & 0 & 0 & z & 0 & 0 \\
 (y+1) y z+y+\frac{1}{z}+1 & (z+1) (y z+1) & y+1 & 0 & y (z+1) & y z+z+1 & 0 & 1 \\
\end{array}
\right)~,
\end{aligned}
\end{equation*}
and
\begin{equation*}
    B = \left(
\begin{array}{cccccccc}
 1 & 0 & \frac{1}{y}+\frac{1}{z}+1 & \frac{z+1}{y z} & 0 & 0 & 0 & \frac{1}{y} \\
 0 & 1 & \frac{y+1}{y z} & 1 & 0 & 0 & \frac{1}{z} & 0 \\
 y (y+1) z+\frac{1}{z} & y z (z+1)+\frac{1}{y z} & 0 & 0 & y z+y+\frac{1}{z} & y z+z+1 & 0 & 0 \\
 y & 1 & 0 & 0 & y & y z & 0 & 0 \\
 0 & 0 & \frac{y^3+1}{y^2 z}+y+1 & \frac{\frac{1}{y}+z^2+1}{z} & 1 & 0 & \frac{\frac{1}{y}+1}{z}+1 & \frac{1}{y}+1 \\
 0 & 0 & \frac{y^2 (z+1)+y+z+1}{y z^2} & \frac{1}{z^2}+1 & 0 & 1 & \frac{z+1}{z^2} & \frac{1}{z} \\
 y & z+1 & 0 & 0 & 0 & z & 0 & 0 \\
 y \left(y+\frac{1}{z}+1\right) & y z+y+\frac{1}{z}+1 & 0 & 0 & y \left(\frac{1}{z}+1\right) & y & 0 & 0 \\
\end{array}
\right)~.
\end{equation*}
Applying lemma~\ref{lemma:compute_anti_hermitian_form}, we have
\begin{equation}
    \Xi^\text{3F}_\text{TQFT} = \left(
\begin{array}{cccc}
 z+\frac{1}{z} & \frac{z}{y}+\frac{1}{y}+z+1 & 0 & \frac{1}{y} \\
 \frac{y}{z}+y+\frac{1}{z}+1 & y+\frac{1}{y} & \frac{1}{z} & 0 \\
 0 & z & z+\frac{1}{z} & \frac{z}{y}+\frac{1}{y}+z+1 \\
 y & 0 & \frac{y}{z}+y+\frac{1}{z}+1 & y+\frac{1}{y} \\
\end{array}
\right)~,
\end{equation}
which is exactly equal to $\Xi^\text{3F}_\text{ISA}$ of Eq.~\eqref{eq:antihermitian_form_3F_from_ISA} by re-defining our coordinate system by sending $z \to x^{-1},y\to y^{-1}$. Thus, Theorem~\ref{theom:QCA_equivalence} tells us $\balpha^\text{3F}_\text{TQFT}$ and $\balpha^\text{3F}_\text{ISA}$ are equivalent.

\subsubsection{$\mathbb{Z}_p$ case}

Let us now show that the QCAs, $\balpha^{(k)}_\text{TQFT}$ obtained from the TQFT Eq.~\eqref{eq:ZkpQCA_from_TQFT}  and $\balpha^{(k)}_\textbf{ISA}$ obtained from the invertible subalgebra Eq.~\eqref{eq:ZkpQCA_from_ISA}, are equivalent. First, we bring Eq.~\eqref{eq:ZkpQCA_from_TQFT} into the form of $A+z^{-1}B$ through an FDQC, which we pick to be
%~hadamard~{1, 2, 3}~cNot~{3, 1, 1 /y/ z}~cNot~{3, 2, -1 / z}~cNot~{1, 3, -(1/2) + 1/(2 x)}~cNot~{2, 3, 1/2 - 1/(2 y)}~hadamard~{1, 2}~cNot~{1, 3, 1/(2 k x) - 1/(2 k x y)}~cNot~{2, 3, 1/(2 k y) - 1/(2 k x y)}~hadamard~{3}~hadamard~{1, 2}~cNot~{1, 3, k/2 - k/(2 x) - k/(2 x y) + (k z)/(2 x)}~cNot~{2, 3, -(k/2) - k/(2 x y^2) + k/(2 y) + (k z)/2}~hadamard~{1, 2}~cNot~{2, 3, 1/(2 x y) + z/2}~cNot~{1, 3, -(1/(2 x)) - (y z)/2}~cPhaseg~{3, 3, k/2 - k/(4 x y) - (k x y)/4 - k/(4 z) + k/(4 x y z) - (k z)/4 + 1/4 k x y z}
\begin{equation}
\begin{aligned}
    \bU =&~ \text{CPhase}^{3}_{\frac{1}{4} k x y z+\frac{k}{4 x y z}-\frac{k x y}{4}-\frac{k}{4 x y}-\frac{k z}{4}-\frac{k}{4 z}+\frac{k}{2}} \text{CX}^{1,3}_{-\frac{1}{2x}-\frac{yz}{2}} \text{CX}^{2,3}_{\frac{1}{2xy}+\frac{z}{2}} \text{H}_1 \text{H}_2 
    \text{CX}^{2,3}_{-\frac{k}{2}-\frac{k}{2x y^2} + \frac{k}{2y}+ \frac{kz}{2}} \text{CX}^{1,3}_{\frac{k}{2}-\frac{k}{2x}-\frac{k}{2xy}+\frac{kz}{2x}} \\
    & \times \text{H}_3 \text{H}_1 \text{H}_2 \text{CX}^{2,3}_{\frac{1}{2ky}- \frac{1}{2kxy}} 
    \text{CX}^{1,3}_{\frac{1}{2kx}-\frac{1}{2kxy}} \text{H}_2 \text{H}_1 \text{CX}^{2,3}_{\frac{1}{2}-\frac{1}{2y}} \text{CX}^{1,3}_{-\frac{1}{2}+\frac{1}{2x}} \text{CX}^{3,2}_{-\frac{1}{z}} \text{CX}^{3,1}_{\frac{1}{yz}} \text{H}_3 \text{H}_2 \text{H}_1
\end{aligned}
\end{equation}
And, we have
\begin{equation}
    \bU \balpha^{(k)}_\text{TQFT} = \left(
\begin{array}{cccccc}
 * & 0 & 0 & * & * & 0 \\
 0 & * & 0 & * & * & 0 \\
 0 & 0 & -1 & 0 & 0 & 0 \\
 * & * & 0 & * & 0 & 0 \\
 * & * & 0 & 0 & * & 0 \\
 0 & 0 & 0 & 0 & 0 & -1 \\
\end{array}
\right)~,
\end{equation}
where the $*$ terms are given below. Observe that the third qudit is now disentangled from the rest of the system and can be dropped. We then obtain the transformed QCA
\begin{equation*}
\begin{aligned}
    &\balpha^{(k)}_\text{TQFT,transformed}= \\
    &\hspace{0.5em} \left(
\begin{array}{cccc}
 \frac{1}{2 x y z}+\frac{1}{2} & 0 & -\frac{1}{2 k x y^2 z}+\frac{1}{2 k x y z}-\frac{y}{2 k}+\frac{1}{2 k} & \frac{1}{2 k x y z}+\frac{y}{2 k x}-\frac{1}{2 k x}-\frac{y}{2 k} \\
 0 & \frac{1}{2 x y z}+\frac{1}{2} & -\frac{1}{2 k x y^2 z}+\frac{1}{2 k y^2 z}-\frac{1}{2 k y z}+\frac{1}{2 k} & \frac{1}{2 k x y z}-\frac{1}{2 k x}-\frac{1}{2 k y z}+\frac{1}{2 k} \\
 \frac{k}{2 x y z}-\frac{k}{2 x}-\frac{k}{2 y z}+\frac{k}{2} & -\frac{k}{2 x y z}-\frac{k y}{2 x}+\frac{k}{2 x}+\frac{k y}{2} & \frac{1}{2 x y z}+\frac{1}{2} & 0 \\
 \frac{k}{2 x y^2 z}-\frac{k}{2 y^2 z}+\frac{k}{2 y z}-\frac{k}{2} & -\frac{k}{2 x y^2 z}+\frac{k}{2 x y z}-\frac{k y}{2}+\frac{k}{2} & 0 & \frac{1}{2 x y z}+\frac{1}{2} \\
\end{array}
\right)~.
\end{aligned}
\end{equation*}
Applying lemma~\ref{lemma:compute_anti_hermitian_form}, we find the skew-Hermitian form to be
\begin{equation}
\begin{aligned}
    &\Xi_\text{TQFT} = 
    \hspace{0.2em} \left(
\begin{array}{cc}
 \frac{1}{4 k y}-\frac{y}{4 k} & \frac{y}{4 k x}-\frac{1}{4 k x}-\frac{y}{4 k}-\frac{1}{4 k} \\
\frac{x}{4 k}+\frac{1}{4 k y}+\frac{1}{4 k} -\frac{x}{4 k y} & \frac{x}{4 k}-\frac{1}{4 k x} \\
\end{array}
\right)~.
\end{aligned}
\end{equation}
}
\end{widetext}
{\change
It is then straightforward to show that $\Xi_\text{TQFT}= E^\dagger\Xi_\text{ISA}E$ for the $\Xi_\text{ISA}$ computed from Eq.~\eqref{eq:ZkpQCA_from_ISA}, where
\begin{equation}
    E = \begin{pmatrix}
 0 & y^{-1}x \\
 1 & 0 \\
\end{pmatrix}~.
\end{equation}
By Theorem~\ref{theom:QCA_equivalence}, the QCAs obtained from the TQFT approach and the invertible subalgebra approach are equivalent.

We have computed the matrix representation of the skew-Hermitian form for the TQFT-based $3{+}1$D QCA and compared it with that obtained from the ISA construction, thereby explicitly verifying this equivalence. However, this comparison involves numerous intermediate basis transformations. Furthermore, in higher-dimensional QCAs, the matrix size grows rapidly, rendering it impractical to explicitly determine the required basis transformations or the corresponding unitary circuits that relate the QCAs constructed from TQFTs and ISAs.

%%%%%%%%%%%%%%%%%%%%%%%%%%%%%%%%%%%%%%%%%%%%%%%%%%
\subsection{Cup product formalism}

Motivated by the limitations of the polynomial approach, we develop an alternative method to extract the boundary algebra that does not rely on an explicit matrix representation. In this subsection, we reformulate the boundary algebra using the cup-product formalism, which is directly applicable in arbitrary spatial dimensions.

\subsubsection{$\ZZ_2$ case}

We begin by studying the three-fermion Walker–Wang model on a spatial manifold $M_3$ with an open boundary $N_2 = \partial M_3$. We review the auxiliary-vertex approach (also known as the boundary cone construction) introduced in Refs.~\cite{Chen2021Disentangling, chen2023exactly}, which is consistent with the boundary Hamiltonian presented in Ref.~\cite{Chen2023HigherCup}.

To extend our bulk construction to manifolds with boundary, we embed the system into a closed manifold by adjoining a set of auxiliary simplices. Concretely, we introduce an auxiliary vertex $v_0$ and connect it to every vertex on the boundary manifold $N_2$. This generates auxiliary edges $\lr{0i}$ for each $i \in N_2$, auxiliary faces $\lr{0ij}$ for each boundary edge $\lr{ij} \subset N_2$, and higher-dimensional auxiliary simplices defined analogously. The union of these auxiliary simplices with $N_2$ defines the cone over $N_2$, which we denote by $CN_2$. By gluing $CN_2$ to the original manifold $M_3$ along $N_2$, we obtain a closed spatial three-manifold,
\begin{eqs}
    \widetilde M_3 \equiv M_3 \sqcup CN_2, \qquad \partial \widetilde M_3 = 0~.
\end{eqs}
With this closed manifold in hand, we may directly apply the bulk formalism. Given cochains $a,b \in C^1(M_4,\ZZ_2)$, we extend them to cochains in $C^1(\widetilde M_4,\ZZ_2)$ by setting $a(e)=b(e)=0$ on all auxiliary edges $e$. Upon gauging the $1$-form $\ZZ_2 \times \ZZ_2$ symmetry, the state $|a|_{\widetilde M_4}, b|_{\widetilde M_4}\rangle$ is mapped to
\begin{eqs}
    |\delta a|_{\widetilde M_3}, \delta b|_{\widetilde M_3}\rangle
    =
    |\delta a|_{M_3}, \delta b|_{M_3}, a|_{N_2}, b|_{N_2}\rangle,
\label{eq: gauged state with boundary}
\end{eqs}
where $(\cdot)|_S$ denotes restriction of a cochain to the subspace $S$.

To interpret the boundary degrees of freedom, we identify the value of $\delta a$ on an auxiliary face $\lr{0ij}$ with the edge variable $a(\lr{ij})$ on $N_2$, using the condition $a(\lr{0i})=0$. The same identification applies to $\delta b$ and $b|_{N_2}$. We denote the Pauli operators acting on the four components appearing in Eq.~\eqref{eq: gauged state with boundary} by $Z^A_f$, $Z^B_f$, $\CZ^A_e$, and $\CZ^B_e$, respectively.

% \begin{eqs}
%     H^{\mathrm{3f}} =& - \sum_t Z^A_{\partial t} - \sum_t Z^B_{\partial t} \\
%     &- \sum_e { G^B_e }  \prod_f {Z^A_f}^{\int \bface \cup \be} 
%     - \sum_e {G^A_e} \prod_f {Z^B_f}^{\int \be \cup \bface},
% \end{eqs}

We now analyze the Hamiltonian terms in Eq.~\eqref{eq: 3fWW_Hamiltonian 1} defined on $\widetilde{M}_3$. We decorate the terms involving $G_e$ with appropriate flux factors $Z_{\partial t}$, which preserves the stabilizer group:
\begin{eqs}
    & ~G^A_e \prod_f {\tilde{Z}^B_f}{}^{\int_{\widetilde{M}_3} \be \cup \bface} \\
    &= \tilde{X}^A_{\delta \be}
    \prod_{f} 
    {\tilde{Z}^A_{f}}{}^{\int_{\widetilde{M}_3}  \delta \be \cup_1 \bface}
    \prod_f {\tilde{Z}^B_f}{}^{\int_{\widetilde{M}_3} \be \cup \bface} \\
    & \sim 
    \tilde{X}^A_{\delta \be}
    \prod_{f} 
    {\tilde{Z}^A_{f}}{}^{\int_{\widetilde{M}_3}  \be \cup \bface + \bface \cup \be}
    \prod_f {\tilde{Z}^B_f}{}^{\int_{\widetilde{M}_3} \be \cup \bface}~,
\end{eqs}
and
\begin{eqs}
    & ~G^B_e \prod_f {\tilde{Z}^A_f}{}^{\int_{\widetilde{M}_3} \bface \cup \be} \\
    &= \tilde{X}^B_{\delta \be}
    \prod_{f} 
    {\tilde{Z}^B_{f}}{}^{\int_{\widetilde{M}_3}  \delta \be \cup_1 \bface}
    \prod_f {\tilde{Z}^A_f}{}^{\int_{\widetilde{M}_3} \bface \cup \be} \\
    &\sim \tilde{X}^B_{\delta \be}
    \prod_{f} 
    {\tilde{Z}^B_{f}}{}^{\int_{\widetilde{M}_3}  \be \cup \bface + \bface \cup \be}
    \prod_f {\tilde{Z}^A_f}{}^{\int_{\widetilde{M}_3} \bface \cup \be}~.
\end{eqs}
We now focus on the behavior of these stabilizers in the vicinity of the boundary $N_2$.
\begin{enumerate}
    \item $t=\lr{0ijk}$ denotes an auxiliary tetrahedron, and let $f=\lr{ijk}$ be the corresponding boundary face. The associated flux term can be written as
    \begin{equation}
        Z^\alpha_{\partial t} =  Z_f^\alpha \prod_{e \in N_2 | e\subset f}\CZ_{e}^\alpha := Z_f^\alpha \CZ_{\partial f}^\alpha,
    \end{equation}
    for $\alpha = A,B$. Here, $Z_f^\alpha$ acts on the bulk degrees of freedom in $M_3$, while $\CZ_{\partial f}^\alpha$ acts on the boundary degrees of freedom associated with $CN_2$. In other words, the Pauli $Z_f$ on the boundary face $f$ of $M_3$ is equal to the product of $\CZ_e$ around edge $e$ such that $e \in\partial f$.
    
    \item $e=\lr{0i}$ denotes an auxiliary edge, and let $v=\lr{i}$ be the corresponding boundary vertex. In this case, we obtain the stabilizer as
    \begin{eqs}
        % & ~G^A_e \prod_f {\tilde{Z}^B_f}{}^{\int_{\widetilde{M}_3} \be \cup \bface} \\
        % &= \tilde{X}^A_{\delta \be}
        % \prod_{f} 
        % {\tilde{Z}^A_{f}}{}^{\int_{\widetilde{M}_3}  \delta \be \cup_1 \bface}
        % \prod_f {\tilde{Z}^B_f}{}^{\int_{\widetilde{M}_3} \be \cup \bface} \\
        &
        \tilde{X}^A_{\delta \be}
        \prod_{f} 
        {\tilde{Z}^A_{f}}{}^{\int_{\widetilde{M}_3}  \be \cup \bface + \bface \cup \be}
        \prod_f {\tilde{Z}^B_f}{}^{\int_{\widetilde{M}_3} \be \cup \bface} \\
        % &= \CX_{\delta \bv}^A
        % \prod_{f \in N_2} 
        % {Z^A_{f}}^{\int_{N_2}  \delta \bv \cup_1 \bface}
        % \prod_{e' \in N_2} {\CZ^A_{e'}}^{\int_{N_2} \be' \cup \delta \bv} 
        %  \\
        % & \qquad \times
        % \prod_{f \in N_2}  {Z^B_{f}}^{\int_{N_2}  \bv \cup \bface}\\
        &= \CX_{\delta \bv}^A
        \prod_{f \in N_2} {(Z^A_{f} Z^B_{f})}^{\int_{N_2}  \bv \cup \bface} \\
        &= \CX_{\delta \bv}^A
        \prod_{f \in N_2} {(\CZ^A_{\partial f} \CZ^B_{\partial f})}^{\int_{N_2}  \bv \cup \bface}~,
    \end{eqs}
    where $\sim$ indicates equality up to multiplication by flux terms.
    % In deriving the above expressions, we have used
    % % \begin{eqs}
    % %     &~\delta \be \cup_1 \bface (0123) \\
    % %     &= \delta \be (013) \bface(123) + \delta \be (023) \bface(012)\\
    % %     &= \delta \bv (13) \bface(123) + \delta \bv (23) \bface(012) \\
    % %     &= \delta \bv \cup_1 \bface(123) + \bface \cup \delta \bv(0123)~,
    % % \end{eqs}
    % % and
    % \begin{eqs}
    %     \prod_{e' \in N_2} {\CZ^A_{e'}}^{\int_{N_2} \delta \be' \cup \bv}
    %     = \prod_{f \in N_2} {Z^A_{f}}^{\int_{N_2} \bface \cup \delta \bv}~.
    % \end{eqs}
    Similarly,
    \begin{eqs}
        % & ~G^B_e \prod_f {\tilde{Z}^A_f}{}^{\int_{\widetilde{M}_3} \bface \cup \be} \\
        % &= \tilde{X}^B_{\delta \be}
        % \prod_{f} 
        % {\tilde{Z}^B_{f}}{}^{\int_{\widetilde{M}_3}  \delta \be \cup_1 \bface}
        % \prod_f {\tilde{Z}^A_f}{}^{\int_{\widetilde{M}_3} \bface \cup \be} \\
        &\tilde{X}^B_{\delta \be}
        \prod_{f} 
        {\tilde{Z}^B_{f}}{}^{\int_{\widetilde{M}_3}  \be \cup \bface + \bface \cup \be}
        \prod_f {\tilde{Z}^A_f}{}^{\int_{\widetilde{M}_3} \bface \cup \be} \\
        % &= \CX_{\delta \bv}^B
        % \prod_{f \in N_2} 
        % {Z^B_{f}}^{\int_{N_2}  \delta \bv \cup_1 \bface}
        % \prod_{e' \in N_2} {\CZ^A_{e'}}^{\int_{N_2} \be' \cup \delta \bv} 
        %  \\
        &= \CX_{\delta \bv}^B
        \prod_{f \in N_2} {\CZ^B_{\partial f}}^{\int_{N_2}  \bv \cup \bface}~.
    \end{eqs}

    \item $e=\lr{ij}$ denotes a boundary edge. In this case, we obtain
    \begin{eqs}
        % & ~G^A_e \prod_f {\tilde{Z}^B_f}{}^{\int_{\widetilde{M}_3} \be \cup \bface} \\
        % &= \tilde{X}^A_{\delta \be}
        % \prod_{f} 
        % {\tilde{Z}^A_{f}}{}^{\int_{\widetilde{M}_3}  \delta \be \cup_1 \bface}
        % \prod_f {\tilde{Z}^B_f}{}^{\int_{\widetilde{M}_3} \be \cup \bface} \\
        &
        \tilde{X}^A_{\delta \be}
        \prod_{f} 
        {\tilde{Z}^A_{f}}{}^{\int_{\widetilde{M}_3}  \be \cup \bface + \bface \cup \be}
        \prod_f {\tilde{Z}^B_f}{}^{\int_{\widetilde{M}_3} \be \cup \bface} \\
        &= X^A_{\delta \be} \CX^A_e
        \prod_{f \in M_3} 
        {Z^A_{f}}^{\int_{{M}_3}  \be \cup \bface + \bface \cup \be} \prod_{e' \in N_2} {\CZ^A_{e'}}^{\int_{N_2} \be' \cup \be} \\
        & \qquad \qquad \times  \prod_f {Z^B_f}^{\int_{M_3} \be \cup \bface} \\
        &= \CX^A_e \prod_{e' \in N_2} {\CZ^A_{e'}}^{\int_{N_2} \be' \cup \be}
        \times \text{ bulk terms}~, \nonumber
    \end{eqs}
    % where we have used
    % \begin{eqs}
    %     &~\delta \be \cup_1 \bface (0123) \\
    %     &= \delta \be (013) \bface(123) + \delta \be (023) \bface(012)\\
    %     &= \be (13) \bface(123) + \be (23) \bface(012) \\
    %     &= \be \cup_1 \bface (123) + \bface \cup \be (0123)~.
    % \end{eqs}
    and
    \begin{eqs}
        % & ~G^B_e \prod_f {\tilde{Z}^A_f}{}^{\int_{\widetilde{M}_3} \be \cup \bface} \\
        % &= \tilde{X}^B_{\delta \be}
        % \prod_{f} 
        % {\tilde{Z}^B_{f}}{}^{\int_{\widetilde{M}_3}  \delta \be \cup_1 \bface}
        % \prod_f {\tilde{Z}^A_f}{}^{\int_{\widetilde{M}_3}  \bface \cup \be} \\
        &\tilde{X}^B_{\delta \be}
        \prod_{f} 
        {\tilde{Z}^B_{f}}{}^{\int_{\widetilde{M}_3}  \be \cup \bface + \bface \cup \be}
        \prod_f {\tilde{Z}^A_f}{}^{\int_{\widetilde{M}_3} \bface \cup \be} \\
        &= X^B_{\delta \be} \CX^B_e
        \prod_{f \in M_3} 
        {Z^B_{f}}^{\int_{{M}_3} \be \cup \bface + \bface \cup \be} \prod_{e' \in N_2} {\CZ^B_{e'}}^{\int_{N_2} \be' \cup \be} \\
        & \qquad \qquad \times \prod_{f \in M_3} 
        {Z^A_{f}}^{\int_{{M}_3}  \bface \cup \be} \prod_{e' \in N_2} {\CZ^A_{e'}}^{\int_{N_2} \be' \cup \be} \\
        &= \CX^B_e \prod_{e' \in N_2} {(\CZ^A_{e'}\CZ^B_{e'})}^{\int_{N_2} \be' \cup \be}
        \times \text{ bulk terms}~. \nonumber
    \end{eqs}
\end{enumerate}
To summarize, the boundary Hamiltonian takes the form
\begin{eqs}
    &\!H_\text{boundary} \\
    =&-\sum_{v \in N_2}
        \CX_{\delta \bv}^A
        \prod_{f \in N_2} {(\CZ^A_{\partial f} \CZ^B_{\partial f})}^{\int_{N_2}  \bv \cup \bface}\\
    &-\sum_{v \in N_2}\CX_{\delta \bv}^B
        \prod_{f \in N_2} {\CZ^B_{\partial f}}^{\int_{N_2}  \bv \cup \bface}\\
    &-\sum_{f \in N_2} \left( Z_f^A \CZ^A_{\partial f} 
    + Z_f^B \CZ^B_{\partial f} \right)
    \\
    &-\sum_{e \in N_2} \CX^A_e \prod_{e' \in N_2} {\CZ^A_{e'}}^{\int_{N_2} \be' \cup \be}
        \times \text{ bulk terms} \\
    &-\sum_{e \in N_2} \CX^B_e \prod_{e' \in N_2} {(\CZ^A_{e'}\CZ^B_{e'})}^{\int_{N_2} \be' \cup \be}
        \times \text{ bulk terms}~.
\label{eq: boundary Hamiltonian of 3F WW}
\end{eqs}
We now swap the roles of the bulk and the boundary, treating the cone $CN_2$ as the ``bulk,'' with Hilbert space generated by $\CX_e^{A,B}$ and $\CZ_e^{A,B}$. From this perspective, the first two lines can be interpreted as bulk stabilizers on $CN_2$, while the $\CX$ and $\CZ$ operators appearing in the remaining terms generate the boundary algebra.

In particular, we focus on the last two lines, which correspond to the anyons $m_A e_A$ and $m_B e_A e_B$ in the $\ZZ_2^A \times \ZZ_2^B$ toric code. These are precisely the same anyon generators that appear in the ISA construction.

Explicitly, the bulk stabilizer on $CN_2$ is given by
\begin{equation}
    S_{\mathrm{bulk}} = \left\langle 
    \raisebox{-3em}{\includegraphics[scale=0.25]{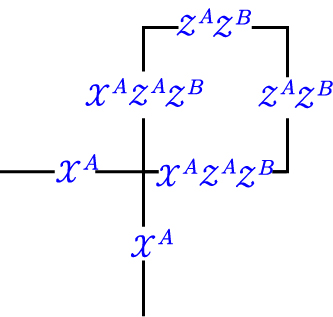}},~
    \raisebox{-3em}{\includegraphics[scale=0.25]{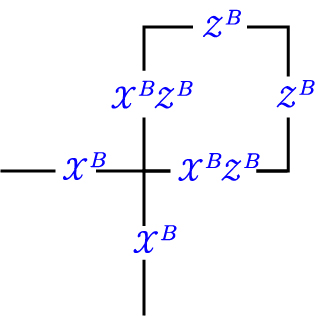}}
    \right\rangle~.
\end{equation}
The boundary algebra is defined as
\begin{equation}
    \mathcal{A}_{\mathrm{boundary}} = \frac{S_{\mathrm{bulk}}^\perp}{S_{\mathrm{bulk}}}~,
\end{equation}
where $S_{\mathrm{bulk}}^\perp$ consists of all finite Pauli operators generated by $\CX_e^{A,B}$ and $\CZ_e^{A,B}$ that commute with $S_{\mathrm{bulk}}$.
From the last three lines of Eq.~\eqref{eq: boundary Hamiltonian of 3F WW}, the corresponding boundary algebra is generated by the following operators:
\begin{eqs}
    \forall \alpha \in \{A, B\}~, \quad
    \CZ^\alpha_{\partial f}
    &= \raisebox{-2.5em}{\includegraphics[scale=.25]{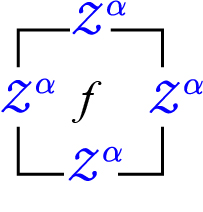}}~,  \\
    \CX^A_e \prod_{e' \in N_2} {\CZ^A_{e'}}^{\int_{N_2} \be' \cup \be}
    &= \raisebox{-2em}{\includegraphics[scale=.25]{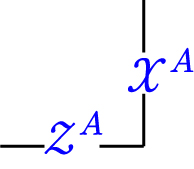}}~,
    \raisebox{-2em}{\includegraphics[scale=.25]{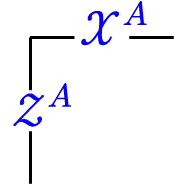}}~, \\
    \CX^B_e \prod_{e' \in N_2} {(\CZ^A_{e'}\CZ^B_{e'})}^{\int_{N_2} \be' \cup \be}
    &= \raisebox{-2em}{\includegraphics[scale=.25]{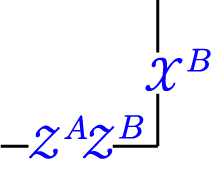}}~,
    \raisebox{-2em}{\includegraphics[scale=.25]{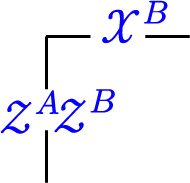}}~.
\end{eqs}
However, these generators are not independent. A straightforward analysis shows that the following two operators are sufficient to generate the entire boundary algebra (up to multiplication by bulk stabilizers):
\begin{eqs}
    &\CX^A_e \prod_{e' \in N_2} {\CZ^A_{e'}}^{\int_{N_2} \be' \cup \be + \be \cup_1 \delta \be'} \\
    & =~ \raisebox{-2.3em}{\includegraphics[scale=0.25]{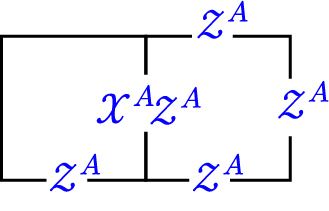}}~, \quad
    \raisebox{-3.8em}{\includegraphics[scale=.25]{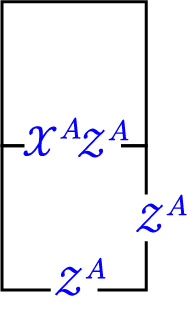}}~, \\
    &\CX^B_e \prod_{e' \in N_2} {\CZ^A_{e'}}^{\int_{N_2} \be' \cup \be} {\CZ^B_{e'}}^{\int_{N_2} \be' \cup \be + \delta \be' \cup_1 \be} \\
    & =~ \raisebox{-2.3em}{\includegraphics[scale=0.25]{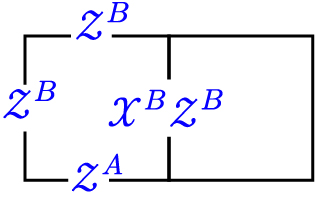}}~, \quad
    \raisebox{-3.8em}{\includegraphics[scale=0.25]{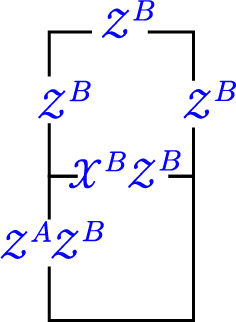}}~.
\end{eqs}
We note that this boundary algebra is exactly the ISA for the 3-fermion theory, up to a reordering of the cup products.

The same analysis extends to higher dimensions with a periodicity of 2: one simply shifts the degree of each cochain by 1, and the recursive relations for the cup products continue to hold modulo~$2$.

% The only difference is that these operators are dressed by additional flux operators, $\CZ^A_{\partial f}$ and $\CZ^B_{\partial f}$. However, it is known that decorating excitations with flux operators does not alter their statistics~\cite{kobayashi2024generalized}. Consequently, their commutation relations—and hence the associated skew-Hermitian forms—are equivalent.

%%%%%%%%%%%%%%%%%%%%%%%%%%%%%%%%%%%%%%%%%%%%
\subsubsection{$\ZZ_p$ case}

In this section, we study the $\ZZ_p^{(k)}$ Walker--Wang Hamiltonian defined in Eq.~\eqref{eq: 1/p BUB QCA initial Hamiltonian}, which consists of the following two types of terms:
\begin{eqs}
     Z_{\partial t}, \qquad
     X_{\delta \be} \prod_{f} Z_{f}^{k \int \bface \cup \be + \be \cup \bface}~.
\end{eqs}
As in the previous section, we construct the Hamiltonian on the closed manifold
${\widetilde M}_3 \equiv M_3 \sqcup CN_2$.
We place $\ZZ_p$ qudits on each face $f \in M_3$ and each edge $e \in N_2$, equipped with Pauli operators $X_f, Z_f$ and $\CX_e, \CZ_e$, respectively.

We now analyze the Hamiltonian terms supported near $N_2$:
\begin{enumerate}
    \item
    Let $t=\lr{0ijk}$ denote an auxiliary tetrahedron, and let $f=\lr{ijk}$ be the corresponding boundary face. The associated flux term can be written as
    \begin{equation}
        Z_{\partial t}
        = Z_{\lr{ijk}} \CZ_{\lr{jk}}^{-1} \CZ_{\lr{ik}} \CZ_{\lr{ij}}^{-1}
        := Z_f \CZ_{\partial f}^{-1}~.
    \end{equation}

    \item
    Let $e=\lr{0i}$ denote an auxiliary edge, and let $v=\lr{i}$ be the corresponding boundary vertex. In this case, the stabilizer takes the form
    \begin{eqs}
        & \tilde{X}_{\delta \be}
        \prod_{f} \tilde{Z}_{f}^{k \int_{\widetilde{M}_3} (\bface \cup \be + \be \cup \bface)} \\
        &= \CX_{\delta \bv}^{-1}
        \prod_{f \in N_2} Z_{f}^{k\int_{N_2} \bv \cup \bface} \\
        &= \CX_{\delta \bv}^{-1}
        \prod_{f \in N_2} \CZ_{\partial f}^{k\int_{N_2} \bv \cup \bface}~,
    \end{eqs}
    where the minus sign in the exponent of $\CX_{\delta \bv}$ arises because when $e=\lr{0i}$ appears with positive orientation in $\partial\lr{0ij}$, the corresponding boundary vertex $v=\lr{i}$ appears with negative orientation in $\partial\lr{ij}$.

    \item
    Let $e=\lr{ij}$ denote a boundary edge. In this case, we obtain
    \begin{eqs}
        & \tilde{X}_{\delta \be}
        \prod_{f} \tilde{Z}_{f}^{k \int_{\widetilde{M}_3} \bface \cup \be + \be \cup \bface} \\
        &= X_{\delta \be} \CX_e
        \prod_{f \in M_3}
        Z_{f}^{k\int_{M_3} \bface \cup \be + \be \cup \bface}
        \prod_{e' \in N_2} \CZ_{e'}^{k\int_{N_2} \be' \cup \be} \\
        &= \CX_e
        \prod_{e' \in N_2} \CZ_{e'}^{k\int_{N_2} \be' \cup \be}
        \times \text{bulk terms}~.
    \end{eqs}
\end{enumerate}

Collecting the above results, the boundary Hamiltonian takes the form
\begin{eqs}
    &H_{\text{boundary}}\\
    &= -\sum_{v \in N_2}
        \CX_{\delta \bv}
        \prod_{f \in N_2} \CZ_{\partial f}^{-k\int_{N_2} \bv \cup \bface}
    -\sum_{f \in N_2} Z_f \CZ_{\partial f}^{-1} \\
    &\quad -\sum_{e \in N_2}
        \CX_e
        \prod_{e' \in N_2} \CZ_{e'}^{k\int_{N_2} \be' \cup \be}
        \times \text{bulk terms} ~ +~ h.c.
\label{eq: boundary Hamiltonian of Zp QCA}
\end{eqs}
As in the previous case, we treat the operators $\CX_e$ and $\CZ_e$ supported on $CN_2$ as bulk degrees of freedom. With this convention, the first term in Eq.~\eqref{eq: boundary Hamiltonian of Zp QCA} defines a bulk stabilizer,
\begin{eqs}
    \CX_{\delta \bv}
        \prod_{f \in N_2} \CZ_{\partial f}^{-k\int_{N_2} \bv \cup \bface}
    = \raisebox{-4em}{\includegraphics[scale=0.25]{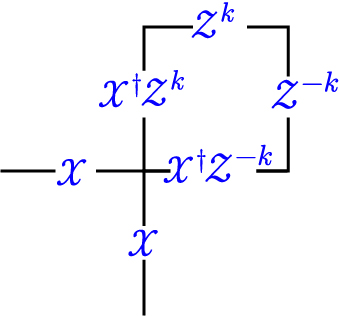}}~.
\end{eqs}

The remaining two terms in Eq.~\eqref{eq: boundary Hamiltonian of Zp QCA} generate the boundary algebra. In particular, the boundary operators can be taken as
\begin{eqs}
    \CZ_{\partial f}
    = \raisebox{-3em}{\includegraphics[scale=0.25]{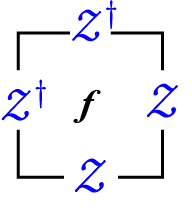}}~,
\end{eqs}
and
\begin{eqs}
    \CX_e \prod_{e'} \CZ_{e'}^{k\int_{N_2} \be' \cup \be}
    = \raisebox{-2em}{\includegraphics[scale=0.25]{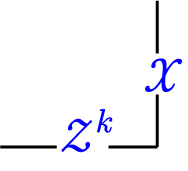}}~, \quad
      \raisebox{-1.5em}{\includegraphics[scale=0.25]{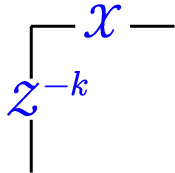}}~.
\end{eqs}
These operators are not independent: they generate redundant elements of the boundary algebra. By appropriately combining them, we obtain a single free generator,
\begin{eqs}
    A_e^{(k)} :=\;& \CX_e \prod_{e'} \CZ_{e'}^{k \int \be' \cup \be}
    \prod_f \CZ_{\partial f}^{\frac{k}{2}\int(\be \cup_1 \bface - \bface \cup_1 \be)} \\
    =\;&
    \raisebox{-2.5em}{\includegraphics[scale=0.25]{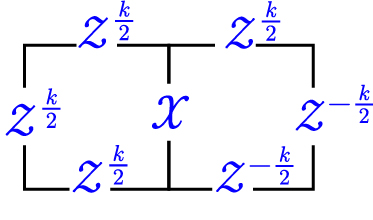}}~, \quad
    \raisebox{-4em}{\includegraphics[scale=0.25]{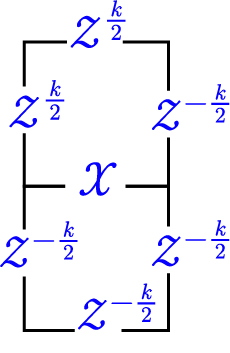}}~,
\end{eqs}
which coincides exactly with the generator of the $\mathbb{Z}_p$ ISA construction in Sec.~\ref{sec: Zpk invertible subalgebras}. 
Therefore, the QCAs constructed via the TQFT and ISA approaches are equivalent. 

The same analysis applies in higher dimensions, repeating every 4 dimensions: one simply increases the degree of each cochain by 2, and the recursive relations for the cup products continue to hold.
}
%%%%%%%%%%%%%%%%%%%%%%%%%%%%%%%%%%%%%%%%%%%%%%%%%%%%%%%%%%%%%%%%%%
\section{Algebraic formalism for Clifford QCA}\label{sec: Algebraic formalism for Clifford QCA}

In this section, we develop an algebraic framework for analyzing Clifford QCAs. We model them as automorphisms of the generalized Pauli group, modulo phase, subject to a locality-preserving condition. This reformulation reduces the study of Clifford QCAs to symplectic automorphisms of an abelian group with a band-diagonality constraint. It also enables a natural connection to the questions in algebraic $K$-theory. We begin with the definition of generalized Pauli operators and show how Clifford QCAs correspond to the locality-preserving automorphisms of their group.

\subsection{Preliminaries}

Recall from Sec.~\ref{sec:Symplectic_Rep_Pauli}, we place $n$-dimensioal qudits on each $p$-dimensional simplex $\sigma_p$. Throughout Sec.~\ref{sec: Algebraic formalism for Clifford QCA}, we denote the collection of qudits by $\Lambda$, which is equipped with a distance function $d: \Lambda \times \Lambda \rightarrow \RR_{\geq 0}$. 

\begin{defn}
    Denote by $X_i, Z_i$ the generalized Pauli matrices supported on $i\in \Lambda$. Let \textit{generalized Pauli operators} $\mathbf P$ be the group generated by $\{X_i, Z_i\}_{i\in \Lambda}$.
\end{defn}

\begin{prop}
    A Clifford QCA $\alpha$ is completely determined by the images $\alpha(X_i), \alpha(Z_i)\in \mathbf P$ for every $i\in \Lambda$. In other words, a Clifford QCA is reduced to an automorphism $\alpha\in \mathrm{Aut}(\mathbf P)$ that meets the locality-preserving condition. 
\end{prop}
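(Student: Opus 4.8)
The plan is to separate the assertion into three elementary facts and then run the argument in reverse for the converse. First, the generators $\{X_i,Z_i\}_{i\in\Lambda}$ generate $\mathbf P$ as a group, and $\mathbf P$ in turn generates the whole (quasi-)local operator algebra on $\Lambda$ --- on any finite sublattice the Pauli operators already span the full matrix algebra of that region. Since a QCA is by definition an automorphism of this operator algebra, it is determined by its values on any algebra-generating set, hence by the list $\bigl(\alpha(X_i),\alpha(Z_i)\bigr)_{i\in\Lambda}$; and this uniqueness survives the thermodynamic limit because the local algebras exhaust the quasi-local one. Second, the Clifford hypothesis $\alpha(\mathbf P)\subseteq\mathbf P$ has to be upgraded to $\alpha(\mathbf P)=\mathbf P$ so that $\alpha|_{\mathbf P}$ is a genuine automorphism of $\mathbf P$; I would get this by restricting $\alpha$ to a finite region, where it is conjugation by a unitary $U$, and using that $U\mathbf P U^{\dagger}\subseteq\mathbf P$ forces $U$ to be Clifford, whence $U^{-1}$ is Clifford as well, the Clifford unitaries forming a group. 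Third, an algebra automorphism fixes scalars, so $\alpha$ fixes the phase subgroup of $\mathbf P$ pointwise; the only genuine data is therefore the images $\alpha(X_i),\alpha(Z_i)$ up to phase, matching the ``modulo phase'' framing.

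For the converse direction I would take $\alpha\in\mathrm{Aut}(\mathbf P)$ satisfying the locality-preserving condition and observe that, being a group automorphism, it respects all the defining relations of $\mathbf P$ --- the orders $X_i^n=Z_i^n=1$, the local commutation $Z_iX_i=\omega X_iZ_i$, and $[X_i,X_j]=[Z_i,Z_j]=[X_i,Z_j]=1$ for $i\neq j$ --- so it extends uniquely to a $*$-automorphism of the operator algebra generated by $\mathbf P$. That extension is locality-preserving by assumption, hence a QCA, and it maps Paulis to Paulis by construction, i.e.\ it is a Clifford QCA. This establishes the stated bijection between Clifford QCAs and locality-preserving automorphisms of $\mathbf P$.

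The main obstacle is the surjectivity/invertibility step of the forward direction: in infinite volume one must argue that $\alpha(\mathbf P)\subseteq\mathbf P$ really implies equality, equivalently that the inverse QCA is again Clifford. The finite-region reduction above (closure of the Clifford group under inverses) settles it; alternatively, one can phrase the whole proposition in the symplectic/polynomial language of Sec.~\ref{sec:Symplectic_Rep_Pauli}, where $\alpha$ induces a band-diagonal symplectic endomorphism of $\mathbf P$ modulo its center and invertibility of $\alpha$ as a QCA is exactly invertibility of that matrix --- this is the point where the proposition hands off to the $K$-theoretic discussion in the rest of the section. I would also stress at the end that the locality-preserving condition is not automatic for an abstract element of $\mathrm{Aut}(\mathbf P)$ and must be imposed by hand: it is precisely the statement that the supports of $\alpha(X_i),\alpha(Z_i)$ are finite and of uniformly bounded diameter.
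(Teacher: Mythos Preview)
Your core argument is the same as the paper's: the Pauli operators $X_i,Z_i$ generate the full local operator algebra, so an algebra automorphism is determined by its values on them. The paper's proof is literally that one sentence and nothing more.

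Your proposal goes well beyond this. The surjectivity concern (why $\alpha(\mathbf P)\subseteq\mathbf P$ implies $\alpha(\mathbf P)=\mathbf P$) and the converse direction (why a locality-preserving element of $\mathrm{Aut}(\mathbf P)$ lifts back to a QCA on the operator algebra) are both points the paper's proof simply does not address; the proposition is being treated there as essentially a definition-unpacking remark rather than a theorem with a two-sided argument. Your extra care is not wrong --- in fact it fills genuine gaps if one reads the proposition as a bijection --- but for the purposes of matching the paper, the first of your three ``elementary facts'' already covers everything the authors wrote.
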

\begin{proof}
Generalized Pauli $X_i, Z_i$ generate the entire algebra of local operators. Therefore, the image under $\alpha$ of another operator is determined by automorphism properties. 
\end{proof}

It is possible to express \(\mathbf{P}\) modulo its center \(\ZZ_d\) using an abelian group~\cite{calderbank1997quantum}. The clock and shift operators $X_i$ and $Z_i$ defined are represented respectively by $\colvec{1}{0}$ and $\colvec{0}{1}$. Each generalized Pauli operator has a corresponding sum of $\colvec{1}{0}$ and $\colvec{0}{1}$, which is uniquely defined up to a phase. For example, both $X_i^2Z_i$ and $X_iZ_iX_i$ are represented by $\colvec{2}{1}$. As $X_i^d= Z_i^d= I$, we have $\colvec{d}{0}=\colvec{0}{d}=\colvec{0}{0}$. All calculations shall therefore be done modulo $d$, where $d$ is the dimension of the qudit. We denote the set of all length-$2$ column vectors modulo $d$ by $\ZZ_d^2$. 

Moreover, the commutation relation between two operators is recovered by the matrix $\begin{pmatrix}
    0 & 1\\
    -1 & 0
\end{pmatrix}$: two Pauli operators represented by $\colvec{a}{b}$ and $\colvec{c}{d}$ commute up to some $\exp(2\pi i \frac{m}{d})\in \mu_d$ with
$$
m =\omega_i\left(\colvec{a}{b}, \colvec{c}{d}\right)=
\begin{pmatrix}
    a & b
\end{pmatrix} 
\begin{pmatrix}
    0 & 1\\
    -1 & 0
\end{pmatrix}  \colvec{c}{d}.
$$
This is also called the standard symplectic form on  $\ZZ_d^2$
\begin{equation}
\omega_i: \ZZ_d^2\times \ZZ_d^2 \rightarrow \ZZ_d.
\end{equation}
\begin{defn}
    Motivated by this, we define $P:=\bigoplus_{\Lambda}\ZZ_d^2$ and $\omega:=\sum_{i\in \Lambda}\omega_i.$ Let $\mathrm{Aut}^{\omega}(P)=\{\theta\in \mathrm{Aut}(P): \omega(\theta(p_1), \theta(p_2))=\omega(p_1, p_2) \ \mathrm{for }\ p_1, p_2 \in P\}$ be the collection of symplectic automorphisms. 
\end{defn}

\begin{lemma}
    There is a surjective homomorphism $$\kappa: \mathrm{Aut}(\mathbf P)\rightarrow \mathrm{Aut}^{\omega}(P).$$ Furthermore, $\ker \kappa$ consists of only Clifford circuits. 
\end{lemma}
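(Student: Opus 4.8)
The plan is to construct $\kappa$ explicitly using the symplectic representation recalled in Section~\ref{sec:Symplectic_Rep_Pauli}, verify it is a well-defined group homomorphism into $\mathrm{Aut}^\omega(P)$, and then establish surjectivity and the description of the kernel. First I would recall that every element of $\mathbf P$ can be written uniquely (up to a central phase in $\ZZ_d$) as $\prod_i X_i^{a_i} Z_i^{b_i}$, so there is a canonical surjection $\pi:\mathbf P \twoheadrightarrow P = \bigoplus_\Lambda \ZZ_d^2$ whose kernel is exactly the center $Z(\mathbf P) \cong \ZZ_d$ (using that $\mathbf P$ modulo its center is abelian, with the commutator pairing $\omega$). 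Given $\alpha \in \mathrm{Aut}(\mathbf P)$, I would first observe that $\alpha$ preserves the center $Z(\mathbf P)$ (any automorphism sends the center to itself), hence descends to an automorphism $\bar\alpha$ of $\mathbf P/Z(\mathbf P) \cong P$. Because commutators are preserved by any automorphism — $\alpha([g,h]) = [\alpha(g),\alpha(h)]$ — and the commutator pairing on $\mathbf P/Z(\mathbf P)$ is precisely $\exp(\tfrac{2\pi i}{d}\omega(\cdot,\cdot))$, the induced map $\bar\alpha$ satisfies $\omega(\bar\alpha p_1, \bar\alpha p_2) = \omega(p_1,p_2)$, i.e., $\bar\alpha \in \mathrm{Aut}^\omega(P)$. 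Setting $\kappa(\alpha) = \bar\alpha$ gives a well-defined map; functoriality of "pass to the quotient by the center" immediately shows $\kappa$ is a group homomorphism.

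Next I would prove surjectivity. Given $\theta \in \mathrm{Aut}^\omega(P)$, I need to lift it to an automorphism of $\mathbf P$. The idea is: choose any set-theoretic lift of $\theta(\pi(X_i))$ and $\theta(\pi(Z_i))$ to honest Pauli operators $\widetilde{X}_i, \widetilde{Z}_i \in \mathbf P$ (pick any representative of each coset). Because $\theta$ preserves $\omega$, the operators $\{\widetilde X_i, \widetilde Z_i\}$ satisfy exactly the same commutation relations as $\{X_i, Z_i\}$, and each has order $d$ up to phase — after rescaling the phases of the representatives (each $\widetilde X_i^d, \widetilde Z_i^d$ is a central phase, which we can absorb since $\gcd$ considerations over $\ZZ_d$ let us adjust), we get generators of an isomorphic copy of $\mathbf P$. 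The universal property of $\mathbf P$ as the group presented by these generators and relations then yields a homomorphism $\mathbf P \to \mathbf P$ sending $X_i \mapsto \widetilde X_i$, $Z_i \mapsto \widetilde Z_i$; it is an automorphism because one can run the same construction for $\theta^{-1}$ and check the composites are inner-by-center, which can be corrected. By construction $\kappa$ of this automorphism is $\theta$, so $\kappa$ is surjective.

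Finally, for the kernel: $\alpha \in \ker\kappa$ iff $\alpha$ acts trivially on $\mathbf P/Z(\mathbf P)$, i.e., $\alpha(X_i) = \lambda_i^X X_i$ and $\alpha(Z_i) = \lambda_i^Z Z_i$ for central phases $\lambda_i^X, \lambda_i^Z \in \mu_d$. Such an $\alpha$ is precisely conjugation by a Pauli operator (the phases $\lambda_i$ determine, via the symplectic form, a Pauli $g$ with $g X_i g^{-1} = \lambda_i^X X_i$, etc.) composed possibly with a diagonal-in-the-Pauli-basis automorphism; in any case it is implemented by a depth-one (hence finite-depth) circuit of single-site Clifford gates, so it is a Clifford circuit. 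Conversely any Clifford circuit that is trivial modulo center and phases — one should be careful here — the precise claim is that $\ker\kappa$ consists of automorphisms realized by single-qudit phase gates, which are finite-depth Clifford circuits; I would cite or reprove the standard fact (e.g.\ \cite{calderbank1997quantum, haah2016algebraic}) that an automorphism of $\mathbf P$ fixing every $X_i, Z_i$ up to phase is inner-up-to-phase and hence a depth-$1$ circuit.

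The main obstacle I expect is the surjectivity step, specifically the bookkeeping of the central phases when lifting: over a general $\ZZ_d$ one must be careful that the chosen lifts $\widetilde X_i$ really can be normalized to have order exactly $d$ and to satisfy the canonical relations on the nose (for $d$ even there are subtleties with signs, as the paper itself flags in footnotes about the antipode), so the universal-property argument goes through cleanly. Everything else is essentially the standard dictionary between the Pauli group modulo phase and the symplectic $\ZZ_d$-module with its form $\omega$.
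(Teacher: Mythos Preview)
Your proposal is correct and follows essentially the same route as the paper: pass to the quotient by the center to define $\kappa$, use preservation of commutators to land in $\mathrm{Aut}^\omega(P)$, and for the kernel observe that an automorphism acting by phases on each $X_i,Z_i$ is conjugation by a single-site Pauli product (the paper writes this explicitly as the depth-one circuit $\prod_i X_i^{n_i}Z_i^{-m_i}$). The paper's own proof is terser and in fact does not spell out the surjectivity argument at all, so your lifting-of-generators sketch actually supplies a step the paper leaves implicit.
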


\begin{proof}
An automorphism $\alpha\in \mathrm{Aut}(\mathbf P)$ has to map the center to itself.
Therefore, it induces an automorphism $\kappa(\alpha)$ of $P$. Since $\alpha$ respects the commutation relation in $\mathbf P$, $\kappa(\alpha)\in \mathrm{Aut}^{\omega}(P)$. If $\alpha\in \ker \kappa$, then $\alpha(X_i)=\exp(2\pi i \frac{m_i}{d})X_i$ and $\alpha(Z_i)=\exp(2\pi i \frac{n_i}{d})Z_i$. Therefore, $\alpha$ is the single-layer quantum circuit $\prod_{i \in \Lambda}X_i^{n_i}Z_i^{-m_i}$.
\end{proof}
As Clifford circuits are deemed trivial, it suffices to study $\mathrm{Aut}^{\omega}(P)$. As for the locality-preserving condition, the following representation is convenient.
An automorphism $\theta\in \mathrm{Aut}^{\omega}(P)$ has an explicit representation $$\theta=\begin{pmatrix}
    \Theta^{XX} & \Theta^{XZ}\\
    \Theta^{ZX} & \Theta^{ZZ}
\end{pmatrix},$$
where $\Theta^{\bullet \bullet}:\Lambda\times \Lambda \rightarrow \ZZ_d$ are 2-tensors indexed by $\Lambda$. Here $\Theta^{\bullet \bullet}\in \{\Theta^{XX}, \Theta^{XZ}, \Theta^{ZX}, \Theta^{ZZ}\}$. Viewed as $\Lambda$-indexed square matrices, multiplication between 2-tensors is defined as matrix multiplication. We will refer to these 2-tensors simply as matrices. The automorphism being $\theta$ symplectic is equivalent to the identities
$$\theta^T J_\Lambda \theta = J_\Lambda\textbf{ and } \theta J_\Lambda \theta^T = J_\Lambda,$$ where 
$$J_\Lambda = \begin{pmatrix}
    0 & I_\Lambda\\
    -I_\Lambda & 0
\end{pmatrix}.$$
Observe that $\theta$ is invertible with inverse $-J_\Lambda \theta^T J_\Lambda$.
The locality-preserving condition translates into a ``band-diagonality" condition: there exists   $r>0$ \ such that $\Theta^{\bullet \bullet}(i,j)=0$ for all $i, j\in \Lambda$ satisfying $d(i,j)>r.$ Let $\mathrm{Sp}_{\Lambda}^{bd}(\ZZ_d)$ denote the subgroup of band-diagonal elements in $\mathrm{Aut}^{\omega}(P)$. Similar objects have been considered in topology under the name controlled linear algerba~\cite{freedman1995controlled}.

A single-layer Clifford circuit corresponds to  $\theta$ where $\Theta^{\bullet \bullet}$ are identically block-diagonal. In other words, there exists a partition of $\Lambda$ into disjoint blocks $\Lambda=\coprod_\beta \Lambda_\beta$ of finite block sizes $\mathrm{diam}(\Lambda_\beta)<\infty$. And $\Theta^{\bullet \bullet}=\sum_\beta \Theta_\beta^{\bullet \bullet}$ with $\Theta_\beta^{\bullet \bullet}(i,j)=0$ unless $i, j \in \Lambda_\beta$. Furthermore, the band-diagonal condition forces the block sizes to be uniformly bounded. Let $ESp_{\Lambda}^{bd}(\ZZ_d)$ denote the group generated by them. It corresponds to finite depth Clifford circuits.  

A QCA which only permutes qudits corresponds to $\theta$ with $\Theta^{XZ}=\Theta^{ZX}=0$ and $\Theta^{XX}=\Theta^{ZZ}$ a band-diagonal permutation matrix.\footnote{A permutation matrix is a square binary matrix that has exactly one entry of 1 in each row and each column with all other entries 0.} Let $PSp_{\Lambda}^{bd}(\ZZ_d)$ denote the group generated by them.

In conclusion, the classification of Clifford QCA is reduced to 
$$\frac{Sp_{\Lambda}^{bd}(\ZZ_d)}{\langle ESp_{\Lambda}^{bd}(\ZZ_d), PSp_{\Lambda}^{bd}(\ZZ_d)\rangle},$$ up to a procedure called stabilization (see below). Elements in $\langle ESp_{\Lambda}^{bd}(\ZZ_d), PSp_{\Lambda}^{bd}(\ZZ_d)\rangle$ are referred to as trivial Clifford QCAs.

Above, we implicitly assume that there is a single qudit at each site $i\in \Lambda$. However, it is customary to allow multiple qudits on each site. Stabilization allows one to compare QCAs defined on the same $\Lambda$ with different on-site qudits. A QCA may be extended to additional on-site qudits by the identity action on operators supported on them. These extended QCA are regarded equivalent to the original. The procedure is called stabilization. In view of this, we should allow $$P=\bigoplus_{i\in \Lambda} \ZZ_d^{2q_i},$$ with different numbers $q_i$ of qudits on site $i$ and its symplectic form $\omega$ accordingly. This generalizes $$\Theta^{\bullet \bullet}(j, k)\in \mathrm{Mat}_{q_k\times q_j}(\ZZ_d),$$ for $j, k\in \Lambda$.

%%%%%%%%%%%%%%%%%%%%%%%%%%%%%%%%%%%%%%%%%%%%%%%%%%%%%%%%%%%%%%%%%%%%%%%%%%%%%%%%
\subsection{QCAs with the same stabilizer group}\label{sec: QCAs with the same stabilizer group}

This section addresses a fundamental question: \textit{Are two Clifford QCAs with the same stabilizer group (or flipper group) always equivalent?}
% The definition below is relevant. 
% \begin{defn}
%     A QCA $\alpha$ is called \textbf{separated} if  $\alpha(\langle X_i\rangle_{i\in \Lambda})=\langle X_i\rangle_{i\in \Lambda}$ and $\alpha(\langle Z_i\rangle_{i\in \Lambda})=\langle Z_i\rangle_{i\in \Lambda}$. Here, $\langle X_i\rangle_{i\in \Lambda}$ (resp. $\langle Z_i\rangle_{i\in \Lambda}$) denotes purely $X$ (resp. purely $Z$) Pauli operators. For $\alpha$ separated, $\kappa(\alpha)$ is necessarily of the form $\begin{pmatrix}
%     \Theta & 0\\
%     0 & (\Theta^T)^{-1}
% \end{pmatrix}\in Sp_{\Lambda}^{bd}(\ZZ_d).$ We call such elements in $Sp_{\Lambda}^{bd}(\ZZ_d) $ separated as well. 
% \end{defn}
% \begin{theorem}
%     The answer to the question above is affirmative if and only if any separated QCA is trivial.
% \end{theorem}
{\change
We begin by introducing a special class of QCAs for which this question reduces to a particularly simple form.

\begin{defn}
A Clifford QCA $\alpha$ is called \textbf{separated} if
\[
\alpha(\langle X_i\rangle_{i\in \Lambda})=\langle X_i\rangle_{i\in \Lambda},
\qquad
\alpha(\langle Z_i\rangle_{i\in \Lambda})=\langle Z_i\rangle_{i\in \Lambda}.
\]
Here, $\langle X_i\rangle_{i\in \Lambda}$ (resp.\ $\langle Z_i\rangle_{i\in \Lambda}$)
denotes the group of Pauli operators generated purely by $X$ (resp.\ purely by $Z$)
operators and phase.  
For a separated QCA $\alpha$, the associated symplectic transformation
$\kappa(\alpha)$ necessarily takes the block-diagonal form
\[
\kappa(\alpha)
=
\begin{pmatrix}
    \Theta & 0 \\
    0 & (\Theta^T)^{-1}
\end{pmatrix}
\in Sp_{\Lambda}^{\mathrm{bd}}(\mathbb{Z}_d).
\]
We refer to such elements of $Sp_{\Lambda}^{\mathrm{bd}}(\mathbb{Z}_d)$ as
\textbf{separated} symplectic transformations.
\end{defn}
Note that a separated Clifford QCA maps CSS codes to CSS codes, and hence preserves the CSS structure.

\begin{theorem}
\label{thm:same-stabilizer}
Two Clifford QCAs with the same stabilizer group (or the same flipper
group) are equivalent up to finite-depth quantum circuits if and only if every separated Clifford QCA is trivial.
\end{theorem}
}

\begin{lemma}
 Any $\theta$ with $\Theta^{XZ}=0$ is equivalent to a separated $\theta'=\begin{pmatrix}
    \Theta & 0\\
    0 & (\Theta^T)^{-1}
\end{pmatrix}\in Sp_{\Lambda}^{bd}(\ZZ_d).$
\end{lemma}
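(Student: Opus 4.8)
The plan is to use the symplectic relations to pin down the block structure of $\theta$, read off the candidate separated form $\theta'$, and then show that $\theta$ and $\theta'$ differ only by a controlled-phase–type Clifford circuit that is manifestly of finite depth.

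First I would put $\theta$ in block form. Since $\Theta^{XZ}=0$, set $\Theta:=\Theta^{XX}$, $C:=\Theta^{ZX}$, $D:=\Theta^{ZZ}$, so $\theta=\begin{pmatrix}\Theta & 0\\ C & D\end{pmatrix}$. Expanding $\theta^{T}J_\Lambda\theta=J_\Lambda$ block by block gives $\Theta^{T}D=I$, $D^{T}\Theta=I$, and $\Theta^{T}C=C^{T}\Theta$, i.e.\ $\Theta^{T}C$ is symmetric. Because $\theta$ is invertible with band-diagonal inverse $-J_\Lambda\theta^{T}J_\Lambda$, and this inverse is again block lower triangular, $\Theta$ must be invertible with band-diagonal inverse, and $D=(\Theta^{T})^{-1}=(\Theta^{-1})^{T}$. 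Hence $\theta':=\begin{pmatrix}\Theta & 0\\ 0 & (\Theta^{T})^{-1}\end{pmatrix}$ is symplectic and band-diagonal, so $\theta'\in\mathrm{Sp}_{\Lambda}^{bd}(\ZZ_d)$, and it is separated by definition. (In the stabilized setting where the $\Theta^{\bullet\bullet}$ are block matrices, the same identities hold verbatim.)

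Next I would compute the ``difference''. A direct calculation gives $(\theta')^{-1}\theta=\begin{pmatrix}I & 0\\ B & I\end{pmatrix}=:S$ with $B:=\Theta^{T}C$, so $\theta=\theta'S$. From the previous step $B$ is symmetric, and it is band-diagonal since it is a product (and transpose) of band-diagonal matrices. It then suffices to show $S$ is a finite-depth Clifford circuit, since $\theta=\theta'S$ would then represent the same class as $\theta'$ in $\mathrm{Sp}_{\Lambda}^{bd}(\ZZ_d)$ modulo trivial Clifford QCAs, i.e.\ $\theta\sim\theta'$. Using $\begin{pmatrix}I & 0\\ B_{1} & I\end{pmatrix}\begin{pmatrix}I & 0\\ B_{2} & I\end{pmatrix}=\begin{pmatrix}I & 0\\ B_{1}+B_{2} & I\end{pmatrix}$, it is enough to split $B=B_{\mathrm{diag}}+\sum_{\beta}B_\beta$, where $B_{\mathrm{diag}}$ collects the diagonal entries and each $B_\beta$ is supported on a matching of the ``interaction graph'' of $B$ (vertices $\Lambda$, an edge $\{i,j\}$ whenever $B_{ij}\neq0$). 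This graph has bounded degree by band-diagonality together with the bounded geometry of $\Lambda$, so by Vizing's theorem its edges decompose into $O(1)$ matchings. Each factor $\begin{pmatrix}I & 0\\ B_\beta & I\end{pmatrix}$ is then block-diagonal with uniformly bounded blocks (disjoint pairs of diameter at most the band radius), hence a single-layer Clifford circuit in $ESp_{\Lambda}^{bd}(\ZZ_d)$: on a pair $\{i,j\}$ it is the power $CZ_{ij}^{B_{ij}}$, and $\begin{pmatrix}I & 0\\ B_{\mathrm{diag}} & I\end{pmatrix}$ is a product of single-qudit phase-type gates realizing $\begin{pmatrix}1 & 0\\ c & 1\end{pmatrix}$. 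Therefore $S\in ESp_{\Lambda}^{bd}(\ZZ_d)$ is trivial.

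The main obstacle I expect is this last reduction: making rigorous that a symmetric band-diagonal $B$ produces a \emph{finite-depth} Clifford circuit via the bounded-degree edge-coloring argument, and confirming that the elementary building blocks $CZ_{ij}^{c}$ and the single-qudit gates realizing $\begin{pmatrix}1 & 0\\ c & 1\end{pmatrix}$ are legitimate Clifford operators over $\ZZ_d$ for every $d$ (the even-$d$ phase gate deserving a one-line check). Extracting the block structure and verifying the symplectic identities is routine bookkeeping.
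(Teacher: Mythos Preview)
Your proof is correct and follows the same route as the paper: use the symplectic relations to identify $\theta'$ and show that $(\theta')^{-1}\theta=\begin{pmatrix} I & 0\\ B & I\end{pmatrix}$ with $B=\Theta^{T}C$ symmetric and band-diagonal is a finite-depth Clifford circuit. The paper simply asserts that such a shear lifts to a circuit (noting it dresses one Pauli type by products of the other), whereas your Vizing edge-coloring argument makes this step explicit.
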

\begin{proof}
    Write $\theta=\begin{pmatrix}
    \Theta^{XX} & 0\\
    \Theta^{ZX} & \Theta^{ZZ}
\end{pmatrix}.$ Then $\theta^T J_\Lambda \theta = J_\Lambda\text{ and } \theta J_\Lambda \theta^T = J_\Lambda$ is equivalent to $$\Theta^{ZZ}=((\Theta^{XX})^T)^{-1}$$ and 
$(\Theta^{XX})^T\Theta^{ZX}, \Theta^{ZZ}(\Theta^{ZX})^T$ are symmetric. By the first equality, $\theta'=\begin{pmatrix}
    \Theta^{XX} & 0\\
    0 & \Theta^{ZZ}
\end{pmatrix}\in Sp_{\Lambda}^{bd}(\ZZ_d).$ Moreover, $\theta'^{-1}\theta=\begin{pmatrix}
    I_\Lambda & 0\\
    (\Theta^{XX})^T\Theta^{ZX} & I_\Lambda
\end{pmatrix}\in ESp_{\Lambda}^{bd}(\ZZ_d).$ In other words, $\theta'^{-1}\theta$ lifts to a QCA with $X_j\mapsto X_j$ and $Z_j\mapsto \mathcal X_jZ_j$ where 
\begin{equation*}
\begin{aligned}
    \mathcal X_j &=\exp\{\pi i \left((\Theta^{XX})^T\Theta^{ZX}\right)(j, j)/d\} \\
    & \hspace{2em} \times \prod_{k\in \Lambda}X_k^{\left((\Theta^{XX})^T\Theta^{ZX}\right)(j, k)}
\end{aligned}
\end{equation*} consists only of Pauli $X$ matrices and a phase. Such a QCA is always a circuit. This proves the lemma. 
\end{proof}
\begin{proof}[Proof of Theorem]
     Let $\alpha$ and $\beta$ be two QCAs that both bijectively map $\langle X_i\rangle_{i\in \Lambda}$ to the same  image. Their difference $\beta^{-1}\circ\alpha$ is a Clifford QCA. Since $\beta^{-1}\circ\alpha$ maps $\langle X_i\rangle_{i\in \Lambda}$ bijectively to itself, $\kappa(\beta^{-1}\circ\alpha)\in A:= \{\theta\in \mathrm{Sp}_\Lambda^{bd}(\ZZ_d): \Theta^{XZ}=0\}.$ 
     By the lemma above, $\kappa(\beta^{-1}\circ\alpha)$ is equivalent to some $\theta'=\begin{pmatrix}
    \Theta & 0\\
    0 & (\Theta^T)^{-1}
\end{pmatrix}$. Hence $\alpha$ and $\beta$ are equivalent if $\theta'$ is trivial. 

Conversely, a nontrivial $\theta'$ of this form has the same stabilizer group as the identity QCA. Therefore, it provides a counterexample. 
\end{proof}

Define $\mathrm{GL}_\Lambda^{bd}(\ZZ_d)$ to be the collection of invertible band-diagonal matrices indexed by $\Lambda$.  There is an injective homomorphism $$D: \Theta\mapsto\begin{pmatrix}
    \Theta & 0\\
    0 & (\Theta^T)^{-1}
\end{pmatrix}$$ embedding $\mathrm{GL}_\Lambda^{bd}(\ZZ_d)$ in $\mathrm{Sp}_\Lambda^{bd}(\ZZ_d)$. 
The image of $D$ is precisely the subgroup of separated elements $\Sigma=\{\theta\in \mathrm{Sp}_\Lambda^{bd}(\ZZ_d): \Theta^{12}=\Theta^{21}=0\}.$ Define $E_{\Lambda}^{bd}(\ZZ_d)\subset \mathrm{GL}_\Lambda^{bd}(\ZZ_d)$ to be the subgroup generated by block diagonal matrices and $P_{\Lambda}^{bd}(\ZZ_d)\subset \mathrm{GL}_\Lambda^{bd}(\ZZ_d)$ the permutation matrices. The classification group of separeted QCAs is isomorphic to $\mathrm{GL}_\Lambda^{bd}(\ZZ_d)/\langle E_{\Lambda}^{bd}(\ZZ_d), P_{\Lambda}^{bd}(\ZZ_d)\rangle$ up to stabilization.
\subsection{Separated QCAs and K-theory}

The following construction is taken from \cite{pedersen1989k, pedersen2006nonconnective}. It provides a natural categorical setting for Clifford QCA \cite{yang2025categorifying}.

\begin{defn}
    
Let $X$ be a metric space and $\mathcal{A}$ a filtered additive category. We then define the filtered category $\mathcal{C}_X(\mathcal{A})$ as follows:
\begin{enumerate}
    \item An object $A$ of $\mathcal{C}_X(\mathcal{A})$ is a collection of objects $A(x)$ of $\mathcal{A}$, one for each $x \in X$, satisfying the condition that for each ball $B \subset X$, $A(x) \ne 0$ for only finitely many $x \in B$.
    \item A morphism $\phi : A \rightarrow B$ is a collection of morphisms $\phi^x_y : A(x) \rightarrow B(y)$ in $\mathcal{A}$ such that there exists $r$ depending only on $\phi$ so that
    \begin{enumerate}
        \item $\phi^x_y = 0$ for $d(x, y) > r$
        \item all $\phi^x_y$ are in $F_r \mathrm{Hom}(A(x), B(y))$
    \end{enumerate}
    (We then say that $\phi$ has filtration degree $\le r$.)
\end{enumerate}

Composition of $\phi : A \rightarrow B$ with $\psi : B \rightarrow C$ is given by $(\psi \phi)^x_z = \sum_{y \in X} \psi^y_z \phi^x_y$. Notice that the sum makes sense because the category is additive and because the sum will always be finite. 
\end{defn}

The categories $\mathcal{C}_X(\mathcal{A})$ and $\mathcal{C}_X(\mathcal{A})$ are equivalent for coarsely equivalent metric spaces $X, Y$. See \cite{pedersen1989k} for a proof. 
\begin{defn}
Let $\mathcal{A}$ be an additive category. The \emph{algebraic \(K_1\) group} of $\mathcal{A}$, denoted \(K_1(\mathcal{A})\), is defined as follows:

\medskip

We first consider the category $\operatorname{Aut}(\mathcal{A})$, whose objects are pairs \((A, \phi)\) where \(A\) is an object of \(\mathcal{A}\) and \(\phi : A \to A\) is an automorphism in \(\mathcal{A}\). A morphism \(f : (A, \alpha) \to (B, \beta)\) in \(\operatorname{Aut}(\mathcal{A})\) is a morphism \(f : A \to B\) in \(\mathcal{A}\) such that
\[
f \circ \alpha = \beta \circ f.
\]

Then \(K_1(\mathcal{A})\) is defined as the abelian group obtained from the group completion of the monoid of isomorphism classes in \(\operatorname{Aut}(\mathcal{A})\), modulo the relations:
\begin{enumerate}
    \item $[A, \alpha]+ [A, \alpha']= [A, \alpha\circ \alpha']$
    \item $[A, \alpha]+[C, \gamma]= [B, \beta]$ whenever there is a diagram with exact rows
    \[
\begin{array}{ccccccccc}
0 & \longrightarrow & A & \longrightarrow & B & \longrightarrow & C & \longrightarrow & 0 \\
  &                 & ~~\downarrow{\scriptstyle \alpha} 
  &                 & ~~\downarrow{\scriptstyle \beta} 
  &                 & ~~\downarrow{\scriptstyle \gamma} 
  &                 &   \\
0 & \longrightarrow & A & \longrightarrow & B & \longrightarrow & C & \longrightarrow & 0
\end{array}
\]

\end{enumerate}

\end{defn}

In particular, $K_1(\mathcal{C}_X(\mathcal A))$ gives a description of the homology theory associated with any algebraic K-theory. The following theorem \cite{pedersen1984k_i} will be useful.  
\begin{theorem}
    Let $\mathcal{A}$ be the category of finitely generated free $R$-modules for a ring $R$. $$K_1(\mathcal{C}_{\ZZ^{D+1}}(\mathcal A))\cong K_{-D}(R)$$ the negative algebraic $K$-theory of $R$.
\end{theorem}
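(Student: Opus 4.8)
The plan is to recognize $\mathcal{C}_{\ZZ^{D+1}}(\mathcal A)$ as Pedersen's bounded category over Euclidean space and then run the standard induction ``adjoining an $\RR$-factor shifts $K$-theory down by one,'' which is precisely the mechanism by which these bounded categories manufacture the non-connective (Bass) negative $K$-theory of $R$. First I would invoke coarse invariance: the inclusion $\ZZ^{D+1}\hookrightarrow\RR^{D+1}$ is a coarse equivalence, so by the equivalence of $\mathcal{C}_X(\mathcal A)$ for coarsely equivalent metric spaces stated above (\cite{pedersen1989k}) we may replace $\ZZ^{D+1}$ by $\RR^{D+1}$, reducing the claim to $K_1(\mathcal{C}_{\RR^{D+1}}(\mathcal A))\cong K_{-D}(R)$.

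The core step is: for any metric space $Y$ there is a natural isomorphism $K_i(\mathcal{C}_{\RR\times Y}(\mathcal A))\cong K_{i-1}(\mathcal{C}_{Y}(\mathcal A))$ for all $i\in\ZZ$. This rests on two observations. \emph{Flasqueness}: the category $\mathcal{C}_{\RR_{\ge0}\times Y}(\mathcal A)$ carries the shift-and-sum endofunctor $T$ with $(TA)(t,y)=\bigoplus_{k\ge0}A(t-k,y)$; local finiteness is preserved since each ball meets only finitely many translates, and $\mathrm{id}\oplus T\cong T$, so the Eilenberg swindle gives $K(\mathcal{C}_{\RR_{\ge0}\times Y}(\mathcal A))\simeq\ast$, and likewise for $\RR_{\le0}\times Y$. \emph{Karoubi localization}: the full subcategory $\mathcal B\subset\mathcal{C}_{\RR_{\ge0}\times Y}(\mathcal A)$ of objects with support bounded in the ray direction is a Karoubi (filtered) subcategory, and $\mathcal B\simeq\mathcal{C}_Y(\mathcal A)$ because a bounded interval is coarsely a point; the quotient, the ``germ at infinity'' category $\mathcal{C}^{\infty}_{\RR_{\ge0}\times Y}(\mathcal A)$, is identified with $\mathcal{C}_{\RR\times Y}(\mathcal A)/\mathcal{C}_{\RR_{\le0}\times Y}(\mathcal A)$. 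Feeding the Karoubi sequence into the localization (fibration) theorem for non-connective $K$-theory of filtered additive categories yields a fiber sequence $K(\mathcal{C}_Y)\to K(\mathcal{C}_{\RR_{\ge0}\times Y})\to K(\mathcal{C}^{\infty}_{\RR_{\ge0}\times Y})$; the middle term is contractible by flasqueness, so $K(\mathcal{C}^{\infty}_{\RR_{\ge0}\times Y})\simeq\Sigma K(\mathcal{C}_Y)$, and since $\mathcal{C}_{\RR_{\le0}\times Y}$ is also flasque, $K(\mathcal{C}_{\RR\times Y})\simeq K(\mathcal{C}^{\infty}_{\RR_{\ge0}\times Y})\simeq\Sigma K(\mathcal{C}_Y)$. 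Passing to homotopy groups gives the claimed one-step shift.

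Iterating this $D+1$ times reduces to $Y=\mathrm{pt}$: $K_1(\mathcal{C}_{\RR^{D+1}}(\mathcal A))\cong K_{1-(D+1)}(\mathcal{C}_{\mathrm{pt}}(\mathcal A))=K_{-D}(\mathcal{C}_{\mathrm{pt}}(\mathcal A))$. It remains to identify the base case: $\mathcal{C}_{\mathrm{pt}}(\mathcal A)$ is, up to idempotent completion, the category of finitely generated free $R$-modules, so its connective $K$-theory is $K_*(R)$; since the bounded categories $\mathcal{C}_{\RR^n}$ already split idempotents, the negative groups occurring here are exactly Bass's $K_{-D}(R)$, which for $D\ge1$ are insensitive to the free-versus-projective distinction and for $D=0$ recover $K_0(R)$. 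Hence $K_{-D}(\mathcal{C}_{\mathrm{pt}}(\mathcal A))\cong K_{-D}(R)$, completing the proof.

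The main obstacle I anticipate is the localization (fibration) theorem itself: showing that a Karoubi sequence $\mathcal B\to\mathcal C\to\mathcal C/\mathcal B$ of filtered additive categories induces a homotopy fiber sequence of non-connective $K$-theory spectra, together with verifying the hypotheses — that $\mathcal B\hookrightarrow\mathcal{C}_{\RR_{\ge0}\times Y}(\mathcal A)$ is genuinely a filtered subcategory and that the germ-at-infinity quotient coincides with the half-space quotient of $\mathcal{C}_{\RR\times Y}(\mathcal A)$. This is the technical heart of \cite{pedersen1982k_i,pedersen1989k,pedersen2006nonconnective}; by contrast the coarse-invariance reduction, the Eilenberg swindle, and the base-case identification are comparatively routine once that machinery is available.
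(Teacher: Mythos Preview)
The paper does not prove this theorem at all; it simply quotes it as a result from \cite{pedersen1982k_i} and uses it as a black box. Your proposal correctly sketches the standard Pedersen--Weibel argument (coarse invariance, flasqueness of half-spaces via the Eilenberg swindle, the Karoubi localization fiber sequence, and induction on the number of $\RR$-factors) that underlies the cited references, so in that sense you have supplied what the paper omits.
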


Let $\mathcal{A}$ be the category of finitely generated free $\ZZ_d$-modules and $X=\Lambda$. The definition of $\mathrm{GL}_\Lambda^{bd}(\ZZ_d)$ coincides with \(\operatorname{Aut}(\mathcal{A})\). Moreover, $\mathrm{GL}_\Lambda^{bd}(\ZZ_d)/E_{\Lambda}^{bd}(\ZZ_d)$ with stabilization is isomorphic to $K_1(\mathcal{C}_\Lambda(\mathcal A))$. Therefore, the classification of separated QCA is a quotient of $K_1(\mathcal{C}_\Lambda(\mathcal A))$. For QCAs on Euclidean spaces, the theorem above implies 
$$K_1(\mathcal{C}_{\ZZ^{D}}(\mathcal A))\cong K_{-D+1}(\ZZ_d)=\begin{cases}
\ZZ^r, \text{ for } D=1\\
0, \text{ for } D>1
\end{cases}$$ where $r$ is the number of distinct prime factors of $d$. In dimension 1, nontrivial $K_1$-classes correspond to translations.
{\change
Since separated QCAs are classified by the corresponding negative $K$-groups, the above result immediately implies the following corollary.
\begin{corollary}
    On any lattice coarsely equivalent to a Euclidean space, all separated QCAs are trivial. Consequently, Clifford QCAs with the same stabilizer group are always equivalent on such lattices according to Theorem~\ref{thm:same-stabilizer}.
\label{corollary: separated QCAs trivial}
\end{corollary}
}

In the translation-invariant case, the same conclusion follows from Suslin’s stability theorem (see Lemma IV.10 in Ref.~\cite{haah_QCA_23}).
On these lattices, Clifford QCAs with the same stabilizer group are always equivalent. On the other hand, there exist metric spaces $X$ on which the $K_1$ group of $\mathcal C_X(\ZZ_d)$  is nontrivial. Separated QCAs on such $X$ could be nontrivial. Examples include open cones
$$X:=O(Y)=\{tx\in \R^{n+1}: t\geq 0 \text{ and } x\in Y\subset S^n\}$$
over certain subspaces $Y$ of the unit $n$-sphere $S^n\subset \R^{n+1}$ with nontrivial $K$-theory homology groups~\cite{pedersen1989k}. This should motivate further study of QCAs on lattices of the form 
$$\{nx\in \R^{n+1}: n\in \NN \text{ and } x\in \Gamma\subset S^n\},$$ where $\Gamma$ is some discrete lattice in the unit $n$-sphere.

\subsection{Local flippability}
Lastly, we record a useful result in the translation invariant setup. 
Consider a Pauli translation-invariant stabilizer code over $\ZZ_{p^r}$ defined by a module homomorphism (see \cite{haah_module_13, ruba2024homological}) $R^t\xrightarrow[]{\sigma} P=R^{2q}$, where $L:=$ image of $\sigma$ satisfies the topological order (aka Lagrangian) condition: $$ R^t\xrightarrow[]{\sigma} P\xrightarrow[]{\eps} R^t$$ is exact. For the purpose of Clifford QCA construction, we impose the additional conditions that $t=q$ and $\sigma$ injective. In particular, $L$ is free of rank $q$. 
\begin{theorem}
    The following are equivalent:
    \begin{enumerate}
        \item Separators defined by $\sigma$ are locally flippable.
        \item The short exact sequence $0\rightarrow L \rightarrow P \rightarrow P/L\rightarrow 0$ splits.
        \item $P/L$ is projective
        \item $P/L$ is free
        \item The module of topological point excitation\footnote{This algebraic invariant was first defined this way in~\cite{ruba2024homological} but appeared already in Ref.~\cite{haah_module_13}.} $\Ext^1(P/L, R)$ vanishes.
    \end{enumerate}
\end{theorem}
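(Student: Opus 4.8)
The plan is to transport the statement into the Laurent--polynomial formalism of Section~\ref{sec: Algebraic formalism for Clifford QCA}, where it becomes a structural question about the free submodule $L=\operatorname{im}\sigma\subseteq P=R^{2q}$, and to reduce everything to a short chain of implications that is almost entirely homological algebra, with the symplectic structure intervening at a single point. First I would record two reformulations. Since $\sigma$ is injective, $L\cong R^{q}$ is free with basis $s_i:=\sigma(e_i)$, and the topological-order exactness $\operatorname{im}\sigma=\ker\epsilon$ (with $\epsilon=\sigma^{\dagger}\lambda$ and $\lambda$ the standard symplectic matrix) is equivalent to $L$ being Lagrangian, i.e.\ $\sigma^{\dagger}\lambda\sigma=0$ and $L=L^{\perp}$. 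And ``the separators $\{s_i\}$ are locally flippable'' unwinds to the existence of an $R$-matrix $\rho\colon R^{q}\to P$ --- equivalently, genuine finite-support Pauli flippers $r_j=\rho(e_j)$, with \emph{no} constraint on their mutual commutation --- such that $\sigma^{\dagger}\lambda\rho=I_q$, i.e.\ $\omega(s_i,r_j)=\delta_{ij}$. I would then prove $(1)\Leftrightarrow(2)$, $(2)\Leftrightarrow(3)$, $(3)\Leftrightarrow(5)$, and $(3)\Leftrightarrow(4)$.

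The biconditional $(1)\Leftrightarrow(2)$ is a determinant computation. For $(1)\Rightarrow(2)$, form $\Psi=[\,\sigma\mid\rho\,]\in\operatorname{Mat}_{2q}(R)$ and compute its symplectic Gram matrix $\Psi^{\dagger}\lambda\Psi=\bigl(\begin{smallmatrix}0 & I_q\\ -I_q & B\end{smallmatrix}\bigr)$ with $B=\rho^{\dagger}\lambda\rho$, using $\sigma^{\dagger}\lambda\sigma=0$ and $\sigma^{\dagger}\lambda\rho=I_q$; this matrix is invertible (it has an obvious inverse), so from $\det\Psi^{\dagger}\cdot\det\lambda\cdot\det\Psi\in R^{\times}$ together with $\det\Psi^{\dagger}=\overline{\det\Psi}$ one gets $\det\Psi\in R^{\times}$, hence $\Psi$ is an isomorphism and $P=L\oplus\operatorname{im}\rho$, which splits $0\to L\to P\to P/L\to 0$. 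For $(2)\Rightarrow(1)$ one runs this backwards: a splitting $P=L\oplus M$ with $M$ free of rank $q$ (freeness supplied by $(2)\Rightarrow(4)$ below) gives an invertible change-of-basis matrix $[\,\sigma\mid\mu\,]$, the same computation forces $G:=\sigma^{\dagger}\lambda\mu=[\omega(s_i,m_j)]\in\mathrm{GL}_q(R)$, and $\rho:=\mu G^{-1}$ is then a valid finite-support flipper matrix with $\sigma^{\dagger}\lambda\rho=I_q$.

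The remaining equivalences rest on the fact that $0\to L\to P\to P/L\to 0$ is a length-one free resolution of $P/L$. The implications $(2)\Rightarrow(3)$ (summand of a free module), $(3)\Rightarrow(2)$ (lift $\operatorname{id}_{P/L}$ along $P\twoheadrightarrow P/L$), $(3)\Rightarrow(5)$ ($\Ext^1$ of a projective vanishes), $(5)\Rightarrow(3)$ (from $\Ext^1(P/L,L)\cong\Ext^1(P/L,R)^{q}=0$ the sequence splits), and $(4)\Rightarrow(3)$ are all soft. The one genuinely geometric step --- and the crux of the theorem --- is $(3)\Rightarrow(4)$: projectivity of $P/L$ \emph{for a Lagrangian $L$} forces $P/L$ to be free. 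I would dualize $0\to L\to P\to P/L\to 0$; since $\Ext^1(P/L,R)=0$ this yields a \emph{short} exact sequence $0\to(P/L)^{*}\to P^{*}\to L^{*}\to 0$, which splits because $L^{*}$ is free; meanwhile the symplectic form identifies $P$ with $P^{*}$ and carries $L$ exactly onto the annihilator $L^{\circ}=\ker(P^{*}\to L^{*})$ --- this is precisely where $L=L^{\perp}$ enters --- so the induced isomorphism $P/L\xrightarrow{\ \sim\ }P^{*}/L^{\circ}\cong L^{*}\cong R^{q}$ exhibits $P/L$ as free.

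I expect the main obstacle to be conceptual rather than computational: recognizing that the symplectic self-duality is exactly what promotes ``$P/L$ projective'' to ``$P/L$ free'' --- without it one would be stuck with a stably-free module and a potential $K_0$-type obstruction, and the equivalence would fail. The one delicate bookkeeping point is the antipode: $P\cong P^{*}$ holds only up to the ring automorphism $x_i\mapsto x_i^{-1}$, so the duality step must be phrased so that this twist is harmless (it is, since the antipode preserves freeness, ranks, and units). I would also make explicit at the outset which notion of local flippability is in force --- the one not demanding that the flippers commute with one another --- since under the stronger notion the equivalence with $(2)$--$(5)$ can already fail over $\ZZ_{2^{r}}$ through an Arf-type invariant.
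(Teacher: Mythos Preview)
Your proof is correct, and the soft implications $(2)\Leftrightarrow(3)\Leftrightarrow(5)$ and $(4)\Rightarrow(3)$ match the paper's. The substantive divergence is at $(3)\Rightarrow(4)$. The paper invokes the Quillen--Suslin theorem over the field-coefficient ring $\ZZ_p[x_1^{\pm},\dots,x_D^{\pm}]$ and then lifts freeness from $R/pR$ to $R=\ZZ_{p^r}[x_1^{\pm},\dots,x_D^{\pm}]$ by two applications of Nakayama's lemma, never touching the Lagrangian hypothesis at this step. Your route --- dualize the resolution and use the (antipode-twisted) symplectic self-duality $\bar P\cong P^{*}$, which carries $\bar L$ onto the annihilator $L^{\circ}$ precisely because $L=L^{\perp}$, to identify $P/L\cong L^{*}\cong R^{q}$ --- is more elementary and works over any commutative base ring, but is specific to quotients by a Lagrangian submodule. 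The paper's argument buys the stronger ring-theoretic conclusion that \emph{every} finitely generated projective $R$-module is free, at the cost of importing a deep theorem; your argument buys portability and transparency. Your determinant computation for $(1)\Leftrightarrow(2)$ is also more explicit than the paper's one-line $1\Rightarrow 2$; the paper instead closes its cycle at $5\Rightarrow 1$ by first splitting $P\cong L\oplus P/L$ from $\Ext^1(P/L,L)=0$ and then invoking the Lagrangian condition to extract a standard symplectic basis, which is essentially your $(2)\Rightarrow(1)$ step packaged differently.
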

    \begin{proof}
        $1 \implies 2:$ the local separators provides a basis for $P/L$. \\
        $2 \implies 3:$ as the short exact sequence splits, $P/L$ is a direct summand of free module $P$. \\
        $3 \implies 4:$ by Quillen-Suslin theorem, every projective module over $R/pR=\ZZ_{p}[x_1^{\pm}, \dots, x_D^{\pm}]$ is free. Assume $M$ is a projective module over $R=\ZZ_{p^r}[x_1^{\pm}, \dots, x_D^{\pm}]$, then $M/pM$ is a free module over $R/pR$. Lift a basis of $M/pM$ to $\{m_1, \dots, m_k\}\subset M$ and denote the submodule it generates by $M'\subset M$. Notice $M/M'\subset p(M/M')$, by Nakayama's lemma $M/ M'=0$. In other words, we have a short exact sequence $0\rightarrow K\rightarrow R^k\xrightarrow{\sigma} M\rightarrow 0$, where $\sigma$ is generated by $\{m_1, \dots, m_k\}$ and $K=\ker \sigma$. By the projectivity of $M$, $R^k\cong K \oplus M$. Moreover, $K\cong R^k/M\subset pK$. Therefore, by Nakayama's lemma again, $K=0$.\\
        $4 \implies 5:$ follows from the definition of $\Ext$. \\
        $5 \implies 1:$ As $L$ is free, $\Ext^1(P/L, R)=0$ implies $\Ext^1(P/L, L)=0$ as well. In other words, all extensions of $P/L$ by $L$ is trivial and $P\cong L\oplus P/L$. Since $L$ is Lagrangian, there exists a standard symplectic basis for this direct sum decomposition of $P$.
    \end{proof}

\section{Discussions}

Our work establishes a unified framework for constructing $\mathbb{Z}_2$ and $\mathbb{Z}_p$ Clifford QCAs from both TQFTs and ISAs, clarifying their orders, periodicities in dimensions, and realizations on arbitrary cellulations (for $\mathbb{Z}_2$ QCAs).
{\change Several natural directions for future research emerge from this framework.

A particularly important direction is to move beyond the Clifford regime. Notable examples, such as the chiral semion Walker--Wang model, already indicate the existence of intrinsically non-Clifford QCAs for qubits~\cite{Shirley2022QCA}. Developing systematic methods to construct and classify such models would extend our framework to a much broader class of topological phases.

As a concrete example, the semion QCA is associated with the topological action
\begin{equation}
    \frac{1}{4}\,\bigl(B_2 \cup B_2 + B_2 \cup_1 \delta B_2\bigr)
    \in H^4\bigl(K(\mathbb{Z}_2,2), \mathbb{R}/\mathbb{Z}\bigr)~,
\end{equation}
whose surface anyon theory is $\{1,s\}$, where $s$ is a semion obeying $\mathbb{Z}_2$ fusion rules and carrying topological spin $\theta(s)=i$. This theory has chiral central charge $c_- = 1$. The fourth power of the semion QCA is equivalent to the $3$-fermion QCA (with $c_- = 4$), since their surface anyon theories lie in the same Witt class. Explicitly, starting from
\begin{equation}
    \mathcal{A}
    = \{1,s_1\} \times \{1,s_2\} \times \{1,s_3\} \times \{1,s_4\},
\end{equation}
condensing the boson $b = s_1 s_2 s_3 s_4$ yields
\begin{equation}
    \mathcal{A}' = \{1,\, s_1 s_2,\, s_1 s_3,\, s_1 s_4\},
\end{equation}
which is precisely the $3$-fermion anyon theory.

Generalizing this construction to higher dimensions may lead to new families of non-Clifford QCAs. For instance, we expect that the topological action
\begin{equation}
    \frac{1}{4}\,\bigl(B_{2l} \cup B_{2l} + B_{2l} \cup_1 \delta B_{2l}\bigr)
    \in H^{4l}\bigl(K(\mathbb{Z}_2,2l), \mathbb{R}/\mathbb{Z}\bigr)~,
\end{equation}
gives rise to semion-type QCAs in $(4l{-}1)$ spatial dimensions. It would also be interesting to construct a non-Clifford ISA corresponding to the chiral semion theory and to generalize such constructions to higher dimensions.

Another promising perspective is to derive QCAs from cobordism classifications. The $3$-fermion Walker--Wang model corresponds to the $3{+}1$D $w_2^2$ TQFT, and its $4{+}1$D generalization, associated with the $w_2 w_3$ TQFT, was constructed in Ref.~\cite{chen2023exactly} via the topological action
\begin{equation}
    \frac{1}{2} \int A_3 \cup_1 A_3
    + A_3 \cup B_2
    + B_2 \cup (B_2 \cup_1 B_2)~,
\end{equation}
which closely parallels the $3$-fermion Walker--Wang action in Eq.~\eqref{eq: 3f WW S}. It is natural to conjecture that all nontrivial cobordism classes give rise to nontrivial QCAs~\cite{chen2023exactly, fidkowski2024qca}.

Finally, we note that whenever a topological action involves coefficients beyond $1/2$ or contains terms higher than quadratic, the resulting QCA is necessarily non-Clifford in prime-dimensional qudits, since the corresponding Hamiltonian can no longer be written as a Pauli stabilizer and must involve at least $S$ or $CZ$ gates. Therefore, exploring cocycles for higher-form symmetries provides a systematic route to constructing new non-Clifford QCAs. However, extracting explicit separators and flippers from such Hamiltonians is highly nontrivial and remains an important open problem.
}

Next, a more explicit connection between the TQFT and ISA constructions remains to be established. Although we have argued that both descriptions capture the same phases, a direct finite-depth circuit relating them has not yet been constructed in general. Constructing such a circuit would provide an operator-algebraic proof of their equivalence and elucidate the role of locality-preserving unitaries in pumping chirality~\cite{Fidkowski2024Pumping}.

Another natural direction is to explore QCAs that act on Hilbert spaces beyond simple tensor products of local qudits. Examples include the Kramers–Wannier duality, symmetry-protected QCAs, and more general categorical dualities in spin systems~\cite{ma2024QCA,jones2024QCA}. Extending our framework to these cases would connect QCA classification to recent work on lattice anomalies and higher-form symmetries, thereby enriching the interplay between algebraic topology, category theory, and condensed matter physics.  

Finally, several structural generalizations remain open. While we demonstrated that $\mathbb{Z}_2$ QCAs can be realized on arbitrary cellulations, extending $\mathbb{Z}_p$ QCAs systematically to triangulations and general cell complexes would further clarify their robustness and their relation to generalized cohomology.
One key property of hypercubic lattices is that {\change $\bc_{i} \cup \bc_{d-i}$} induces a one-to-one correspondence between cells. It would be interesting to identify other lattices with similar properties, or to exploit Poincaré duality, as in our $\mathbb{Z}_2$ construction. For example, vertex-face pairing seems to be important in the construction of Chern-Simons theory on the lattice~\cite{ELIEZER92anyon,ELIEZER92,Sun15,dlF24}.

More broadly, our results suggest a deep correspondence between TQFT classifications and QCA classifications. Making this link precise, both in the Clifford and non-Clifford regimes, would provide a unified understanding of the algebraic invariants underlying QCAs and their role in higher-dimensional phases of matter. A systematic study along these lines would not only extend the mathematical classification of QCAs in higher dimensions, but also clarify their relevance to fault-tolerant quantum computation, Floquet dynamics, generalized symmetries, and lattice anomalies.

\section*{Acknowledgment}

We thank Lukasz Fidkowski, Jeongwan Haah, Po-Shen Hsin, Wenjie Ji, Anton Kapustin, Ryohei Kobayashi, Ruochen Ma, Shmuel Weinberger, and Beni Yoshida for valuable and insightful discussions.

Y.-A.C. is supported by the National Natural Science Foundation of China (Grant No.~12474491), and the Fundamental Research Funds for the Central Universities, Peking University. N.T. was supported by the Walter Burke Institute for Theoretical Physics at Caltech. Z.W. is supported by the Simons Foundation, award number 651438. B.Y. gratefully acknowledges support from Harvard CMSA and the Simons Foundation through Simons
Collaboration on Global Categorical Symmetries.

% \newpage
\onecolumngrid
\appendix

\section{Review of the Laurent polynomial formalism}\label{app: Review of the Laurent polynomial formalism}

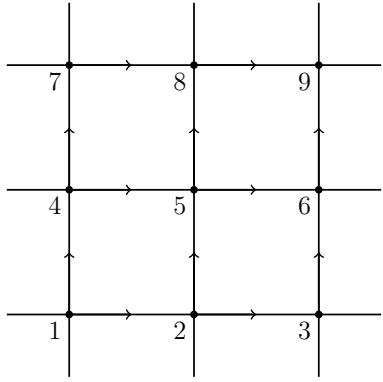
\begin{figure}[htb]
\centering
\resizebox{5cm}{!}{%
\begin{tikzpicture}
\draw[thick] (-3,0) -- (3,0);\draw[thick] (-3,-2) -- (3,-2);\draw[thick] (-3,2) -- (3,2);
\draw[thick] (0,-3) -- (0,3);\draw[thick] (-2,-3) -- (-2,3);\draw[thick] (2,-3) -- (2,3);
\draw[->] [thick](0,0) -- (1,0);\draw[->][thick] (0,2) -- (1,2);\draw[->][thick] (0,-2) -- (1,-2);
\draw[->][thick] (0,0) -- (0,1);\draw[->][thick] (2,0) -- (2,1);\draw[->][thick](-2,0) -- (-2,1);
\draw[->][thick] (-2,0) -- (-1,0);\draw[->][thick] (-2,2) -- (-1,2);\draw[->][thick](-2,-2) -- (-1,-2);
\draw[->][thick] (-2,-2) -- (-2,-1);\draw[->][thick] (0,-2) -- (0,-1);\draw[->] [thick](2,-2) -- (2,-1);
\filldraw [black] (-2,-2) circle (1.5pt) node[anchor=north east] {\large 1};
\filldraw [black] (0,-2) circle (1.5pt) node[anchor=north east] {\large 2};
\filldraw [black] (2,-2) circle (1.5pt) node[anchor=north east] {\large 3};
\filldraw [black] (-2,-0) circle (1.5pt) node[anchor=north east] {\large 4};
\filldraw [black] (0,0) circle (1.5pt) node[anchor=north east] {\large 5};
\filldraw [black] (2,0) circle (1.5pt) node[anchor=north east] {\large 6};
\filldraw [black] (-2,2) circle (1.5pt) node[anchor=north east] {\large 7};
\filldraw [black] (0,2) circle (1.5pt) node[anchor=north east] {\large 8};
\filldraw [black] (2,2) circle (1.5pt) node[anchor=north east] {\large 9};
\end{tikzpicture}}
\caption{A qudit is placed on each edge, with generalized Pauli operators $X_e$ and $Z_e$ acting on it.}
\label{fig:square}
\end{figure}

This appendix reviews the Laurent polynomial representation and its application to translation-invariant stabilizer codes. The formalism was first introduced in Refs.~\cite{Schlingemann2008structure, Gutschow2010Clifford, haah_module_13, haah2016algebraic}. In this work, we adopt the conventions and notations of Refs.~\cite{liang2023extracting, liang2024operator, liang2025generalized, liang2025planar}.  
We begin with a general $\ZZ_d$ qudit system, where the $d \times d$ generalized Pauli matrices are defined as
\begin{eqs}
    X = 
    \begin{bmatrix}
    0 & 0 & \cdots & 0 & 1 \\
    1 & 0 & \cdots & 0 & 0 \\
    0 & 1 & \cdots & 0 & 0 \\
    \vdots & \vdots & \ddots & \vdots & \vdots \\
    0 & 0 & \cdots & 1 & 0
    \end{bmatrix}, \quad
    Z = 
    \begin{bmatrix}
    1 & 0 & 0 & \cdots & 0 \\
    0 & \omega & 0 & \cdots & 0 \\
    0 & 0 & \omega^2 & \cdots & 0 \\
    \vdots & \vdots & \vdots & \ddots & \vdots \\
    0 & 0 & 0 & \cdots & \omega^{d-1}
    \end{bmatrix},
\end{eqs}
with $\omega := \exp(2 \pi i / d)$. These matrices satisfy the commutation relation
\begin{eqs}
    Z X = \omega X Z.
\end{eqs}
We develop the formalism in the setting of a two-dimensional square lattice, from which the extension to the three-dimensional cubic lattice and higher dimensions follows naturally.

We consider the case of two $\mathbb{Z}_d$ qudits per unit cell (e.g., one qudit on each edge of a square lattice), generalizable to $k$ qudits per cell. Any Pauli operator---a finite tensor product of Pauli matrices---can be represented, up to an overall phase, as a column vector over the polynomial ring
\begin{equation}
    R = \ZZ_d [x, y, x^{-1}, y^{-1}],
\end{equation}
which includes all polynomials in $x^{\pm1}$, $y^{\pm1}$ with coefficients in $\mathbb{Z}_d$.
We assign column vectors over $\ZZ_d$ to the (generalized) Pauli matrices $X_{12}$, $Z_{12}$, $X_{14}$, and $Z_{14}$, depicted in Fig.~\ref{fig:square}:
\begin{eqs}
    \mX_{12} =
    \left[\begin{array}{c}
        1 \\
        0 \\
        \hline
        0 \\
        0
    \end{array}\right],~
    \mZ_{12} =
    \left[\begin{array}{c}
        0 \\
        0 \\
        \hline
        1 \\
        0
    \end{array}\right], 
    \mX_{14} =
    \left[\begin{array}{c}
        0 \\
        1 \\
        \hline
        0 \\
        0
    \end{array}\right],~
    \mZ_{14} =
    \left[\begin{array}{c}
        0 \\
        0 \\
        \hline
        0 \\
        1
    \end{array}\right],
\end{eqs}
where column vector representations of operators are indicated using curly letters.
The coefficients in these vectors correspond to their powers:
\begin{eqs}
    \mathcal{P} =
    \left[\begin{array}{c}
        i \\
        j \\
        \hline
        k \\
        l
    \end{array}\right]
    ~\Rightarrow~
    \mathcal{P}^m =
    \left[\begin{array}{c}
        m i \\
        m j \\
        \hline
        m k \\
        m l
    \end{array}\right]
    \forall~m \in \ZZ_d.
\end{eqs}
The translation of operators is achieved using polynomials of $x$ and $y$ to denote translations in the $x$ and $y$ directions, respectively. To illustrate, translating the operator on edge $e_{12}$ to edge $e_{78}$ or to edge $e_{58}$ involves multiplying the column vector of the operator by $y^2$ or $xy$, respectively:
\begin{eqs}
    \mZ_{78}= y^2
    \mZ_{12}
    =
    \left[\begin{array}{c}
        0 \\
        0 \\
        \hline
        y^2 \\
        0
    \end{array}\right],~
    \mX_{58}= x y
    \mX_{14}
    =
    \left[\begin{array}{c}
        0 \\
        xy \\
        \hline
        0 \\
        0
    \end{array}\right].
\end{eqs}
A general Pauli operator can be expressed as
\begin{equation}
    P = \eta X^{a_1}_{e_1} X^{a_2}_{e_2} \cdots X^{a_n}_{e_n} Z^{b_1}_{e'_1} Z^{b_2}_{e'_2} \cdots Z^{b_m}_{e'_m},
\end{equation}
where $\eta$ represents a root of unity of order $2d$. After dropping the overall phase $\eta$, the corresponding column vector for this operator is a linear combination of individual Pauli matrices, expressed as
\begin{equation}
\begin{aligned}
    \mathcal{P} =&  a_1 \mX_{e_1} + a_2 \mX_{e_2} + \cdots + a_n \mX_{e_n}  + b_1 \mZ_{e'_1} + b_2 \mZ_{e'_2} + \cdots + b_m \mZ_{e'_m}~.
\end{aligned}
\end{equation}
More examples are included in Fig.~\ref{fig:example_poly}.

Next, we introduce the {\bf antipode map} that is a $\ZZ_d$-linear map from $R$ to $R$ defined by
\begin{eqs}
    x^a y^b \rightarrow \overline{x^a y^b}:=x^{-a} y^{-b}.
\end{eqs}
To determine whether two Pauli operators represented by vectors $v_1$ and $v_2$ commute or anti-commute, we define the dot product as
\begin{eqs}
    v_1 \cdot v_2 = \overline{v}_1^{T} \Lambda v_2,
\end{eqs}
where $T$ is the transpose operation on a matrix and
\begin{eqs}
    \Lambda=
    \left[\begin{array}{cc | cc}
        0 & 0 & 1 & 0 \\
        0 & 0 & 0 & 1 \\
        \hline
        -1 & 0 & 0 & 0 \\
        0 & -1 & 0 & 0 \\
    \end{array}\right]
\end{eqs}
is the matrix representation of the standard {\bf symplectic bilinear form}. For simplicity, we denote $\overline{(\cdots)}^T$ as $(\cdots)^\dagger$.

A translation-invariant stabilizer code corresponds to an $R$-submodule $\sigma$ such that
\begin{eqs}
    v_1 \cdot v_2 = v_1^\dagger \Lambda v_2 = 0, \quad \forall\, v_1, v_2 \in \sigma,
\end{eqs}
called the \textbf{stabilizer module}. 
The Hamiltonian could have two (or more) terms per square to have a unique ground state on a simply-connected manifold, denoted as $H = -\sum_{\text{cells}} (S_1 + S_2)$.
For example, the trivial phase $H_0 = - \sum_e X_e$ is
\begin{eqs}
    \mS_1 = \left[\begin{array}{c}
        1 \\
        0 \\
        \hline
        0 \\
        0
    \end{array}\right], \quad
    \mS_2 = \left[\begin{array}{c}
        0 \\
        1 \\
        \hline
        0 \\
        0
    \end{array}\right],
\label{eq: trivial H0 SA SB}
\end{eqs}
and the standard $\ZZ_d$ toric code Hamiltonian 
\begin{eqs}
    H_{\text{TC}} = - \sum_v \vcenter{\hbox{\includegraphics[scale=.22]{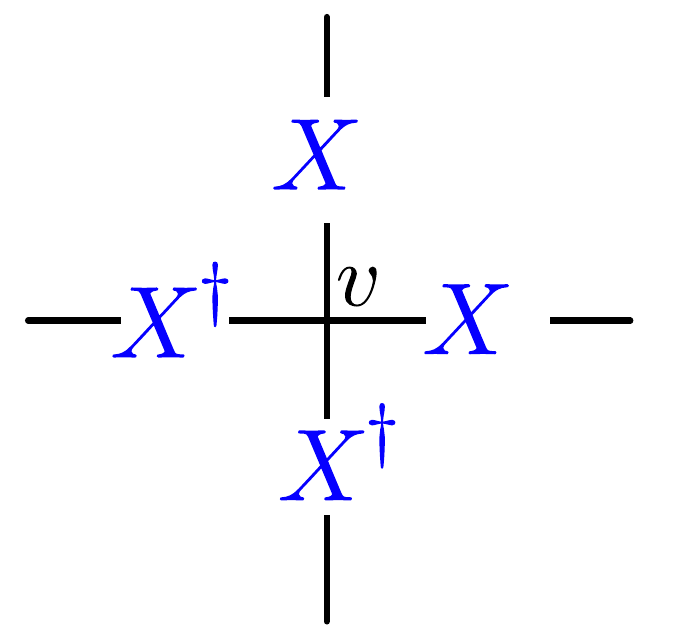}}} \, - \sum_p \vcenter{\hbox{\includegraphics[scale=.23]{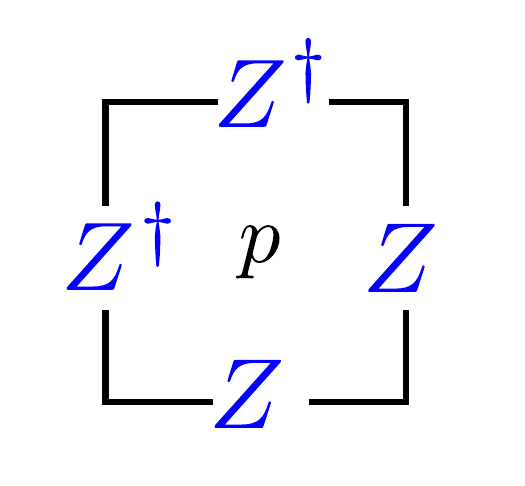}}},
\label{eq: toric code Hamiltonian}
\end{eqs}
corresponds to
\begin{eqs}
    \mS_1 = \left[\begin{array}{c}
        1-\bx \\
        1- \by \\
        \hline
        0 \\
        0
    \end{array}\right], \quad
    \mS_2 = \left[\begin{array}{c}
        0 \\
        0 \\
        \hline
        1-y \\
        -1+x
    \end{array}\right].
\label{eq: standard toric code SA SB}
\end{eqs}

\section{Review of higher cup products on hypercubes}\label{app:highercuphypercube}

\begin{figure}[b]
    \centering
    \subfigure[2d square]{\includegraphics[scale=0.3]{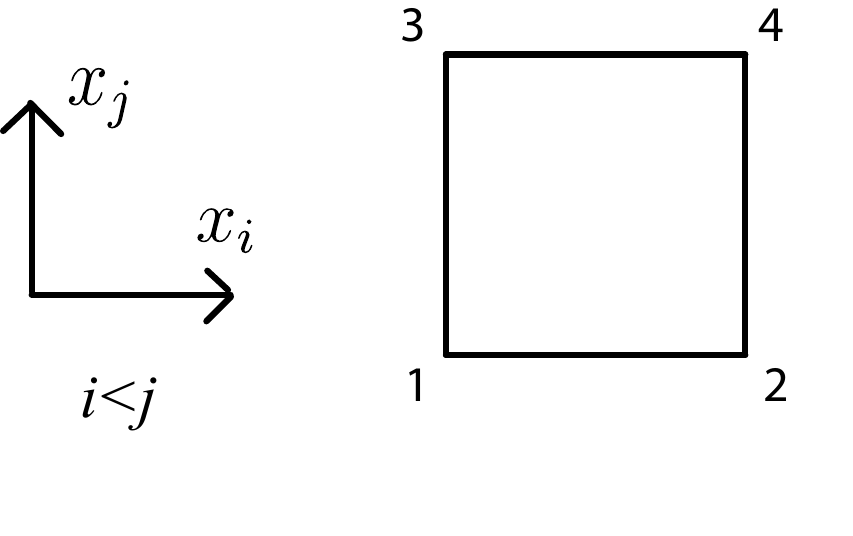}\label{fig: 2d square 1234}}
    \hspace{10ex}
    \subfigure[3d cube]{\raisebox{0ex}{\includegraphics[scale=0.30]{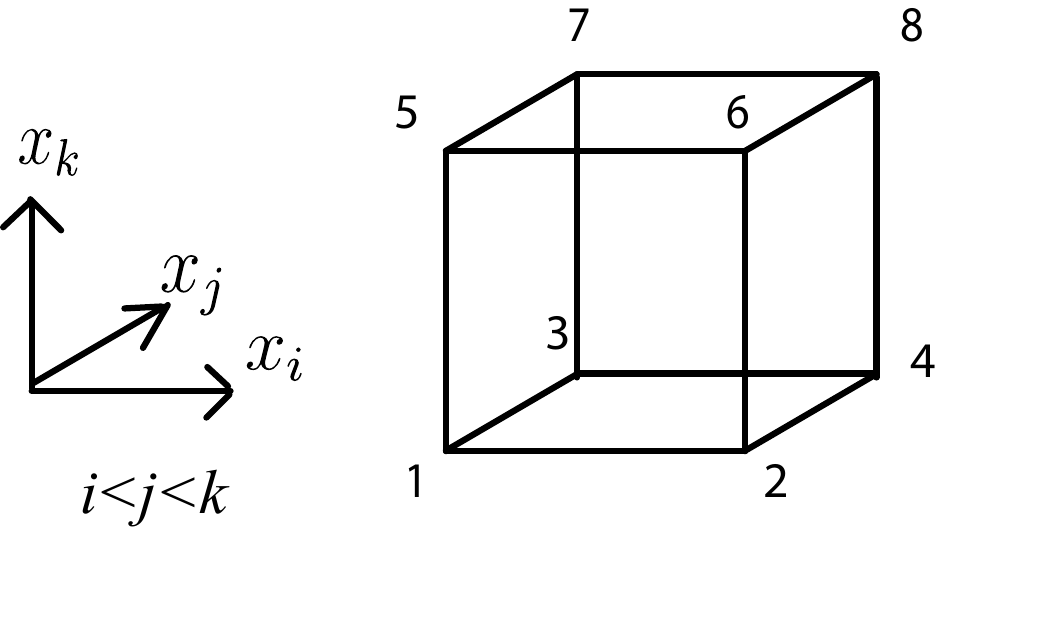}\label{fig: 3d cube 12345678}}}
    \caption{(a) the conventional orientation of a 2d square
    (b) the conventional orientation of a 3d cube
    }
    \label{fig: 2d square and 3d cube}
\end{figure}

The notations and definitions in this section are mostly adapted from Ref.~\cite{Chen2023HigherCup}, but note that the signs of boundary and coboundary operators with integer coefficients presented previously have typos. Here, we provide a corrected sign convention and explicitly list all definitions of the higher cup products used for hypercubes in this paper. 

First, let us establish the notation for cochains on a hypercube. We denote the $d$-dimensional hypercube by $\msquare_d$. As a subset of $\R^d$, we assign coordinates to $\msquare_d$ via
\begin{equation}
\msquare_d = \{(x_1,\cdots,x_d) \in \R^d \mid x_i \in [0,1]\},
\end{equation}
which will be used throughout our discussion of cochain operations. The ordering of the coordinates is shown in Fig.~\ref{fig: 2d square and 3d cube}.

\subsection{The Cells}

We denote the set of $p$-cells of $\msquare_d$ by $F_p(\msquare_d)$. For instance, the vertices of the hypercube are the $0$-cells:
\begin{equation}
F_0(\msquare_d) = \{(\sigma_1, \cdots, \sigma_d) \mid \sigma_i = 0, 1\}.
\end{equation}
Equivalently, each label \((\sigma_1 \cdots \sigma_d)\) can be interpreted as the vertex’s coordinate in $\R^d$.

Next, consider the $p$-cells in $F_p(\msquare_d)$. A $p$-cell is spanned by $p$ directions \(\{i_1 \cdots i_p\}\). However, identifying just the directions is not sufficient to specify a particular cell because there are $2^{d-p}$ possible choices for the remaining coordinates, each of which can be \(0\) or \(1\). Concretely, if we let \(\{\ihat_1 \cdots \ihat_{d-p}\}\) denote the complementary set of directions \(\{1,2,\dots,d\} \backslash \{i_1,i_2,\dots,i_p\}\), then specifying a $p$-cell also requires assigning a value of \(0\) or \(1\) to each of those \(\ihat\)-coordinates.

To organize these labels, we represent each cell in terms of a $d$-tuple \((z_1 \cdots z_d)\). We assign \(z_j=\bullet\) if \(j \in \{i_1 \cdots i_p\}\), indicating that the coordinate in direction \(j\) can vary in \([0,1]\). Otherwise, \(z_j\) is fixed to be either 0 or 1, indicating that the coordinate in that direction does not vary. A cell described by \((z_1 \cdots z_d)\) is a $p$-cell precisely when exactly $p$ of the entries are \(\bullet\). We denote such a cell by \(P_{(z_1 \cdots z_d)}\). As a subset of \(\R^d\), we may write
\begin{equation}
P_{(z_1 \cdots z_d)} := \{(x_1,\cdots,x_d) \in \R^d \mid 
x_i \in [0,1] \text{ if } z_i = \bullet, \text{ or }  x_{\ihat} = z_{\ihat} \text{ if } z_{\ihat} \in \{0,1\}\}.
\end{equation}
For example, the cells of \(\msquare_2\) can be listed as
\begin{equation}
    \begin{split}
        F_0(\msquare_2) &= \{(0,0),(0,1),(1,0),(1,1)\}, \\
        F_1(\msquare_2) &= \{(\bullet,0),(\bullet,1),(0,\bullet),(1,\bullet)\}, \\
        F_2(\msquare_2) &= \{(\bullet,\bullet)\}.
    \end{split}
\end{equation}
Similarly, for a 3-dimensional cube \(\msquare_3\), the cells are:
\begin{equation}
\begin{split}
    F_0(\msquare_3) =&
    \{(0,0,0),\, (0,0,1),\, (0,1,0),\, (0,1,1),\, (1,0,0),\, (1,0,1),\, (1,1,0),\, (1,1,1)\},\\
    F_1(\msquare_3) =&
    \{(\bullet,0,0),\, (\bullet,0,1),\, (\bullet,1,0),\, (\bullet,1,1),\,
      (0,\bullet,0),\, (0,\bullet,1),\, (1,\bullet,0),\, (1,\bullet,1),\\
    &~~
      (0,0,\bullet),\, (0,1,\bullet),\, (1,0,\bullet),\, (1,1,\bullet)\},\\
    F_2(\msquare_3) =&
    \{(\bullet,\bullet,0),\, (\bullet,\bullet,1),\, (\bullet,0,\bullet),\, (\bullet,1,\bullet),\, (0,\bullet,\bullet),\, (1,\bullet,\bullet)\},\\
    F_3(\msquare_3) =& 
    \{(\bullet,\bullet,\bullet)\},
\end{split}
\end{equation}
which are shown in Fig.~\ref{fig: cells of a cube}.

\begin{figure}[htbp]
    \centering
    \raisebox{0.1em}{\subfigure[0d cell(vertices)]{\includegraphics[scale=0.4]{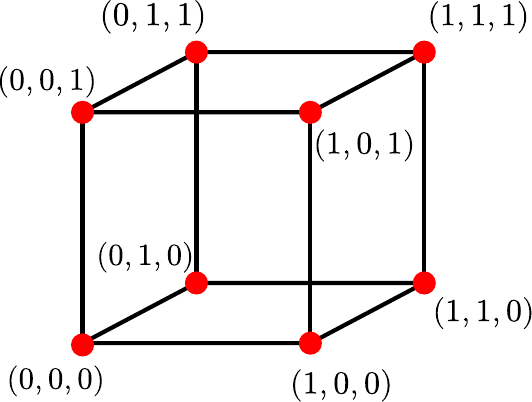}\label{fig: 0d cell(vertices)}}}
    \quad
    \subfigure[1d cell(edges)]{\includegraphics[scale=0.43]{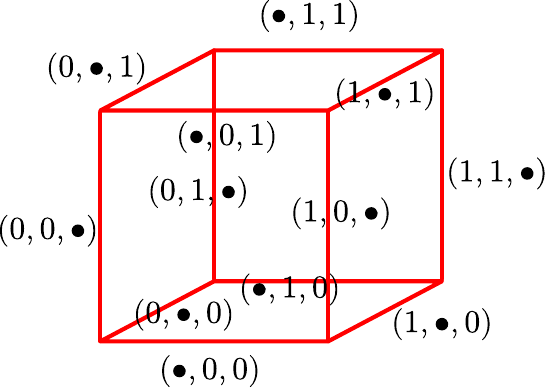}\label{fig: 1d cell(edges)}}
    \quad
\subfigure[2d cell(faces)]{\includegraphics[scale=0.43]{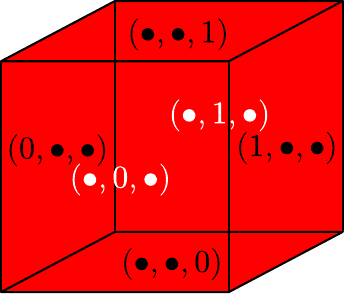}\label{fig: 2d cell(faces)}}
    \caption{(a), (b), and (c) corresponds to diffenrent dimensional cells $F_0$, $F_1$, and $F_2$}
    \label{fig: cells of a cube}
\end{figure}

\subsection{Boundary of a Cell}

We now describe the boundary of a $p$-cell \(P_{(z_1 \cdots z_d)}\). Suppose \(z_j = \bullet\) exactly for \(j \in \{i_1 \cdots i_p\}\). The boundary \(\partial P_{(z_1 \cdots z_d)}\) is formed by replacing precisely one of the \(\bullet\)’s with either 0 or 1. Concretely,
\begin{equation}
    \partial P_{(z_1 \cdots z_d)} 
    = \Bigl\{ P_{(z_1 \cdots z_{j-1}, \tilde{z}_j, z_{j+1} \cdots z_d)} 
    \,\Big|\,
    j \in \{i_1 \cdots i_p\},  
    \tilde{z}_j \in \{0,1\} 
    \Bigr\}.
\end{equation}
For example, using the notation \((z_1 \cdots z_d)\) \(\leftrightarrow\) \(P_{(z_1 \cdots z_d)}\),
\begin{equation}
    \partial (\bullet,\bullet,0) 
    = \Bigl\{ (\bullet,0,0), (\bullet,1,0), (0,\bullet,0), (1,\bullet,0) \Bigr\}.
\end{equation}
When working with integer coefficients $\ZZ$ instead of $\ZZ_2$, the boundary acquires signs. Specifically,\footnote{Note that our sign convention differs from Ref.~\cite{Chen2023HigherCup}, which has a typo in the overall factor.}
\begin{equation}
    \partial (\bullet_1, \cdots, \bullet_d) = \sum_{\substack{\ell=1, \\ a_\ell \in \{0,1\}}}^d (-1)^{\ell+a_\ell} (\bullet, \cdots \underbrace{,a_\ell,}_{\substack{\ell^\text{th} \\ \text{coord.}}} \cdots, \bullet).
\end{equation}
For instance, the boundary of the 1-dimensional edge \(\msquare_1 = (\bullet)\) is
\begin{equation}
    \partial(\bullet)=-(0)+(1),
\end{equation}
the boundary of the 2-dimensional face \(\msquare_2 = (\bullet,\bullet)\) is
\begin{equation}
    \partial(\bullet,\bullet)
    = -(0,\bullet)+(1,\bullet)+(\bullet,0)-(\bullet,1),
\end{equation}
and the boundary of the 3-dimensional cube \(\msquare_3 = (\bullet,\bullet,\bullet)\) is
\begin{equation}
    \partial(\bullet,\bullet,\bullet)
    = -(0,\bullet,\bullet)+(1,\bullet,\bullet)
    +(\bullet,0,\bullet)-(\bullet,1,\bullet)
    -(\bullet,\bullet,0)+(\bullet,\bullet,1).
\end{equation}
In the following parts, we will build upon these definitions to review the higher cup product structures on the cubic lattice.

\subsection{Higher cup products}

Convention for higher cup products on a square is:
\begin{align}
    A_1 \cup B_1(\bullet,\bullet)
    &= A_1(\bullet,0) \, B_1(1,\bullet)
       \;-\;
       A_1(0,\bullet)\, B_1(\bullet,1),
    \\
    A_1 \cup_1 B_2(\bullet,\bullet)
    &= -\,A_1(0,\bullet) \, B_2(\bullet,\bullet)
       \;-\;
       A_1(\bullet,1)\, B_2(\bullet,\bullet),
    \\
    A_2 \cup_1 B_1(\bullet,\bullet)
    &= A_2(\bullet,\bullet)\,B_1(\bullet,0)
       \;+\;
       A_2(\bullet,\bullet)\,B_1(1,\bullet),
    \\
    A_2 \cup_2 B_2(\bullet,\bullet)
    &= A_2(\bullet,\bullet)\,B_2(\bullet,\bullet),
\end{align}
where $(\bullet,\bullet)$ represents the square spanned on the $x_i$ and $x_j$ coordinates, with all other indices that remain constants omitted implicitly. For example, $(\bullet,\bullet)$ could be $(\bullet,\bullet, 0)$, $(1, \bullet,\bullet)$, $(\bullet, 1, \bullet)$, or any square. In the computation, we are free to ignore all other coordinates except $x_i$ and $x_j$.

\begin{align}
    A_1\cup B_1(\msquare_{1234})&=A_1(12)B_1(24)-A_1(13)B_1(34),\\
    A_1\cup B_2(\msquare_{1234})&=-A_1(13)B_2(\msquare_{1234})-A_1(34)B_2(\msquare_{1234}),\\
    A_2\cup B_1(\msquare_{1234})&=A_2(\msquare_{1234})B_1(12)+A_2(\msquare_{1234})B_1(24).
\end{align}

\begin{align}
    A_2\cup B_1(\bullet,\bullet,\bullet)
    &= A_2(0,\bullet,\bullet)\,B_1(\bullet,1,1)
       \;-\;
       A_2(\bullet,0,\bullet)\,B_1(1,\bullet,1)
       \;+\;
       A_2(\bullet,\bullet,0)\,B_1(1,1,\bullet),
    \\
    A_1\cup B_2(\bullet,\bullet,\bullet)
    &= A_1(0,0,\bullet)\,B_2(\bullet,\bullet,1)
       \;-\;
       A_1(0,\bullet,0)\,B_2(\bullet,1,\bullet)
       \;+\;
       A_1(\bullet,0,0)\,B_2(1,\bullet,\bullet),
    \\
    \begin{split}
    A_2\cup_1 B_2(\bullet,\bullet,\bullet)
    &= A_2(\bullet,1,\bullet)\,B_2(\bullet,\bullet,0)
       \;-\;
       A_2(\bullet,\bullet,1)\,B_2(\bullet,0,\bullet)
       \;+\;
       A_2(0,\bullet,\bullet)\,B_2(\bullet,\bullet,0)
    \\
    &\quad +\;
       A_2(0,\bullet,\bullet)\,B_2(\bullet,1,\bullet)
       \;-\;
       A_2(\bullet,\bullet,1)\,B_2(1,\bullet,\bullet)
       \;-\;
       A_2(\bullet,0,\bullet)\,B_2(1,\bullet,\bullet),
    \end{split}
    \\
    A_1\cup_1 B_3(\bullet,\bullet,\bullet)
    &= A_1(\bullet,1,1)\,B_3(\bullet,\bullet,\bullet)
       \;+\;
       A_1(0,\bullet,1)\,B_3(\bullet,\bullet,\bullet)
       \;+\;
       A_1(0,0,\bullet)\,B_3(\bullet,\bullet,\bullet),
    \\
    A_3\cup_1 B_1(\bullet,\bullet,\bullet)
    &= A_3(\bullet,\bullet,\bullet)\,B_1(\bullet,0,0)
       \;+\;
       A_3(\bullet,\bullet,\bullet)\,B_1(1,\bullet,0)
       \;+\;
       A_3(\bullet,\bullet,\bullet)\,B_1(1,1,\bullet),
    \\
    A_2\cup_2 B_3(\bullet,\bullet,\bullet)
    &= A_2(\bullet,\bullet,0)\,B_3(\bullet,\bullet,\bullet)
       \;+\;
       A_2(\bullet,1,\bullet)\,B_3(\bullet,\bullet,\bullet)
       \;+\;
       A_2(0,\bullet,\bullet)\,B_3(\bullet,\bullet,\bullet),
    \\
    A_3\cup_2 B_2(\bullet,\bullet,\bullet)
    &= A_3(\bullet,\bullet,\bullet)\,B_2(1,\bullet,\bullet)
       \;+\;
       A_3(\bullet,\bullet,\bullet)\,B_2(\bullet,0,\bullet)
       \;+\;
       A_3(\bullet,\bullet,\bullet)\,B_2(\bullet,\bullet,1),
    \\
    A_3\cup_3 B_3(\bullet,\bullet,\bullet)
    &= A_3(\bullet,\bullet,\bullet)\,B_3(\bullet,\bullet,\bullet).
\end{align}

In the graphic representation, that is (for simplicity, we will denote the entire cube as C, instead of [12345678])

\begin{align}
    A_2\cup B_1(C)&=A_2(\msquare_{1357})B_1(78)-A_2(\msquare_{1256})B_1(68)+A_2(\msquare_{1234})B_1(48),\\
    A_1\cup B_2(C)&=A_1(15)B_2(\msquare_{5678})-A_1(13)B_2(\msquare_{3478})+A_1(12)B_2(\msquare_{2468}),\\
    \begin{split}
        A_2\cup_1B_2(C)&=A_2(\msquare_{3478})B_2(\msquare_{1234})
        -A_2(\msquare_{5678})B_2(\msquare_{1256})
        +A_2(\msquare_{1357})B_2(\msquare_{1234})\\
        &\quad+A_2(\msquare_{1357})B_2(\msquare_{3478})
        -A_2(\msquare_{5678})B_2(\msquare_{2468})
        -A_2(\msquare_{1256})B_2(\msquare_{2468}),
    \end{split}\\
    A_1\cup_1B_3(C)&=\big[ A_1(79)+A_1(57)+A_1(15) \big] B_3(C),\\
    A_3\cup_1B_1(C)&=A_3(C) \big[B_1(12)+B_1(24)+B_1(48) \big],\\
    A_2\cup_2B_3(C)&=\big[A_2(\msquare_{1234})+A_2(\msquare_{3478})+A_2(\msquare_{1357})\big]B_3(C),\\
    A_3\cup_2B_2(C)&=A_3(C)\big[B_2(\msquare_{2468})+B_2(\msquare_{1256})+B_2(\msquare_{5678})\big].
\end{align}

We can also write down some useful cup products in higher dimensions.

For example, in five dimensions, we have
\begin{eqs}
    &A_2\cup B_3(\bullet,\bullet,\bullet,\bullet,\bullet)\\
    =&+A_2(\bullet,\bullet,0,0,0)B_3(1,1,\bullet,\bullet,\bullet)-A_2(\bullet,0,\bullet,0,0)B_3(1,\bullet,1,\bullet,\bullet)+A_2(\bullet,0,0,\bullet,0)B_3(1,\bullet,\bullet,1,\bullet)\\
    &-A_2(\bullet,0,0,0,\bullet)B_3(1,\bullet,\bullet,\bullet,1)+A_2(0,\bullet,\bullet,0,0)B_3(\bullet,1,1,\bullet,\bullet)-A_2(0,\bullet,0,\bullet,0)B_3(\bullet,1,\bullet,1,\bullet)\\
    &+A_2(0,\bullet,0,0,\bullet)B_3(\bullet,1,\bullet,\bullet,1)+A_2(0,0,\bullet,\bullet,0)B_3(\bullet,\bullet,1,1,\bullet)-A_2(0,0,\bullet,0,\bullet)B_3(\bullet,\bullet,1,\bullet,1)\\
    &+A_2(0,0,0,\bullet,\bullet)B_3(\bullet,\bullet,\bullet,1,1)~,
\end{eqs}
\begin{eqs}
    &A_3\cup B_2(\bullet,\bullet,\bullet,\bullet,\bullet)\\
    =&+A_3(\bullet,\bullet,\bullet,0,0)B_2(1,1,1,\bullet,\bullet)-A_3(\bullet,\bullet,0,\bullet,0)B_2(1,1,\bullet,1,\bullet)+A_3(\bullet,0,\bullet,\bullet,0)B_2(1,\bullet,1,1,\bullet)\\
    &-A_3(0,\bullet,\bullet,\bullet,0)B_2(\bullet,1,1,1,\bullet)+A_3(\bullet,\bullet,0,0,\bullet)B_2(1,1,\bullet,\bullet,1)-A_3(\bullet,0,\bullet,0,\bullet)B_2(1,\bullet,1,\bullet,1)\\
    &+A_3(0,\bullet,\bullet,0,\bullet)B_2(\bullet,1,1\,,\bullet,1)+A_3(\bullet,0,0,\bullet,\bullet)B_2(1,\bullet,\bullet,1,1)-A_3(0,\bullet,0,\bullet,\bullet)B_2(\bullet,1,\bullet,1,1)\\
    &+A_3(0,0,\bullet,\bullet,\bullet)B_2(\bullet,\bullet,1,1,1)~,
\end{eqs}
and
\begin{eqs}
&A_3\cup_1B_3(\bullet,\bullet,\bullet,\bullet,\bullet)\\
=&+A_3(\bullet,\bullet,\bullet,1,1)B_3(\bullet,0,0,\bullet,\bullet)+A_3(\bullet,\bullet,\bullet,1,1)B_3(1,\bullet,0,\bullet,\bullet)+A_3(\bullet,\bullet,\bullet,1,1)B_3(1,1,\bullet,\bullet,\bullet)\\&-A_3(\bullet,\bullet,1,\bullet,1)B_3(\bullet,0,\bullet,0,\bullet)-A_3(\bullet,\bullet,1,\bullet,1)B_3(1,\bullet,\bullet,0,\bullet)+A_3(\bullet,\bullet,0,\bullet,1)B_3(1,1,\bullet,\bullet,\bullet)\\&+A_3(\bullet,\bullet,1,1,\bullet)B_3(\bullet,0,\bullet,\bullet,0)+A_3(\bullet,\bullet,1,1,\bullet)B_3(1,\bullet,\bullet,\bullet,0)+A_3(\bullet,\bullet,0,0,\bullet)B_3(1,1,\bullet,\bullet,\bullet)\\&+A_3(\bullet,1,\bullet,\bullet,1)B_3(\bullet,\bullet,0,0,\bullet)-A_3(\bullet,0,\bullet,\bullet,1)B_3(1,\bullet,\bullet,0,\bullet)-A_3(\bullet,0,\bullet,\bullet,1)B_3(1,\bullet,1,\bullet,\bullet)\\&-A_3(\bullet,1,\bullet,1,\bullet)B_3(\bullet,\bullet,0,\bullet,0)+A_3(\bullet,0,\bullet,1,\bullet)B_3(1,\bullet,\bullet,\bullet,0)-A_3(\bullet,0,\bullet,0,\bullet)B_3(1,\bullet,1,\bullet,\bullet)\\&+A_3(\bullet,1,1,\bullet,\bullet)B_3(\bullet,\bullet,\bullet,0,0)+A_3(\bullet,0,0,\bullet,\bullet)B_3(1,\bullet,\bullet,\bullet,0)+A_3(\bullet,0,0,\bullet,\bullet)B_3(1,\bullet,\bullet,1,\bullet)\\&+A_3(0,\bullet,\bullet,\bullet,1)B_3(\bullet,\bullet,0,0,\bullet)+A_3(0,\bullet,\bullet,\bullet,1)B_3(\bullet,1,\bullet,0,\bullet)+A_3(0,\bullet,\bullet,\bullet,1)B_3(\bullet,1,1,\bullet,\bullet)\\&-A_3(0,\bullet,\bullet,1,\bullet)B_3(\bullet,\bullet,0,\bullet,0)-A_3(0,\bullet,\bullet,1,\bullet)B_3(\bullet,1,\bullet,\bullet,0)+A_3(0,\bullet,\bullet,0,\bullet)B_3(\bullet,1,1,\bullet,\bullet)\\&+A_3(0,\bullet,1,\bullet,\bullet)B_3(\bullet,\bullet,\bullet,0,0)-A_3(0,\bullet,0,\bullet,\bullet)B_3(\bullet,1,\bullet,\bullet,0)    -A_3(0,\bullet,0,\bullet,\bullet)B_3(\bullet,1,\bullet,1,\bullet)\\&+A_3(0,0,\bullet,\bullet,\bullet)B_3(\bullet,\bullet,\bullet,0,0)+A_3(0,0,\bullet,\bullet,\bullet)B_3(\bullet,\bullet,1,\bullet,0)+A_3(0,0,\bullet,\bullet,\bullet)B_3(\bullet,\bullet,1,1,\bullet)
\end{eqs}
The general formula for higher cup products on a hypercubic lattice can be found in Ref.~\cite{Chen2023HigherCup}.

\subsection{Polynomial expression for higher cup products}
We may explicitly write down the expressions for the matrices $M_\mathcal I$ using the definitions of the cup products on the cubic lattice. We tabulate some of them used in the text here

\begin{eqs}
     \bM_{e \cup f} &= \begin{pmatrix}
        \bar y \bar z & 0 & 0\\
       0 & -\bar x \bar z & 0\\
      0 & 0 & \bar x \bar y
    \end{pmatrix}~, \quad
    \bM_{f \cup e} = \begin{pmatrix}
       \bar x & 0 & 0\\
       0 & -\bar y & 0\\
      0 & 0 & \bar z
    \end{pmatrix}, \quad 
        \bM_{f' \cup_1 f} = \begin{pmatrix}
     %   0& -y & -1\\
    %    x & 0 & -\bar y\\
      % z & x\bar z & 0
    0 & \bar x & \bar x z\\
    -\bar y & 0 & z\\
    -1 & -y &0
    \end{pmatrix}~,\\
    \bM_{e \cup_1 c} &=
    \begin{pmatrix}
        1\\ \bar x \\ \bar x\bar y
    \end{pmatrix}~, \qquad \qquad ~~
         \bM_{f \cup_2 c} =
    \begin{pmatrix}
        \bar x\\ 1 \\ \bar z
    \end{pmatrix}~, \qquad\qquad
    \bM_{c \cup_2 f} =
    \begin{pmatrix}
        1&y&1
    \end{pmatrix}~.
\end{eqs}

{\change
\section{Explicit matrices for the 3-fermion-type QCA}
\label{app: Explicit matrices for the 3-fermion-type QCA}

%\allowdisplaybreaks

\subsection{The $3{+}1$D 3-fermion QCA}
\label{app:3FQCA_TQFT_3D}

\begingroup
The matrix elements of the QCA obtained from the TQFT approach, Eq.~\eqref{eq:3FQCA_TQFT}, are
\begin{align}
    \overline{\bX}^A &=  \left(
        \begin{array}{ccc}
         1 & 0 & 0 \\
         0 & 1 & 0 \\
         0 & 0 & 1 \\
         y z+1 & y^2 z+y z & y z^2+y z \\
         x z+z & x y z+y z+z+1 & z^2+z \\
         x+1 & y+1 & x y z+z \\
         0 & 0 & 0 \\
         0 & 0 & 0 \\
         0 & 0 & 0 \\
         x y z+x+y z+1 & x y^2 z+y^2 z+y z+1 & x y z+y z^2+y z+z \\
         x y z+\frac{x}{y}+x+\frac{1}{y} & x y^2 z+x y z+\frac{1}{y}+1 & x y z^2+x y z+x z+\frac{z}{y} \\
         x y+\frac{x}{z}+x+1 & x y^2+y+\frac{1}{z}+1 & x y z+x y+z+1 \\
        \end{array}
        \right) \\
\overline{\bX}^B &= \left(
        \begin{array}{ccc}
         \frac{1}{x y z}+\frac{1}{x} & \frac{y}{x}+\frac{1}{x} & \frac{z}{x}+\frac{1}{x} \\
         \frac{1}{x y}+\frac{1}{y} & \frac{1}{x y z}+\frac{1}{x y}+\frac{1}{x}+1 & \frac{z}{x y}+\frac{1}{x y} \\
         \frac{1}{x y z}+\frac{1}{y z} & \frac{1}{x y z}+\frac{1}{x z} & \frac{1}{x y}+1 \\
         1 & 0 & 0 \\
         0 & 1 & 0 \\
         0 & 0 & 1 \\
         \frac{1}{x y z}+\frac{1}{x}+\frac{1}{y z}+1 & \frac{1}{x y z}+\frac{y}{x}+\frac{1}{x}+y & \frac{1}{x y}+\frac{z}{x}+\frac{1}{x}+1 \\
         \frac{1}{x y^2 z}+\frac{1}{y^2 z}+\frac{1}{y z}+1 & \frac{1}{x y^2 z}+\frac{1}{x y z}+y+1 & \frac{1}{x y^2}+\frac{1}{y}+z+1 \\
         \frac{1}{x y z}+\frac{1}{y z^2}+\frac{1}{y z}+\frac{1}{z} & \frac{1}{x y z^2}+\frac{1}{x y z}+\frac{1}{x z}+\frac{y}{z} & \frac{1}{x y z}+\frac{1}{x y}+\frac{1}{z}+1 \\
         0 & 0 & 0 \\
         0 & 0 & 0 \\
         0 & 0 & 0 \\
        \end{array}
        \right) 
\end{align}
\begin{align}
    \overline{\bZ}^A &= \left(
        \begin{array}{ccc}
         0 & 0 & 0 \\
         0 & 0 & 0 \\
         0 & 0 & 0 \\
         0 & y z+y & y z+z \\
         x z+x & 0 & x z+z \\
         x y+x & x y+y & 0 \\
         1 & 0 & 0 \\
         0 & 1 & 0 \\
         0 & 0 & 1 \\
         x y z+x & x y+y & x y z+y z \\
         \frac{x}{y}+x & x y z+x y+x+1 & x y z+x z \\
         \frac{x}{z}+x & \frac{x y}{z}+x y & x y+1 \\
        \end{array}
        \right) \\
    \overline{\bZ}^B &= \left(
        \begin{array}{ccc}
         0 & \frac{1}{x z}+\frac{1}{x} & \frac{1}{x y}+\frac{1}{x} \\
         \frac{1}{y z}+\frac{1}{y} & 0 & \frac{1}{x y}+\frac{1}{y} \\
         \frac{1}{y z}+\frac{1}{z} & \frac{1}{x z}+\frac{1}{z} & 0 \\
         0 & 0 & 0 \\
         0 & 0 & 0 \\
         0 & 0 & 0 \\
         \frac{1}{y z}+1 & \frac{1}{x z}+\frac{1}{z} & \frac{1}{x}+1 \\
         \frac{1}{y^2 z}+\frac{1}{y z} & \frac{1}{x y z}+\frac{1}{y z}+\frac{1}{z}+1 & \frac{1}{y}+1 \\
         \frac{1}{y z^2}+\frac{1}{y z} & \frac{1}{z^2}+\frac{1}{z} & \frac{1}{x y z}+\frac{1}{z} \\
         1 & 0 & 0 \\
         0 & 1 & 0 \\
         0 & 0 & 1 \\
        \end{array}
        \right)
\end{align}
\endgroup
And, the $\bH$ matrix that is used to construct the QCA from 3F invertible subalgebra is
\begin{equation}
    \bH = \left(
\begin{array}{cccccccc}
 1 & 0 & 0 & 0 & 1 & 0 & 0 & 0 \\
 0 & 1 & 0 & 0 & 0 & 1 & 0 & 0 \\
 0 & 0 & 1 & 0 & 0 & 0 & 1 & 0 \\
 0 & 0 & 0 & 1 & 0 & 0 & 0 & 1 \\
 y+1 & \frac{y}{x}+\frac{1}{x}+y & 0 & y & \frac{1}{y}+1 & 1 & 0 & 0 \\
 1 & \frac{1}{x}+1 & x & 0 & \frac{x}{y}+x+\frac{1}{y} & x+1 & 0 & 0 \\
 0 & 0 & \frac{1}{y}+1 & 1 & 0 & \frac{1}{x} & y+1 & \frac{y}{x}+\frac{1}{x}+y \\
 0 & 0 & \frac{x}{y}+x+\frac{1}{y} & x+1 & \frac{1}{y} & 0 & 1 & \frac{1}{x}+1 \\
\end{array}
\right)~.
\end{equation}
}

\subsection{The $5{+}1$D ``3-fermion'' QCA}\label{app: Polynomial formalism for Clifford QCA}

In five spatial dimensions, we begin by writing down the matrices for the coboundary operator and the relevant (higher) cup products. For convenience, we label the coordinates by $\{a,b,c,d,e\}$.
\begin{align}
\bd_{c,\delta f}&=\left(
\begin{array}{cccccccccc}
 c-1 & 1-b & 0 & 0 & a-1 & 0 & 0 & 0 & 0 & 0 \\
 d-1 & 0 & 1-b & 0 & 0 & a-1 & 0 & 0 & 0 & 0 \\
 e-1 & 0 & 0 & 1-b & 0 & 0 & a-1 & 0 & 0 & 0 \\
 0 & d-1 & 1-c & 0 & 0 & 0 & 0 & a-1 & 0 & 0 \\
 0 & e-1 & 0 & 1-c & 0 & 0 & 0 & 0 & a-1 & 0 \\
 0 & 0 & e-1 & 1-d & 0 & 0 & 0 & 0 & 0 & a-1 \\
 0 & 0 & 0 & 0 & d-1 & 1-c & 0 & b-1 & 0 & 0 \\
 0 & 0 & 0 & 0 & e-1 & 0 & 1-c & 0 & b-1 & 0 \\
 0 & 0 & 0 & 0 & 0 & e-1 & 1-d & 0 & 0 & b-1 \\
 0 & 0 & 0 & 0 & 0 & 0 & 0 & e-1 & 1-d & c-1 \\
\end{array}
\right)\\
\bM_{f\cup c}&=\left(
\begin{array}{cccccccccc}
 0 & 0 & 0 & 0 & 0 & 0 & 0 & 0 & 0 & a b c \\
 0 & 0 & 0 & 0 & 0 & 0 & 0 & 0 & -a b d & 0 \\
 0 & 0 & 0 & 0 & 0 & 0 & 0 & a b e & 0 & 0 \\
 0 & 0 & 0 & 0 & 0 & 0 & a c d & 0 & 0 & 0 \\
 0 & 0 & 0 & 0 & 0 & -a c e & 0 & 0 & 0 & 0 \\
 0 & 0 & 0 & 0 & a d e & 0 & 0 & 0 & 0 & 0 \\
 0 & 0 & 0 & -b c d & 0 & 0 & 0 & 0 & 0 & 0 \\
 0 & 0 & b c e & 0 & 0 & 0 & 0 & 0 & 0 & 0 \\
 0 & -b d e & 0 & 0 & 0 & 0 & 0 & 0 & 0 & 0 \\
 c d e & 0 & 0 & 0 & 0 & 0 & 0 & 0 & 0 & 0 \\
\end{array}
\right)\\
\bM_{c\cup f}&=\left(
\begin{array}{cccccccccc}
 0 & 0 & 0 & 0 & 0 & 0 & 0 & 0 & 0 & a b \\
 0 & 0 & 0 & 0 & 0 & 0 & 0 & 0 & -a c & 0 \\
 0 & 0 & 0 & 0 & 0 & 0 & 0 & a d & 0 & 0 \\
 0 & 0 & 0 & 0 & 0 & 0 & -a e & 0 & 0 & 0 \\
 0 & 0 & 0 & 0 & 0 & b c & 0 & 0 & 0 & 0 \\
 0 & 0 & 0 & 0 & -b d & 0 & 0 & 0 & 0 & 0 \\
 0 & 0 & 0 & b e & 0 & 0 & 0 & 0 & 0 & 0 \\
 0 & 0 & c d & 0 & 0 & 0 & 0 & 0 & 0 & 0 \\
 0 & -c e & 0 & 0 & 0 & 0 & 0 & 0 & 0 & 0 \\
 d e & 0 & 0 & 0 & 0 & 0 & 0 & 0 & 0 & 0 \\
\end{array}
\right)\\
\bM_{c \cup_1 c}&=\left(
\begin{array}{cccccccccc}
 0 & 0 & 0 & 0 & 0 & \frac{1}{d e} & 0 & 0 & \frac{a}{d e} & \frac{a b}{d e} \\
 0 & 0 & 0 & 0 & -\frac{1}{c e} & 0 & 0 & -\frac{a}{c e} & 0 & \frac{a b}{e} \\
 0 & 0 & 0 & \frac{1}{c d} & 0 & 0 & \frac{a}{c d} & 0 & 0 & a b \\
 0 & 0 & \frac{1}{b e} & 0 & 0 & 0 & 0 & -\frac{a}{e} & -\frac{a c}{e} & 0 \\
 0 & -\frac{1}{b d} & 0 & 0 & 0 & 0 & \frac{a}{d} & 0 & -a c & 0 \\
 \frac{1}{b c} & 0 & 0 & 0 & 0 & 0 & a & a d & 0 & 0 \\
 0 & 0 & \frac{1}{e} & 0 & \frac{b}{e} & \frac{b c}{e} & 0 & 0 & 0 & 0 \\
 0 & -\frac{1}{d} & 0 & -\frac{b}{d} & 0 & b c & 0 & 0 & 0 & 0 \\
 \frac{1}{c} & 0 & 0 & -b & -b d & 0 & 0 & 0 & 0 & 0 \\
 1 & c & c d & 0 & 0 & 0 & 0 & 0 & 0 & 0 \\
\end{array}
\right)
\end{align}
The original qubits $A$ and $B$ are
\begin{eqs}
\bZ^A_c=\left(\begin{array}{c}0\\0\\ \bbone_{10\times 10}\\0\end{array}\right),
\bZ^B_c=\left(\begin{array}{c}0\\0\\0\\ \bbone_{10\times 10}\end{array}\right),
\bX^A_c=\left(\begin{array}{c}\bbone_{10\times 10}\\0\\0\\0\end{array}\right),
\bX^B_c=\left(\begin{array}{c}0\\ \bbone_{10\times 10}\\0\\0\end{array}\right)
\end{eqs}
The $5{+}1$D 3F QCA can then be set up as (we do not perform the tensor contractions for clarity)
\begin{eqs}
\overline{\bZ}^A_c&=\bZ^A_c+(\bX^B_{c'}\bd_{c',\delta f}+\bZ^B_{c'}\bM^\dagger_{c''\cup_1 c'}\bd_{c'',\delta f})\bM^\dagger_{c\cup f}\\
\overline{\bZ}^B_c&=\bZ^B_c+(\bX_{c'}^A\bd_{c',\delta f}+\bZ_{c'}^A\bM^\dagger_{c''\cup_1 c'}\bd_{c'',\delta f})\bM_{f\cup c}\\
\overline{\bX}^A_c&=\bX^A_c+(\bX_{c''}^B\bd_{c'',\delta f}+\bZ_{c'''}^B\bM^\dagger_{c''\cup_1  c'''}\bd_{c'',\delta f})\bM^\dagger_{c'\cup f}\bM_{c'\cup_1 c}\\
\overline{\bX}^B_c&=\bX^B_{c}+(\bX_{c''}^A\bd_{c'',\delta f}+\bZ_{c'''}^A\bM^\dagger_{c''\cup_1 c'''}\bd_{c'',\delta f})\bM_{f\cup c'}\bM_{c'\cup_1 c}
\end{eqs}
The proof that the QCA squares to the identity can be performed similarly to the 3+1D version.

\twocolumngrid

\bibliographystyle{utphys}
\bibliography{bibliography.bib}

\end{document}